\newtheorem{theorem}{Theorem}[section]
\newtheorem{lemma}[theorem]{Lemma}
\newtheorem{proposition}[theorem]{Proposition}
\newtheorem{corollary}[theorem]{Corollary}
\theoremstyle{definition}
\newtheorem{definition}[theorem]{Definition}
\newtheorem{remark}[theorem]{Remark}
\newtheorem{ipotesi}[theorem]{Assumption}
\numberwithin{equation}{section}
\newcommand{\N}{\mathbb{N}} 
\newcommand{\R}{\mathbb{R}} 
\newcommand{\bA}{\mathbf{A}} 
\newcommand{\bC}{\mathbf{C}} 
\newcommand{\bB}{\mathbf{B}} 
\newcommand{\bS}{\mathbf{S}} 
\newcommand{\bbE}{{\mathbb E}}
\newcommand{\p}{\mathbf{p}} 
\newcommand{\cB}{\mathcal{B}} 
\newcommand{\Dir}{\mathrm{Dir}} 
\newcommand{\bG}{\mathbf{G}} 
\newcommand{\mass}{\mathbf{M}} 
\newcommand{\Rc}{\mathscr{R}} 
\newcommand\res{\mathop{\hbox{\vrule height 7pt width .3pt depth 0pt\vrule height .3pt width 5pt depth 0pt}}\nolimits}
\newcommand{\V}{\mathbf{v}} 
\newcommand{\bE}{\mathbf{E}} 
\newcommand{\bh}{\mathbf{h}} 
\newcommand{\modp}{{\rm mod}(p)} 
\newcommand{\Ha}{\mathcal{H}} 
\newcommand{\eps}{\varepsilon} 
\newcommand{\spt}{\mathrm{spt}} 
\newcommand{\dist}{\mathrm{dist}} 
\newcommand{\tr}{\mathrm{tr}} 
\newcommand{\diam}{\mathrm{diam}} 
\newcommand{\Lip}{\mathrm{Lip}} 
\renewcommand{\epsilon}{\varepsilon}
\def\XXint#1#2#3{{\setbox0=\hbox{$#1{#2#3}{\int}$ }
		\vcenter{\hbox{$#2#3$ }}\kern-.6\wd0}}
\newcommand{\mres}{\mathop{\hbox{\vrule height 7pt width .3pt depth 0pt\vrule height .3pt width 5pt depth 0pt}}\nolimits}
\DeclareMathOperator{\Div}{div} 
\def\a#1{\left\llbracket{#1}\right\rrbracket}
\newcommand{\abs}[1]{\lvert#1\rvert} 
\newcommand{\Abs}[1]{\left\lvert#1\right\rvert} 
\newcommand{\norm}[1]{\left\lVert#1\right\rVert} 
\newcommand{\etab}{\boldsymbol{\eta}}
\newcommand{\Iqspec}{{\mathscr{A}_Q (\R)}}
\newcommand{\bH}{\mathbf{H}}
\author[C. De Lellis]{Camillo De Lellis}
\address{School of Mathematics, Institute for Advanced Study, 1 Einstein Dr., Princeton NJ 05840, USA}
\email{camillo.delellis@math.ias.edu}
\author[J. Hirsch]{Jonas Hirsch}
\address{Mathematisches Institut, Universit\"at Leipzig, Augustusplatz 10, D-04109 Leipzig, Germany}
\email{Jonas.Hirsch@math.uni-leipzig.de}
\author[A. Marchese]{Andrea Marchese}
\address{Dipartimento di Matematica, Universit\`a degli Studi di Trento, Via Sommarive 14, I-38123 Povo (TN), Italy}
\email{andrea.marchese@unitn.it}
\author[L. Spolaor]{Luca Spolaor}
\address{Department of Mathematics, UC San Diego, AP\&M, La Jolla, California, 92093, USA}
\email{lspolaor@ucsd.edu}
\author[S. Stuvard]{Salvatore Stuvard}
\address{Dipartimento di Matematica, Universit\`a degli Studi di Milano, Via Saldini 50, I-20133 Milano (MI), Italy}
\email{salvatore.stuvard@unimi.it}
\begin{document}

\title[Excess decay for minimizing hypercurrents mod $2Q$]{Excess decay for minimizing hypercurrents mod $2Q$}

\begin{abstract}
We consider codimension $1$ area-minimizing $m$-dimensional currents $T$ mod an even integer $p=2Q$ in a $C^2$ Riemannian submanifold $\Sigma$ of the Euclidean space. We prove a suitable excess-decay estimate towards the unique tangent cone at every point $q\in \spt (T)\setminus \spt^p (\partial T)$ where at least one such tangent cone is $Q$ copies of a single plane. While an analogous decay statement was proved in \cite{MW} as a corollary of a more general theory for stable varifolds, in our statement we strive for the optimal dependence of the estimates upon the second fundamental form of $\Sigma$. This technical improvement is in fact needed in \cite{DHMSS_final} to prove that the singular set of $T$ can be decomposed into a $C^{1,\alpha}$ $(m-1)$-dimensional submanifold and an additional closed remaining set of Hausdorff dimension at most $m-2$.
\end{abstract}

\maketitle

\tableofcontents

\section{Introduction}

In this paper we consider area minimizing currents mod an integer $p \geq 2$ which have codimension $1$ in a given smooth Riemannian ambient manifold. 

\begin{definition} \label{def:am_modp}
	Let $p\geq 2$, $\Omega \subset \R^{m+n}$ be open, and let $\Sigma \subset \R^{m+n}$ be a complete submanifold without boundary of dimension $m+\bar{n}$ and class $C^{2}$. We say that an $m$-dimensional integer rectifiable current $T \in \Rc_{m}(\Sigma)$ is \emph{area minimizing} $\modp$ in $\Sigma \cap \Omega$ if
	\begin{equation}\label{e:am_mod_p}
	\mass (T) \leq \mass (T + W) \qquad \mbox{for any $W \in \Rc_{m}(\Omega\cap \Sigma)$ which is a boundary $\modp$}\,.
	\end{equation}
\end{definition}

In \cite{MW} the authors leverage the regularity theory of \cite{Wic} for stable integral varifolds in codimension $\bar n = 1$ and use an observation in \cite{DHMSS} to prove, among other things, the uniqueness of tangent cones at every interior point $q$ where at least one tangent cone is flat, namely contained in an $m$-dimensional plane. Recall that at any such $q$ the density $\Theta_T (q)$ is necessarily an integer no larger than $\frac{p}{2}$. Moreover, if $1 \leq \Theta_T (q) \leq \lfloor \frac{p-1}{2}\rfloor$, the regularity results of Allard \cite{Allard72} and White \cite{White86} apply: in this case $T$ is a regular submanifold in a neighborhood of $q$, counted with multiplicity $\Theta_T (q)$. The case of interest here is therefore that of even moduli $p=2Q$ and interior points $q$ with at least one flat tangent cone and density $Q=\frac{p}{2}$. Under the latter assumption, in fact, $q$ can be a singular point (cf. \cite{White79} and \cite[Example 1.6]{DLHMS}). The result of \cite{MW} on which we focus here can therefore be stated as follows.

\begin{theorem}\label{t:main}
Let $p= 2Q$ be even, $\Sigma, T$, and $\Omega$ be as in Definition \ref{def:am_modp} with $\dim (\Sigma)=\dim (T)+1 = m+1$. If at $q\in \spt (T)\setminus \spt^p (\partial T)$ one tangent cone to $T$ is of the form $\bC = Q \a{\pi}$ for some $m$-dimensional plane $\pi$, then $\bC$ is the {\em unique} tangent cone to $T$ at $q$.
\end{theorem}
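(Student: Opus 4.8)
The plan is to follow the classical Allard–White–Almgren scheme adapted to the mod $2Q$ setting: prove a one-sided excess-decay estimate near the flat cone $\bC = Q\a{\pi}$ and then iterate it to obtain uniqueness of the tangent cone. Concretely, I would first normalize so that $q=0$, $\pi = \R^m\times\{0\}$, and fix a scale $r$ at which the (cylindrical or spherical) excess $\mathbf{E}(T,\mathbf{B}_r) := \frac{1}{r^m}\int_{\mathbf{B}_r} |\vec T - \vec\pi|^2\, d\|T\|$ together with the ``error'' coming from the second fundamental form $\mathbf{A}_\Sigma$ (encoded in a quantity like $r^2\,\|\mathbf{A}_\Sigma\|_{C^0}^2$) is small. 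The heart of the matter is the decay estimate: there exist $\alpha\in(0,1)$, $\theta\in(0,1)$, and $\eps_0>0$ such that if $\mathbf{E}(T,\mathbf{B}_r)+ r^2\|\mathbf{A}_\Sigma\|_{C^0}^2 \leq \eps_0$ then there is another $m$-plane $\pi'$ with
\begin{equation*}
\mathbf{E}(T,\mathbf{B}_{\theta r}, \pi') \leq \theta^{2\alpha}\, \mathbf{E}(T,\mathbf{B}_r,\pi) + C\, r^2 \|\mathbf{A}_\Sigma\|_{C^0}^2\,,
\qquad |\pi'-\pi|^2 \leq C\,\mathbf{E}(T,\mathbf{B}_r,\pi) + C\, r^2\|\mathbf{A}_\Sigma\|_{C^0}^2\,.
\end{equation*}
Crucially — and this is where the paper improves on \cite{MW} — I want the constants $C$ and $\theta$ to be \emph{independent} of $\Sigma$, with the only appearance of $\mathbf{A}_\Sigma$ being the explicit additive term above; this is exactly the scaling-critical dependence needed in \cite{DHMSS_final}.

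The decay estimate itself I would prove by the usual \emph{blow-up / harmonic approximation} argument, but with $\mathrm{Dir}$-minimizing $Q$-valued functions replacing classical harmonic functions. Step one: a Lipschitz approximation — away from a small ``bad'' set, $T \res \mathbf{C}_r$ coincides with the graph of a $\Lip$ $Q$-valued function $u$ with $\mathrm{Lip}(u)$ small and $\int |Du|^2$ comparable to the excess, with all errors controlled \emph{linearly} by the excess plus $r^2\|\mathbf{A}_\Sigma\|_{C^0}^2$ (here one uses the mod $2Q$ minimality together with the observation from \cite{DHMSS}, plus the isoperimetric/slicing estimates mod $p$). Step two: assume by contradiction the decay fails along a sequence $T_k$ with excess $\mathbf{E}_k\to 0$ and $r_k^2\|\mathbf{A}_{\Sigma_k}\|_{C^0}^2 / \mathbf{E}_k \to 0$; rescale the Lipschitz approximations $u_k$ by $\mathbf{E}_k^{-1/2}$ and show they converge strongly in $W^{1,2}_{\mathrm{loc}}$ to a $\mathrm{Dir}$-minimizing $Q$-valued function $u_\infty$ (minimality passes to the limit by a standard competitor-construction argument mod $p$, and the term $r_k^2\|\mathbf{A}_{\Sigma_k}\|^2$ being negligible relative to $\mathbf{E}_k$ is precisely what makes the ambient manifold disappear in the limit). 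Step three: invoke the interior regularity and the first-order expansion for $\mathrm{Dir}$-minimizers (from the Almgren / De Lellis–Spadaro theory of $\Iqn$-valued functions), which gives that $u_\infty$ is, up to an affine piece, well-approximated at scale $\theta$ with the expected $\theta^{\alpha}$ gain — and the affine piece is where the tilted plane $\pi'$ comes from. Combining with the harmonic approximation closes the contradiction.

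Once the decay estimate holds, uniqueness of the tangent cone follows by the classical iteration: starting from a scale $r_0$ where the smallness hypothesis holds (such a scale exists because \emph{some} tangent cone at $q$ is $Q\a{\pi}$, so $\mathbf{E}(T,\mathbf{B}_{r_j},\pi_j)\to 0$ along a subsequence, and the second fundamental form term vanishes as $r\to 0$ since $\Sigma\in C^2$), one applies the decay at $r_0$, then at $\theta r_0$, and so on, generating a sequence of planes $\pi_j$ with $\mathbf{E}(T,\mathbf{B}_{\theta^j r_0},\pi_j)\lesssim \theta^{2\alpha j} + (\theta^j r_0)^2\|\mathbf{A}_\Sigma\|^2_{C^0}$ and $|\pi_{j+1}-\pi_j|^2 \lesssim \theta^{2\alpha j} + \ldots$, hence $\{\pi_j\}$ is Cauchy and converges to a single plane $\pi_\infty$; a telescoping/summation estimate then shows $\mathbf{E}(T,\mathbf{B}_\rho,\pi_\infty)\to 0$ as $\rho\to 0$ with a rate, which forces \emph{every} tangent cone to equal $Q\a{\pi_\infty}$, i.e. $\bC$ is unique (and equals $Q\a{\pi_\infty}$). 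The main obstacle, and the real content of the paper, is Step two of the decay estimate — making the Lipschitz approximation and the passage to the limit work with constants that do not degenerate as $\|\mathbf{A}_\Sigma\|$ varies, which requires a careful bookkeeping of every error term in the mod $p$ first-variation and isoperimetric arguments to confirm it is genuinely of the scaling-critical form $C\,r^2\|\mathbf{A}_\Sigma\|_{C^0}^2$ rather than merely ``$o(1)$ depending on $\Sigma$''; the $Q$-valued nature of the competitors (patching two $Q$-valued graphs across the bad set mod $2Q$) adds the usual combinatorial subtleties on top.
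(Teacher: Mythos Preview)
Your Step~3 contains a genuine gap.  At a density-$Q$ point, the blow-up limit $u_\infty$ is a Dir-minimizing special $Q$-valued function with $u_\infty(0)=Q\a{\etab\circ u_\infty(0)}$, and its first-order behavior at the origin is governed by a \emph{1-homogeneous} Dir-minimizer $\bar u$, not by an affine map.  In codimension one, the graph of such a $\bar u$ is in general a non-flat \emph{open book}: $2Q$ half-planes meeting along an $(m-1)$-dimensional spine.  When $\bar u$ is genuinely non-flat (equivalently $\bar u\ominus\etab\circ\bar u\not\equiv 0$), one has
\[
\inf_{\ell\ \text{linear}}\ \theta^{-(m+2)}\int_{B_\theta}\mathcal{G}_s(u_\infty,\ell)^2
\ \longrightarrow\ \inf_{\ell}\int_{B_1}\mathcal{G}_s(\bar u,\ell)^2\ >\ 0
\qquad\text{as }\theta\to 0\,,
\]
so no estimate of the form $\bE(T,\pi',0,\theta r)\le \theta^{2\alpha}\bE(T,\pi,0,r)+\ldots$ can hold with fixed $\alpha>0$.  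The ``first-order expansion'' from the classical $\Iq$ theory you invoke gives a 1-homogeneous tangent, not an affine one; the affine picture is only valid at regular points or when all $Q$ sheets collapse, which is precisely what may fail here (cf.\ the singular examples in \cite{White79}, \cite[Example~1.6]{DLHMS}).

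The paper's remedy is to replace the planar excess by a \emph{book excess} $\mathbb{E}(T,\bS,q,r)$ (Definition~\ref{d:excess of excesses}) and to prove decay in two complementary regimes.  In the first (Proposition~\ref{p:decay-1}) one only needs $\mathbb{E}\ll \bar\bE$ at a smaller scale, and this follows exactly from the 1-homogeneous approximation of the Dir-minimizing blow-up (Lemma~\ref{l:special-decay}).  In the second (Proposition~\ref{p:decay-2}), when $\mathbb{E}$ is already much smaller than $\bar\bE$, the book is quantitatively non-flat and one runs an adaptation of Simon's cylindrical-cone machinery (monotonicity, Hardt--Simon inequality, binding functions, Wickramasekera's weighted averages) to halve $\mathbb{E}$.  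The iteration alternates between the two regimes; the resulting books form a Cauchy sequence in Hausdorff distance, and the hypothesis that \emph{some} tangent cone is $Q\a{\pi}$ forces the limit book to be $\pi$ itself.  The quadratic $\bA^2$ dependence you correctly identify as the main technical point is threaded through both regimes, but the structural heart of the argument---decay to a book, not to a plane---is missing from your outline.
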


In \cite{DHMSS_final} the above theorem was one of the starting points to complete the study of the fine structure of the singular set of area-minimizing hypercurrents mod $p$ in the case when $p$ is even. More precisely, we prove there that, outside an exceptional closed subset of (Hausdorff) dimension at most $m-2$, the rest of the interior singular set of $T$ is, locally, an $(m-1)$-dimensional submanifold of class $C^{1,\alpha}$. This generalizes the classical theorem in \cite{White79} to the case of even $p=2Q > 4$ (the case $p=2$ is special, because locally area-minimizing hypercurrents mod $2$ are area-minimizing integral currents, see the discussion in the introduction to \cite{DLHMS}; note also that \cite{DHMSS_final} needs a slightly stronger regularity for the ambient manifold $\Sigma$ than the one stated in Definition \ref{def:am_modp}, and more precisely $\Sigma$ is assumed to be of class $C^{3,\alpha}$ for some $\alpha>0$). The main theorem in \cite{DHMSS_final} complements the analogous theorem for odd moduli, first shown by Taylor in \cite{Taylor} for $p=3$ and $m=2$, and extended recently to any odd $p$ in \cite{DHMSS}. As was later pointed out in \cite{MW}, the one proposition in \cite{DHMSS} which is used in combination with \cite{Wic} to yield Theorem \ref{t:main} above can in fact be also used to derive the same regularity results of \cite{DHMSS} via the theory of stable varifolds of \cite{Wic}. 

While uniqueness of flat tangent cones is the starting point of the analysis we carried out in \cite{DHMSS_final}, in fact we do need there an important refinement of Theorem \ref{t:main}.
In order to give the precise statement, we introduce the $L^2$ excess of $T$ from an $m$-dimensional plane $\bar\pi$ in a ball $\bB_r (q)$, namely
\begin{equation}\label{e:plane-excess}
\bE (T, \bar\pi, q, r):= \frac{1}{r^{m+2}} \int_{\bB_r (q)} \dist^2 (q'-q, \bar\pi)\, d\|T\| (q')\, ,
\end{equation}
the minimal planar $L^2$ excess
\begin{equation} \label{e:minimal-plane-excess}
    \bar\bE(T,q,r) := \min_{\bar \pi \subset T_q \Sigma} \bE (T, \bar \pi, q, r)\,,
\end{equation}
and the notation $\bA$ for the supremum norm of the second fundamental form of $\Sigma$, i.e. $\bA = \|A_\Sigma\|_\infty$.
The precise decay statement which is needed in \cite{DHMSS_final} is then the following.

\begin{theorem}\label{t:main-with-decay}
There are positive constants $\varepsilon$, $\alpha$, and $C$ with the following properties.
Let $p= 2Q$ be even and $\Sigma, T$, $\Omega$, $q$, and $\pi$ be as in Theorem \ref{t:main}. Assume in addition that $T$ is a representative $\modp$, that $\bB_r (q)\subset \Omega\setminus \spt^p (\partial T)$, $\|T\| (\bB_r (q)) < (Q+\frac{1}{2})\omega_m r^m$ and that
\begin{equation}
\bar \bE (T,q, r) + r^2 \bA^2  \leq \varepsilon\, .
\end{equation}
Then, for every $\rho < \frac{r}{2}$ we have
\begin{equation}\label{e:decay-0}
\bE (T, \pi, q, \rho) \leq C \left(\frac{\rho}{r}\right)^\alpha (\bar \bE (T, q, r) + r^2 \bA^2)\, .
\end{equation}
\end{theorem}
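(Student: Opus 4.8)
The plan is to combine the qualitative uniqueness of the flat tangent cone from Theorem~\ref{t:main} with a quantitative excess-decay scheme of the type familiar from Almgren's and De Lellis--Spadaro's regularity theory for multiple-valued functions and area-minimizing currents, keeping careful track of the dependence on $\bA = \|A_\Sigma\|_\infty$. The starting point is the \emph{harmonic approximation}: whenever the one-sided excess $\bar\bE(T,x,s) + s^2\bA^2$ is below a small threshold $\varepsilon_1$, the current $T$ in $\bB_s(x)$ is, after rescaling, $L^2$-close to (the graph of) a $Q$-valued $\Dir$-minimizing function $u$ on the optimal plane, with $\|Du\|_{L^2}^2$ comparable to the excess plus a term $C s^2\bA^2$ coming from the curvature of $\Sigma$ (the ambient manifold forces $T$ to solve a perturbed rather than exactly stationary problem, and this is precisely where the \emph{optimal} power of $\bA$ must be extracted). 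This requires the usual ingredients: a compactness/Lipschitz approximation result for minimizing hypercurrents mod $2Q$, the $\Dir$-minimizer decay for the average $\boldsymbol\eta\circ u$ and for $u$ itself at points of multiplicity $Q$, and a tilt-excess vs.\ plane-excess comparison.

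First I would set up the decay dichotomy. Fix the optimal plane $\pi_{x,s}$ realizing $\bar\bE(T,x,s)$ and let $u_{x,s}$ be the $Q$-valued $\Dir$-minimizing approximation on $\pi_{x,s}$. Since the hypothesis gives a flat tangent cone $Q\a{\pi}$ at $q$, at the scale $r$ the function $u_{q,r}$ has small energy; the key \emph{Dirichlet-energy decay} for $\Dir$-minimizers (using that at a multiplicity-$Q$ point the blow-up is a single linear function, hence the ``first variation'' improvement holds) yields, for a fixed $\theta\in(0,1/2)$,
\begin{equation}\label{e:dir-decay}
\frac{1}{(\theta s)^m}\int_{B_{\theta s}} |D u_{x,s}|^2 \le \frac{1}{2}\,\theta^{2}\cdot\frac{1}{s^m}\int_{B_s}|Du_{x,s}|^2\,.
\end{equation}
Transferring this back to the current via the harmonic approximation and absorbing errors, one obtains the excess-improvement inequality
\begin{equation}\label{e:improvement}
\bar\bE(T,x,\theta s) \le C\theta^{2}\,\bar\bE(T,x,s) + C\theta^{2}s^2\bA^2 + (\text{approximation error})\,,
\end{equation}
where the approximation error is superlinearly small in $\bar\bE(T,x,s)+s^2\bA^2$, hence absorbable after choosing $\theta$ small and then $\varepsilon$ small. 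Here the only delicate accounting is that the curvature contribution enters with the \emph{same} scaling factor $s^2$ (so it behaves like excess under dilation) and with no worse a power than $\bA^2$ — one gets this from the monotonicity formula with the sharp error term $e^{C\bA^2 s^2}$ and from the perturbed-harmonic equation having right-hand side bounded by $\bA$ pointwise.

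Next I would iterate \eqref{e:improvement}. Writing $e_k := \bar\bE(T,q,\theta^k r) + (\theta^k r)^2 \bA^2$, inequality \eqref{e:improvement} gives $e_{k+1}\le C\theta^2 e_k + C e_k^{1+\delta}$ for some $\delta>0$; provided $\varepsilon$ is chosen so that $e_0 \le \varepsilon$ forces $C e_k^\delta \le \frac12 C\theta^2$ along the iteration (a standard smallness-propagation argument, using that the excess stays small by induction), we conclude $e_k \le (C\theta^2)^k\, 2\, e_0$, i.e.\ geometric decay $\bar\bE(T,q,\rho) \le C(\rho/r)^\alpha\, e_0$ with $\alpha = -\log(2C\theta^2)/\log(1/\theta)\in(0,1)$ after choosing $\theta$ suitably. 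To upgrade from the minimal planar excess $\bar\bE$ to the excess $\bE(T,\pi,q,\rho)$ measured against the \emph{fixed} plane $\pi$, one controls the tilt of the optimal planes: $|\pi_{q,\theta^{k+1}r} - \pi_{q,\theta^k r}| \le C\, e_k^{1/2}$ (again from the harmonic approximation, since the tilt is governed by the average of $Du_{x,s}$), so the planes form a Cauchy sequence converging to $\pi$ at geometric rate, and $\bE(T,\pi,q,\rho) \le 2\bar\bE(T,q,\rho) + C(\sum_{j\ge k} e_j^{1/2})^2 \le C(\rho/r)^\alpha e_0$. Finally, passing from the discrete scales $\theta^k r$ to arbitrary $\rho < r/2$ costs only a constant by the almost-monotonicity of the excess.

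The main obstacle, and the whole point of the paper relative to \cite{MW}, is the \emph{uniform} control of all constants in terms of $\bA$ with the optimal power: one must prove the harmonic approximation, the monotonicity formula, and the density/multiplicity lower-semicontinuity with error terms that are $O(\bA^2 s^2)$ rather than $o(1)$ or $O(\bA s)$, which in turn requires a careful version of the Lipschitz approximation and the first-variation estimates for minimizing hypercurrents mod $2Q$ in a $C^2$ manifold, tracking the second fundamental form through every comparison competitor. A secondary technical point is that, since mod-$2Q$ currents can be genuinely singular at multiplicity $Q$, one cannot use a graphical parametrization of $T$ itself; the $Q$-valued harmonic approximation (and its decay at multiplicity-$Q$ points, where the blow-up is a single sheet) is the right substitute, and one must ensure its \emph{existence} near $q$ — which is exactly guaranteed by Theorem~\ref{t:main} providing an entire scale range on which the one-sided excess is small.
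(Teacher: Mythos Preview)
There is a genuine gap at the very heart of your scheme. The claimed Dirichlet-decay \eqref{e:dir-decay} rests on the assertion that ``at a multiplicity-$Q$ point the blow-up is a single linear function''. For \emph{special} $Q$-valued $\Dir$-minimizers into $\R$ (which is the relevant class here, cf.~Remark~\ref{r:special-multi-functions}) this is false: the $1$-homogeneous $\Dir$-minimizers with $u(0)=Q\a{0}$ include all the \emph{open-book} configurations, i.e.\ maps whose graph is supported on a non-flat union of half-planes through $V=\{0\}\times\R^{m-1}$. For such a $u$ one has $\int_{B_{\theta s}}|Du|^2=\theta^m\int_{B_s}|Du|^2$ with \emph{no} extra factor $\theta^2$, and the planar excess of $\mathbf{G}_u$ is exactly scale-invariant. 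Consequently the improvement \eqref{e:improvement} for the \emph{planar} excess $\bar\bE$ simply does not hold in general, and the iteration cannot start. (Concretely: take $T$ equal to a genuine non-flat minimizing open book mod $2Q$; then $\bar\bE(T,0,s)$ is independent of $s$, contradicting \eqref{e:improvement}.)

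The paper's proof is built precisely around this obstruction. It replaces the single planar-excess iteration by a \emph{two-regime} scheme based on the double-sided \emph{book} excess $\bbE(T,0,s)=\min_{\bS\in\mathscr{B}(0)}\bbE(T,\bS,0,s)$. Proposition~\ref{p:decay-1} is the analogue of your harmonic-approximation step: it shows that $\bbE(T,0,r_1)\le\delta_1\,\bar\bE(T,0,1)$, via Lemma~\ref{l:special-decay}, which is exactly the statement that a special $\Dir$-minimizer decays to some $1$-homogeneous book (not to a plane). This alone cannot be iterated, because after one step the optimal book $\bS$ may be far from any plane ($\bbE\ll\bar\bE$), and then the linearization over $\pi_0$ is useless. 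Proposition~\ref{p:decay-2} handles this second regime by a Simon-type analysis over $\bS$ itself (Whitney decomposition, Hardt--Simon inequality, binding functions, and a further blow-up) to get $\bbE(T,0,r_2)\le\frac12\bbE(T,0,1)$. The actual iteration in Section~3 alternates between the two propositions (and a trivial case when $\bA^2$ dominates), proving geometric decay of $\bbE$; only \emph{then} does one use Lemma~\ref{l:compare-books} to show the optimal books converge to a single book, which must be the plane $\pi$ by Theorem~\ref{t:main}, and deduce \eqref{e:decay-0}. Your proposal captures (a version of) Proposition~\ref{p:decay-1} but entirely misses Proposition~\ref{p:decay-2}, which is the bulk of the paper.
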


Note that the quadratic dependence on $\bA$ in the right-hand side is essential for the arguments in \cite{DHMSS_final}: the power $2$ in $\bA$ and a subtle analysis of the anisotropic rescalings of $T$ around $q$ allow us to improve $\alpha$ in \eqref{e:decay-0} to any exponent strictly smaller than $2$; this almost quadratic decay is then a crucial ingredient in the rest of the work. 

Estimate \eqref{e:decay-0} is certainly an outcome of \cite{MW} when $\Sigma$ is flat, i.e. if $\bA=0$. On the other hand, the ``obvious'' modification of the arguments in \cite{MW} seem to yield an $\bA$-dependence of the right-hand side of \eqref{e:decay-0} which is linear, rather than quadratic, since $\bA$ bounds the $L^\infty$ norm of the generalized mean curvature $\vec{H}_T$ of the varifold induced by $T$. The aim of this note is to show that the technical improvement from $\bA$ to $\bA^2$ is however possible, and hence the regularity theory of \cite{DHMSS_final} holds in the full generality claimed there.  

Roughly speaking, we need to control error terms in inequalities and identities derived through first variations along some test vector fields $X$. All the vector fields $X$ relevant to the proof of Theorem \ref{t:main-with-decay} are almost tangential to the ambient manifold $\Sigma$ and the deviation from tangentiality can be controlled with $\bA$. Since the mean curvature vector $\vec{H}_T$ is directed normally to $\Sigma$, the $L^\infty$ norm of the scalar product $\vec{H}_T \cdot X$ can then be estimated by $\bA^2$. This idea is used already in \cite[Appendix A]{DLS_Lp} to improve the $\bA$-dependence in the classical monotonicity formula. Incidentally, this quadratic improvement plays also a pivotal role in the work \cite{DLMiSk}.

While the underlying idea towards the improvement is simple, there are many cumbersome technicalities to overcome; for this reason, while we believe that an analogous improvement on the $\bA$-dependence can be achieved in the context of varifolds which are stationary and stable in $\Sigma$ and enjoy the special structure detailed in \cite{MW} (of which the varifolds induced by area-minimizing hypercurrents mod $p$ are a special case), we choose to take advantage of the minimizing property mod $p$ to highlight more efficiently the aforementioned technical obstructions.

\medskip

\noindent\textbf{Acknowledgements.} C.D.L. acknowledges support from the National Science Foundation through the grant FRG-1854147. J.H. was partially supported by the German Science Foundation DFG in context of the Priority Program SPP 2026 “Geometry at Infinity”. A.M. and S.S. were partially supported by the \textit{Gruppo Nazionale per l'Analisi Matematica, la Probabilit\`a e le loro Applicazioni} of INdAM. L.S. acknowledges the support of the NSF Career Grant DMS-2044954. 

\section{Notation and preliminaries}

In this section we collect the main notation in use in the paper as well as one important estimate that will be used multiple times in the sequel.

\subsection{Notation} The symbol $\mathbf{p}$ will be typically used for orthogonal projections: in particular, given a linear subspace $\pi\subset \mathbb R^{m+n}$, $\mathbf{p}_\pi$ is the orthogonal projection onto $\pi$, while $\mathbf{p}^\perp_\pi$ is the orthogonal projection on the orthogonal complement. The symbol $T_{q,r}$ will denote the recentered and rescaled current, with base point $q$ and scale $r$: more precisely, if $\lambda_{q,r}$ is the map $q'\mapsto \lambda_{q,r} (q') := r^{-1} (q'-q)$, then $T_{q,r}:= (\lambda_{q,r})_\sharp T$. 
We next introduce two families of sets which are central to the rest of our work.

\begin{definition}
Let $T$, $\Sigma$, $p=2Q$ and $\Omega$ be as in Definition \ref{def:am_modp}. We let:
\begin{itemize}
    \item[(a)] $\mathscr{P} (q, \Sigma)$ be the set of $m$-dimensional planes $\pi \subset T_q \Sigma$, where $m =\dim (T)$; 
    \item[(b)] $\mathscr{B} (q, \Sigma)$ be all the sets of the form
\[
\bS = \bigcup_{i=1}^N \bH_i\, ,
\]
where $2\leq N \leq 2Q$ and the $\bH_i$ are pairwise distinct $m$-dimensional half-planes of $T_q \Sigma$ joining at a common $(m-1)$-dimensional linear subspace $V = V (\bS) \subset T_q\Sigma$. Any such $\mathbf{S}$ will be called an {\em open book}, $V (\bS)$ will be called the {\em spine} \footnote{Note that, according to our definition, an $m$-dimensional plane $\pi \subset T_q\Sigma$ is an open book, and however in the latter case the spine $V$ is not uniquely defined and can be taken to be an arbitrary $(m-1)$-dimensional linear subspace of $\pi$. When we regard $\pi$ as an open book, we assume that a choice of $V$ has been specified, too.} of $\mathbf{S}$, and $\bH_i$ will be called the {\em pages of $\bS$}.
\end{itemize}
\end{definition}

We will simply write $\mathscr{P} (q)$ and $\mathscr{B} (q)$ when $\Sigma$ is clear from the context.

\begin{remark}\label{r:book-to-cones}
Observe that, if:
\begin{itemize}
    \item[(i)] $\bS \in \mathscr{B} (q)$, 
    \item[(ii)] we orient the halfplanes $\bH_i$ so that $\partial \a{\bH_i} = \a{V}$,
    \item[(iii)] and we choose multiplicities $\kappa_i\in [1, Q]\cap \mathbb N$ so that $\sum_i \kappa_i = 2Q$,
\end{itemize}
then $\bC := \sum_i \kappa_i \a{\mathbf{H}_i}$ is a cycle $\modp$. There is of course only a finite number of possible choices for the weights, and the choice is unique if and only if $N=2Q$. 
\end{remark}

Next we introduce various notions of excess that will be used throughout the paper.

\begin{definition} \label{d:excess of excesses}
Let $T \in \mathscr{R}_m(\Sigma)$, let $q\in \R^{m+n}$ and let $\bB_r(q)\subset \R^{m+n}$ be an open ball. 
\begin{itemize}
    \item[(a)] The \emph{one-sided} $L^2$ excess of $T$ from $\bS \in \mathscr{B}(q)$ in $\bB_r(q) \subset \R^{m+n}$ is
    \[
    \bE(T,\bS,q,r) := r^{-(m+2)} \int_{\bB_r(q)} \dist^2(q'-q,\bS)\, d\|T\|(q')\,. 
    \]
    
    \item[(b)] The \emph{one-sided} $L^2$ excess of $\bS \in \mathscr{B}(q)$ from $T$ in $\bB_r(q)$ is defined by
    \[
    \bE(\bS,T,q,r) := r^{-(m+2)} \int_{\bS \cap (\bB_r \setminus B_{r/8}(V(\bS)))} \dist^2(q+q',\spt (T))\, d\Ha^m(q')\, ,
    \]
    where $B_s(V)$ denotes the tubular neighborhood of $V$ in $\R^{m+n}$ of radius $s$.
    \item[(c)] The \emph{double-sided} 
     $L^2$ excess between $T$ and $S \in \mathscr{B}(q)$ in $\bB_r(q)$ is
     \[\mathbb E (T,\bS,q,r) := \bE (T,\bS,q,r) + \bE(\bS,T,q,r)\,.
     \]
\end{itemize}
Furthermore we shall write:
\begin{itemize}
    \item[(d)] $\bE(T,q,r)$ and $\mathbb E (T,q,r)$ for, respectively, the minima of $\bE(T,\bS,q,r)$ and $\mathbb E(T,\bS,q,r)$ over all open books $\bS \in \mathscr{B}(q)$;
    \item[(e)] $\bE (T,\pi,q,r)$ with $\pi \in \mathscr{P}(q)$ and $\bar\bE (T,q,r)$ as in \eqref{e:plane-excess} and \eqref{e:minimal-plane-excess}, respectively.    
 \end{itemize}
\end{definition}

We will often denote with $\pi_{q,r}$ an \emph{optimizing plane} in the ball $\bB_r(q)$, i.e. such that 
\[
\bE(T,\pi_{q,r}, q,r)= \min_{\pi \in \mathscr{P}(q)} \bE(T,\pi,q,r) = \bar\bE(T,q,r)\,.
\]

\subsection{Allard's height bound}
We end up this section recalling a useful $L^\infty-L^2$ estimate due to Allard that will be used in several places later on in the paper.

\begin{lemma}[$L^\infty$-$L^2$ estimates] \label{Linfty-L2}
There exists a geometric constant $C>0$ such that, if $T, \Sigma$ are as in Definition \ref{def:am_modp}, $0\in \Sigma$, $\bB_1\cap \spt^p (\partial T) = \emptyset$, and $\|T\|(\bB_1) < (Q + \frac12) \omega_m$, then 
\begin{equation}
    \sup_{q\in \spt^p(T)\cap \bB_{15/16}} |\p_{\pi_0}^\perp (q)|^2 \leq C\,(\bE(T,\pi_0,0,1)+\bA^2) \qquad \mbox{for every $\pi_0 \in \mathscr{P}(0)$}\,.
\end{equation}
\end{lemma}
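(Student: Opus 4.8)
\textbf{Proof proposal for Lemma \ref{Linfty-L2}.}

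The plan is to deduce the estimate from Allard's classical $L^\infty$–$L^2$ bound for stationary varifolds (or more precisely, varifolds with bounded generalized mean curvature), applied to the integral varifold $\mathbf{v}(T)$ associated to $T$. First I would recall the structural facts we are allowed to use: since $T$ is area minimizing $\modp$ in $\Sigma\cap\Omega$, the varifold $\mathbf{v}(T)$ has generalized mean curvature $\vec{H}_T$ which coincides with the mean curvature of $\Sigma$ restricted to $\spt(T)$; in particular $\|\vec H_T\|_\infty \le C\,\bA$ for a dimensional constant $C$. Moreover, away from $\spt^p(\partial T)$, the support of $\mathbf v(T)$ carries no boundary, so $\mathbf v(T)$ is an integral varifold with locally bounded first variation in $\bB_1$, and the density is $\ge 1$ on $\spt^p(T)$ while the mass ratio hypothesis $\|T\|(\bB_1) < (Q+\tfrac12)\omega_m$ gives a uniform upper mass bound. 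These are exactly the hypotheses under which Allard's tilt-excess/height estimate produces a bound of the form
\[
\sup_{q\in\spt(\mathbf v(T))\cap\bB_{15/16}} |\p_{\pi_0}^\perp(q)|^2 \;\le\; C\Big(\bE(T,\pi_0,0,1) + \|\vec H_T\|_\infty^2\Big)\,,
\]
valid for any fixed reference plane $\pi_0$, where I have used the fact that $0\in\spt(T)$ (which follows from $0\in\Sigma$ together with the nontriviality built into $\mathscr P(0)$ being defined; if $0\notin\spt(T)$ the statement is even easier by translating) so that the height is measured from a plane through a point of the current. Substituting $\|\vec H_T\|_\infty \le C\bA$ then yields the claim.

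The key steps, in order, would be: (1) verify that $\mathbf v(T)$ restricted to $\bB_1$ is an integral varifold with no generalized boundary, using $\bB_1\cap\spt^p(\partial T)=\emptyset$; (2) identify the generalized mean curvature of $\mathbf v(T)$ with (minus) the trace of the second fundamental form of $\Sigma$ along the approximate tangent planes of $T$, which is the standard computation of the first variation of mass under the minimizing-$\modp$ constraint (test vector fields tangent to $\Sigma$ give stationarity inside $\Sigma$, and the normal component of the first variation is governed by $A_\Sigma$), hence the bound $\|\vec H_T\|_\infty \le C\bA$; (3) check that the hypotheses for Allard's $L^\infty$–$L^2$ estimate are met — lower density bound $\Theta\ge 1$ on the support, upper mass ratio bound from $\|T\|(\bB_1)<(Q+\tfrac12)\omega_m$, and the smallness that Allard's theorem requires can be arranged because the statement as phrased is vacuous unless $\bE(T,\pi_0,0,1)+\bA^2$ is small (if it is not small, the estimate holds trivially with a large $C$ since the left-hand side is bounded by a dimensional constant on $\bB_{15/16}\subset\bB_1$); (4) record the estimate and replace $\|\vec H_T\|_\infty^2$ by $\bA^2$.

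The main obstacle I expect is step (2) together with the bookkeeping of where the estimate is actually being applied: one must be careful that Allard's height bound as usually stated is for stationary varifolds, so invoking the version with bounded mean curvature requires either citing the appropriate generalization (e.g. as in \cite{Allard72} or the monotonicity-with-mean-curvature discussion in \cite[Appendix A]{DLS_Lp}) or reducing to the stationary case by the standard device of viewing $\mathbf v(T)$ as stationary in the ambient $\Sigma$ and then transferring the estimate to $\R^{m+n}$ at the cost of the curvature term. A secondary subtlety is the radius $15/16$: Allard's theorem gives the bound on a slightly smaller concentric ball, and one needs the mass hypothesis on all of $\bB_1$ to have room for the covering/iteration in Allard's proof; since we only claim the estimate on $\bB_{15/16}$ this is not a real difficulty, but it should be stated correctly. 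Finally, one should note that the dichotomy "either the right-hand side is small and Allard applies, or it is not small and the bound is trivial" is what makes the clean statement (with no explicit smallness hypothesis) legitimate; this is a routine but necessary remark.
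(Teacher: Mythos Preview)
Your proposal is correct and follows essentially the same approach as the paper: the paper simply cites \cite[Lemma 1.7]{Spolaor} and \cite[Theorem (6)]{Allard72}, noting that the argument only uses that $T$ induces a varifold in $\R^{m+n}$ with generalized mean curvature bounded by $\bA$, which is exactly the structure you describe. Your outline is in fact more detailed than what the paper provides, but the underlying idea (Allard's height bound applied to the varifold of $T$, with the $\bA^2$ term coming from $\|\vec H_T\|_\infty^2$) is identical.
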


A proof can be found for instance in \cite[Lemma 1.7]{Spolaor}, and is based on an argument of Allard (see \cite[Theorem (6)]{Allard72}). Note that the argument in \cite[Lemma 1.7]{Spolaor} just uses the fact that $T$ induces a varifold in $\mathbb R^{m+n}$ with generalized mean curvature bounded by $\bA$.


\section{Excess decay in the two regimes and the proof of Theorem \ref{t:main-with-decay}}




For the rest of the paper we will mostly work under the following assumption:

\begin{ipotesi}\label{ass:everywhere}
We let $T$, $\Sigma$ be as in Definition \ref{def:am_modp} with $\Omega = \bB_1(0)$, $\bar n = 1$, and $p=2Q$, and let $T$ be a representative $\modp$. Moreover we assume $\bA \leq 1$, 
\begin{equation}\label{e:ass_density}
    \Theta_T (0) \geq Q\, , \qquad \bB_1(0)\cap \spt^p (\partial T ) = \emptyset\, ,\qquad \mbox{and}\qquad 
    \|T\| (\bB_1 (0)) \leq \left(Q+\textstyle{\frac{3}{4}}\right) \omega_m\, .
\end{equation}
\end{ipotesi}

Our main Theorem \ref{t:main-with-decay} will then be proved by showing two suitable decay propositions in two different regimes based on the value of the ratio $\bar\bE^{-1} \bbE$. 

\begin{proposition}\label{p:decay-1}
For every $p=2Q$, $m$, $n$, and any fixed $\delta_1>0$ there are $\frac{1}{2}\geq r_1 = r_1 (\delta_1, p,m,n)>0$ and $\varepsilon_1 = \varepsilon_1 (\delta_1,p,m,n)>0$ with the following property. Assume that Assumption \ref{ass:everywhere} holds and that in addition  \begin{equation}\label{e:ex_trapped} 
    \bA^2 \leq \varepsilon_1 \bar \bE (T, 0,1) \leq \varepsilon_1^2\,,
\end{equation}
then 
\begin{equation}\label{e:decay-1}
\mathbb{E} (T, 0, r_1) \leq \delta_1 \bar \bE (T, 0, 1)\, . 
\end{equation}
\end{proposition}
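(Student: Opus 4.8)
The proposition is a decay statement in the regime where the flat excess $\bar\bE(T,0,1)$ dominates the curvature term $\bA^2$. The natural strategy is a \emph{compactness / blow-up argument} à la De Giorgi--Almgren. Suppose the statement fails: then there is $\delta_1 > 0$ and a sequence of currents $T_k$ satisfying Assumption \ref{ass:everywhere}, with $\bA_k^2 \leq \tfrac1k \bar\bE_k$ (where $\bar\bE_k := \bar\bE(T_k,0,1) \to 0$, since $\bar\bE_k \leq \varepsilon_1^2$ forces $\bar\bE_k\to 0$ as $\varepsilon_1\to 0$), but $\mathbb{E}(T_k,0,r) > \delta_1 \bar\bE_k$ for all $r \leq \tfrac12$ (or more precisely for the candidate radius; the failure is really a negation of an existential, so one has to be careful to set it up so that $r_1$ can be chosen after extracting the limit). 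Let $\pi_k := \pi_{0,1}$ be an optimizing plane for $T_k$ in $\bB_1$, and consider the normalized graphs: by Lemma \ref{Linfty-L2} (Allard's height bound) together with the minimizing property and the a priori mass bound, away from a small neighborhood of a spine $T_k$ is, up to the excess scale, a union of multiplicity-one Lipschitz graphs over $\pi_k$; rescaling the vertical direction by $\bar\bE_k^{-1/2}$ produces, in the limit, a collection of harmonic-type functions (classical $\mathrm{Dir}$-minimizing, or harmonic sheets) on $\pi_\infty = \lim \pi_k$, possibly with a jump structure along an $(m-1)$-plane.

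\textbf{Key steps, in order.} (1) \emph{Set-up of the contradiction sequence} and reduction to $\bA_k \to 0$ faster than $\bar\bE_k^{1/2}$, so that in all first-variation identities the mean-curvature error terms (bounded by $\bA_k$, or better $\bA_k^2$ via the almost-tangential trick recalled in the introduction) are $o(\bar\bE_k^{1/2})$, hence negligible after the blow-up rescaling. (2) \emph{Compactness}: extract a limit of the rescaled currents / graphs; use the height bound (Lemma \ref{Linfty-L2}) to get $L^\infty$ control, the minimality to get $W^{1,2}$ compactness of the graphical parts, and a Lipschitz approximation (a De Giorgi--type Lipschitz approximation for minimizing hypercurrents mod $p$, available in the regime of small excess and small $\bA$) to pass to the limit. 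The limit object is a $\mathrm{Dir}$-minimizing function, or more precisely, since the density is $Q$ and the structure mod $2Q$ allows a spine, a limit which splits as $Q$ harmonic sheets (the key point being that in this first regime $\bbE/\bar\bE$ is not forced to be small, so the limit could \emph{a priori} carry an open-book spine; but minimality will force the limit to be a single plane counted with multiplicity $Q$, because any genuine book would have strictly positive one-sided excess $\bE(\bS,T,\cdot)$ in the limit, contradicting that the blow-up minimizes the relevant energy — this is where the $\mathbb{E}$, not just $\bE$, enters). (3) \emph{Decay for the limit}: a harmonic function (resp. $Q$ harmonic sheets over a common plane) satisfies the standard interior estimate $\int_{B_r} |u - \ell_r|^2 \leq C r^{m+2} \int_{B_1} |u|^2$ for the best linear (resp. affine multi-valued) approximation $\ell_r$; choose $r_1$ so small that $C r_1^{2} \cdot(\text{const}) < \tfrac12 \delta_1$ for the relevant constant. (4) \emph{Contradiction}: the decay for the limit, combined with the strong convergence from step (2) and the fact that both one-sided excesses $\bE(T_k,\cdot)$ and $\bE(\cdot,T_k)$ pass to the limit, contradicts $\mathbb{E}(T_k,0,r_1) > \delta_1\bar\bE_k$ for $k$ large.

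\textbf{Main obstacle.} The delicate point is step (2)–(3): controlling the limit object and, above all, ruling out that the blow-up is a nontrivial open book. Unlike the classical single-valued or integral-current blow-up, here the limit lives in $\mathscr{A}_Q$ (or is a configuration of half-sheets over a spine), and one must show that $\mathrm{Dir}$-minimality forces the spine to be ``fake'' — i.e., the limit is genuinely $Q$ copies of one affine plane. This uses that $\bE(\bS,T_k,0,1)$ is also controlled (so $\bbE_k \sim \bar\bE_k$, i.e. the one-sided excess of the book from $T$ is comparable to the flat excess), and that the minimality mod $p$ propagates to a variational inequality for the limit which a genuine book cannot satisfy with finite energy. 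A secondary but real technical hurdle is making the Lipschitz approximation and the passage to the limit work with the \emph{optimal} $\bA^2$ (rather than $\bA$) dependence in all the first-variation error terms; by hypothesis \eqref{e:ex_trapped} we have $\bA^2 \leq \varepsilon_1\bar\bE$, which is exactly what makes these errors subordinate to the blow-up scale, but one must be careful that the almost-tangential test vector fields used in the monotonicity and tilt-excess estimates are constructed so that $\vec H_{T_k}\cdot X = O(\bA_k^2)$ pointwise, as indicated in the introduction.
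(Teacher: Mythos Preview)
Your overall blow-up strategy (contradiction sequence, Lipschitz approximation, rescaling by $\bar\bE_k^{-1/2}$, compactness, limit is a Dir-minimizing $\mathscr{A}_Q(\mathbb R)$-valued map) matches the paper. The gap is in what you do \emph{after} extracting the limit.

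You identify as the main obstacle ``ruling out that the blow-up is a nontrivial open book'' and try to force the limit to be $Q$ copies of a single affine map. This is both unnecessary and, in general, false: nothing in the hypotheses of the proposition prevents $\bbE_k/\bar\bE_k$ from staying bounded away from zero (that ratio being small is the \emph{other} regime, Proposition~\ref{p:decay-2}), so the blow-up $v$ can perfectly well be a genuine Dir-minimizer with a spine. Your appeal to ``$\bE(\bS,T_k,0,1)$ is also controlled, so $\bbE_k\sim\bar\bE_k$'' is not justified by the assumptions.

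The point you are missing is that the quantity to be decayed is $\mathbb E$, the double-sided excess \emph{over open books}, not the planar excess. Hence you do not need the limit to be flat; you only need it to be close, at small scales, to the graph of a $1$-homogeneous Dir-minimizer---whose graph \emph{is} an open book. This is exactly the content of the paper's Lemma~\ref{l:special-decay}: for a Dir-minimizing $u\in W^{1,2}(B_1,\mathscr{A}_Q(\mathbb R))$ with $u(0)=Q\a{0}$ one finds a $1$-homogeneous Dir-minimizer $\bar u$ with
\[
r^{-(m+2)}\int_{B_r}\mathcal G_s(u,\bar u)^2\leq \delta\int_{B_1}|Du|^2\qquad\forall r\leq \bar r(\delta).
\]
This is not ``the standard interior estimate for harmonic functions''; it requires the frequency-function machinery for special $Q$-valued Dir-minimizers (integer frequency, the dichotomy $I\geq 2$ versus $I=1$, and in the latter case a $C^{1,\beta}$-type expansion). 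Once you have it, set $r_1=\bar r(\delta_1/4Q)$, let $\bS_k$ be the book supporting the graph of $\bar\bE_k^{1/2}\bar u$, and read off $\mathbb E(T_k,0,r_1)\leq \mathbb E(T_k,\bS_k,0,r_1)\leq \delta_1\bar\bE_k$ from strong $L^2$ convergence of the approximations, contradicting the failure of \eqref{e:decay-1}.
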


\begin{proposition}\label{p:decay-2}
For every $p=2Q$, $m$, $n$, there are $0 < r_2\leq \frac{1}{2}$, $\varepsilon_2> 0$, and $0 < \eta_2 \leq 1$ with the following property. Assume that Assumption \ref{ass:everywhere} holds and that in addition
\begin{equation}\label{e:ex_free}
\mathbb{E} (T, 0,1) \leq \eta_2 \bar \bE (T, 0, 1) \quad\mbox{and}\quad \bA^2 \leq \varepsilon_2 \mathbb{E} (T, 0, 1)\leq \varepsilon_2 \bar\bE (T,0,1) \leq \varepsilon_2^2
\, .
\end{equation}
Then 
\begin{equation}\label{e:decay-2}
\mathbb{E} (T, 0, r_2) \leq \frac{1}{2} \mathbb{E} (T, 0, 1)\, .
\end{equation}
\end{proposition}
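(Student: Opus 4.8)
The plan is to argue by contradiction and compactness, exploiting that in this regime the double-sided excess $\mathbb{E}(T,0,1)$ is the small quantity controlling everything, and that the curvature term $\bA^2$ is infinitesimal with respect to it. Suppose the statement fails: then for a fixed $\eta_2$ (to be chosen) and for every $k$ there is a current $T_k$ satisfying Assumption \ref{ass:everywhere}, with $\bA_k^2 \leq \frac1k \mathbb{E}(T_k,0,1)$, $\mathbb{E}(T_k,0,1)\leq \eta_2\,\bar\bE(T_k,0,1)$, $\varepsilon_k^2:=\bar\bE(T_k,0,1)\to 0$, yet $\mathbb{E}(T_k,0,r)>\frac12\,\mathbb{E}(T_k,0,1)$ for all $r\le \frac12$ — in particular for the specific $r_2$ we will eventually pick. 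Let $\bS_k\in \mathscr{B}(0)$ be an optimizing open book for $\mathbb{E}(T_k,0,1)$, with spine $V_k$ and pages carrying the multiplicities coming from Remark \ref{r:book-to-cones}, and set $E_k:=\mathbb{E}(T_k,0,1)\to 0$. We normalize: rotating, we may assume $T_0\Sigma_k\to \R^m$ (using $\bA_k\to 0$) and $V_k\to V$ for some fixed $(m-1)$-plane $V$, and by passing to a subsequence the combinatorial type of $\bS_k$ (number of pages $N$, multiplicities $\kappa_i$, up to relabeling) stabilizes; so $\bS_k\to \bS_\infty$, a fixed open book, and $\bC_k:=\sum_i\kappa_i\a{\bH_i^k}\to\bC_\infty:=\sum_i\kappa_i\a{\bH_i^\infty}$, an area-minimizing (in $\R^{m+1}$) cone mod $p$ with spine $V$.

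The heart of the argument is the linearization: I would define the rescaled (blow-up) maps $u_k$ measuring the deviation of $T_k$ from $\bC_k$, normalized in $L^2$ by $E_k^{1/2}$, over the annular region $\bB_{1/2}\setminus B_{1/8}(V_k)$ away from the spine where $T_k$ is, by Allard's theorem together with the height bound Lemma \ref{Linfty-L2}, a smooth multi-valued graph with small gradient over the pages of $\bS_k$ (here the hypothesis $\|T_k\|(\bB_1)<(Q+\frac12)\omega_m$ and $\bE(T_k,\bS_k,0,1)\le E_k\to 0$ are used). On each page this produces a (single-valued, since the multiplicities split the sheets) function, and the key point is that the mean-curvature error in the first-variation identity for $T_k$ is $O(\bA_k)=o(E_k^{1/2})$ by the normalization $\bA_k^2\le \frac1k E_k$, so in the limit $u_k\to u_\infty$ where $u_\infty$ is a \emph{harmonic} function on each half-plane page of $\bS_\infty$, with bounded Dirichlet energy, satisfying the natural matching/transmission condition across the spine $V$ dictated by the fact that $T_k$ is a cycle mod $p$ near $V$ (the pieces must glue so that the total boundary mod $p$ vanishes). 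One also has to control the behavior near $V$: here I would invoke the decay of $\bE(\bS_k,T_k,0,1)$ side of the double excess together with a monotonicity/no-mass-drop argument to rule out mass concentrating on the spine in the limit, so that $\bC_\infty$ is genuinely the limit cone and $\Theta_{\bC_\infty}$-matching forces the density hypothesis $\Theta_{T_k}(0)\ge Q$ to pass to the limit.

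The decisive step is then the linear decay estimate for $u_\infty$: harmonic functions on the union of half-planes meeting along $V$ with the transmission conditions above — equivalently, after folding, harmonic (or appropriately reflected-harmonic) functions on $\R^m$ with controlled homogeneity — decay at a definite rate $r^{1+\beta}$ in $L^2$ towards their value/first-order part, because the relevant homogeneities are bounded below by $1$ away from the trivial ones, and the trivial (homogeneity $\le 1$) pieces correspond precisely to infinitesimal rotations/translations of $\bC_\infty$, which are absorbed by \emph{re-optimizing the open book and the plane} at scale $r$. Concretely: for $r$ small, $\mathbb{E}(T_k,0,r)\le C r^{1+2\beta} E_k + (\text{errors } o(E_k))$, contradicting $\mathbb{E}(T_k,0,r)>\frac12 E_k$ once $r=r_2$ is chosen with $C r_2^{1+2\beta}<\frac14$ and $k$ large. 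The parameter $\eta_2$ enters to guarantee that the one-sided excess $\bE(\bS_k,T_k,\cdot)$ is small enough compared to $\bar\bE$ that the open book $\bS_k$ is the ``right'' competitor and no splitting into a configuration with strictly more pages can lower the excess below the harmonic prediction.

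\textbf{Main obstacle.} The genuinely delicate point is the analysis \emph{at the spine} $V$: establishing that the blow-up $u_\infty$ really solves a transmission problem with the correct matching conditions (rather than the pieces decoupling), and that there is no loss of mass or density onto $V$ in the limit. This requires a careful combination of the mod-$p$ cycle structure, the minimality, and a quantitative exclusion of ``hidden'' boundary along $V_k$ — essentially the same mechanism as in the $\mathbb{E}$-side of Definition \ref{d:excess of excesses}(b) being part of the monotone quantity — and it is here that the even-modulus hypothesis $p=2Q$ and the density lower bound $\Theta_T(0)\ge Q$ are used in an essential way. A secondary technical nuisance, flagged already in the introduction, is making sure every first-variation error term is paired against a test field that is \emph{tangent to $\Sigma$ up to order $\bA$}, so that $\vec H_{T_k}\cdot X=O(\bA_k^2)=o(E_k)$ and not merely $O(\bA_k)$; keeping this bookkeeping straight through the construction of the competitor vector fields near $V_k$ is where most of the length of the actual proof will go.
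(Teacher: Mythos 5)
Your overall plan — blow-up by contradiction, show the linearization is harmonic with a trivial kernel accounted for by re-optimizing the book at scale $r_2$, then bootstrap the decay back to the excess — is the correct shape of the argument and matches what the paper does across Sections~\ref{s:Q-points}--\ref{s:final-step}. What you call the ``main obstacle'' is indeed where the entire proof lives, and the mechanism you sketch for it is not the right one.

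The matching across the spine is \emph{not} a consequence of the mod-$p$ cycle structure ``forcing the pieces to glue.'' In the paper it comes from a completely different source: the existence of density-$Q$ points of $T_k$ arbitrarily close to every $y\in V\cap\bB_{1/4}$ (the no-holes condition \eqref{eq.no_holes12} from the propagation Lemma~\ref{l:propagation}), combined with a Hardt--Simon type weighted estimate (Theorem~\ref{t:Hardt-Simon-main}) centered at such a point $q_0$, which forces both $|\mathbf{p}_{\pi_0}^\perp(q_0)|$ and $\beta_{\pi_0}(\bS)|\mathbf{p}_{V^\perp\cap\pi_0}(q_0)|$ to be of order $\bE^{1/2}$. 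These displacements of $Q$-points become the \emph{binding function} $\xi$ (Theorem~\ref{t:binding}): all $2Q$ sheets $\bar w^\pm_j$ must pass through the same shifted cone, which is a pointwise matching through one shared function, not a boundary cancellation. Without identifying this mechanism your linearization has no coupling at all, and the harmonic sheets decouple — the contradiction then fails because a configuration of decoupled half-plane harmonics has plenty of homogeneity-$1$ modes that are not rotations of a single cone.

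There is a second, specific difficulty that you do not address and that distinguishes this proposition from its analogue in \cite{DHMSS}: here the opening angle $\beta_{\pi_0}(\bS)$ is not bounded below, only comparable to $\bar\bE^{1/2}$ (Lemma~\ref{l:angle-bound}), so the horizontal component of the binding function is \emph{not} controlled by $\bE$ alone. The paper therefore tracks a vertical binding $\bar\xi_v$ and a horizontal one $\bar\xi_o$ separately, and to show both extend harmonically across $V$ it needs \emph{two} independent averages — Simon's unweighted $\omega=\sum_j(\bar w^+_j+\bar w^-_j)$ and Wickramasekera's weighted $\varpi=\sum_j(\mu^+_j\bar w^+_j+\mu^-_j\bar w^-_j)$ — each satisfying a variational identity (Proposition~\ref{p:variational-averages}) obtained by testing the first variation against fields tangent to $\Sigma$ up to order $\bA$ so the curvature error is $O(\bA^2)$. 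Lemma~\ref{l:opened-and-bounded} ensures the resulting $2\times 2$ linear system in $(\bar\xi_v,\bar\xi_o)$ is invertible. Your proposal has no analogue of this ``two-average'' device, and a single reflected-harmonic extension would not separate the two components. Finally, the whole construction rests on a Whitney decomposition around the spine (Theorem~\ref{thm:graph_v1}, with good and bad cubes and a new book $\tilde\bS\subset\bS$), which is what gives both the graphical parametrization with $C^{1,1/2}_*$ weights and the strong (not merely weak) $L^2$ convergence \eqref{e:strong}--\eqref{e:reverse-control} needed to transfer the decay of $\bar w$ back to $\bbE(T_k,0,r_2)$; a direct Allard argument in an annulus $\bB_{1/2}\setminus B_{1/8}(V_k)$ at a single scale will not yield these.
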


Proposition \ref{p:decay-1} will be proved in Section \ref{s:decay-1}, whereas Sections \ref{s:Q-points} to \ref{s:final-step} will be devoted to the proof of Proposition \ref{p:decay-2}. In the rest of this section, we will show how Theorem \ref{t:main-with-decay} follows from the two decay propositions. First, we show the validity of a slightly modified version of Proposition \ref{p:decay-1}. 

\begin{corollary} \label{c:decay-1-improved}
For every $p=2Q$, $m$, $n$, and $\delta_1 > 0$ there are $\frac12 \geq r_1 = r_1(\delta_1, p,m,n) >0$ and $\varepsilon_3 = \varepsilon_3(\delta_1, p,m,n) > 0$ with the following property. If Assumption \ref{ass:everywhere} holds, and if furthermore \begin{equation}\label{e:ex_trapped_2} 
    \bA^2 \leq \varepsilon_3 \mathbb{E} (T, 0,\sfrac{1}{2}) \quad\mbox{and}\quad\bar \bE (T, 0,1) \leq \varepsilon_3\,, 
\end{equation} then \eqref{e:decay-1} holds true.
\end{corollary}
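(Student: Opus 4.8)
The plan is to deduce Corollary \ref{c:decay-1-improved} from Proposition \ref{p:decay-1} by a simple dichotomy argument on the size of $\bA^2$ relative to $\bar\bE(T,0,1)$. The point is that Proposition \ref{p:decay-1} already gives the desired decay under the hypothesis $\bA^2 \le \varepsilon_1 \bar\bE(T,0,1)$, so the only case to address is when $\bA^2$ is large compared to $\bar\bE(T,0,1)$ — i.e. the ``$\Sigma$-dominated'' regime — and in that case the new hypothesis $\bA^2 \le \varepsilon_3 \,\mathbb E(T,0,\sfrac12)$ together with $\bar\bE(T,0,1)\le\varepsilon_3$ must be leveraged to conclude differently.

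First I would fix $\delta_1>0$, let $r_1 = r_1(\delta_1,p,m,n)$ and $\varepsilon_1 = \varepsilon_1(\delta_1,p,m,n)$ be the constants from Proposition \ref{p:decay-1}, and set $\varepsilon_3 := c\,\varepsilon_1$ for a small geometric constant $c$ to be chosen (in particular $\varepsilon_3 \le \varepsilon_1^2$, so that \eqref{e:ex_trapped} is available whenever its first inequality holds). Given $T$ satisfying Assumption \ref{ass:everywhere} and \eqref{e:ex_trapped_2}, split into two cases. \textbf{Case 1:} $\bA^2 \le \varepsilon_1 \bar\bE(T,0,1)$. Since $\bar\bE(T,0,1)\le\varepsilon_3\le\varepsilon_1^2$ (shrinking $\varepsilon_3$), hypothesis \eqref{e:ex_trapped} of Proposition \ref{p:decay-1} is satisfied, and \eqref{e:decay-1} follows directly. \textbf{Case 2:} $\bA^2 > \varepsilon_1 \bar\bE(T,0,1)$. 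Then combining with the new hypothesis $\bA^2 \le \varepsilon_3\,\mathbb E(T,0,\sfrac12) \le \varepsilon_3\,\mathbb E(T,0,1)$ one gets $\bar\bE(T,0,1) < \varepsilon_1^{-1}\varepsilon_3\,\mathbb E(T,0,\sfrac12)$. But $\mathbb E$ at a smaller scale is controlled by $\mathbb E$ at scale $1$ (up to the fixed multiplicative constant $2^{m+2}$ coming from the $r^{-(m+2)}$ normalization in Definition \ref{d:excess of excesses} together with monotonicity of the domains of integration, modulo the small $\bA^2$-error from the height bound Lemma \ref{Linfty-L2}). Since trivially $\bar\bE(T,0,1)\ge \mathbb E(T,0,1)\cdot 0$ is useless, the right way is: in Case 2 the ratio $\bar\bE(T,0,1)/\bA^2$ is bounded below by $\varepsilon_1$, so one also has $\bar\bE(T,0,1)\le C\varepsilon_3 \bA^2 \cdot (\text{small})$... — more cleanly, in Case 2 we will instead directly invoke the regime hypotheses to see that $\bar\bE(T,0,1)$ and $\bA^2$ are \emph{both} tiny multiples of $\mathbb E(T,0,\sfrac12)$, and then re-run Proposition \ref{p:decay-1} \emph{rescaled at scale $\sfrac12$} (i.e. applied to $T_{0,\sfrac12}$), whose hypothesis \eqref{e:ex_trapped} reads $(\tfrac12)^2\bA^2 \le \varepsilon_1 \bar\bE(T,0,\sfrac12) \le \varepsilon_1^2$: the second inequality holds since $\bar\bE(T,0,\sfrac12)\le 2^{m+2}\bar\bE(T,0,1)\le 2^{m+2}\varepsilon_3$, and the first holds because $\bar\bE(T,0,\sfrac12)\ge c' \mathbb E(T,0,\sfrac12) \ge c'\varepsilon_3^{-1}\bA^2$, up to absorbing the usual $\bA^2$-error via Lemma \ref{Linfty-L2}. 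This yields $\mathbb E(T,0,\tfrac{r_1}{2}) \le \delta_1 \bar\bE(T,0,\sfrac12) \le 2^{m+2}\delta_1 \bar\bE(T,0,1)$, and after relabelling $r_1$ (dividing by $2$) and $\delta_1$ (dividing by $2^{m+2}$) — which only changes the constants — we recover \eqref{e:decay-1}.

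The main technical point — and the only genuine subtlety — is controlling the comparison between $\bar\bE(T,0,\sfrac12)$ (or $\mathbb E(T,0,\sfrac12)$) and $\bar\bE(T,0,1)$, and in particular the lower bound $\bar\bE(T,0,\sfrac12) \ge c'\,\mathbb E(T,0,\sfrac12)$ with $c'$ independent of the small parameters. Both sides of Definition \ref{d:excess of excesses} behave reasonably under restriction of balls: the one-sided excess $\bE(T,\bS,q,r)$ is monotone in $r$ up to the normalization factor, and the reverse one-sided excess $\bE(\bS,T,q,r)$ requires comparing the nearest-point distance from points of the book at two scales, which is standard once one has Allard's height bound (Lemma \ref{Linfty-L2}) to keep $\spt^p(T)$ within a controlled tube. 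The clean way to organize this is: (i) pick the optimal book $\bS$ at scale $1$; (ii) bound $\mathbb E(T,\bS,0,\sfrac12)$ by $C\,\mathbb E(T,\bS,0,1) = C\,\mathbb E(T,0,1)$; (iii) by definition $\mathbb E(T,0,\sfrac12) \le \mathbb E(T,\bS,0,\sfrac12)$; and then use $\bar\bE \ge \tfrac1{C}\mathbb E$ type estimates, which follow from the basic geometric fact that a plane is a (degenerate) open book. I expect this step to be short but to require care about which $\bA^2$-error terms appear and where. Everything else is bookkeeping with the constants.

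Finally I would remark that the relabelling in Case 2 is harmless because Proposition \ref{p:decay-1} produces, for each target $\delta_1$, a fixed pair $(r_1,\varepsilon_1)$; applying it with target $2^{-(m+2)}\delta_1$ gives a (possibly different) fixed radius $r_1'$, and then setting the radius in the corollary to be $\min\{r_1, r_1'/2\}$ and $\varepsilon_3$ small enough relative to the corresponding $\varepsilon_1$'s closes the argument uniformly over both cases. The hard part, to reiterate, is not the dichotomy — which is essentially formal — but rather verifying the scale-change estimates for $\mathbb E$ and $\bar\bE$ between radii $\sfrac12$ and $1$ with constants that do not degenerate; this is where Lemma \ref{Linfty-L2} is used.
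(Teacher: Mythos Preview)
Your approach is correct in essence but substantially more roundabout than the paper's, and the dichotomy is entirely superfluous.

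The paper's proof is a one-step reduction: it shows directly that \eqref{e:ex_trapped_2} implies \eqref{e:ex_trapped}, so that Proposition~\ref{p:decay-1} applies at scale~$1$ with no rescaling and no adjustment of $r_1$ or $\delta_1$. The single ingredient is the inequality
\[
\mathbb{E}(T,0,\tfrac12)\;\le\; C\big(\bar\bE(T,0,1)+\bA^2\big),
\]
proved by taking $\pi_0$ optimal for $\bar\bE(T,0,1)$, viewing $\pi_0$ as a book, and bounding the reverse piece $\bE(\pi_0,T,0,\tfrac12)$ via the height bound (Lemma~\ref{Linfty-L2}) together with the surjectivity of $\mathbf{p}_{\pi_0}$ on $\spt(T)$. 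Feeding this into $\bA^2\le\varepsilon_3\,\mathbb{E}(T,0,\tfrac12)$ and absorbing the $\bA^2$ on the right yields $\bA^2\le C\varepsilon_3\,\bar\bE(T,0,1)$; for $\varepsilon_3$ small this is \eqref{e:ex_trapped}, and Proposition~\ref{p:decay-1} gives \eqref{e:decay-1} immediately.

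Your ``Case~2'' argument in fact runs this same reduction---you establish the equivalent bound $\mathbb{E}(T,0,\tfrac12)\le C(\bar\bE(T,0,\tfrac12)+\bA^2)$ and use it to verify the hypothesis of Proposition~\ref{p:decay-1} for $T_{0,1/2}$---but notice that this works regardless of whether $\bA^2\le\varepsilon_1\bar\bE(T,0,1)$ or not, so your Case~1/Case~2 split serves no purpose. Moreover, once you have the displayed inequality above (with $\bar\bE(T,0,1)$ on the right, which you also know since $\bar\bE(T,0,\tfrac12)\le 2^{m+2}\bar\bE(T,0,1)$), you get \eqref{e:ex_trapped} at scale~$1$ directly, making the rescaling and the relabelling of constants unnecessary as well. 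Finally, a word of caution on your concluding ``clean way to organize this'': your step~(ii), bounding $\mathbb{E}(T,\bS,0,\tfrac12)$ by $C\,\mathbb{E}(T,\bS,0,1)$ for the \emph{optimal book} $\bS$, is not as immediate as you suggest (controlling the reverse excess of a genuine book across scales is the content of \eqref{e:almost-monotonicity-book}, proved much later); fortunately you only need it for a \emph{plane}, where the height-bound argument suffices---and this is exactly what the paper does.
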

\begin{proof}
We show that \eqref{e:ex_trapped_2} implies \eqref{e:ex_trapped} when $\varepsilon_3$ is chosen sufficiently small. To this aim, it is sufficient to show that there exists a geometric constant $C > 0$ such that
\begin{equation} \label{e:2sided_vs_flat}
    \mathbb{E}(T,0,\sfrac{1}{2}) \leq C \left( \bar \bE(T,0,1) + \bA^2 \right)\,.
\end{equation}
To prove \eqref{e:2sided_vs_flat}, we let $\pi_0 \in \mathscr{P}(0) \subset \mathscr{B} (0)$ be a plane realizing $\bar\bE(T,0,1)$, so that
\begin{equation} \label{e:planes-are-books}
\mathbb{E}(T,0,\sfrac{1}{2}) \leq \mathbb{E}(T,\pi_0,0,\sfrac{1}{2})\,,
\end{equation}
where in the calculation of $\mathbb{E}(T,\pi_0,0,\sfrac{1}{2})$ the subspace $V(\pi_0)$ can be chosen arbitrarily. Next, if $\varepsilon_3$ is chosen sufficiently small then 
\[
((\mathbf{p}_{\pi_0})_\sharp (T\res\bB_1))\res \bB_{3/4} = Q \a{\pi_0\cap \bB_{3/4}} \quad \modp\, .
\]
In particular, by Lemma \ref{Linfty-L2} we easily see that for every $z\in \pi_0 \cap \bB_{3/4}$ there is a point $q\in \spt (T)$ such that $\mathbf{p}_{\pi_0} (q) = z$ and $|q-z|^2\leq C (\bar \bE (T,0,1) + \bA^2)$. This implies easily that 
\begin{equation}\label{e:1/2}
\bE (\pi_0, T, 0, \sfrac12) \leq C (\bar \bE (T,0,1) + \bA^2)\, .
\end{equation}
Since
\[
\bE(T,\pi_0,0,\sfrac12) \leq 2^{m+2} \, \bE(T,\pi_0,0,1)\,,
\]
\eqref{e:2sided_vs_flat} follows immediately from \eqref{e:planes-are-books} and \eqref{e:1/2}.
\end{proof}

Finally, before coming to the proof of Theorem \ref{t:main-with-decay} we come to another important ingredient, which is in fact an outcome of the analysis leading to Propositions \ref{p:decay-1} and \ref{p:decay-2}.

\begin{lemma}\label{l:compare-books}
For every $p=2Q,m,n$, and $\gamma>0$ there is $C(p,m,n, \gamma) >0$ with the following property. If Assumption \ref{ass:everywhere} holds, if $\bS$ and $\bS'$ are open books in $\mathscr{B} (0)$ realizing $\mathbb{E}(T,0,1)$ and $\mathbb{E}(T,0,r)$ respectively, and $r\geq \gamma$, then
\begin{equation}\label{e:compare-books}
d_{\mathcal{H}} (\bS\cap \bB_1, \bS'\cap \bB_1)^2 
\leq C \left(\bA^2 + \bbE(T, \bS, 0, 1) + \bbE (T, \bS', 0, r)\right)\, ,
\end{equation}
where $d_\mathcal{H}$ denotes Hausdorff distance. Moreover, there is a constant $C(p,m,n)$ such that
\begin{equation}\label{e:almost-monotonicity-book}
\bbE (T, \bS, 0, \textstyle{\frac{1}{2}}) \leq C \,
(\bbE (T, \bS, 0, 1)+\bA^2)\, .
\end{equation}
\end{lemma}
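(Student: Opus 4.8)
The key point is that both inequalities are consequences of the regularity/compactness machinery already developed for Propositions \ref{p:decay-1} and \ref{p:decay-2}, together with the $L^\infty$–$L^2$ estimate of Lemma \ref{Linfty-L2}. Let me address \eqref{e:almost-monotonicity-book} first, since it is the more structural of the two. Write $\bbE(T,\bS,0,1) = \beta$ and $\bA^2 \le \varepsilon$ for whatever threshold $\varepsilon$ we end up needing. The plan is to argue that a book $\bS$ realizing (or merely nearly realizing) the double-sided excess in $\bB_1$ is, up to an error controlled by $\beta + \bA^2$, also a competitor in $\bB_{1/2}$; i.e. one should show directly that
\[
\bbE(T, \bS, 0, \textstyle\frac12) \le C\,(\bbE(T,\bS,0,1) + \bA^2)\,,
\]
by estimating the two one-sided pieces separately. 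For the piece $\bE(T,\bS,0,\frac12)$ this is immediate from the scaling: $\bE(T,\bS,0,\frac12) \le 2^{m+2}\,\bE(T,\bS,0,1)$ since integrating over the smaller ball only decreases the integral and the prefactor scales by $2^{m+2}$. The genuinely nontrivial piece is the reverse one-sided excess $\bE(\bS,T,0,\frac12)$, which integrates $\dist^2(q',\spt T)$ over $\bS$ far from the spine: here one uses Lemma \ref{Linfty-L2} (applied on balls centered at points of $\bS \cap \bB_{1/2}$, away from $V(\bS)$, where $T$ is, modulo $p$, a $Q$-sheeted graph over a page) together with the closeness of $\bS$ to $T$ in $\bB_1$, exactly in the spirit of the derivation of \eqref{e:1/2} in Corollary \ref{c:decay-1-improved}. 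That gives $\bE(\bS,T,0,\frac12) \le C(\bbE(T,\bS,0,1) + \bA^2)$ and hence \eqref{e:almost-monotonicity-book}.

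For \eqref{e:compare-books}, the idea is that two books $\bS, \bS'$ which are both close to the \emph{same} current $T$ — one in $\bB_1$ and one in $\bB_r$ with $r \ge \gamma$ bounded below — must be close to each other, with the Hausdorff distance controlled by the sum of the two double-sided excesses plus $\bA^2$. The cleanest route is: from $\bbE(T,\bS',0,r) \le \bbE(T,\bS',0,1)/r^{m+2} \cdot (\text{fixed factor})$... no — rather, observe that on $\bB_r \supset \bB_\gamma$ the book $\bS'$ is within $C(\bbE(T,\bS',0,r))^{1/2}r$ of $\spt T$ (in the relevant annular region, via the definition of $\bE(\bS',T,0,r)$) and $T$ is within $C(\bbE(T,\bS',0,r))^{1/2}r$ of $\bS'$; the same with $\bS$ in $\bB_1$. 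Since $r$ is bounded below, a triangle-inequality argument in the fixed ball $\bB_\gamma$ — comparing $\bS \cap \bB_\gamma$ and $\bS' \cap \bB_\gamma$ both to $\spt T \cap \bB_\gamma$, and then using that two open books that agree to order $\eta$ on a ball of fixed size have Hausdorff distance $O(\eta)$ on $\bB_1$ because they are cones — yields \eqref{e:compare-books}. The factor $\bA^2$ (rather than $\bA$) enters precisely through Lemma \ref{Linfty-L2}, whose right-hand side is $\bE + \bA^2$; there is no place where a linear $\bA$ appears.

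The technical care needed is twofold. First, one must ensure that the thresholds in Assumption \ref{ass:everywhere} suffice to guarantee that, near each page of $\bS$ (resp.\ $\bS'$) away from the spine, $T$ decomposes mod $p$ into graph-like sheets so that the $L^\infty$ bound of Lemma \ref{Linfty-L2} is applicable; this is where the phrase ``an outcome of the analysis leading to Propositions \ref{p:decay-1} and \ref{p:decay-2}'' is used — the relevant graphicality and decomposition statements are established en route to those propositions, and can be quoted. Second, the passage from ``$\bS$ and $\bS'$ are $\eta$-close on $\bB_\gamma$'' to ``$d_{\mathcal H}(\bS \cap \bB_1, \bS' \cap \bB_1)^2 \le C\eta$'' requires a short compactness/continuity lemma for the space of open books in $\mathscr B(0)$ — this is elementary (open books are finite unions of half-planes through a common spine, so the configuration space is a compact manifold and the map to closed subsets of $\bB_1$ is bi-Lipschitz on the relevant stratum), but must be stated.

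\textbf{Main obstacle.} The hardest part is controlling the reverse one-sided excess $\bE(\bS,T,0,\frac12)$ and $\bE(\bS',T,0,r)$ — i.e.\ going from ``$T$ is close to $\bS$'' to ``$\bS$ is close to $\spt T$'' on a smaller scale. Unlike the forward excess, this does not follow by monotonicity of integration and genuinely needs the structure of $T$ as a (mod $p$) multi-sheeted graph over the pages, i.e.\ the full-measure covering of $\bS$ by projections of $\spt T$. Making this precise with constants depending only on $p, m, n$ (and, crucially, with the $\bA$-dependence quadratic rather than linear) is exactly where one must invoke Lemma \ref{Linfty-L2} in the sharpened form stated above, and where the bookkeeping is most delicate.
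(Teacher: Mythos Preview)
Your plan correctly identifies the reverse excess $\bE(\bS,T,0,\tfrac12)$ as the crux and the triangle-through-$\spt T$ mechanism for \eqref{e:compare-books}. But there is a genuine gap in the graphicality step. You assume that under Assumption \ref{ass:everywhere} and smallness of $\bbE(T,\bS,0,1)+\bA^2$, the current $T$ decomposes as a multi-sheeted graph over each page of $\bS$, with sheets at height $O((\bbE+\bA^2)^{1/2})$. The graphicality results proved en route to Propositions \ref{p:decay-1}--\ref{p:decay-2} (Theorem \ref{thm:graph_v1}, Corollary \ref{cor:reparametrization}) require the stronger Assumption \ref{ass:decay plane}, namely $\bbE\ll\bar\bE$. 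When the pages of $\bS$ are nearly coplanar this can fail, and Lemma \ref{Linfty-L2} applied in a ball centred on a page only yields height $O((\bar\bE+\bA^2)^{1/2})$, which may be much larger than $(\bbE+\bA^2)^{1/2}$. The argument you sketch ``in the spirit of \eqref{e:1/2}'' is a planar-excess bound and delivers exactly $\bar\bE+\bA^2$ on the right-hand side, not $\bbE+\bA^2$.

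The paper resolves this by a case split after first disposing of the trivial regime where the control quantity exceeds a threshold $\bar\alpha(\gamma)$. If Assumption \ref{ass:decay plane} holds, the paper does not work with $\bS$ directly but passes to the effective sub-book $\tilde\bS\subset\bS$ of Definition \ref{d:new-cone} and invokes \eqref{eq:tildeexcesvsexcess} of Corollary \ref{cor:comparrison excess}, which bounds $r^{m+2}\bbE(T,\tilde\bS,0,r)$ by $C(\bE(T,\bS,0,1)+\bA^2)$ for \emph{every} $r\le\tfrac12$; combined with $d_{\mathcal H}(\bS\cap\bB_1,\tilde\bS\cap\bB_1)^2\le C\,\bbE(T,\bS,0,1)$ this yields both conclusions. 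If instead Assumption \ref{ass:decay plane} fails because $\bar\bE\lesssim\bA^2+\bbE$, then both books lie within $O((\bbE+\bA^2)^{1/2})$ of the optimal plane $\pi_0$ and Lemma \ref{Linfty-L2} over $\pi_0$ suffices---this is the only regime your argument actually covers. Finally, if it fails because $\bar\bE$ is bounded below, then $\beta_{\max}(\bS)$ is bounded below and the machinery of \cite{DHMSS} (where the book is quantitatively far from planar) applies directly. The construction of $\tilde\bS$ and the estimate \eqref{eq:tildeexcesvsexcess}---the content of Section \ref{s:reparametrization}---are the substantial inputs your plan is missing.
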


Lemma \ref{l:compare-books} will be proved in Section \ref{s:reparametrization}.

\subsection{Proof of Theorem \texorpdfstring{\ref{t:main-with-decay}}{t:decay}} Without loss of generality, using the scaling and translation invariance of the problem we assume that $r=1$ and $q=0$. We fix therefore $\Sigma, T, \Omega= \bB_1$, and assume that $\spt^p (\partial T) \cap \bB_1=\emptyset$, $\|T\| (\bB_1)< (Q + \frac{1}{2}) \omega_m$ and fix a plane $\pi_0$ such that 
\begin{equation}\label{e:start-epsilon}
\bar \bE (T, 0, 1) + \bA^2 = \bE (T, \pi_0, 0, 1) + \bA^2 \leq \varepsilon\, .
\end{equation}
The choice of $\varepsilon$ will be subject to various smallness specifications along the argument. In fact we first notice that, by the classical monotonicity formula, if it is smaller than some geometric constant then $\|T\| (\bB_r) \leq (Q + \frac{3}{4}) \omega_m r^m$ for every $r\leq 1$. In particular, Assumption \ref{ass:everywhere} holds for $T_{0,r}$ and $\Sigma_{0,r}$ in place of $T$ and $\Sigma$, whenever $r\leq 1$.

Next, we fix $\varepsilon_2$, $\eta_2$, and $r_2$ as in Proposition \ref{p:decay-2}. We then specify $\delta_1 = \frac{\eta_2}{2}$, and fix correspondingly $r_1$ and $\varepsilon_3$ as in Corollary \ref{c:decay-1-improved}. For convenience we define $\bar\varepsilon := \min \{\varepsilon_2, \varepsilon_3,1\}$ and we next proceed to define inductively a family of radii $t_k$ indexed by a set $\mathcal{K}$ which is either the set of natural numbers or the subset of all natural numbers up to a maximum $k_{max}$. The procedure will also give a suitable estimate for $\bbE (T,0, t_k)$.

First of all we set $t_0:=\frac{1}{2}$, and notice that  
\begin{equation}\label{e:decay-start}
\bbE (T,0,{\textstyle{\frac{1}{2}}})
\leq \bbE (T,\pi_0,0, {\textstyle{\frac{1}{2}}})\leq C (\bar \bE (T,0,1) + \bA^2)\,,
\end{equation}
as in the proof of Corollary \ref{c:decay-1-improved}.
Assume next that $t_k$ has been chosen and consider the current $T_{0, t_k}$, the manifold $\Sigma_{0,t_k}$ and $\bA_k^2 := \|A_{\Sigma_{0,t_k}}\|_\infty^2 = t_k^2 \bA^2\leq \varepsilon t_k^2$. We then examine the following three conditions:
\begin{align}
\bar \bE (T_{0,t_k}, 0, 1) &\leq \bar \varepsilon\label{e:condizione-a}\\
\bA_k^2 &\leq \bar \varepsilon \min \{\bbE (T_{0,t_k}, 0, 1),
\bbE (T_{0,t_k}, 0, {\textstyle{\frac{1}{2}}})\}\label{e:condizione-b}\\
\bbE (T_{0,t_k}, 0, 1) &\leq \eta_2 \bar\bE (T_{0, t_k}, 0, 1)\label{e:condizione-c}\, .
\end{align}
\begin{itemize}
\item[(a)] If \eqref{e:condizione-a} fails we set $k_{max}=k$.
\item[(b)] If \eqref{e:condizione-a} holds but \eqref{e:condizione-b} fails we set $t_{k+1}= \frac{t_k}{2}$ and we invoke \eqref{e:almost-monotonicity-book} to conclude
\begin{equation}\label{e:inductive-decay-1}
\bbE (T,0, t_{k+1}) \leq \frac{C}{\bar\varepsilon} \bA_k^2 = \leq C_b  t_{k+1}^2 \bA^2\, 
\end{equation}
for a constant $C_b$ depending only on $p,m,n$, and $\bar\varepsilon$.
\item[(c)] If \eqref{e:condizione-a} and \eqref{e:condizione-b} hold, but \eqref{e:condizione-c} fails we apply Corollary \ref{c:decay-1-improved}, set $t_{k+1}= r_1 t_k$ and estimate
\begin{align}
\bbE (T, 0, t_{k+1}) &= \bbE (T_{0, t_k}, 0, r_1)
\leq \delta_1 \bar \bE (T_{0, t_k}, 0, 1) = \delta_1 \bar \bE (T, 0, t_k)\nonumber\\
&\leq \frac{\delta_1}{\eta_2} \bbE (T, 0, t_k) =
\frac{1}{2} \bbE (T, 0, t_k)\, .\label{e:inductive-decay-2}
\end{align}
\item[(d)] If \eqref{e:condizione-a}, \eqref{e:condizione-b}, and \eqref{e:condizione-c} hold we apply
Proposition \ref{p:decay-2}, set $t_{k+1} = r_2 t_k$ and conclude
\begin{equation}\label{e:inductive-decay-3}
\bbE (T, 0, t_{k+1}) = \bbE (T_{0, t_k},0, r_2) \leq \frac{1}{2} \bbE (T_{0, t_k}, 0, 1) = \frac{1}{2} \bbE (T, 0, t_k)\, .
\end{equation}
\end{itemize}
Next observe that the following inequality holds for $k=0$ and for those $k$ for which $(k-1)$ falls under alternative (b), because $t_k^2 \leq t_k\leq 2^{-k}$:
\begin{equation}\label{e:wrap-it-up}
\bbE (T, 0, t_k) \leq C_b (\bA^2 + \bar \bE (T,0,1)) 2^{-k}\, .
\end{equation}
For any other $k\in \mathcal{K}$ we let $k'$ be the largest integer smaller than $k$ for which $k'-1$ falls in alternative (b), if it exists, or set $k'=0$. We now can use \eqref{e:inductive-decay-2} and \eqref{e:inductive-decay-3} and the validity of \eqref{e:wrap-it-up} for $k'$ to conclude
\[
\bbE (T, 0, t_k) \leq 2^{-(k-k')} \bbE (T, 0, t_{k'})
\leq 2^{-(k-k')} C_b (\bA^2 + \bar \bE (T,0,1)) 2^{-k'}\, ,
\]
and hence the validity of \eqref{e:wrap-it-up} for every $k\in \mathcal{K}$.
Next set $\gamma := \min \{r_1, r_2\}$ and observe that $\gamma \leq \frac{t_{k+1}}{t_k}\leq \frac{1}{2}$. For each $k$ let $\bS_k$ be an open book such that $\bbE (T, 0, t_k) = \bbE (T, \bS_k, 0, t_k)$. By Lemma \ref{l:compare-books} we have
\begin{align}
d_{\mathcal{H}} (\bS_k \cap \bB_1, \bS_{k-1}\cap \bB_1)^2
&\leq C (\bA^2 + \bar \bE (T,0,1)) 2^{-k} \qquad \forall k\in \mathcal{K}\setminus \{0\}\label{e:tilt-books}\\
d_{\mathcal{H}} (\pi_0 \cap \bB_1, \bS_0\cap \bB_1)^2
&\leq C (\bA^2 + \bar \bE (T,0,1))\, , \label{e:tilt-books-start}
\end{align}
where in \eqref{e:tilt-books-start} we have applied \eqref{e:compare-books} to the current $T_{0,t_0} = T_{0,\frac12}$ together with \eqref{e:decay-start}. In particular we conclude that 
$d_{\mathcal{H}} (\pi_0 \cap \bB_1, \bS_k\cap \bB_1)^2
\leq C (\bA^2 + \bar \bE (T,0,1)) \leq C \varepsilon$, 
which in turn, together with \eqref{e:wrap-it-up} implies $\bE (T, \pi_0, 0, t_k) 
\leq C \varepsilon$.
Since the constant $C$ is independent of $\varepsilon$, upon choosing $\varepsilon$ sufficiently small, we conclude
\begin{equation}\label{e:no-escape-3}
\bE (T, \pi_0, 0, t_k) 
\leq \bar \varepsilon\, 
\qquad \forall k\in \mathcal{K}\, .
\end{equation}
On the other hand the latter estimate implies that alternative (a) never applies and the inductive procedure never stops, namely $\mathcal{K}=\mathbb N$. 

Observe next that \eqref{e:tilt-books} implies that $\bS_k \cap \overline{\bB}_1$ is a Cauchy sequence of compact sets in the Hausdorff distance. It thus converges to $\bS\cap \overline{\bB}_1$ for some unique open book $\bS$. Consider next a sequence $r_j\downarrow 0$ with the property that $T_{0, r_j}$ converges to $Q \a{\pi}$ for some plane $\pi$. We can find a sequence $k(j)$ so that $t_{k(j)+1}\leq r_j \leq t_{k(j)}$. Given that $\frac{t_{k(j)+1}}{t_{k(j)}} \geq \gamma$, we immediately conclude that $T_{0, t_{k(j)}}$ converges to $Q \a{\pi}$. On the other hand this implies that $\bbE (Q \a{\pi}, \bS, 0, 1) = 0$, which in turn forces the equality $\bS = \pi$. 

Next, summing the appropriate tail of the series \eqref{e:tilt-books} we immediately see that 
\[
d_{\mathcal{H}} (\bS_k \cap \bB_1, \pi \cap \bB_1)^2 \leq C (\bA^2 + \bar \bE (T,0,1)) 2^{-k}\, .
\]
Combined with \eqref{e:wrap-it-up} we conclude
\[
\bE (T, \pi, 0, t_k) \leq C (\bA^2 + \bar \bE (T,0,1)) 2^{-k}\, .
\]
Finally, for any $r\leq \frac12$ we choose $k$ so that $t_{k+1}\leq r \leq t_k$ and we immediately conclude
\[
\bE (T,\pi, 0, r) \leq C \gamma^{-m-2} \bE (T, \pi_0, 0, t_k)
\leq C (\bA^2 + \bar \bE (T,0,1)) 2^{-k}\, .
\]
Since $r\geq t_{k+1} \geq \gamma^{k+1}$, the latter implies the desired estimate \eqref{e:decay-0} for $\alpha =- \log_\gamma 2$. \qed

\section{Graphical parametrizations over planes}

Next we introduce the graphical parametrization that will play in this paper the same role that Allard's and White's regularity results play in \cite{Simon} and \cite{DHMSS}, respectively. The first proposition follows essentially from \cite[Theorem 16.1]{DLHMS} once we can show that the tilt-excess is controlled by the $L^2$ planar excess (an estimate which can be reduced to Allard's classical work). We follow the notation of \cite{DHMSS}, and for planes $\pi \subset T_q \Sigma$ we denote by $\pi^\perp$ their orthogonal complement in $\mathbb R^{m+n}$ and by $\pi^{\perp_q}$ their orthogonal complement in $T_q \Sigma$. Moreover, we set, for an open set $\Omega \subset \R^{m+n}$, 
\[
\bh (T, \Omega, \pi) := \sup \{|\mathbf{p}_{\pi^\perp} (x-y)|: x,y\in \spt (T)\cap \Omega\}\, ,
\]
and we introduce two further notions of excess, which are ``$W^{1,2}$-based'' rather than $L^2$-based.
\begin{definition}\label{d:E-cessi} Let $T$ be a representative mod $p$.
\begin{itemize}
    \item[(a)] $\bE^{{o}}$ is the \emph{oriented} \footnote{The other notions of excess have the property that they only depend on the $\modp$ class $[T]$ of $T$, as long as $T$ is a representative mod $p$. This is however not the case for the oriented tilt excess.} tilt excess of $T$ with respect to a plane. More precisely, let $\pi \in \mathscr{P}(q)$ be oriented by the unit $m$-vector $\vec \pi$, and set $\bC_r(q,\pi):= B_r(q,\pi) \times \pi^\perp$, where $B_r(q,\pi):= \bB_r(q) \cap (q + \pi)$. Then, we set, for $\Omega = \bB_r(q)$ or $\Omega = \bC_r(q,\pi)$:
     \[
     \bE^{{o}}(T,\pi,\Omega) := \frac{1}{2\omega_m r^m} \,\int_{\Omega} |\vec T(q') - \vec \pi|^2 \,d\|T\|(q')\, ;
     \]
     \item[(b)] $\bE^{{no}}$ for the \emph{unoriented} tilt excesses of $T$ with respect to a plane $\pi \in \mathscr{P}(q)$, namely
     \[
     \bE^{{no}}(T,\pi,\Omega) := \frac{1}{2\omega_m r^m} \,\int_{\Omega} |\vec T(q') - \pi|_{{no}}^2 \,d\|T\|(q')\,,
     \]
     where
     \[
     |\pi_1 - \pi_2|_{{no}} := \min \{|\vec\pi_1 - \vec\pi_2|, |\vec\pi_1 + \vec \pi_2|\}\,.
     \]
     \end{itemize}
\end{definition}
Note moreover that in place of $|\pi_1-\pi_2|_{{no}}$ we could use the integrand $|\mathbf{p}_{\pi_1}-\mathbf{p}_{\pi_2}|$, as the two integrands are equivalent up to a multiplicative constant.

\begin{remark}\label{r:special-multi-functions}
While we will use the notation and terminology of \cite{DLHMS_linear} for ``special'' $Q$-valued functions, since we are in a codimension one context and we will always deal with Lipschitz multifunctions, in our case they reduce to the specification of the following classical maps:
\begin{itemize}
\item $Q$ Lipschitz maps $v_1, \ldots, v_Q$ defined on some domain $\Omega$ of an oriented $m$-dimensional plane $\pi_q \subset T_q \Sigma$ with values into its orthogonal complement $\pi_q^\perp$, taking values in $\Sigma$. This means that the maps will take the special form $v_i (z) = u_i (z) + \Psi (z+u_i (z))$, where $\Psi: T_q \Sigma \mapsto T_q \Sigma^\perp$ parametrizes $\Sigma$ as a graph over $T_q \Sigma$ and $u_i$ takes values on the real line $\pi_q^{\perp_q}$. 
\item A map $\varepsilon_v : \Omega \to \{-1,1\}$. This map specifies whether we should consider the tangent planes to the graphs of each $v_i$ at $(z, v_i (z))$ as positively oriented (i.e. having the same orientation as the pushforward of $\pi_q$), or negatively oriented. Note that, by definition of special $Q$-valued functions, at any given point $z$ either all such planes are positively oriented, or they are all negatively oriented.  
\end{itemize}
Following \cite{DLHMS_linear} such an object will be denoted by $(\sum_i \a{v_i}, \varepsilon_v)$ and $\mathbf{G}_v$ will denote the integer rectifiable current associated to it (the ``graph'' of $v$), which happens to be a representative mod~$2Q$, with no boundary (mod~$2Q$) in the cylinder $\Omega \times \pi_q^\perp$. If not otherwise specified, the functions will be ordered so that $u_1 \leq u_2 \leq \ldots \leq u_Q$, for some ordering of the real line $\pi_q^{\perp_q}$ (a canonical choice of ordering of $\pi_q^{\perp_q}$ is the one which is compatible with the orientations of $T_q \Sigma$ and $\pi_q$). 
\end{remark}

The main results of this section are Propositions \ref{p:L2Alm}, \ref{p:L2Alm-piece-2}, and \ref{p:L2Alm-piece-3} below.

\begin{proposition}[Multivalued approximation] \label{p:L2Alm}
For every $p=2Q$, $m$, and $n$ there exist constants $\eps_G,\gamma,C>0$ depending on $(Q,m,n)$, with the following property. Assume that 
\begin{itemize}
    \item[(a)] $T$, $\Sigma$, and $\Omega = \bB_1$ are as in Definition \ref{def:am_modp} with $\bar n = 1$ and $p=2Q$, and $T$ is a representative $\modp$;
    \item[(b)] $\bar\bE + \bA^2 := \bar\bE (T, 0,1) + \bA^2 \leq \varepsilon_G^2$, and  $\bar\bE(T,0,1) = \bE(T,\pi_0,0,1)$;
    \item[(c)] $\bB_1\cap \spt^p (\partial T ) = \emptyset$;  
    \item[(d$1$)] either there exists $\xi \in \bB_{1/16}$ such that $\Theta_T(\xi) \ge Q$ 
    \item[(d$2$)] or 
    \begin{equation}\label{e:(d')}
    ({\bf{p}}_{\pi_0})_\sharp(T\res \bC_{7/8}(\pi_0)\cap \bB_1)=Q\,\a{\pi_0\cap \bB_{7/8}}\, .
    \end{equation}
\end{itemize}
Then, there exist a function $u \colon B_{3/4}:=B_{3/4}(0,\pi_0) \to \mathscr{A}_Q(\pi_0^{\perp_0})$, and a closed set $K \subset B_{3/4}$ such that, if we set
\begin{equation} \label{lifting up}
v(z) := \left( \sum_{i=1}^Q \a{v_i(z)}, \eps_u(z) \right)\,, \qquad v_i(z) := u_i(z) + \Psi(z+u_i(z))\,,
\end{equation}
then the following holds:
\begin{align}\label{e:inclusion}
&\spt(\bG_v) \subset \Sigma\,, \\ \label{e:lip_osc_est}
&\Lip (v)\leq C(\bar \bE + \bA^2)^\gamma \quad \mbox{and} \quad 
{\rm osc} (v) \leq C (\bar \bE + \bA^2)^{\sfrac{1}{2}} 
+ \bh (T, \bB_{15/16}, \pi_0), \\ \label{e:graph_current}
&\bG_v \res (K\times \pi_0^\perp)= T\mres (K\times \pi_0^\perp)\cap \bB_{15/16} \quad \modp\,,\\
\label{e:volume_estimate}
&|B_{3/4}\setminus K|\leq \|T\| (((B_{3/4}\setminus K)\times \pi_0^\perp)\cap \bB_{15/16}) \leq C (\bar \bE + \bA^2)^{1+\gamma}\,,
\end{align}
\begin{align}
& \Abs{\|T\|(\bC_{3/4}\cap \bB_{15/16}) - Q |B_{3/4}| - \frac{1}{2} \int_{B_{3/4}} \abs{Dv}^2} \leq C\,(\bar \bE + \bA^2)^{1+\gamma}\, ,\label{e:Dirichlet}\\
&\|v\|_{L^\infty}^2 + \int_{B_{3/4}} |Dv|^2 \leq C\, (\bar \bE + \bA^2)\, .\label{e:W12}
\end{align}
\end{proposition}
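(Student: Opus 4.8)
## Proof proposal for Proposition \ref{p:L2Alm}

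\textbf{Overall strategy.} The plan is to reduce the statement to the corresponding multivalued Lipschitz approximation theorem for area-minimizing currents mod $p$ in a flat ambient space (\cite[Theorem 16.1]{DLHMS}), which provides exactly estimates of the form \eqref{e:lip_osc_est}--\eqref{e:W12} but with the \emph{cylindrical tilt excess} $\bE^{o}(T,\pi_0,\bC_{r}(\pi_0))$ in place of $\bar\bE + \bA^2$ on the right-hand sides, and modulo the presence of the ambient manifold $\Sigma$. Thus there are two independent reductions to carry out: (i) show that the cylindrical tilt excess is controlled, up to a geometric constant, by the planar $L^2$ excess $\bar\bE(T,0,1)$ plus $\bA^2$; and (ii) absorb the graph $\Psi$ of $\Sigma$ over $T_0\Sigma$ into the construction, noting that $\Psi$ is of class $C^2$ with $\|D\Psi\|_\infty + \|D^2\Psi\|_\infty \lesssim \bA$ after the normalization $\bA \le 1$ (and $0 \in \Sigma$, $T_0\Sigma$ adapted coordinates), so that composing the multifunction $u$ with the parametrization $z + u_i(z) \mapsto u_i(z) + \Psi(z + u_i(z))$ changes Lipschitz constants, oscillations and Dirichlet energies only by errors controlled by $\bA$ and $\bA^2$, which are harmless for \eqref{e:lip_osc_est}--\eqref{e:W12} and can be tracked in the $(1+\gamma)$-order error terms of \eqref{e:volume_estimate} and \eqref{e:Dirichlet}.

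\textbf{Step 1: tilt-excess estimate.} First I would prove that under the hypotheses (a)--(d), with $\varepsilon_G$ small,
\[
\bE^{o}(T,\pi_0,\bC_{7/8}(\pi_0)) \le C\,\big(\bar\bE(T,0,1) + \bA^2\big)\,.
\]
This is the classical Allard-type tilt-excess estimate: the varifold induced by $T$ has generalized mean curvature bounded by $\bA$, so the Caccioppoli-type inequality relating the tilt excess (a $W^{1,2}$-type quantity) to the height excess (the $L^2$ quantity $\bE(T,\pi_0,0,1)$) plus a curvature error quadratic in $\bA$ applies. The hypothesis $\|T\|(\bB_1) \le (Q+\tfrac34)\omega_m$ together with $\Theta_T(0)\ge Q$ (case (d$1$), via monotonicity) or the projection identity \eqref{e:(d')} (case (d$2$)) guarantees that the density ratio is close to the integer $Q$ on a slightly smaller ball, which is precisely the non-degeneracy needed for Allard's argument. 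For this I would cite, as the paper already does for Lemma \ref{Linfty-L2}, the argument in \cite[Theorem (6)]{Allard72} (equivalently \cite{Simon}), which only uses the mean-curvature bound. Combining this with Lemma \ref{Linfty-L2} also yields $\bh(T,\bB_{15/16},\pi_0)^2 \le C(\bar\bE + \bA^2)$, which is what makes the oscillation bound in \eqref{e:lip_osc_est} meaningful, and it gives the comparison $\|\cdot\|_{C_r} \le Q|B_r| + \ldots$ needed to enter the cylindrical framework of \cite{DLHMS}.

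\textbf{Step 2: apply the flat multivalued approximation and re-graph.} With Step 1 in hand, I would invoke \cite[Theorem 16.1]{DLHMS} (the multivalued Lipschitz approximation for representatives mod $p$) inside the cylinder $\bC_{7/8}(\pi_0)$: since $T$ is area-minimizing mod $p$ in $\Sigma\cap\bB_1$, it is in particular \emph{almost-minimizing} in the cylinder with an error controlled by $\bA$ (comparison competitors must be pushed back to $\Sigma$, costing $O(\bA)$ in mass per unit area), which is exactly the setting that theorem covers. This produces a $Q$-valued Lipschitz function $u\colon B_{3/4}\to\mathscr{A}_Q(\pi_0^{\perp_0})$ — a priori taking values in the real line $\pi_0^{\perp_0} = \pi_0^{\perp}\cap T_0\Sigma$ — and a closed set $K$ with $\bG_u = T$ mod $p$ over $K$ in the relevant cylinder, together with \eqref{e:volume_estimate}, the Dirichlet comparison \eqref{e:Dirichlet}, and \eqref{e:W12}, all with $\bE^{o}(T,\pi_0,\bC_{7/8}(\pi_0))$ on the right, hence with $\bar\bE + \bA^2$ after Step 1. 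Then I would define $v$ by the re-graphing formula \eqref{lifting up}; \eqref{e:inclusion} is then automatic since $\Psi$ parametrizes $\Sigma$, and the Lipschitz, oscillation, Dirichlet, and $L^\infty$ bounds for $v$ follow from those for $u$ using $\|D\Psi\|_{C^1}\lesssim\bA\le 1$ and the chain rule, with the extra contributions absorbed either into the leading constants (for \eqref{e:lip_osc_est}, \eqref{e:W12}) or into the $(\bar\bE+\bA^2)^{1+\gamma}$ errors (for \eqref{e:volume_estimate}, \eqref{e:Dirichlet}), using $\bA^2 \le \bar\bE + \bA^2$ and the fact that the new errors are at worst of order $\bA\cdot(\text{Dirichlet energy}) \le C(\bar\bE+\bA^2)^{1+\gamma}$ by the already-established energy bound and smallness. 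Finally \eqref{e:graph_current} is the statement $\bG_v = T$ mod $p$ over $K\times\pi_0^\perp$ inside $\bB_{15/16}$, which transfers from the corresponding identity for $\bG_u$ since over $\Sigma$ the graphs of $u$ and of $v$ coincide as subsets of $\Sigma$.

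\textbf{Main obstacle.} The genuinely delicate point is Step 1 — ensuring that the tilt excess is bounded by the \emph{planar} $L^2$ excess plus \emph{quadratically}, not linearly, by $\bA$. The naive Caccioppoli inequality for a varifold with mean curvature bounded by $\bA$ gives a linear-in-$\bA$ error; the quadratic improvement requires the observation (already used in \cite[Appendix A]{DLS_Lp}) that the test vector fields can be chosen tangent to $\Sigma$ up to an $O(\bA)$ error, so that their pairing with the normal mean curvature vector $\vec H_T$ is $O(\bA^2)$. One must carry this through the first-variation identity that underlies the tilt-excess estimate. A secondary technical nuisance is keeping track of the two cases (d$1$) and (d$2$) uniformly: in case (d$1$) the lower density bound propagates to a neighborhood by monotonicity (with the $\bA^2$-improved monotonicity to avoid spurious linear $\bA$ losses), while case (d$2$) is handed the projection identity directly; in both cases one extracts the one-sided mass bound $\|T\|(\bC_r)\le (Q+\tfrac12)\omega_m r^m$ needed to run the approximation. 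Beyond that, the rest is bookkeeping of error exponents, which I would not spell out in detail.
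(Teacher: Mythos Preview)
Your overall strategy is correct and matches the paper's, but you have misidentified where the difficulty lies and, more importantly, skipped a step that is genuinely essential.

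\textbf{What you got wrong about the ``main obstacle''.} The quadratic $\bA$-dependence in the tilt-excess inequality is \emph{not} delicate: the standard Caccioppoli argument pairs $\vec H_T$ with a vector field of size $O(\bh)$ (the height), giving a term $\bA\cdot\bh$ which Young's inequality turns into $\bA^2 + \bh^2$. This is why the paper simply cites \cite[Proposition 4.1]{D-Allard} without comment and immediately writes $\bE^{no}(T,\bB_{9r_0}(z_0))\le C(\bar\bE+\bA^2)$. The tangential-vector-field trick from \cite[Appendix A]{DLS_Lp} is needed elsewhere in the paper (monotonicity, Hardt--Simon estimates) but not here.

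\textbf{The step you actually missed.} To invoke the Lipschitz approximation of \cite{DLHMS} you need to know that $(\mathbf p_{\pi_0})_\sharp(T\res\bC_{7/8}) = Q\,\a{\pi_0\cap\bB_{7/8}}$ mod $p$, not merely that mass ratios are close to $Q$. The constancy lemma mod $p$ gives $(\mathbf p_{\pi_0})_\sharp T' = k\,\a{\pi_0}$ for some $1\le k\le Q$, but nothing in your argument rules out $k<Q$. The paper handles this by observing that if $k<Q$ then White's $\varepsilon$-regularity theorem \cite[Theorem 4.5]{White86} applies and forces $T$ to be a regular submanifold in $\bC_{7/64}$, hence free of points of density $\ge Q$; this contradicts (d1) directly and contradicts (d2) because (d2) is exactly the statement $k=Q$. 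This is the only place (d1)/(d2) are actually used, and your vague appeal to ``non-degeneracy for Allard's argument'' does not capture it.

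\textbf{Minor points.} You cite \cite[Theorem 16.1]{DLHMS} as the approximation theorem, but in the paper's scheme that theorem is used to pass from the \emph{unoriented} excess $\bE^{no}$ to the \emph{oriented} excess $\bE^o$; the actual Lipschitz approximation is \cite[Theorem 15.1]{DLHMS}, applied locally on small balls $B_{r_0}(z_0)$ and then patched via \cite[Section 6.2]{DLS_Center}. Also, you should not assume a global mass bound $\|T\|(\bB_1)\le (Q+\tfrac34)\omega_m$ --- it is not among the hypotheses of the proposition.
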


The next proposition adds two conclusions which are 
useful in our situation and which follow from a careful combination of the estimates in Proposition \ref{p:L2Alm} with the classical monotonicity formula.

\begin{proposition}\label{p:L2Alm-piece-2}
Let $T$, $\Sigma$, $u$, and $v$ be as in Proposition \ref{p:L2Alm}. Then:
\begin{itemize}
\item[(i)] If $q_0=(z_0,w_0)\in B_{1/4}\times B_{1/4} \subset \pi_0 \times \pi_0^\perp$ and $\Theta_T(q_0) \geq Q$, then
\begin{equation}\label{eq.cheap Hardt-Simon}
   \int_{B_{r_0}(z_0,\pi_0)\cap K} \frac{1}{\abs{z-z_0}^{m-2}}\sum_{i=1}^Q \left|\partial_r \frac{(v_i(z) - w_0)}{\abs{z-z_0}}\right|^2\, dz \le C r_0^{-m}  \left(\bar{\bE} +\bA^2\right)\quad \forall r_0< \frac{1}{4}\, .
\end{equation}
\item[(ii)] If $\bS \in \mathscr{B} (0)$ has spine $V=V(\bS)$ and $(\omega_m^{-1} \bE(\bS,T,0,1))^{\frac{1}{m+2}} \leq \frac18$ then for any $\rho_0 \in \left[(\omega_m^{-1} \bE(\bS,T,0,1))^{\frac{1}{m+2}}, \frac18 \right]$, upon setting \[\bS (\rho):= \bS\cap \bB_{7/8} \cap \bC_{\frac12} \setminus B_{\frac18+\rho}(V)\,,\] we have that 
\begin{equation}\label{eq:graphical_madness}
    \int_{\bS (\rho_0)} \dist^2(q, \spt({\bf G}_v))\, d\Ha^m\leq C \int_{\bS (\rho_0)} \dist^2(q,\spt (T))\,d\Ha^m + C\left(\bar \bE +\bA^2\right)^{1+\beta}\, ,
\end{equation}
for some positive geometric constant $\beta = \beta (m,Q)$.
\end{itemize}
\end{proposition}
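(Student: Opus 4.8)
The two conclusions are essentially bookkeeping consequences of Proposition \ref{p:L2Alm} combined with the monotonicity formula, so the plan is to exploit the graphical description of $T$ on $K$ as much as possible and to absorb everything happening on the bad set $B_{3/4}\setminus K$ into the super-quadratic error terms already recorded in \eqref{e:volume_estimate} and \eqref{e:Dirichlet}.

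\textbf{Part (i).} Since $\Theta_T(q_0)\ge Q$ and $q_0$ lies well inside $\bB_{1/4}$, the classical (almost-)monotonicity formula for the varifold induced by $T$ — whose generalized mean curvature is bounded by $\bA\le 1$ — gives, for a suitable radius $\sigma_0$ with $r_0<\sigma_0<\frac12$, the Hardt--Simon-type identity
\[
\int_{\bB_{\sigma_0}(q_0)} \frac{\abs{\mathbf{p}^\perp_{T_{q'}T}(q'-q_0)}^2}{\abs{q'-q_0}^{m+2}}\,d\|T\|(q') \le C\,\sigma_0^{-m}\,\bigl(\|T\|(\bB_{\sigma_0}(q_0)) - Q\,\omega_m\sigma_0^m\bigr) + C\,\bA^2\,,
\]
where the mass deficit term is in turn controlled by $C\,\sigma_0^{-m}(\bar\bE+\bA^2)$ via \eqref{e:Dirichlet} (after recentering at $q_0$ and discarding the Dirichlet energy, which is nonnegative) together with the monotonicity of the density ratio. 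On the portion $B_{r_0}(z_0,\pi_0)\cap K$ the current coincides mod $p$ with $\bG_v$ by \eqref{e:graph_current}, and there the integrand $\abs{\partial_r\bigl((v_i(z)-w_0)/\abs{z-z_0}\bigr)}^2 / \abs{z-z_0}^{m-2}$, pulled back under the projection $\mathbf{p}_{\pi_0}$, is comparable (up to the factor $1+\Lip(v)^2$, which is $\le 2$ by \eqref{e:lip_osc_est} and the smallness in (b)) to the radial part of the monotonicity integrand above. Summing over the $Q$ sheets and using that $\mathbf{p}_{\pi_0}$ has Jacobian bounded below on $K$ yields the claimed bound. The only point requiring a little care is that the value $w_0=\mathbf{p}^\perp_{\pi_0}(q_0)$ need not equal any $v_i(z_0)$ — but $\Theta_T(q_0)\ge Q$ forces $q_0\in\spt(T)$, hence $\abs{w_0 - v_i(z_0)}\le \bh(T,\bB_{15/16},\pi_0)\le C(\bar\bE+\bA^2)^{1/2}$ by the oscillation estimate, and this is harmless.

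\textbf{Part (ii).} Here the strategy is to push each point $q\in\bS(\rho_0)$ first vertically onto $\spt(\bG_v)$ and then compare with $\spt(T)$. Fix $q=z+w\in\bS(\rho_0)$ with $z=\mathbf{p}_{\pi_0}(q)\in B_{1/2}(0,\pi_0)$ (note $\bC_{1/2}$ in the definition of $\bS(\rho)$ makes this projection land in the graph's domain). If $z\in K$ then $\dist(q,\spt(\bG_v))\le \dist(q,\spt(T)) + \bh$-type corrections coming from the identification $\bG_v\res(K\times\pi_0^\perp) = T\res(\cdots)$, so $\dist^2(q,\spt(\bG_v))\le 2\dist^2(q,\spt(T))$ pointwise there. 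The contribution of the set $\{z\in B_{3/4}\setminus K\}$ to $\int_{\bS(\rho_0)}\dist^2(q,\spt(\bG_v))$ is bounded by $(\mathrm{osc}(v)+\bh)^2\cdot \Ha^m(\bS(\rho_0)\cap \mathbf{p}_{\pi_0}^{-1}(B_{3/4}\setminus K))$; since $\bS(\rho_0)$ is a Lipschitz slab of bounded measure and the planar area of $B_{3/4}\setminus K$ is at most $C(\bar\bE+\bA^2)^{1+\gamma}$ by \eqref{e:volume_estimate}, while $(\mathrm{osc}(v)+\bh)^2\le C(\bar\bE+\bA^2)$ (using \eqref{e:lip_osc_est} and the height bound Lemma \ref{Linfty-L2} to control $\bh$), this contributes at most $C(\bar\bE+\bA^2)^{2+\gamma}$, which is of the required form with $\beta$ any exponent $\le \gamma$. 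Absorbing also the cross terms and relabeling $\beta$ gives \eqref{eq:graphical_madness}.

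\textbf{Main obstacle.} The delicate point is Part (ii) near the spine: the region $B_{1/8+\rho_0}(V)$ is deliberately excised precisely because there the graphical picture degenerates (the $Q$ sheets of $\bG_v$ cannot resolve the book structure of $\bS$ there), and one must make sure that the chosen inner radius $\rho_0$ — linked to $\bE(\bS,T,0,1)^{1/(m+2)}$ — is large enough that every $q\in\bS(\rho_0)$ has a point of $\spt(T)$ within distance comparable to $(\bE(\bS,T,0,1))^{1/(m+2)}$, so that the pointwise comparison $\dist(q,\spt(\bG_v))\lesssim \dist(q,\spt(T)) + (\text{small})$ is not vacuous. This is exactly why the hypothesis restricts $\rho_0$ to the interval $[(\omega_m^{-1}\bE(\bS,T,0,1))^{1/(m+2)},\tfrac18]$. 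The verification that the vertical projection onto $\bG_v$ is well-defined and $1$-Lipschitz-controlled on $\mathbf{p}_{\pi_0}(\bS(\rho_0))\cap K$, uniformly in $\rho_0$ in this range, is the technically heaviest step; everything else is a routine splitting into good and bad sets plus the two estimates \eqref{e:volume_estimate}–\eqref{e:Dirichlet} already in hand.
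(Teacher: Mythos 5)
Your plan for part (i) is essentially the paper's: graph approximation plus Dirichlet energy estimate to bound the mass deficit, monotonicity formula to turn the mass deficit into the weighted $L^2$ bound on the normal component, and then transfer to the sheets via the projection and Lipschitz control (cf.\ \cite[Proposition 8.3]{DHMSS}). One thing you gloss over, however, is the source of the \emph{quadratic} error in $\bA$ in the monotonicity formula: you write the Hardt--Simon inequality with $+C\,\bA^2$ as if it were standard, but Allard's monotonicity formula for a varifold with mean curvature bounded by $\bA$ only gives a \emph{linear} error $C\,\bA$. Getting $\bA^2$ requires the sharper argument (as in \cite[Appendix A]{DLS_Lp}) exploiting that the varifold is stationary \emph{inside} $\Sigma$, and this is precisely the technical point the whole paper is about; it deserves to be flagged.

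Part (ii) has a genuine gap. You decompose according to whether $z=\mathbf{p}_{\pi_0}(q)\in K$ or not, and claim that on the good part $z\in K$ one has the pointwise bound $\dist(q,\spt(\bG_v))\le\dist(q,\spt(T))+\text{small}$. That is false: the nearest point $p\in\spt(T)$ to $q$ need not lie on the vertical line through $z$, so $\mathbf{p}_{\pi_0}(p)$ may well lie in $B_{3/4}\setminus K$ even though $z\in K$, in which case $p\notin\spt(\bG_v)$ and no pointwise comparison follows. The condition $z\in K$ controls nothing about the projection of the \emph{minimizer} $p$. The correct splitting is by whether the nearest point of $\spt(T)$ is (comparable in distance to) a point of $\spt(T\mres\bC_K)$: the paper introduces the exceptional set
\[
U:=\left\{q\in\bS(\rho_0)\,:\,\dist(q,\spt(T))<\tfrac12\,\dist(q,\spt(T\mres\bC_K)\cap\bB_{15/16})\right\}
\]
and bounds $\Ha^m(U)$ via a Vitali covering by balls $\bB_{2r(q)}(q)$ with $r(q)=\dist(q,\spt(T))$, using the crucial geometric observation that the nearest-point ball $\bB_{r(q_i)}(p_i)$ is disjoint from $\spt(T\mres\bC_K)$ so all the mass it contains lies over $B_{3/4}\setminus K$, whence the lower density bound gives $\Ha^m(U)\lesssim\|T\|(\bB_{15/16}\cap(B_{3/4}\setminus K)\times\pi_0^\perp)\lesssim(\bar\bE+\bA^2)^{1+\gamma}$ by \eqref{e:volume_estimate}. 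A further issue with your bad-set estimate: you bound the contribution on $z\notin K$ by $(\operatorname{osc}(v)+\bh)^2$ times a measure, but $\dist(q,\spt(\bG_v))$ for $q\in\bS(\rho_0)$ is not controlled pointwise by $\operatorname{osc}(v)+\bh$; one only has the much weaker a priori bound $\dist(q,\spt(T))\le 2(\omega_m^{-1}\bE(\bS,T,0,1))^{1/(m+2)}$ coming from the choice of $\rho_0$, which can be of order $1$. This is another reason the naive projection-based splitting cannot close the estimate without the Vitali argument.
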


In the final proposition we take advantage of the regularity theory for area-minimizing currents in codimension 1. Before coming to the statement we introduce a suitable cylindrical version of the $L^2$ excess which is given by 
\begin{equation}\label{e:cylindrical-excess} 
\bE (T, \pi_0, \bC_r (z, \pi_0)) := \frac{1}{r^{m+2}} \int_{\bC_r (z, \pi_0)} \dist (q-z, \pi_0)^2\, d \|T\| (q)\, .
\end{equation}

\begin{proposition}\label{p:L2Alm-piece-3}
Let $T$ and $\Sigma$ be as in Proposition \ref{p:L2Alm}. The approximating map $v$ and the set $K$ can be required to satisfy the following additional property. Assume that for some $q = (z,w) \in \spt (T)\cap \bB_{7/8} \cap \bC_{1/2} \subset \pi_0 \times \pi_0^\perp$ and some cylinder $\bC_{2 r} (z,\pi_0) \subset \bC_{1/2}$ the following holds:
\begin{align} 
    &\Theta_T(q') < Q \qquad \forall q'\in \bC_{2r} (z,\pi_0) \cap \bB_{7/8},\label{no Q points}\\
    &r^{m} \geq (\bar{\mathbf{E}} + \bA^2)^{1- \gamma}\, . \label{e:tamer-cylindrical excess} 
\end{align}
Then $B_r (z) \subset K$ and there is a $C^{1,\sfrac{1}{2}}$ selection for $u|_{B_r (z)}$. More precisely:
\begin{itemize}
    \item[(i)] $\eps_u$ is constant on $B_r (z)$;
    \item[(ii)] there are $C^{1,\sfrac{1}{2}}$ functions $u_1 \leq \ldots \leq u_Q : B_r (z) \to \pi_0^{\perp_0} = \mathbb R$ such that $u = (\sum_i \a{u_i}, \varepsilon_u)$, 
    \item[(iii)] for all $i<j$, either $u_i (\zeta) < u_j (\zeta)$ $\forall \zeta\in B_r (z)$ or $u_i (\zeta)=u_j (\zeta)$ $\forall \zeta\in B_r (z)$;
    \item[(iv)] The following estimate holds for every $i \in \{1,\ldots,Q\}$
\begin{align}
& \|Du_i\|_ {C^0 (B_r(z))} + r^{-\sfrac{1}{2}} [Du_i]_{\sfrac{1}{2}, B_r (z)} \leq C\,(\bE(T\res \bB_{7/8}, \pi_0, \bC_{2r} (z,\pi_0)) + \mathbf{A}^2)^{\sfrac{1}{2}}\, . \label{eq:C12est}
\end{align}
\end{itemize}
\end{proposition}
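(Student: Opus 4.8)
\textbf{Proof plan for Proposition \ref{p:L2Alm-piece-3}.}

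The plan is to combine the multivalued approximation of Proposition \ref{p:L2Alm} with De Giorgi–Allard-type excess decay and the classical interior regularity theory for codimension-one area-minimizing currents mod $p$ in a regime where no density-$Q$ points are present. First I would observe that, by \eqref{no Q points} and the constancy theorem mod $p$ combined with Allard's and White's regularity results (applicable precisely because every point in $\bC_{2r}(z,\pi_0)\cap\bB_{7/8}$ has density $<Q$, hence density in $\{1,\dots,Q-1\}$), the current $T$ restricted to the relevant cylinder is, away from a set where the density jumps, a superposition of smooth sheets, and more importantly the \emph{full} multiplicity-$Q$ mass in the cylinder forces $T\res(\bC_{2r}(z,\pi_0)\cap\bB_{7/8})$ to be the graph of a $Q$-valued function whose sheets are individually regular minimal graphs in $\Sigma$; the key point is that $T$ cannot "escape" the cylinder because the projected multiplicity is exactly $Q$ there. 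The estimate \eqref{e:tamer-cylindrical excess} is what guarantees, via the volume bound \eqref{e:volume_estimate} of Proposition \ref{p:L2Alm}, that the bad set $B_{3/4}\setminus K$ cannot meet $B_r(z)$: indeed $|B_{3/4}\setminus K|\le C(\bar\bE+\bA^2)^{1+\gamma}$, while $|B_r(z)|=\omega_m r^m\ge \omega_m(\bar\bE+\bA^2)^{1-\gamma}\gg (\bar\bE+\bA^2)^{1+\gamma}$ once $\varepsilon_G$ is small, so $K$ has full measure in $B_r(z)$; combined with the graphicality of $T$ in the cylinder this upgrades to $B_r(z)\subset K$ outright.

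Once we know $T\res \bB_{7/8}$ coincides with $\bG_v$ over $B_r(z)$ and that $v$ there is a genuine superposition of minimal graphs in $\Sigma$ with no crossings of density $Q$, the selection statements (i)–(iii) follow from the structure of special multivalued functions: on a connected domain where two sheets $u_i<u_j$ never touch (their touching would create a density-$Q$ point, contradicting \eqref{no Q points}) one obtains a well-defined ordering and $\varepsilon_u$ must be locally constant since an orientation flip would again force a branch point; here one uses that we may shrink to $B_r(z)$ being a ball, hence connected. Each $u_i$ then solves the (non-parametric) minimal surface type PDE in $\Sigma$ with right-hand side controlled by $\bA$, so it is smooth; the quantitative $C^{1,1/2}$ bound \eqref{eq:C12est} comes from standard Schauder/De Giorgi estimates for such equations applied at scale $r$, where the natural small quantity controlling $\|Du_i\|_{C^0}$ and the rescaled Hölder seminorm of $Du_i$ is exactly the cylindrical excess $\bE(T\res\bB_{7/8},\pi_0,\bC_{2r}(z,\pi_0))$ plus $\bA^2$—the $\bA^2$ (rather than $\bA$) arising because, as explained in the introduction, the relevant forcing term is the normal part of the mean curvature tested against an almost-tangential direction. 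This is where one invokes the interior regularity theory of Schoen–Simon–Almgren or the codimension-one results quoted from \cite{DLHMS}, reading off the estimate at scale $r$ by the usual rescaling $T_{z,r}$.

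The main obstacle I expect is bookkeeping the passage from the a priori only-mod-$p$ and only-measure-theoretic information to the clean graphical picture on all of $B_r(z)$: one must rule out that a piece of $\spt(T)$ lies over $B_r(z)$ but outside $\bG_v$, i.e. that $K$ fails to be literally all of $B_r(z)$ rather than merely of full measure. This requires using not just the volume estimate \eqref{e:volume_estimate} but also the monotonicity formula and the density gap (density strictly below $Q$ everywhere in the cylinder, density at least... well, at least $1$) to show that the closed set $\spt(T)\cap\bC_{2r}(z,\pi_0)$ is contained in the union of the graphs of the $u_i$ — an argument of the type "density-$1$ points with small excess are regular by Allard, and the projection being multiplicity exactly $Q$ leaves no room for extra sheets". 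A secondary technicality is checking that \eqref{e:tamer-cylindrical excess} together with smallness of $\bar\bE+\bA^2$ indeed makes $\bE(T\res\bB_{7/8},\pi_0,\bC_{2r}(z,\pi_0))$ small enough to trigger the $\varepsilon$-regularity theorem with the desired constant — this is a matter of comparing the planar $L^2$ excess at scale $r$ with $\bar\bE$ at scale $1$ via \eqref{e:Dirichlet} and \eqref{e:W12}, and is where the exponent $1-\gamma$ in \eqref{e:tamer-cylindrical excess} is calibrated.
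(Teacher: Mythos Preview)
Your identification of the key ingredients is correct: under \eqref{no Q points} the current $T'=T\res\bB_{7/8}$ is a classical area-minimizing integral current in the cylinder (this is \cite[Lemma 9.5]{DHMSS}), and De Giorgi's $\varepsilon$-regularity then gives $C^{1,1/2}$ sheets $\tilde u_1\le\dots\le\tilde u_Q$ parametrizing $T'$ over $B_{3r/2}(z)$, with the estimate \eqref{eq:C12est} following from standard elliptic regularity. The role of \eqref{e:tamer-cylindrical excess} is also correctly identified.

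The gap is in your treatment of the inclusion $B_r(z)\subset K$. Your claim that $K$ has \emph{full} measure in $B_r(z)$ is false: the volume bound \eqref{e:volume_estimate} only gives $|B_r(z)\setminus K|\le C(\bar\bE+\bA^2)^{1+\gamma}$, which is small compared to $|B_r(z)|\ge\omega_m(\bar\bE+\bA^2)^{1-\gamma}$ but certainly not zero. Since $K$ is closed, the complement $B_r(z)\setminus K$ may contain a genuine open set on which the Lipschitz extension defining $v$ has no reason to agree with the regular parametrization $\tilde v$. Your ``main obstacle'' paragraph gestures at this but proposes to resolve it by arguing about $\spt(T)$, which misses the point: the issue is not whether $T$ is graphical (it is, by De Giorgi), but whether the \emph{specific} map $v$ produced by Proposition \ref{p:L2Alm} coincides with that graph.

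The paper's resolution is to read the phrase ``can be required'' literally: one \emph{modifies} $v$ and $K$. Concretely, one restricts $u$ to $K$, enlarges $K$ by adjoining the balls $B_{9r/8}(z)$ for all pairs $(q,r)$ satisfying the hypotheses, sets $u:=\tilde u^{q,r}$ on each such ball (well-defined because overlapping balls parametrize the same piece of $T$), and finally re-extends. The nontrivial content is then verifying that the modified map retains the Lipschitz bound \eqref{e:lip_osc_est}; this requires a case analysis comparing points in the old $K$, in the new patches, and across the two, using both the original Lipschitz bound and the $C^{1,1/2}$ estimate for $\tilde u^{q,r}$ together with \eqref{e:tamer-cylindrical excess} to control the local excess by $\bar\bE^{2\gamma}$.
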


\begin{proof}[Proof of Proposition \ref{p:L2Alm}] Having fixed the plane $\pi_0$, we will write $\bE^{no}(T,\Omega)=\bE^{no}(T,\pi_0,\Omega)$ to simplify the notation. First, observe that the unoriented excess $\bE^{no}(T,\bB_1)$ introduced in Definition \ref{d:E-cessi} is in fact equivalent, up to multiplicative constants, to the classical varifold excess (see \cite{Allard72}) of the varifold $\V(T)$ associated to $T$. Invoking \cite[Lemma 5.1]{DLHMS}, we have that 
\[
\delta \V(T) [X] = - \int X \cdot \vec{H}_T (x)\, d\|T\| (x) \qquad \mbox{for all $X \in C^1_c(\bB_1;\R^{m+n})$}\,,
\]
where $\vec{H}_T$ is a Borel function satisfying $\|\vec{H}_T\|_\infty \leq C \bA$. Hence, we can use the classical tilt excess inequality, cf. \cite[Proposition 4.1]{D-Allard}, to achieve, for every $z_0 \in B_{3/4}=B_{3/4}(0,\pi_0)$ and for any $r_0$ with $9 r_0 < 1/8$,  
\begin{equation}\label{e:tilt-excess}
\bE^{no} (T, \bB_{9r_0}(z_0)) \leq C (\bar \bE + \bA^2)\, .
\end{equation}
By Lemma \ref{Linfty-L2}, 
\begin{align*}
\sup \left\{|\mathbf{p}_{\pi_0}^\perp (x)|:x\in \spt (T)\cap \bB_{15/16}\right\}
\leq  & C (\bar\bE + \bA^2)^{\sfrac{1}{2}}\,,
\end{align*}
so that, if  we set $T' := T\res \bB_{15/16}$ and $\bC_{7/8}:=\bC_{7/8}(0,\pi_0)$, we then conclude that 
\[
\partial T' \res \bC_{7/8} = 0 \; \modp
\]
and that 
\begin{equation} \label{e:height-bound-pf}
\bh (T', \bC_{7/8}, \pi_0) \leq \bh (T, \bB_{15/16}, \pi_0) < C (\bar\bE + \bA^2)^{\sfrac{1}{2}}\, , 
\end{equation}
and thus
\[
\bE^{no}(T',\bC_{8r_0}(z_0)) \leq \bE^{no}(T,\bB_{9r_0}(z_0)) \leq C(\bar\bE + \bA^2)\,.
\]
Observe next that, by the constancy lemma \modp (see \cite[Lemma 7.4]{DLHMS}), there exist (up to a change of orientation of $\pi_0$) an integer $1 \leq k \leq Q$ such that
\[
(\mathbf{p}_{\pi_0})_\sharp T' \res \bC_{7/8} = k \a{\pi_0 \cap \bB_{7/8}} \modp\, .
\] 
We claim that it is necessarily $k=Q$. Indeed, if $k < Q$, assuming $\varepsilon_G$ sufficiently small we can appeal to White's regularity theorem \cite[Theorem 4.5]{White86}, and conclude that $T' \mres \bC_{7/64}$ is (the current associated to) a regular submanifold of $\Sigma$, which, in particular, would be free of points $q$ with $\Theta_T(q) \geq Q$: thus, having $k < Q$ would contradict both (d2) and (d1).

We can now appeal to \cite[Theorem 16.1]{DLHMS} and \eqref{e:tilt-excess} to conclude that 
\[
\bE^o (T', \bC_{4r_0}(z_0)) \leq C (\bar \bE + \bA^2)\,.
\]
 We can thus apply \cite[Theorem 15.1]{DLHMS} to find a map $u^{z_0}: B_{r_0} (z_0, \pi_0) \to \mathscr{A}_Q (\pi_0^{\perp_0})$ and a closed set $K^{z_0} \subset B_{r_0}(z_0,\pi_0)$ so that the associated map $v^{z_0}$ as in \eqref{lifting up} satisfies \eqref{e:inclusion} to \eqref{e:Dirichlet} on the cylinder $\bC_{r_0}(z_0,\pi_0)$, where $\gamma>0$ is a geometric constant. Using the same arguments as in \cite[Section 6.2]{DLS_Center} to patch the maps $u^{z_0}$ together when $z_0 \in B_{3/4}$ varies, we conclude the existence of a unique map $u \colon B_{3/4} \subset \pi_0 \to \mathscr{A}_Q(\pi_0^{\perp_0})$ and a closed set $K \subset B_{3/4}$ so that the associated map $v$ satisfies \eqref{e:inclusion} to \eqref{e:Dirichlet}. 
Moreover, 
using \cite[Theorem 16.1]{DLHMS}, a standard covering argument, and the tilt excess inequality as in \eqref{e:tilt-excess}, it holds
\[
\|T'\|(\bC_{3/4}) - Q|B_{3/4}| \leq C (\bE^{no}(T',\bC_{7/8}) + \bA^2) \leq C(\bar\bE + \bA^2)\,, 
\]
we can use \eqref{e:Dirichlet} to infer
\begin{equation}\label{e:DuL2}
\int_{B_{3/4}} |Dv|^2 \leq C (\bar \bE + \bA^2)\, .
\end{equation}
Now, the second estimate in \eqref{e:lip_osc_est} and \eqref{e:height-bound-pf} give immediately
\[
{\rm osc}\, (v) \leq  C (\bar{\bE} + \bA^2)^{\sfrac{1}{2}}\, .
\]
However observe also that 
\[
\sup \{|\mathbf{p}_{\pi_0^\perp} (q)|: q\in \spt (T) \cap \bB_{15/16} \cap \bC_{3/4}\} \leq C (\bar{\bE} + \bA^2)^{\sfrac{1}{2}}\,,
\]
while $\spt (T) \cap \spt (\bG_v) \cap \bB_{15/16} \cap \bC_{3/4}$ is certainly nonempty. We thus conclude the estimate $\|v\|_{L^\infty}^2 \leq C (\bar{\mathbf{E}} + \bA^2)$.
\end{proof}
\begin{proof}[Proof of Proposition \ref{p:L2Alm-piece-2}]
In order to prove \eqref{eq.cheap Hardt-Simon}, let $q_0=(z_0,w_0)$ be as in the statement of the proposition and let $r_0 < \frac{1}{4}$. Observe first that, since $\bB_{r_0}(q_0) \subset \bB_{15/16}\cap \bC_{3/4}$, it follows rather easily from Proposition \ref{p:L2Alm} that 
\begin{align*}
\frac{\norm{T}(\bB_{r_0}(q_0))}{\omega_m r_0^m} - \Theta_T(q_0)
    &\le \frac{\norm{\bG_v}(\bB_{r_0}(q_0))}{\omega_m r_0^m} - Q + \frac{\norm{T-\bG_v}(\bB_{r_0}(q_0))}{\omega_m r_0^m}  \\
    & \le C\left( \frac{1}{r_0^m} \int_{B_{3/4}} \abs{Dv}^2 + \frac{1}{r_0^m} (\bar \bE+\bA^2) \right) \stackrel{\eqref{e:Dirichlet}}{\le}  \frac{C}{r_0^m}\, (\bar \bE+\bA^2)\,.
\end{align*}
We then use the monotonicity formula and the fact that $\Theta_T(q_0)=Q$ to infer that
\begin{align*}
    \int_{\bB_{r_0}(q_0)} \frac{\abs{(q-q_0)^\perp}^2}{\abs{q-q_0}^{m+2}}\, d\norm{T} & \le \frac{\|T\| (\bB_{r_0} (q_0))}{\omega_m r_0^m} - \Theta_T (q_0) + C \bA^2 r_0^2\\
    &\le C r_0^{-m} (\bar{\bE} + \bA^2)\, . 
\end{align*}
Note that the usual monotonicity formula for varifolds with bounded mean curvature (as in \cite{Allard72}) would give an error term of type $C\bA$ in the first line. In order to get a quadratic error it suffices to invoke the argument in \cite[Appendix A]{DLS_Lp}, which uses the stronger fact that $\|T\|$ is a stationary varifold in the Riemannian manifold $\Sigma$, cf. also \cite[Remark A.2]{DLS_Lp}. 

Using the Lipschitz continuity of $v$, the same argument as in the proof of \cite[Proposition 8.3]{DHMSS} shows that
\begin{align*}
    \int_{B_{r_0}(z_0)\cap K} \frac{1}{\abs{z - z_0}^{m-2}}\sum_{i=1}^Q \left|\partial_r \frac{(v_i(z)- w_0)}{\abs{z-z_0}}\right|^2
    & \leq C \int_{\bB_{r_0}(q_0)}  \frac{\abs{(q-q_0)^\perp}^2}{\abs{q-q_0}^{m+2}}\, d\norm{T} \\
    &\leq C r_0^{-m}\, (\bar \bE+\bA^2)\,,
\end{align*}
thus proving \eqref{eq.cheap Hardt-Simon}. 

We now come to \eqref{eq:graphical_madness}. Recall that $\bS (\rho) :=\bS\cap \bB_{7/8} \cap (\bC_{\frac12} \setminus B_{\frac18+\rho}(V))$. Upon introducing the set $\bC_K:= K \times \pi_0^\perp$, we note that it is enough to show
\begin{equation}\label{eq:graph_mad_1}
 \int_{\bS (\rho_0)} \dist^2(q, \spt(T\res \bC_K) \cap \bB_{15/16})\, d\Ha^m\leq C \int_{\bS (\rho_0)} \dist^2(q,\spt (T))\,d\Ha^m + C\left(\bar\bE +\bA^2\right)^{1+\beta}\,,
\end{equation}
since then \eqref{eq:graphical_madness} follows from
\begin{align*}
    \int_{ \bS (\rho_0) } \dist^2(q, \spt(\bG_v))\, d\Ha^m
        & \leq \int_{\bS (\rho_0)} \dist^2(q, \spt(\bG_v\res \bC_K) \cap \bB_{15/16})\, d\Ha^m \\
        & = \int_{\bS (\rho_0)} \dist^2(q, \spt(T\res \bC_K) \cap \bB_{15/16})\, d\Ha^m \,.
\end{align*}
Towards the proof of \eqref{eq:graph_mad_1}, define the set
\[
U:=\left\{q \in \bS (\rho_0) \, \colon\, \dist(q,\spt(T)) < \frac12\, \dist(q,\spt(T\res \bC_K)\cap \bB_{15/16}) \right\}\,.
\]
It is clear that \eqref{eq:graph_mad_1} holds true provided we can show that, for a suitable choice of $\rho_0$,
\begin{equation} \label{eq:the Vitali claim}
    \mathcal{H}^m (U) \leq C (\bar\bE + \bA^2)^{1+\beta}\,.
\end{equation}

To this aim, first observe that if we set \[\delta_q:=\frac12\min\left\{\dist(q, \spt(T)), \rho_0,\frac{1}{8}\right\} \qquad \mbox{for $q \in \bS({\rho_0})$}\,,\]
then one has
\begin{equation}
 \bE(\bS,T,0,1)\geq \int_{\bB_{\delta_q}(q) \cap \bS} \dist^2(x, \spt(T))\, d\Ha^m(x) \geq \omega_m\,\delta_q^{m+2}\,,
\end{equation}
where in the second inequality we have used that $\dist(x,\spt(T)) \geq \delta_q$ for all $x \in \bB_{\delta_q}(q)$ due to the definition of $\delta_q$. It follows that if $1/8 \geq \rho_0 \geq 10\,\left( \omega_m^{-1}\,\bE(\bS,T,0,1) \right)^{\frac{1}{m+2}}$, then 
\begin{equation}\label{eq:vitali}
\dist(q, \spt(T))=2\delta_q\leq 2\, (\omega_m^{-1}\,\bE(\bS, T, 0,1))^{\frac1{m+2}} \leq \frac{1}{40}\qquad \forall q \in  \bS(\rho_0)\,.
\end{equation}
In particular,  
\begin{equation} \label{e:everything_in_the_right_place}
\dist(q,\spt(T)) = \dist(q,\spt(T) \cap \bC_{5/8} \cap \bB_{29/32}) \qquad \mbox{for all $q \in \bS(\rho_0)$}\,.
\end{equation}

We will estimate the measure of $U$ by a Vitali covering argument. We apply Vitali's covering theorem to the family of balls $\{ \bB_{2r (q)}(q)\,\colon\, q \in U\}$ with $r (q) := \dist(q,\spt(T))$ to find a disjoint subfamily $\{\bB_{2r (q_i)}(q_i)\}$
such that \[
U \subset \bigcup_{i} \bB_{10r (q_i)}(q_i)\,.
\]
For each $i$, fix $p_i \in \spt(T)$ such that $\abs{q_i-p_i}=\dist(q_i,\spt(T))=r (q_i)$. Notice that $p_i \in \spt(T) \cap \bC_{5/8} \cap \bB_{29/32}$ as a consequence of \eqref{e:everything_in_the_right_place}. Hence 
\begin{equation}\label{eq:vitali2}
   \bB_{r (q_i)}(p_i) \cap \spt(T\res \bC_K) =\bB_{r (q_i)}(p_i) \cap \spt(T\res \bC_K) \cap \bB_{15/16} =  \emptyset\,,
\end{equation}  
for otherwise, given the definition of $r(q_i)$, one would contradict the fact that $q_i \in U$. Notice that, since for every $i$ we have $\bB_{r(q_i)}(p_i)\subset \bB_{2r(q_i)}(q_i)$, then also $\{\bB_{r(q_i)}(p_i)\}_i$ is a disjoint family. We recall next the density lower bound for area minimizing currents mod($p$), that is $\omega_m r^m \le 2\norm{T}(\bB_r(\tilde q))$ for all $\tilde q \in \spt(T)$, which holds provided $\bA$ is smaller than a geometric constant. We then have
\begin{align*}
    \mathcal{H}^m(U) 
        &\le p \sum_i 10^m \omega_m r(q_i)^m \le 2p\cdot 10^m  \sum_i \norm{T}\left( \bB_{r (q_i)}(p_i)\right) \\
        &= 2p \cdot 10^m \norm{T}\left( \bigcup_i \bB_{r(q_i)}(p_i) \right)  \\
        &\le 2p \cdot 10^m \norm{T}\left(\bB_{15/16} \cap (B_{\frac34}\setminus K)\times \pi_0^\perp\right) \leq C (\bar{\bE}+\bA^2)^{1+\gamma}\,,
\end{align*}
where in the last inequality we have used \eqref{e:volume_estimate} and the second last inequality is a consequence of \eqref{eq:vitali}, \eqref{e:everything_in_the_right_place}, and \eqref{eq:vitali2}. We have thus proved that \eqref{eq:the Vitali claim} holds with $\beta=\gamma$, and the proof of \eqref{eq:graphical_madness} is complete.
\end{proof}

\begin{proof}[Proof of Proposition \ref{p:L2Alm-piece-3}] In order to simplify our notation, we set $T':= T\res \bB_{7/8}$. Note first that 
under the additional assumption \eqref{no Q points}
we can apply \cite[Lemma 9.5]{DHMSS} to deduce that $T'$ is a classical area minimizing current in $\bB_{2r}(q)$, and thus, thanks to Lemma \ref{Linfty-L2} and \eqref{e:tamer-cylindrical excess}, in $\bC_{7r/4}(z,\pi_0)$. We can then apply the standard decomposition of codimension $1$ area minimizing currents in sum of area minimizing boundaries with constant multiplicities, and De Giorgi's $\eps$-regularity theorem with $L^2$-excess (see for instance \cite[Theorem 4.5]{CoEdSp}), so to conclude that in $\bC_{3r/2} (z,\pi_0)$ the support $\spt(T')$ coincides with the union of the graphs of finitely many $C^{1,\sfrac{1}{2}}$ functions $\tilde{v}_1, \ldots, \tilde{v}_N$ with the property that $\tilde{v}_i (\zeta) = \tilde{u}_i (\zeta) + \Psi (\zeta+ \tilde{u}_i (\zeta))$ and $\tilde{u}_1\leq \tilde{u}_2 \leq \ldots \leq \tilde{u}_N$. Observe that, because of the assumption \eqref{e:tamer-cylindrical excess} and the estimate \eqref{e:volume_estimate}, $K\cap B_r (z)$ can be assumed to have positive measure, provided $\varepsilon$ is chosen sufficiently small. In particular we conclude that $N=Q$ and that the multifunctions $\sum_i \a{\tilde{v}_i}$ and $\sum_i \a{v_i}$ coincide on a set of positive measure. Because of the constancy lemma we immediately conclude the existence of a constant $\tilde{\varepsilon}\in \{-1,1\}$ such that  
\[
T' \res \bC_{3r/2 } (z, \pi_0) = \mathbf{G}_{\tilde{v}}
\]
for the special multivalued function 
\[
\tilde{v} = (\sum_i \a{\tilde{v}_i}, \tilde{\varepsilon})\, .
\]
We remark in passing that the estimate 
\begin{align}
& \|D\tilde{u}_i\|_ {C^0 (B_{5r/4}(z))} + r^{-\sfrac{1}{2}} [D\tilde{u}_i]_{\sfrac{1}{2}, B_{5r/4} (z)} \leq C\,(\bE(T\res \bB_{7/8}, \pi_0, \bC_{2r} (z,\pi_0)) + \mathbf{A}^2)^{\sfrac{1}{2}}\, \label{eq:C12est-2}
\end{align}
follows from classical elliptic regularity, the reader can for instance see the argument in the proof of \cite[Theorem 6.3]{DHMSS}. 

If we could show that $v$ and $\tilde{v}$ coincide on the domain of definition of $\tilde{v}$, we would be finished. In the remaining argument we will show that we can in fact modify $v$ suitably so to coincide with the map $\tilde{v}$ for all choices of $z$ and $r$, while retaining all the estimates that $v$ satisfies (of course with some larger geometric constants). 

Define the set $\mathcal{P}$ of pairs $(q,r)$ satisfying the assumption of the Proposition, and denote by $\tilde{u}^{q,r}$ and $\tilde{v}^{q,r} = \tilde{u}^{q,r}+\Psi(\cdot + \tilde{u}^{q,r})$ the corresponding maps which we just found. We wish to redefine the map $v$ of Proposition \ref{p:L2Alm} with the following algorithm:
\begin{itemize}
\item[(1)] First of all we restrict $u$ to $K$;
\item[(2)] We then enlarge $K$ by adding $B_{9r/8} (z)$ for every pair $(q,r)\in \mathcal{P}$ (where $q=(z,w)$) and denote by $K^\sharp$ the corresponding set;
\item[(3)] Furthermore we extend $u$ to each such $B_{9r/8} (z)$ by setting it equal to $\tilde{u}^{q,r}$;
\item[(4)] We make a final Lipschitz extension to the whole ball $B_{1/2}$, and then we lift such extension to $\Sigma$ using $\Psi$.
\end{itemize}
We denote by $u^\sharp$ the map defined through the steps (1), (2), and (3), and we set $v^\sharp(\zeta) = u^\sharp(\zeta)+\Psi(\zeta+u^\sharp(\zeta))$. Note that the extension in (3) is well defined because necessarily $\tilde{v}^{q,r} = \tilde{v}^{q',r'}$ on $B_{9r/8} (z) \cap B_{9r'/8} (z')$ whenever the latter is nonempty. This crucial property follows from the fact that over both balls the graphs of the corresponding maps coincide with the restrictions of $T'$ on the corresponding cylinders. 

We next claim that 
\begin{equation}\label{e:Lipschitz-nuova}
{\rm Lip}\, (v^\sharp) \leq C (\bar{\mathbf{E}} + \bA^2)^{\gamma}\, .
\end{equation}
Given that $\|v^\sharp\|_{L^\infty} \leq C (\bar{\mathbf{E}}+ \bA^2)^{\sfrac{1}{2}}$ just because of Lemma \ref{Linfty-L2}, we can use the extension theorem \cite[Corollary 5.3]{DLHMS_linear} to extend $u^\sharp$ to $B_{1/2}$ by enlarging the Lipschitz constant and the $L^\infty$ bound by a constant geometric factor and then lift such extension to $\Sigma$ using $\Psi$. All the remaining conclusions of Proposition \ref{p:L2Alm} will then follow, except for the fact that $K^\sharp$ is not closed. To overcome this issue, we replace $K^\sharp$ with the closed set
\[
K^\star := K \cup \overline{\bigcup_{(q,r)\in\mathcal{P}}B_r(z)}\,.
\]
We observe that all the conclusions of Proposition \ref{p:L2Alm} still hold, since $K \subset K^\star \subset K^\sharp$. The first inclusion is obvious; the second follows from \eqref{e:tamer-cylindrical excess}, which in particular implies that $\overline{\bigcup_{(q,r)\in\mathcal{P}}B_r(z)} \subset \bigcup_{(q,r)\in\mathcal{P}}{B_{9r/8}(z)}$.

We are left with the proof of \eqref{e:Lipschitz-nuova}: we fix $\xi, \zeta\in K^\sharp$ and distinguish several cases. 

\medskip

{\bf Case (a)} $\xi, \zeta \in K$. Then 
\begin{equation}\label{e:evident}
\mathcal{G}_s (v^\sharp (\xi), v^\sharp (\zeta)) 
= \mathcal{G}_s (v (\xi), v (\zeta))
\leq \Lip (v) |\xi-\zeta| \leq C
(\bar{\mathbf{E}} + \bA^2)^\gamma |\xi-\zeta|\, .
\end{equation}

\medskip

{\bf Case (b)} $\xi\in K$, $\zeta\in K^\sharp\setminus K$. Consider then $(q,r)\in \mathcal{P}$ such that 
$\zeta \in B_{9r/8} (z)$. We distinguish further two situations:
\begin{itemize}
\item[(b1)] $\xi \in B_{5r/4} (z)$. Then we obviously have
\begin{align}
\mathcal{G}_s (v^\sharp (\xi), v^\sharp (\zeta)) 
&= \mathcal{G}_s (\tilde{v}^{q,r} (\xi), \tilde{v}^{q,r} (\zeta))
\leq \Lip (\tilde v^{q,r}) |\xi-\zeta|\nonumber\\
&\leq C
(\mathbf{E} (T', \pi_0, \bC_{2r} (z,\pi_0)) + \bA^2)^{1/2} |\xi-\zeta|\, .\label{e:obvious}
\end{align}
Note however that, by \eqref{e:tamer-cylindrical excess}
\[
\mathbf{E} (T', \pi_0, \bC_{2r} (z,\pi_0))
\leq \frac{1}{r^{m+2}} \bar{\mathbf{E}} 
\leq \bar{\mathbf{E}}^{2\gamma}\, .
\]
Therefore we again conclude
\begin{equation}\label{e:obvious-2}
\mathcal{G}_s (v^\sharp (\xi), v^\sharp (\zeta)) 
\leq C
(\bar{\mathbf{E}} + \bA^2)^{\gamma} |\xi-\zeta|\, .
\end{equation}
\item[(b2)] $\xi\not \in B_{5r/4} (z)$. We then select $\xi'\in K \cap B_{9r/8} (z)$. Since $|\xi-\zeta|> \frac{r}{8}$ we certainly have 
\begin{align}
|\xi'-\zeta| &\leq 4r \leq 32 |\xi-\zeta|\, ,\label{e:sciocca1}\\
|\xi-\xi'| &\leq |\xi-\zeta| + |\xi'-\zeta| \leq 33 |\xi-\zeta|\, .\label{e:sciocca2}
\end{align}
We can then use the estimates in cases (a) and (b1) to conclude
\begin{align}
\mathcal{G}_s (v^\sharp (\xi), v^\sharp (\zeta))
\leq \mathcal{G}_s (v^\sharp (\xi), v^\sharp (\xi'))
+ \mathcal{G}_s (v^\sharp (\zeta), v^\sharp (\xi'))
\leq C
(\bar{\mathbf{E}} + \bA^2)^{\gamma} |\xi-\zeta|\, .\label{e:trivial}
\end{align}
\end{itemize}

\medskip

{\bf Case (c)} $\xi, \zeta\in K^\sharp\setminus K$. As above we choose a pair $(q,r)\in \mathcal{P}$ such that $\zeta \in B_{9r/8} (z)$. As in case (b) we distinguish two corresponding cases, which we call (c1) and (c2). In case (c1), namely if $\xi \in B_{5r/4} (z)$, we argue as in case (b1) to conclude \eqref{e:obvious-2}. If instead $\xi\not \in B_{5r/4} (z)$ we then choose $\xi' \in K \cap B_{9r/8} (z)$. The two inequalities \eqref{e:sciocca1} and \eqref{e:sciocca2} are still valid. We can now proceed as in the proof of \eqref{e:trivial}, using, this time, case (b) and case (c1). 
\end{proof}

\section{Proof of Proposition \ref{p:decay-1}} \label{s:decay-1}

In this section we prove the first decay Proposition \ref{p:decay-1}. This will be achieved via a suitable linearization over a plane using the theory of special multivalued functions.

\subsection{Preliminary decay estimate on harmonic multifunctions}

The main reason behind Proposition \ref{p:decay-1} is an analogous decay estimate for Dir-minimizing functions $u$ taking values in $\Iqspec$.

\begin{lemma}\label{l:special-decay}
For every $\delta>0$ there is a constant $\bar r (Q, m, \delta)>0$ with the following property. Let $B_1\subset \mathbb R^m$ and let $u\in W^{1,2} (B_1, \Iqspec)$ be Dir-minimizing and such that $u (0) =Q \a{0}$. Then there is a $1$-homogeneous Dir-minimizing $\bar u\in W^{1,2} (B_1, \Iqspec)$ such that
\begin{equation}\label{e:special-decay}
\frac{1}{r^{m+2}} \int_{B_r} \mathcal{G}_s (u(x), \bar u (x))^2\, dx \leq \delta \int_{B_1} |Du|^2\, 
\qquad \forall r\leq \bar r.
\end{equation}
\end{lemma}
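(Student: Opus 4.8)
The plan is to prove Lemma~\ref{l:special-decay} by a compactness/contradiction argument, exploiting the structure theory of Dir-minimizing $\Iqspec$-valued functions. First I would recall that for such $u$ one has the usual Almgren monotonicity of the frequency function $I(r) = \frac{r \int_{B_r} |Du|^2}{\int_{\partial B_r} |u|^2}$, and that blow-ups of $u$ at $0$ (suitably rescaled, using $u(0) = Q\a{0}$ so that the averaged function $\etab\circ u$ vanishes at $0$) are $1$-homogeneous Dir-minimizing functions whose homogeneity equals $I(0^+) =: \alpha > 0$. The key structural point in the codimension-one special multivalued setting is that a $1$-homogeneous Dir-minimizing $\Iqspec$-valued $\bar u$ with $\bar u(0) = Q\a 0$ is very rigid: after choosing a continuous selection, the sheets are of the form $\bar u_i(x) = \mathrm{Re}(c_i (x_1 + i x_2)^\alpha)$ (in an appropriately chosen $2$-plane), and one has the exact scaling identity $\frac{1}{r^{m+2}}\int_{B_r} |\bar u|^2 = r^{2\alpha - 2} \cdot \frac{1}{1}\int_{B_1}|\bar u|^2 \cdot (\text{const})$; what matters is that this quantity converges to $0$ as $r \to 0$ because $\alpha \ge 1$, and in the borderline case $\alpha = 1$ the homogeneous function is \emph{linear} (a single affine sheet with multiplicity $Q$, or genuinely $Q$ linear sheets through the origin), for which $\frac{1}{r^{m+2}}\int_{B_r}\mathcal G_s(\bar u, \bar u)^2 = 0$ trivially — so the statement is really about the decay of $u$ towards \emph{its own} blow-up.

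The contradiction argument goes as follows. Suppose the lemma fails for some $\delta > 0$: then there exist a sequence $u_k \in W^{1,2}(B_1,\Iqspec)$, Dir-minimizing, with $u_k(0) = Q\a 0$, and radii $r_k \to 0$ such that for \emph{every} $1$-homogeneous Dir-minimizing $\bar u$,
\[
\frac{1}{r_k^{m+2}} \int_{B_{r_k}} \mathcal G_s(u_k, \bar u)^2 > \delta \int_{B_1} |Du_k|^2 .
\]
Normalize so that $\int_{B_1}|Du_k|^2 = 1$. By the $W^{1,2}$ estimates and compactness for Dir-minimizers (the $\Iqspec$-analogue of Almgren's compactness, available since these are ordinary Lipschitz-type $Q$-valued maps lifted from the real line), a subsequence of $u_k$ converges strongly in $W^{1,2}_{\mathrm{loc}}$ and locally uniformly to a Dir-minimizing $v$ with $\int_{B_1}|Dv|^2 \le 1$ and $v(0) = Q\a 0$. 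Now rescale at $0$: let $v^{(\rho)}$ denote the rescaling of $v$ at scale $\rho$ normalized in $L^2(\partial B_1)$; by the blow-up theory $v^{(\rho)} \to \bar v$ as $\rho \to 0$ for some $1$-homogeneous Dir-minimizing $\bar v$, strongly in $W^{1,2}(B_1)$. The strategy is to combine the decay $\frac{1}{\rho^{m+2}}\int_{B_\rho}\mathcal G_s(v, \bar v)^2 \to 0$ (genuine decay of $v$ to its blow-up, which follows from the convergence $v^{(\rho)} \to \bar v$ together with the monotonicity that controls the rate at which the frequency pinches, or more simply from a Morrey-type estimate once one knows strong convergence along \emph{all} scales to a single homogeneous limit — this is where the uniqueness of the blow-up of $v$ at $0$, proved e.g. via the \L ojasiewicz–Simon inequality or, in this special codimension-one situation, directly from the explicit classification, is used) with the strong convergence $u_k \to v$ to derive a contradiction for $k$ large and an appropriate comparison radius.

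More precisely, I would fix first a small $\rho$ with $\frac{1}{\rho^{m+2}}\int_{B_\rho}\mathcal G_s(v,\bar v)^2 < \delta/4$ and, by strong $W^{1,2}$ and uniform convergence of $u_k$ to $v$, get $\frac{1}{\rho^{m+2}}\int_{B_\rho}\mathcal G_s(u_k, \bar v)^2 < \delta/2$ for $k$ large; then, since $r_k \to 0 < \rho$, one needs to pass from scale $\rho$ to scale $r_k$, which requires a further decay between these two scales — so in fact the cleaner route is to not fix $\rho$ independently of $k$ but to run the blow-up of $u_k$ itself: set $\bar u_k$ to be a $1$-homogeneous Dir-minimizing function approximating the rescaling of $u_k$ at scale $r_k$ (e.g.\ a blow-up limit of $(u_k)^{(s)}$ as $s\to 0$, or better, choose $\bar u_k$ adapted so that $\frac{1}{r_k^{m+2}}\int_{B_{r_k}}\mathcal G_s(u_k,\bar u_k)^2$ is comparable to the oscillation of the rescalings of $u_k$ between scales $r_k$ and, say, $2r_k$), and show this tends to $0$ faster than $\delta$ using uniform control of the frequency function $I_{u_k}$ near $0$ (which is bounded, by compactness) together with the gap between $\alpha = I_{u_k}(0^+)$ and the next homogeneity. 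The main obstacle I anticipate is exactly this last point: making the decay towards the blow-up \emph{quantitative and uniform in $k$}, i.e.\ ruling out that the frequency of $u_k$ at $0$ drifts or that $u_k$ oscillates between two distinct homogeneous profiles at small scales. This is handled by the monotonicity of Almgren's frequency (which forbids oscillation of the homogeneity) combined with the classification of homogeneous Dir-minimizers in $\Iqspec$ — in the borderline case relevant to Proposition~\ref{p:decay-1}, $\alpha$ is close to $1$ and the only homogeneous competitors with $\alpha = 1$ are linear, which pins down $\bar u_k$ and gives a clean power-rate decay $\frac{1}{r^{m+2}}\int_{B_r}\mathcal G_s(u_k,\bar u_k)^2 \lesssim r^{2\mu}\int_{B_1}|Du_k|^2$ for some $\mu>0$, far stronger than \eqref{e:special-decay}. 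All the remaining ingredients (strong $W^{1,2}$ compactness, Almgren monotonicity, the $L^2$–$W^{1,2}$ interior estimates, and the classification of $1$-homogeneous minimizers) are standard for $\Iqspec$-valued maps since these reduce to genuine classical/Lipschitz multifunctions on the real line, as recalled in Remark~\ref{r:special-multi-functions}.
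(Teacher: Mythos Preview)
Your overall architecture (contradiction sequence, normalize Dirichlet energy, pass to a limit, analyze the frequency) matches the paper's, and you correctly flag the central difficulty: the decay towards a $1$-homogeneous profile must be \emph{uniform} along the sequence $u_k$, not just true for each $k$ separately. But your resolution of this point is where the proposal is genuinely incomplete.

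You write that uniformity follows from ``monotonicity of Almgren's frequency together with the gap between $\alpha = I_{u_k}(0^+)$ and the next homogeneity'' and the classification of homogeneous minimizers. This does not close the argument: frequency monotonicity controls the \emph{homogeneity degree} along scales, not the $L^2$ distance to a fixed $1$-homogeneous competitor, and there is no mechanism in your sketch that converts the integer frequency gap into the required uniform $L^2$ decay at the (moving) scales $r_k$. The paper resolves this through the \emph{limit} function $u$, not through $I_{u_k}(0^+)$: by \cite[Theorem~3.6]{DHMSS_final} the frequency $I := I_{u,0}(0^+)$ is a positive integer, and the proof splits into two cases. If $I \ge 2$, one uses the interior \emph{Lipschitz} estimate for $\Iqspec$-minimizers (\cite[Theorem~3.1]{DHMSS_final}) --- a result specific to the scalar target, which fails for general $\mathcal A_Q(\mathbb R^n)$ --- to transfer $\int_{B_{2\bar r}}|Du|^2 \le C\bar r^{m+2}$ back to a uniform $\|Du_k\|_{L^\infty(B_{\bar r})}$ bound, so that the constant map $Q\a{0}$ already works as $\bar u$ for all small $r$. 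If $I = 1$, one invokes a quantitative decay proposition (\cite[Proposition~7.1]{DHMSS_final}), whose hypotheses (a Weiss-energy smallness and a Dirichlet doubling bound at some fixed scale $\bar r$) are verified for all large $k$ by strong $W^{1,2}$ convergence to $u$; this produces $1$-homogeneous $\bar u_k$ with $\|\mathcal G_s(u_k,\bar u_k)\|_{C^0(B_r)} \le \bar C r^{1+\beta}$ for all $r \le \bar r$, which is the power-rate decay you allude to but do not derive.

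Two further remarks. First, the paper begins by reducing to $\etab \circ u \equiv 0$ (splitting off the harmonic average and handling it by elementary Taylor expansion); you omit this, and it matters because the integer-frequency result and Proposition~7.1 are stated in that normalized setting. Second, your description of homogeneous $\Iqspec$-minimizers as ``$\bar u_i(x) = \mathrm{Re}(c_i(x_1+ix_2)^\alpha)$'' is the $2$-dimensional classical $Q$-valued picture and is not the relevant structure here; in the present setting the $1$-homogeneous minimizers are ``open books'' (piecewise linear across an $(m-1)$-plane), and what is actually used is not their explicit form but the two black-box inputs above.
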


\begin{proof} 
We first claim that it suffices to prove that the lemma holds under the additional assumption that $\etab \circ u \equiv 0$ and with the additional conclusion that $\etab \circ \bar u \equiv 0$. Towards proving the claim, notice first that, if $f \colon B_1 \to \R$ is a harmonic function with $f(0)=0$, then it is elementary that, denoting $\bar f$ the linear term in the Taylor series representing $f$, one has
\begin{equation} \label{e:est_average}
     \frac{1}{r^{m+2}}\,\int_{B_r} |f-\bar f|^2 \, dx \leq C r^2 \int_{B_1} |Df|^2\,.
\end{equation}
Next, for a general $u \in W^{1,2}(B_1, \Iqspec)$ which is $\Dir$-minimizing and such that $u(0)=Q\a{0}$, one denotes $f := \etab \circ u$ and $v:= u \ominus f$; if $\bar v$ denotes the $1$-homogeneous and $\Dir$-minimizing function with zero average obtained applying the lemma to $v$, and $\bar f$ denotes the linear term in the Taylor series of $f$ then, setting $\bar u := \bar v \oplus \bar f$ one has, for all $r \leq \bar r (Q,m,\delta)$
\[
\begin{split}
    \frac{1}{r^{m+2}} \int_{B_r} \mathcal{G}_s(u,\bar u)^2 &= \frac{1}{r^{m+2}} \int_{B_r} \mathcal{G}_s(v,\bar v)^2 + \frac{Q}{r^{m+2}} \int_{B_r} |f-\bar f|^2 \\ 
& \leq \delta \int_{B_1} |Dv|^2 + C\, Q\, r^2 \int_{B_1} |Df|^2 \\
&\leq \delta \int_{B_1} (|Dv|^2 + Q |Df|^2) \\
&=\delta \int_{B_1} |Du|^2\,,
\end{split}
\]
where the second to last inequality holds up to choosing a possibly smaller value for $\bar r$ so that $C r^2 \leq \delta$. 

Next, we prove the validity of the lemma under the additional assumption that $\etab \circ u \equiv 0$ and with the additional conclusion that $\etab \circ \bar u \equiv 0$. We denote by $\mathscr{I}_1 \subset W^{1,2}(B_1,\Iqspec)$ the space of $1$-homogeneous Dir-minimizing functions with zero average.
We argue by contradiction and assume therefore that for every choice of $\bar {r} = \frac{1}{k}$ there is a Dir-minimizing function $u_k\in W^{1,2} (B_1, \Iqspec)$ such that $u_k (0) = Q \a{0}$, $\etab \circ u_k \equiv 0$, and for which there is a positive radius $r_k < \frac{1}{k}$ such that
\begin{equation}\label{e:special-decay-contradiction}
\inf_{\bar u \in \mathscr{I}_1}\frac{1}{r_k^{m+2}} \int_{B_{r_k}} \mathcal{G}_s (u_k(x), \bar u (x))^2\, dx \geq \delta \int_{B_1} |Du_k|^2\, .
\end{equation}
By rescaling we can, w.l.o.g., assume that $\int_{B_1} |Du_k|^2=1$ and thus up to subsequences we can assume that $u_k$ converges to some Dir-minimizing $u\in W^{1,2} (B_1, \Iqspec)$, while statement \eqref{e:special-decay-contradiction} becomes
\begin{equation}\label{e:contradiction-2}
\inf_{\bar u \in \mathscr{I}_1}\frac{1}{r_k^{m+2}} \int_{B_{r_k}} \mathcal{G}_s (u_k(x), \bar u (x))^2\, dx \geq \delta\, .
\end{equation}
Clearly
\[
\int_{B_1} |Du|^2\leq 1\, .
\]
Moreover, by \cite[Theorem 3.1]{DHMSS_final}, $u_k$ is equilipschitz on each compact subset of $B_1$, and thus the convergence is uniform. In particular $u(0) = Q \a{0}$ and $\etab \circ u \equiv 0$. Recall next that the convergence is strong in $W^{1,2} (B_r)$ for every $r<1$ (see \cite{DLHMS_linear}). Therefore $\int_{B_1} |Du|^2>0$. Otherwise we would have
\begin{equation}\label{e:impossible-1}
\lim_{k\to\infty} \int_{B_{3/4}} |Du_k|^2 = 0\, .
\end{equation}
Combined with the Lipschitz estimate of \cite[Theorem 3.1]{DHMSS_final}, the latter would imply
\[
\lim_{k\to\infty} \|Du_k\|_{L^\infty (B_{1/2})} = 0\, .
\]
Observe however that, given the information $u_k (0) = Q \a{0}$, from this we would easily infer 
\[
\lim_{k\to\infty} \frac{1}{r_k^{m+2}} \int_{B_{r_k}} \mathcal{G}_s (u_k (x), Q\a{0})^2 \, dx \leq C (m)
\lim_{k\to\infty} \|Du_k\|^2_{L^\infty (B_{1/2})} = 0\, ,
\]
which is incompatible with \eqref{e:contradiction-2} because the function identically equal to $Q\a{0}$ is certainly $1$-homogeneous and with zero average. We thus conclude 
\begin{equation}\label{e:conclusion-1}
\eta:= \int_{B_1} |Du|^2 > 0\, .
\end{equation}
Consider next the frequency $I = I_{u, 0} (0)$. By \cite[Theorem 3.6]{DHMSS_final} we know that $I$ is a positive integer. If $I\geq 2$, it then follows from the monotonicity of the frequency function that
\[
\int_{B_r} |Du|^2 \leq M_0 r^{m+2} \qquad \mbox{for every $r>0$}\, .
\]
In particular, for any fixed positive $\bar r$ there would be $K:= K(\bar r)\in \mathbb N$ such that
\[
\int_{B_{2 \bar r}} |Du_k|^2 \leq 2^{m+3} M_0 \bar r^{m+2}\, \qquad \forall k\ge K\,.
\]
By the Lipschitz estimate of \cite[Theorem 3.1]{DHMSS_final} we then conclude 
\[
\|Du_k\|_{L^\infty (B_{\bar r})} \leq C M_0^{\sfrac12} \bar r \qquad \forall k\geq K
\]
for a geometric constant $C$. In particular, again using $u_k (0) = Q \a{0}$ we get 
\[
\frac{1}{r^{m+2}} \int_{B_r} \mathcal{G}_s (u_k (x), Q \a{0})^2\, dx \leq C M_0 \bar r^2
\qquad \forall k\geq K, \forall r < \bar r\, .
\]
We thus choose first $\bar r$ sufficiently small so that $C M_0 \bar r^2 \leq \frac{\delta}{2}$ and $k$ sufficiently large so that $k\geq K$ and $\frac{1}{k} < \bar r$. Again our conclusion would be in contrast with \eqref{e:contradiction-2} for all such $k$'s and we thus conclude that the frequency $I$ cannot be larger than $1$. It must therefore be $1$. From this, we can draw the following conclusions:
\begin{align}
&\lim_{r\downarrow 0} \frac{r D_{u,0} (r)}{H_{u,0} (r)} = 1 \label{e:freq=1}  \\
&\lim_{r\downarrow 0} \frac{H_{u,0} (r)}{r^{m+1}} =: \gamma >0 \label{e:non_degener}\\
&\lim_{r\downarrow 0} \frac{D_{u,0} (2r)}{D_{u,0} (r)} = 2^m \label{e:energy_doubling}\\
&\lim_{r\downarrow 0} W_{u,0} (r) := \lim_{r\downarrow 0} \left( r^{-m} D_{u,0} (r) - r^{-m-1} H_{u,0} (r)\right) = 0\, . \label{e:Weiss}
\end{align}

In particular, consider the threshold $\bar \varepsilon>0$ given by \cite[Proposition 7.1]{DHMSS_final} for the choice $C= 2^{m+2}$. Next choose $\bar r$ sufficiently small so that
\begin{align}
H_{u,0} (\bar r) &\geq \frac{\gamma}{2} \bar r^{m+1}\\ 
\bar r D_{u,0} (2\bar r) &\leq 2^{m+1} H_{u,0} (\bar r)\\
 \bar r^{m+1} W_{u,0} (\bar r) &\leq \frac{\bar \varepsilon}{2} H_{u,0} (\bar r)\, .
\end{align}
For any sufficiently large $k$ we then have 
\begin{align}
\bar r D_{u_k,0} (2\bar r) &\leq 2^{m+2} H_{u_k,0} (\bar r)\\
\bar r^{m+1} W_{u_k,0} (\bar r) &\leq \bar \varepsilon  H_{u_k,0} (\bar r)\, .
\end{align}
We then can apply \cite[Proposition 7.1]{DHMSS_final} to each rescaled function $v_k (x) := u_k (\bar r x)$. We thus conclude the existence of a constant $\beta$ (which is geometric) and a constant $\bar C$ (which depends on $\bar r$) such that there exists a $1$-homogeneous Dir-minimizing function $\bar u_k$ with $\etab \circ \bar u_k \equiv 0$ (although this property is not claimed in the statement of \cite[Proposition 7.1]{DHMSS_final}, it can be easily concluded by a rapid inspection of the proof) and with the property that
\[
\|\mathcal{G}_s (u_k, \bar u_k)\|_{C^0 (B_r)} \leq \bar C r^{1+\beta} \qquad \forall r\leq \bar r\, .
\]
In particular, for every $k$ sufficiently large, we would infer 
\[
\int_{B_r} \mathcal{G}_s (u_k (x), \bar u_k (x))^2 \, dx \leq \bar C r^{m+2+2\beta} \qquad \forall r\leq \bar r\, .
\]
Choosing $k$ large enough we also ensure $r_k \leq \frac{1}{k} \leq \bar r$ and we thus can write 
\begin{equation}\label{e:contradiction-3}
\frac{1}{r_k^{m+2}} \int_{B_{r_k}} \mathcal{G}_s (u_k (x), \bar u_k (x))^2 \, dx \leq \bar C r_k^{2\beta}\, .
\end{equation}
Since $r_k\downarrow 0$, for $k$ large enough we have $\bar C r_k^{2\beta} \leq \frac{\delta}{2}$. In particular, given that $\bar u_k\in \mathscr{I}_1$, \eqref{e:contradiction-2} and \eqref{e:contradiction-3} are incompatible. 
\end{proof}

\subsection{Proof of Proposition \ref{p:decay-1}} 

Let $\delta_1>0$ be given and fix a small constant 
$$
r_1 (\delta_1, m,n, Q)< \frac{1}{2}
$$ 
whose choice will be specified later. We will argue by contradiction and assume that, for the choice of $\varepsilon_1 = \frac{1}{k}$, there is a current $T_k$ which satisfies the assumptions of the Proposition but for which \eqref{e:decay-1} fails. We set $\bar \bE_k := \bar \bE (T_k, 0,1)$, denote by $\Sigma_k$ the corresponding Riemannian manifolds and let $\bA_k$ be the $L^\infty$ norms of their second fundamental forms. We can further assume to rotate the currents and the ambient manifolds $\Sigma_k$ so that $\mathbb R^m \times \{0\} = \pi_0 \subset T_0 \Sigma_k = \mathbb R^{m+1}\times \{0\}$ is a plane minimizing the excess $\bar \bE_k$. We then let $v_k$ be the Lipschitz approximation of $T'_k := T_k \res \bC_{1/2}(0,\pi_0)$ given by Proposition \ref{p:L2Alm}, and $\bar v_k := \bar\bE_k^{-1/2} v_k$ their normalizations. By \eqref{e:W12} we conclude that 
\begin{equation}
\int_{B_{1/2}} (|\bar v_k|^2 + |D\bar v_k|^2)\leq C\, .
\end{equation}
We can therefore appeal to the extension of the classical Sobolev space theory to $\mathscr{A}_Q (\mathbb R^n)$-valued maps to conclude that $\bar v_k$ converges to a map $v\in W^{1,2} (B_{1/2}, \mathscr{A}_Q (\mathbb R^n))$ strongly in $L^2$, up to extraction of a  subsequence (not relabeled). Moreover we observe that:
\begin{itemize}
    \item[(a)] Since $\bA_k \to 0$, $\Sigma_k$ converge to $\mathbb R^{m+1}\times \{0\}$ and thus, by \eqref{e:inclusion}, $v$ takes values in $\pi_0^\perp \cap (\mathbb R^{m+1}\times \{0\})$, i.e. it can be regarded as a $\mathscr{A}_Q (\mathbb R)$-valued map;
    \item[(b)] by \cite[Theorem 13.3]{DLHMS} $v$ is ${\rm Dir}$-minimizing;
    \item[(c)] since $\Theta (T'_k, 0) = \Theta (T_k, 0) \geq Q$, by \cite[Theorem 23.1]{DLHMS} we have that
    \[
    \lim_{s\downarrow 0} \frac{1}{s^m} \int_{B_s} \mathcal{G}_s (v (y), Q \a{\etab \circ v (0)})^2 = 0\, ;
    \]
    in fact, the validity of \cite[Theorem 23.1]{DLHMS} is stated under the assumption that $\Theta(T_k',0)=Q$, but an inspection of the proof (which is as in \cite[Proof of Theorem 2.7]{DLS_Lp}) shows that the same result also holds when $\Theta(T_k',0) \geq Q$ (precisely, the condition on the density is crucial in \cite[Formula (9.9)]{DLS_Lp}, and the latter inequality is valid also in our setting). Furthermore, one sees that, since the origin is a point of density  at least $Q$ for $T'_k$, $v(0)=Q\a{\etab \circ v (0)} = Q\a{0}$.
\end{itemize}
 We are then in a position to apply Lemma \ref{l:special-decay} and conclude that there are $\bar r>0$ and a $1$-homogeneous Dir-minimizing function $h$ such that 
\begin{equation}\label{e:decaduto-1}
\int_{B_r} \mathcal{G}_s (v, h)^2 \leq \frac{\delta_1}{4 Q} r^{m+2} \qquad \forall r \leq \bar r\, .
\end{equation}
Our choice of $r_1$ is then given by the above $\bar r$. 

We next consider the rescaled functions $h_k := \bar\bE_k^{1/2} h$ and observe that the supports of their graphs are open books $\bS_k$ which belong to $\mathscr{B} (0)$. In particular we must have
\[
\mathbb{E} (T_k, 0, r_1) \leq \frac{1}{ r_1^{m+2}} \left( \int_{\bB_{r_1}} \dist^2 (x, \bS_k)\, d\|T_k\|  + \int_{\bS_k \cap (\bB_{r_1} \setminus B_{r_1/8}(V))} \dist^2(x,\spt(T_k)) \, d\Ha^m \right) \, .
\]
We now claim that, for a sufficiently large $k$,
\begin{equation}\label{e:decaduto-2}
\limsup_{k\to \infty} \frac{1}{\bar \bE_k r_1^{m+2}} \left( \int_{\bB_{r_1}} \dist^2 (x, \bS_k)\, d\|T_k\| + \int_{\bS_k \cap (\bB_{r_1} \setminus B_{r_1/8}(V))} \dist^2(x,\spt(T_k)) \, d\Ha^m \right) \leq \frac{\delta_1}{2}\, ,
\end{equation}
and the latter will give a contradiction since we were assuming $\mathbb{E} (T_k, 0, r_1) > \delta_1 \bar \bE_k$ for every $k$.

Observe first that $\dist (x, \bS_k) \leq r_1$ for every $x\in \bB_{r_1}$ and we can therefore estimate
\begin{align*}
\int_{\bB_{r_1}} \dist^2 (x, \bS_k) \,d\|T_k\| (x) &\leq
\int_{\bB_{r_1}} \dist^2 (x, \bS_k)\, d\|\bG_{v_k}\| (x) + C \|T_k\| ((B_{r_1}\setminus K_k)\times \pi_0^\perp)\\
&\leq \int_{\bB_{r_1}} \dist^2 (x, \bS_k)\, d\|\bG_{v_k}\| (x) + C \bar \bE_k^{1+\gamma}\, .
\end{align*}
Moreover we have
\begin{align*}
    & \int_{\bS_k \cap (\bB_{r_1} \setminus B_{r_1/8}(V))} \dist^2(x,\spt(T_k)) \, d\Ha^m\\
    \le & \int_{\bB_{r_1}} \dist^2(x,\spt(\bG_{v_k}\res \bC_{K_k})) \, d\norm{\bG_{h_k}\res \bC_{K_k}} + C \bar \bE_k^{1+\gamma}\, ,
\end{align*}
so that
\begin{equation}\label{e:decaduto-3}
\begin{split}
&\quad\limsup_{k\to \infty} \frac{1}{\bar \bE_k r_1^{m+2}} \left( \int_{\bB_{r_1}} \dist^2 (x, \bS_k) d\|T_k\| (x) + \int_{\bS_k \cap (\bB_{r_1} \setminus B_{r_1/8}(V))} \dist^2(x,\spt(T_k)) \, d\Ha^m\right) \\& \leq
\limsup_{k\to \infty} \frac{1}{\bar \bE_k r_1^{m+2}} \Bigg( \int_{\bB_{r_1}} \dist^2 (x, \bS_k) d\|\bG_{v_k}\| (x)\\
&\qquad\qquad\qquad\qquad\qquad + \int_{\bB_{r_1} } \dist^2(x,\spt(\bG_{v_k}\res \bC_{K_k})) \, d\norm{\bG_{h_k}\res \bC_{K_k}} \Bigg)\, .
\end{split}
\end{equation}
Consider next that $\bB_{r_1} \subset \bC_{r_1}$ and that, since the Lipschitz constants of $v_k$ and $h_k$ converge to $0$, we conclude
\begin{align}
&\limsup_{k\to \infty} \frac{1}{\bar \bE_k r_1^{m+2}} \int_{\bB_{r_1}} \dist^2 (x, \bS_k)\, d\|\bG_{v_k}\| (x)\nonumber\\
\leq &\limsup_{k\to \infty}\frac{1}{\bar \bE_k r_1^{m+2}} \int_{B_{r_1}} \sum_{i=1}^Q \dist^2 ((y, (v_k)_i (y)), \bS_k)\, dy\, ,
\label{e:decaduto-4}
\end{align}
as well as
\begin{align}
&\limsup_{k\to \infty} \frac{1}{\bar \bE_k r_1^{m+2}} \int_{\bB_{r_1}} \dist^2 (x, \spt(\bG_{v_k}\res \bC_{K_k}))\, d\|\bG_{h_k}\res \bC_{K_k}\| (x)\nonumber\\
\leq &\limsup_{k\to \infty}\frac{1}{\bar \bE_k r_1^{m+2}} \int_{B_{r_1}\cap K_k} \sum_{i=1}^Q \dist^2 ((y, (h_k)_i (y)), \spt(\bG_{v_k}\res \bC_{K_k}))\, dy\, ,
\label{e:deceduto-4}
\end{align}
where $(\sum_i \a{(v_k)_i (y)}, \epsilon_{v_k} (y))$ is the value of the $\mathscr{A}_Q (\mathbb R^n)$-valued function $v_k$ at $y$ and $(\sum_i \a{(h_k)_i (y)}, \epsilon_{h_k} (y))$ is the value of the $\mathscr{A}_Q (\mathbb R^n)$-valued function $h_k$ at $y$.

Now, observe that $h_k=\bar\bE_k^{1/2} h$, and that $\bS_k$ is its support. Thus, if we denote by $(\sum_i \a{h_i (y)}, \epsilon (y))$ the value of $h$ at $y$, then $\eps_{h_k}=\eps$, $(h_k)_i=\bar \bE_k^{1/2} h_i$, and for every $y$, $k$ and $i$ there is a $j=j (k, y,i)$ with the property that
\[
|(v_k)_i (y) - \bar \bE_k^{1/2} h_j (y)|\leq \mathcal{G}_s (v_k (y), \bar \bE_k^{1/2} h (y))\, . 
\]
Since $\dist (\left(y, (v_k)_i (y)\right), \bS_k) \leq |(v_k)_i (y) - \bar \bE_k^{1/2} h_j (y)|$, we can write 
\begin{align}
&\limsup_{k\to \infty}\frac{1}{\bar \bE_k r_1^{m+2}} \int_{B_{r_1}} \sum_{i=1}^Q \dist^2 ((y, (v_k)_i (y)), \bS_k)\, dy\nonumber\\
\leq & \limsup_{k\to \infty} \frac{1}{\bar \bE_k r_1^{m+2}} \int_{B_{r_1}} Q\, \mathcal{G}_s (v_k (y), \bar \bE_k^{1/2} h (y))^2\, dy\nonumber\\
= & \limsup_{k\to \infty} \frac{Q}{r_1^{m+2}} \int_{B_{r_1}} \mathcal{G}_s (\bar v_k (y), h (y))^2\, dy\nonumber\\
=& \frac{Q}{r_1^{m+2}} \int_{B_{r_1}} \mathcal{G}_s (v (y), h (y))^2\, dy
\stackrel{\eqref{e:decaduto-1}}{\leq} \frac{\delta_1}{4}\, . \label{e:decaduto-5}
\end{align}

Arguing analogously, one sees that  $\dist(\left( y, (h_k)_i(y)\right),\spt(\bG_{v_k})) \leq \mathcal{G}_s(\bar\bE_k^{1/2}h(y), v_k(y))$ for every $y \in B_{r_1}\cap K_k$, so that
\begin{align} \label{e:deceduto-5}
    &\limsup_{k\to \infty}\frac{1}{\bar \bE_k r_1^{m+2}} \int_{B_{r_1}\cap K_k} \sum_{i=1}^Q \dist^2 ((y, (h_k)_i (y)), \spt(\bG_{v_k} \res \bC_{K_k}))\, dy
        \leq \frac{\delta_1}{4}\,.
\end{align}
Combining \eqref{e:decaduto-5} and \eqref{e:deceduto-5} with \eqref{e:decaduto-4} and \eqref{e:deceduto-4}, and plugging in \eqref{e:decaduto-3}, we conclude \eqref{e:decaduto-2}, thus completing the proof.
\qed

\section{Proof of Proposition \texorpdfstring{\ref{p:decay-2}}{p:decay2}: propagation lemmas and behavior of \texorpdfstring{$Q$}{Q}-points} \label{s:Q-points}

Many ingredients in the proof of Proposition \ref{p:decay-2} will be borrowed from \cite[Theorem 4.5]{DHMSS}. However, several substantial changes are needed, mostly because the ``optimal open book'' $\bS$ in \cite[Theorem 4.5]{DHMSS} is assumed to be at a fixed distance from a plane, while the one in Proposition \ref{p:decay-2} is not. The first such change is  related to the construction of the graphical parametrization, where we cannot rely solely on White's $\eps$-regularity theorem \cite{White86}, but we will also need to use Proposition \ref{p:L2Alm}. 

Over the next sections we will work under the following set of assumptions.

\begin{ipotesi}\label{ass:decay plane -1}
We let $T$ and $\Sigma$ be as in Assumption \ref{ass:everywhere}, and $\bar \eps, \bar \eta \in \left(0,\frac12\right)$ are fixed positive constants. There are an open book $\bS=\bigcup_{i=1}^{N} \bH_i \subset T_0\Sigma$ and and a plane $\pi_0 \subset T_0\Sigma$ such that
\begin{itemize}
    \item[(i)] $\bA^2 \leq \bar\eps\, \mathbf E (T, \pi_0, 0, 1)\leq \bar\varepsilon^2$; 
    \item[(ii)] $\mathbb{E} (T, \bS, 0,1) \leq \bar \eta \, \bE (T, \pi_0, 0, 1)$;
    \item[(iii)] $\bar \bE (T, 0, 1) \geq (1-\bar \eta) \, \bE (T, \pi_0, 0, 1)$.
\end{itemize}
\end{ipotesi}

Setting $V=V(\bS)$, we write $T_0\Sigma = V^{\perp_0} \oplus V$ with coordinates 
\[
z=(x,y)=(x_1,x_2,y_1,\ldots,y_{m-1})\, .
\]
If additionally $V= V (\bS) \subset \pi_0$, then we set 
\[
\pi_0 = \{x_2=0\}
\qquad \text{and}\qquad 
\pi_0^\pm := \pi_0 \cap \{ \pm x_1 > 0  \}\,.
\]
Coordinates in $T_0 \Sigma^\perp \simeq \R^{n-1}$ are denoted $w$.

\subsection{Angle bound} We start with a lemma bounding the angles formed between the various pages of $\mathbf{S}$ and $\pi_0$.

\begin{definition}\label{def:angles}
For every fixed $q\in T_0\Sigma$, let $\mathscr{H}_q = \left\lbrace \bH \subset \bS \, \colon \, \dist(q,\bH) = \dist(q,\bS) \right\rbrace$, and, for any $m$-dimensional plane $\pi \subset T_0\Sigma$, denote by $\beta_\pi(q)$ the maximal angle between half-planes $\bH \in \mathscr{H}_q$ and $\pi$. More precisely, for any $\bH \in \mathscr{H}_q$ we let $\tau(\bH)$ denote the $m$-plane containing $\bH$, and then we set
\[
\beta_\pi(q) := \max\{\dist_{\mathcal H}(\tau(\bH) \cap \bB_1, \pi \cap \bB_1)\,\colon\, \bH \in \mathscr{H}_q\}\,,
\]
where $d_{\mathcal{H}}$ denotes Hausdorff distance. We record the elementary fact that, when $\tau$ and $\pi$ are two $m$-planes in $T_0\Sigma$,
\[
\dist_{\mathcal H}(\tau \cap \bB_1, \pi \cap \bB_1) = \sup\{|\mathbf{p}_{\pi^\perp}(z)| \, \colon \, z \in \tau \cap \bB_1\}\,.
\]
We also set
\[
\begin{split}
\beta_{\pi}(\bS):&=\max_{q\in \partial \bB_1\cap \bS} \beta_{\pi} (q) \\ &= \max\left\lbrace \dist_{\mathcal H}(\pi \cap \bB_1, \pi_i \cap \bB_1) \, \colon \, \pi_i \supset \bH_i\,, i \in \{1,\ldots,N\} \right\rbrace \,, \\
\beta_{\max}(\bS):&=\max\left\lbrace \max_{q\in \partial \bB_1\cap \bS} \beta_{\pi} (q) \, \colon \, \pi \supset \bH_i \quad \mbox{for some $i \in \{1,\ldots,N\}$} \right\rbrace \\
&= \max\left\lbrace \dist_{\mathcal H}(\pi_i \cap \bB_1, \pi_j \cap \bB_1) \, \colon \, \pi_i \supset \bH_i\,, \pi_j \supset \bH_j\,,i,j \in \{1,\ldots,N\} \right\rbrace\, .
\end{split}
\]
\end{definition}


$\beta_{\pi} (\bS)$ is the maximal angle formed by pages of $\bS$ and $\pi$, while $\beta_{\max} (\bS)$ is the maximal angle formed by distinct pages of $\bS$. Observe that $\beta_{\max} (\bS)=0$ if and only if $\bS$ is an $m$-dimensional plane.

\begin{lemma}[Angle bound]\label{l:angle-bound}
There are positive constants $\varepsilon_4$, $\eta_4$, and $C$ such that, if $T$ satisfies Assumption \ref{ass:decay plane -1} with $\bar\eps\leq \eps_4$ and $\bar\eta\leq \eta_4$, then the following holds:
    \begin{equation}\label{eq:angle_bound}
         C^{-1} \beta_{\pi_0}^2(\bS) \leq  \bE(T,\pi_0,0,1) \leq C \beta_{\max}^2 (\bS) 
         \leq C^2 \beta_{\pi_0}^2 (\bS)\, .
    \end{equation}
    The same conclusions hold if the first inequality in Assumption \ref{ass:decay plane -1}(i) is replaced by $\bA \leq \bar\eta\beta_{\max} (\bS)$.
\end{lemma}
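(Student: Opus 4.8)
The chain of inequalities in \eqref{eq:angle_bound} has three links, and the outer two are purely about geometry of open books together with soft comparison between $T$ and $\bS$; only the middle one uses the hypotheses seriously. I would organize the argument around the quantity $\mathbf E(T,\pi_0,0,1)$ and the three excesses $\bE(T,\bS,0,1)$, $\bE(\bS,T,0,1)$, $\bE(T,\pi_0,0,1)$, exploiting the fact that by Assumption \ref{ass:decay plane -1}(ii)--(iii) we have $\mathbb E(T,\bS,0,1)\leq \bar\eta\,\bE(T,\pi_0,0,1)$ and $\bar\bE(T,0,1)\geq (1-\bar\eta)\bE(T,\pi_0,0,1)$, so that the two-sided excess from $\bS$ is much smaller than the planar excess, and $\pi_0$ is almost optimal among planes.

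\textbf{Lower bound $C^{-1}\beta_{\pi_0}^2(\bS)\leq \bE(T,\pi_0,0,1)$.} Fix a page $\bH_i$ realizing $\beta_{\pi_0}(\bS)$, with containing plane $\pi_i$, and pick a point $q^\ast\in \pi_i\cap \partial\bB_1$ far from the spine $V$, say $|\mathbf p_V^\perp(q^\ast)|\geq \tfrac12$, with $|\mathbf p_{\pi_0^\perp}(q^\ast)|=\beta_{\pi_0}(\bS)$ (using the elementary identity for $\dist_{\mathcal H}$ recorded in Definition \ref{def:angles}). On the portion of $\bH_i$ near $q^\ast$ and away from $V$, the distance from $\pi_0$ grows linearly, so $\int_{\bH_i\cap (\bB_1\setminus B_{1/8}(V))}\dist^2(\cdot,\pi_0)\,d\Ha^m\geq c\,\beta_{\pi_0}^2(\bS)$. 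The subtle point is to transfer this lower bound from $\bH_i$ to $\|T\|$: I would argue that, since $\bE(\bS,T,0,1)$ is small, $\spt(T)$ must lie close to $\bH_i$ on a set of substantial $\Ha^m$-measure inside this region, and then a point of $\spt(T)$ at distance $\lesssim \bE(\bS,T,0,1)^{1/(m+2)}$ from such a point of $\bH_i$ is at distance $\gtrsim \beta_{\pi_0}(\bS) - o(\beta_{\pi_0}(\bS))$ from $\pi_0$; combined with the density lower bound $\omega_m r^m\leq 2\|T\|(\bB_r(\tilde q))$ valid when $\bA$ is small, this produces $\bE(T,\pi_0,0,1)\gtrsim \beta_{\pi_0}^2(\bS)$ provided $\bar\eta$ is small enough that $\bE(\bS,T,0,1)\ll \bE(T,\pi_0,0,1)$, which is exactly Assumption \ref{ass:decay plane -1}(ii).

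\textbf{Middle bound $\bE(T,\pi_0,0,1)\leq C\beta_{\max}^2(\bS)$.} Here I would use the triangle-type inequality $\dist(q'-0,\pi_0)\leq \dist(q'-0,\bS)+\sup_{z\in \bS\cap\bB_1}\dist(z,\pi_0)$ on $\bB_1$; the second term is controlled by $\beta_{\pi_0}(\bS)\leq \beta_{\max}(\bS)$ and the first integrates to $\bE(T,\bS,0,1)\leq \bar\eta\,\bE(T,\pi_0,0,1)$ by Assumption \ref{ass:decay plane -1}(ii). Absorbing the $\bar\eta\,\bE(T,\pi_0,0,1)$ term into the left-hand side (possible once $\bar\eta\leq \eta_4\leq \tfrac12$) yields $\bE(T,\pi_0,0,1)\leq C(\beta_{\pi_0}^2(\bS)+\|T\|(\bB_1))\leq C\beta_{\max}^2(\bS)$, where the mass bound is absorbed into the angle term using that $\beta_{\pi_0}(\bS)$ cannot be too small relative to $\bE(T,\pi_0,0,1)^{1/2}$ once the previous lower bound is in hand — more precisely, I would instead directly bound $\bE(T,\pi_0,0,1)\leq 2\bE(T,\bS,0,1)+2\,\beta_{\pi_0}(\bS)^2\,\|T\|(\bB_1)\,\omega_m^{-1}\leq C\beta_{\pi_0}^2(\bS)\leq C\beta_{\max}^2(\bS)$, using the controlled mass and $\bar\eta$ small.

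\textbf{Upper bound $\beta_{\max}^2(\bS)\leq C\beta_{\pi_0}^2(\bS)$ and the main obstacle.} This is the heart of the lemma and the step I expect to be hardest: it says every page of $\bS$ is nearly parallel to $\pi_0$, not merely that the extremal one is. The mechanism is that a single plane $\pi_0$ is close (in two-sided excess) to $T$, and $T$ is close to the whole book $\bS$, so $\bS$ itself cannot ``open up'' more than $\beta_{\pi_0}(\bS)$ without $T$ being forced far from $\pi_0$ somewhere, contradicting the smallness of $\bE(T,\pi_0,0,1)$ relative to $\beta_{\max}^2(\bS)$. Concretely, suppose $\beta_{\max}(\bS)$ is attained by pages $\bH_i,\bH_j$ with $\beta_{\pi_0}(\bS)\leq \tfrac{1}{10C}\beta_{\max}(\bS)$; then both $\pi_i$ and $\pi_j$ are within $\tfrac{1}{10C}\beta_{\max}(\bS)$ of $\pi_0$, forcing $\dist_{\mathcal H}(\pi_i\cap\bB_1,\pi_j\cap\bB_1)\leq \tfrac{1}{5C}\beta_{\max}(\bS)<\beta_{\max}(\bS)$, a contradiction. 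In other words the inequality $\beta_{\max}(\bS)\leq 2\beta_{\pi_0}(\bS)$ is \emph{purely linear-algebraic} once we know \emph{every} page plane $\pi_i$ is $O(\beta_{\pi_0}(\bS))$-close to $\pi_0$; and that in turn is exactly the content of the lower bound $\beta_{\pi_0}(\bS)^2\lesssim \bE(T,\pi_0,0,1)$ applied page-by-page, together with $\bE(T,\pi_0,0,1)\lesssim \beta_{\pi_0}(\bS)^2$. So the real work is in the first ``transfer'' step above, making rigorous that $\spt(T)$ populates each region $\bH_i\cap(\bB_1\setminus B_{1/8}(V))$ densely enough; this is where a Vitali/covering argument together with the density lower bound and the smallness of $\bE(\bS,T,0,1)$ does the job, in the spirit of the proof of \eqref{eq:graphical_madness} in Proposition \ref{p:L2Alm-piece-2}. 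Finally, the variant with $\bA\leq \bar\eta\,\beta_{\max}(\bS)$ replacing Assumption \ref{ass:decay plane -1}(i) is handled identically: everywhere we used $\bA^2\leq \bar\eps\,\bE(T,\pi_0,0,1)$ we now use $\bA^2\leq \bar\eta^2\,\beta_{\max}^2(\bS)\leq C\bar\eta^2\,\bE(T,\pi_0,0,1)$ by the middle inequality, which is at least as good once $\bar\eta$ is small.
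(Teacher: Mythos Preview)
You have misidentified which inequality is hard and, as a result, there is a genuine gap in your argument for the middle bound.

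\textbf{The easy inequality.} The bound $\beta_{\max}(\bS)\leq 2\beta_{\pi_0}(\bS)$ is \emph{not} the heart of the lemma; it is an immediate triangle inequality for Hausdorff distance. By definition $\beta_{\pi_0}(\bS)=\max_i d_{\mathcal H}(\pi_i\cap\bB_1,\pi_0\cap\bB_1)$, so \emph{every} page plane $\pi_i$ is already within $\beta_{\pi_0}(\bS)$ of $\pi_0$; hence $d_{\mathcal H}(\pi_i\cap\bB_1,\pi_j\cap\bB_1)\leq 2\beta_{\pi_0}(\bS)$ for all $i,j$. Your contradiction argument is a correct but roundabout proof of this one-line fact, and the subsequent discussion (``that in turn is exactly the content of the lower bound \ldots applied page-by-page'') is circular and unnecessary.

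\textbf{The gap in the middle bound.} Your chain
\[
\bE(T,\pi_0,0,1)\leq 2\,\bE(T,\bS,0,1)+C\,\beta_{\pi_0}^2(\bS)\leq C\,\beta_{\pi_0}^2(\bS)\leq C\,\beta_{\max}^2(\bS)
\]
is correct up to the penultimate expression, but the last step $\beta_{\pi_0}^2(\bS)\leq C\,\beta_{\max}^2(\bS)$ is unjustified and in general false: $\pi_0$ need not lie among (or near) the page planes, so nothing prevents $\beta_{\pi_0}(\bS)\gg\beta_{\max}(\bS)$. What rules this out is precisely Assumption~\ref{ass:decay plane -1}(iii), the near-optimality of $\pi_0$, which you list in the plan but never invoke. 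The paper uses it as follows: for any plane $\pi\supset\bH_i$ one has $\bar\bE(T,0,1)\leq\bE(T,\pi,0,1)$, and by the same triangle argument you used,
\[
\bE(T,\pi,0,1)\leq C(\bar\eta+\bar\varepsilon)\,\bE(T,\pi_0,0,1)+C\bar\eta\,\beta_{\max}^2(\bS)+C\int_{\bB_1}\beta_\pi(q'')^2\,d\|T\|\,.
\]
Combining with $(1-\bar\eta)\,\bE(T,\pi_0,0,1)\leq\bar\bE(T,0,1)$ and absorbing, one gets $\bE(T,\pi_0,0,1)\leq C\int\beta_\pi(q'')^2\,d\|T\|+C\bar\eta\,\beta_{\max}^2(\bS)$; now choosing $\pi$ to contain a page makes $\beta_\pi(q'')\leq\beta_{\max}(\bS)$ pointwise and yields $\bE(T,\pi_0,0,1)\leq C\,\beta_{\max}^2(\bS)$. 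This is where the real work lies.

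\textbf{Minor remarks.} For the lower bound $\beta_{\pi_0}^2(\bS)\leq C\,\bE(T,\pi_0,0,1)$, your density/Vitali sketch is in the right spirit but more involved than needed. The paper argues by contradiction and uses Allard's height bound (Lemma~\ref{Linfty-L2}) directly: points of $\spt(T)\cap\bB_{15/16}$ lie within $C\,\bE(T,\pi_0,0,1)^{1/2}$ of $\pi_0$, so a definite region of the extremal page $\bH_0$ is at distance $\gtrsim\beta_{\pi_0}(\bS)$ from $\spt(T)$, forcing $\bE(\bS,T,0,1)$ to be large and contradicting (ii). Finally, in your treatment of the variant hypothesis $\bA\leq\bar\eta\,\beta_{\max}(\bS)$ you appeal to ``the middle inequality'' to get $\beta_{\max}^2(\bS)\leq C\,\bE(T,\pi_0,0,1)$, but that is the wrong direction; the paper instead uses $\bA^2\leq\bar\eta^2\beta_{\max}^2(\bS)\leq 4\bar\eta^2\beta_{\pi_0}^2(\bS)$ via the triangle inequality and feeds this directly into the contradiction argument for the lower bound.
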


\begin{proof} 
We first observe that, under the hypotheses (ii) and (iii) of Assumption \ref{ass:decay plane -1}, the book $\bS$ cannot be an $m$-dimensional plane, that is $\beta_{\max}(\bS) > 0$. Indeed, should $\bS$ be a plane, we would have, for $\bar\eta < \frac12$, the contradiction 
\[
\begin{split}
\bE(T,\pi_0,0,1) &\leq \frac{1}{1-\bar\eta}\, \bar\bE(T,0,1) \leq \frac{1}{1-\bar\eta}\, \bE(T,\bS,0,1) \leq \frac{\bar\eta}{1-\bar\eta}\, \bE(T,\pi_0,0,1) \\
& < \bE(T,\pi_0,0,1)\,.
\end{split}
\]

Next, we prove that there exists a positive geometric constant $C$ such that
\begin{equation}\label{eq:angle_bound1}
    \frac1C\,  \bE(T,\pi_0,0,1)\leq \beta_{\max}^2(\bS)\,.
\end{equation}
Indeed, for $q \in \Sigma \cap \bB_1$ set $q' := \p_{T_0\Sigma}(q)$ , let $\bH_{q'} \in \mathscr{H}_{q'}$ be such that $\dist_{\mathcal{H}}(\tau(\bH_{q'}) \cap \bB_1, \pi \cap \bB_1) = \beta_\pi (q')$, and let $q''\in \bH_{q'}$ be such that $\dist (q', \bH_{q'})= \dist(q',\bS) =  |q'-q''|$. Notice that, for $\pi$ an arbitrary plane in $T_0\Sigma$, it holds
\[
\abs{\mathbf{p_{\pi^\perp}}(q)}\le \dist(q', \bH_{q'}) + \beta_{\pi} (q'') + \bA\,.
\]
Therefore we have that
\begin{align*}
    &\int_{\bB_1} \dist^2(q,\pi) \,d\|T\| 
     \le 2 \int_{\bB_1} \dist^2(q,\bS) \,d\|T\|(q) + 2\int_{\bB_1} \beta_{\pi} (q'')^2  \, d\|T\|(q) + C\bA^2\,\\
    \leq & C\,\bE(T,\bS,0,1)+ C \bar\varepsilon \bE (T, \pi_0, 0, 1) + C \bar\eta^2 \beta_{\max} (\bS)^2 + 2\int_{\bB_1} \beta_\pi(q'')^2  \, d\|T\|(q)\\
    \leq & C (\bar\eta + \bar\eps) \bE (T, \pi_0, 0, 1) + C \bar\eta \beta_{\max} (\bS)^2 + 2\int_{\bB_1} \beta_{\pi} (q'')^2 \, d\|T\| (q)\, ,
\end{align*}
where in the second inequality we have used Assumption \ref{ass:decay plane -1}(i) (or the alternative $\bA \leq \bar\eta \beta_{\max} (\bS)$) and in the last inequality we used Assumption \ref{ass:decay plane -1} (ii). The above implies
\[
\bar\bE(T,0,1) \leq \bE (T, \pi, 0, 1) \leq C (\bar\eta+\bar\eps) \bE (T, \pi_0, 0, 1)
+ C \bar\eta \beta_{\max} (\bS)^2 + 2 \int_{\bB_1} \beta_{\pi} (q'')^2 \, d\|T\| (q)\, ,
\]
which coupled with Assumption \ref{ass:decay plane -1}(iii) yields, for $\bar\eta$ and $\bar\eps$ sufficiently small,
\begin{equation}\label{eq.integral lower bound}
\bE(T,\pi_0,0,1) \le 4\,\int_{\bB_1} \beta_\pi(q'')^2\, d\|T\| (q) + C \bar\eta \beta_{\max} (\bS)^2\,.
\end{equation}
We fix next $\pi$ to be some $m$-dimensional plane containing a page $\bH_i$ of $\bS$, so that $\beta_{\pi}(q'') \leq \beta_{\max}(\bS)$ for $\|T\|$-a.e. $q$. We then achieve
\begin{align}\label{eq.lower bound on opening angle}
    \bE(T,\pi_0,0,1)\leq C\,\beta_{\max}^2(\bS)\,.
\end{align}

\medskip

The inequality $\beta_{\max} (\bS) \leq 2 \beta_{\pi_0} (\bS)$ immediately follows from the triangle inequality for the Hausdorff distance.

\medskip

We next claim that there exists a positive geometric constant $C$ such that
\begin{equation}\label{eq:angle_bound2}
    \beta_{\pi_0}^2(\bS) \leq  C\,  \bE(T,\pi_0,0,1)\,.
\end{equation}
We assume by contradiction that for every $C_1>0$ one could have
\[
\beta_{\pi_0}^2(\bS)> C_1\, \bE(T,\pi_0,0,1) \,,
\] 
and we let $\bH_0$ be a half plane realizing $\beta_{\pi_0}(\bS)$, namely $\beta_{\pi_0}(\bS) = \dist_{\mathcal H}(\tau(\bH_0) \cap \bB_1, \pi_0 \cap \bB_1)$. Then, let $V = V(\bS)$ be the spine of $\bS$, set $W:= \pi_0 \cap \bH_0$, and define
\[
    \Omega := \left\lbrace z \in \bH_0 \, \colon \, |z| \leq \frac14,\, \dist(z,V) \geq \frac18,\, \dist(z,W) \geq \frac18 \right\rbrace\,.
\]
Notice that $\mathcal{H}^m(\Omega) \geq c_m$ for some positive geometric constant $c_m$. Notice also that 
\[
\abs{\mathbf{p}_{\pi_0^\perp}(z)}^2 \geq 8^{-2} \dist^2(z,W) \, \beta_{\pi_0} (\mathbf{S})^2 \qquad \mbox{for every $z \in \Omega$}
\]
 Next recall Lemma \ref{Linfty-L2}: 
\begin{align*}
\sup_{\tilde q\in \bB_{15/16}\cap\spt(T)}|\mathbf{p}_{\pi_0^\perp}(\tilde q)|^2 & \leq C_0 \,\bE(T,\pi_0,0,1) + C_0 \bA^2 \\
&\leq C_0 \,\bE(T,\pi_0,0,1) + C_0 \bar\eps \bE (T, \pi_0, 0, 1) + \bar\eta \beta_{\pi_0} (\bS)^2\, ,
\end{align*}
where we have used either Assumption \ref{ass:decay plane -1}(i) or the alternative $\bA^2 \leq \bar\eta^2 \beta_{\max} (\bS)^2 \leq \bar\eta \beta_{\pi_0} (\bS)^2$.
It follows then that for every $z \in \Omega$
\[
\begin{split}
& 2\,\dist(z,\spt(T))^2 \\
&\qquad \qquad\ge 8^{-2} \dist^2(z,W) \beta_{\pi_0} (\mathbf{S})^2 - 2 \sup_{\tilde q\in \bB_{15/16}\cap\spt(T)}|\mathbf{p}_{\pi_0^\perp}(\tilde q)|^2 \\ &\qquad \qquad\geq 
 ((8^{-4}-\bar\eta)C_1-3C_0) \, \bE(T,\pi_0,0,1)\, .
 \end{split}
\]
We then infer that
\[
\bE(\bS,T,0,1) \ge \bE(\bH_0, T,0,1) \ge \frac{((8^{-4}-\bar\eta)C_1-3C_0)}{2} \, c_m\,\bE(T,\pi_0,0,1) \,, 
\]
which, for $\bar\eta$ sufficiently small and $C_1>0$ sufficiently large, is a contradiction with Assumption \ref{ass:decay plane -1} (ii).
\end{proof}

\begin{remark}\label{rmk:on_angle_bound}
A quick inspection of the proof of Lemma \ref{l:angle-bound} shows that in order to prove the inequalities
\[
\bE(T,\pi_0,0,1) \leq C\,\beta_{\max}^2(\bS) \leq C^2\,\beta_{\pi_0}^2(\bS)
\]
only the smallness condition $\bE(T,\bS,0,1) \leq \bar\eta \, \bE(T,\pi_0,0,1)$ on the one-sided excess is needed. The smallness condition in Assumption \ref{ass:decay plane -1}(ii) for the double-sided excess, which involves also a condition on $\bE(\bS,T,0,1)$, is only needed to prove the other bound
\[
\beta_{\pi_0}^2(\bS) \leq C \, \bE(T,\pi_0,0,1)\,.
\]
\end{remark}

\subsection{Propagation of graphicality}

The following lemmas will be the key to achieve a graphical parametrization of the current over $\bS$.

\begin{lemma}[Kick-off lemma] \label{lem.propagation1}
There exists $\eta_5 > 0$ with the following property. Let $T$ and $\Sigma$ be as in Assumption \ref{ass:everywhere}. Assume that there are an open book $\bS = \bigcup_{i=1}^N \bH_i \subset T_0\Sigma$ and a plane $\pi_0 \subset T_0\Sigma$ such that, for some $\bar\eta \leq \eta_5$ 
\begin{enumerate}
    \item[(a1)] $\bE(T,\pi_0,0,1) \le \bar\eta$ and $\bar \bE (T, 0, 1) \geq (1-\bar\eta) \bE (T, \pi_0, 0, 1)$;
    \item[(a2)] $\mathbb{E} (T,\bS, 0,1) \le \bar\eta\, \bE(T,\pi_0,0,1)$; 
    \item[(a3)] $\bA^2 \leq \bar\eta \, \mathbf{E}(T,\pi_0,0,1)$.
\end{enumerate}
Then, there exists a plane $\pi_0'$ such that the rescaled current $T' = (\lambda_{0,\sfrac12})_\sharp T$ satisfies (a1)-(a2)-(a3) with $\pi_0$ replaced by $\pi_0'$ and $\bar\eta$ replaced by some $\eta=\eta(\bar\eta)$ such that $\eta(\bar\eta) \to 0$ when $\bar\eta \to 0$. Furthermore, $\pi_0'$ satisfies the additional properties that
\begin{itemize}
\item[(a4)] $\bE (T',\pi_0',0,1) \leq 2\,\bar\bE (T',0,\sfrac12)$;
\item[(a5)] $V (\mathbf{S})\subset \pi_0'$.
\end{itemize}
\end{lemma}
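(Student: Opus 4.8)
# Proof Proposal for the Kick-off Lemma

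\textbf{Overview of the strategy.} The plan is to produce the new plane $\pi_0'$ essentially as an \emph{optimizing plane} for $T' = T_{0,1/2}$ at scale $1$, modified only to force the inclusion $V(\bS) \subset \pi_0'$, and then to verify that all the hypotheses (a1)--(a3) survive the rescaling with a controlled (but worse) constant $\eta(\bar\eta)$. The key structural input is the Angle Bound (Lemma \ref{l:angle-bound}), which under hypotheses (a1)--(a3) (which coincide with Assumption \ref{ass:decay plane -1} once $\bar\eta$ is small) gives the two-sided comparison
\[
C^{-1}\beta_{\pi_0}^2(\bS) \leq \bE(T,\pi_0,0,1) \leq C\beta_{\max}^2(\bS) \leq C^2 \beta_{\pi_0}^2(\bS)\,,
\]
so that all planar excesses, the opening angle $\beta_{\max}(\bS)$, and the angle $\beta_{\pi_0}(\bS)$ between $\bS$ and $\pi_0$ are mutually comparable up to geometric constants. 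This is what lets me pass from one scale to another without losing the relation between the various excesses by more than a fixed factor.

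\textbf{Construction of $\pi_0'$ and verification of (a4)--(a5).} First I let $\tilde\pi$ be a plane realizing $\bar\bE(T',0,1)$, i.e. $\bE(T',\tilde\pi,0,1) = \bar\bE(T',0,1)$, with $\tilde\pi \subset T_0\Sigma'$ (here $\Sigma' = \Sigma_{0,1/2}$, and note $T_0\Sigma' = T_0\Sigma$ canonically). To enforce (a5) I then rotate $\tilde\pi$ minimally to contain the $(m-1)$-plane $V := V(\bS)$, obtaining $\pi_0'$; since $V \subset T_0\Sigma$, the angle of rotation needed is at most $\dist_{\mathcal H}(\tilde\pi \cap \bB_1, \pi_0' \cap \bB_1) \lesssim \dist(V, \tilde\pi)$, and $\dist(V,\tilde\pi)$ is controlled because $V \subset \bS$ and $\bS$ is close to $\tilde\pi$ in the relevant sense: indeed $\bE(\bS, T', 0, 1)$ small plus $\bE(T', \tilde\pi, 0, 1)$ small forces points of $V$ at unit distance from the origin to lie near $\spt(T')$ near $\tilde\pi$. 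Quantitatively, $d_{\mathcal H}(V\cap\bB_1, \tilde\pi\cap\bB_1)^2 \leq C(\bbE(T',\bS,0,1) + \bar\bE(T',0,1)) + C\bA'^2$, so $\bE(T',\pi_0',0,1) \leq 2\bE(T',\tilde\pi,0,1) = 2\bar\bE(T',0,1)$ once $\bar\eta$ (hence all the quantities at scale $1/2$, via almost-monotonicity) is small enough; this gives (a4). Here I will need almost-monotonicity of the various excesses from scale $1$ to scale $1/2$ — this is exactly the content of \eqref{e:almost-monotonicity-book} together with the elementary $\bE(T,\cdot,0,1/2) \leq 2^{m+2}\bE(T,\cdot,0,1)$ — which guarantees $\bar\bE(T',0,1) = \bar\bE(T,0,1/2) \leq C(\bar\bE(T,0,1)+\bA^2) \leq C\bar\eta$.

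\textbf{Propagating (a1)--(a3) to the new scale.} For (a3): $\bA'^2 = \tfrac14 \bA^2 \leq \tfrac14 \bar\eta\, \bE(T,\pi_0,0,1)$, and I must compare $\bE(T,\pi_0,0,1)$ with $\bE(T',\pi_0',0,1) = \bE(T,\pi_0',0,1/2)$. By the Angle Bound all four of $\bE(T,\pi_0,0,1)$, $\beta_{\max}^2(\bS)$, $\bE(T,\bS,0,1/2)$-related quantities, and $\bar\bE(T,0,1/2)$ are comparable up to geometric constants and lower-order errors $O(\bar\eta)\bE(T,\pi_0,0,1)$, so $\bE(T',\pi_0',0,1) \geq c\,\bE(T,\pi_0,0,1)$ for a geometric $c>0$; hence (a3) holds at the new scale with constant $\eta \sim \bar\eta/c$. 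For (a1): the bound $\bE(T',\pi_0',0,1) \leq \bar\eta'$ follows from $\bE(T',\pi_0',0,1) \leq 2\bar\bE(T',0,1) \leq C(\bar\bE(T,0,1)+\bA^2) \leq C\bar\eta =: \bar\eta'$ (so again $\bar\eta' \to 0$ as $\bar\eta \to 0$); and $\bar\bE(T',0,1) \geq (1-\bar\eta')\bE(T',\pi_0',0,1)$ is immediate from (a4), which gives $\bar\bE(T',0,1) \geq \tfrac12 \bE(T',\pi_0',0,1)$, a bound of the required form. For (a2): I must bound $\bbE(T',\bS,0,1) = \bbE(T,\bS,0,1/2)$. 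The one-sided piece $\bE(T,\bS,0,1/2) \leq 2^{m+2}\bE(T,\bS,0,1) \leq 2^{m+2}\bar\eta\,\bE(T,\pi_0,0,1)$ is trivial; the reverse piece $\bE(\bS,T,0,1/2)$ requires the almost-monotonicity \eqref{e:almost-monotonicity-book}, yielding $\bbE(T,\bS,0,1/2) \leq C(\bbE(T,\bS,0,1) + \bA^2) \leq C\bar\eta\,\bE(T,\pi_0,0,1) + C\bar\eta\,\bE(T,\pi_0,0,1) \leq C'\bar\eta\,\bE(T',\pi_0',0,1)$ after using the comparability $\bE(T,\pi_0,0,1) \leq C\bE(T',\pi_0',0,1)$ established above. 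Thus (a2) holds at the new scale with $\eta = C'\bar\eta/c$.

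\textbf{Expected main obstacle.} The delicate point is not any single estimate but the \emph{bookkeeping of the comparison $\bE(T,\pi_0,0,1) \asymp \bE(T',\pi_0',0,1)$}: I must be sure that rescaling by $1/2$ and replacing $\pi_0$ by a genuinely different optimizing-type plane $\pi_0'$ does not \emph{decrease} the planar excess by more than a geometric factor, for otherwise the ratios $\bA'^2/\bE(T',\pi_0',0,1)$ and $\bbE(T',\bS,0,1)/\bE(T',\pi_0',0,1)$ could blow up and (a2)--(a3) would fail. This is exactly where the lower bound $\bE(T,\pi_0,0,1) \leq C\beta_{\max}^2(\bS)$ from the Angle Bound is essential: $\beta_{\max}(\bS)$ is a scale-free quantity attached to the \emph{fixed} book $\bS$, so it provides the needed scale-independent anchor forcing $\bE(T',\pi_0',0,1) \gtrsim \beta_{\max}^2(\bS) \gtrsim \bE(T,\pi_0,0,1)$. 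Once this anchoring is in place, everything else is a routine chase through the triangle inequality for Hausdorff distance, the crude dyadic bound $\bE(T,\cdot,0,1/2)\leq 2^{m+2}\bE(T,\cdot,0,1)$, almost-monotonicity \eqref{e:almost-monotonicity-book}, and the height bound of Lemma \ref{Linfty-L2}.
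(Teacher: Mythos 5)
Your proposal takes a genuinely different route from the paper, but it contains a circularity that I don't think can be removed without essentially reverting to the paper's compactness argument.

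The paper proves this lemma by contradiction and compactness: it builds the normalized Lipschitz approximations $\bar v_k := \bar{\mathbf{E}}_k^{-1/2} v_k$ from Proposition~\ref{p:L2Alm}, passes to a Dir-minimizing limit $v$ (using \cite[Theorem~13.3]{DLHMS}), shows the limit is supported on a \emph{non-flat} open book (the non-flatness coming from $\bA_k^2/\bar{\mathbf{E}}_k \to 0$ via the angle bound \eqref{e:A^2/E-vanishes-after-all}), and then constructs $\pi'_k$ from the harmonic average $\etab \circ v$, rotated to contain $V(\bS_k)$. Strong $W^{1,2}$ convergence then lets all the excess ratios be tracked in the limit.

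Your proposal instead tries to argue directly, and here is where the gap is. The crux is exactly the point you flag yourself: you need $\bar{\mathbf{E}}(T,0,1/2) \gtrsim \beta_{\max}^2(\bS)$ and, more to the point, you need to bound $\bE(\bS,T,0,\sfrac12)$ from above. You invoke \eqref{e:almost-monotonicity-book} for this. But \eqref{e:almost-monotonicity-book} is part of Lemma~\ref{l:compare-books}, which the paper proves only in Section~\ref{s:reparametrization}, and the proof there goes through Corollary~\ref{cor:comparrison excess}, which goes through Theorem~\ref{thm:graph_v1}, whose proof starts by applying Corollary~\ref{cor:kick-off}, which is itself proved by iterating the very Lemma~\ref{lem.propagation1} you are trying to establish. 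So the proof as written is circular.

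Moreover, the circularity cannot be removed by a ``cheap'' direct argument, because there is a genuine information gap that the ``scale-free anchoring by $\beta_{\max}(\bS)$'' idea glosses over. The reverse one-sided excess $\bE(\bS,T,0,1)$ integrates $\dist^2(\cdot,\spt(T))$ only over the annulus $\bS \cap (\bB_1 \setminus B_{1/8}(V))$. After rescaling by $1/2$, proving the angle bound for $T'=T_{0,\sfrac12}$ (which is what you would need to get $\bar{\bE}(T,0,\sfrac12) \gtrsim \beta_{\max}^2(\bS)$) requires knowing that $\spt(T)$ hugs the pages of $\bS$ on the set $\Omega' = \{z\in\bH_0 : |z|\le 1/8,\ \dist(z,V) \ge 1/16\}$; but this set lies entirely in $B_{1/8}(V)$, a region about which $\bE(\bS,T,0,1)$ says nothing. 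In other words, hypothesis (a2) at scale $1$ does not, by itself, force $\spt(T)$ to be near the pages of $\bS$ at scale $1/2$ near the spine, and so the lower bound on the new planar excess does not follow. Filling this hole is exactly what the paper's compactness argument is for: the limiting Dir-minimizer $v$ is forced to be supported on a non-flat book \emph{across all of $\bB_{1/2}$}, which is the quantitative non-degeneracy that an a-priori estimate here would need, and which the hypotheses alone do not supply at the rescaled radius without a propagation argument.
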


\begin{proof}
If we choose $\eta_5 \leq \min\{\varepsilon_4,\eta_4\}$, then under the hypotheses of the lemma we can apply Lemma \ref{l:angle-bound}, so that we have
\begin{equation}\label{e:A^2/E-vanishes-after-all}
C\,\beta_{\max}(\bS)^2 \geq \bE (T, \pi_0, 0,1) \geq \bar\eta^{-1} \bA^2\,,
\end{equation}
and
\begin{equation}\label{e:boundedness}
\beta_{\pi_0} (\bS)^2 \leq C \bE (T, \pi_0, 0, 1)\, .
\end{equation}

Consider now sequences $\{T_k\}_{k=1}^\infty$ of currents and $\{\Sigma_k\}_{k=1}^\infty$ of manifolds satisfying, for open books $\bS_k$ and planes $\pi_k$ in $T_0\Sigma_k$, assumptions (a1)-(a2)-(a3) with parameters $\bar\eta=\eta_k \to 0^+$. Up to rotations, we can assume that each $\Sigma_k$ has the same tangent $T_0 \Sigma_k = \tau_0$ and is the graph of a function $\Psi_k: \tau_0\to \tau_0^\perp$ over a region including all points of interest for the rest of the proof. Upon applying a further rotation, we may also assume that the planes $\pi_k$ coincide with a fixed plane $\pi_0$, and also that the spines $V(\bS_k)$ have the same projection onto $\pi_0$, that is $\mathbf{p}_{\pi_0}(V(\bS_k)) = V'$ for every $k$, where $V'$ is an $(m-1)$-dimensional linear subspace of $\pi_0$. We let $\pi_0^\pm$ denote the two halves of $\pi_0$ delimited by $V'$.

Next, we observe that, for all $k$ sufficiently large, we can apply Proposition \ref{p:L2Alm} \footnote{Notice that the conclusions of Proposition \ref{p:L2Alm} still hold true (with a slightly worse constant) if the second part of hypothesis (b) on the optimality of $\pi_0$ is replaced by the almost-optimality condition (a1).} and guarantee the existence of Lipschitz maps $v=v_k \colon B_{3/4}=B_{3/4}(0,\pi_0) \to \mathscr{A}_Q(\pi_0^\perp)$ and closed sets $K = K_k \subset B_{3/4}$ such that \eqref{e:inclusion}-\eqref{e:W12} hold for $T=T_k$ and $\Sigma=\Sigma_k$. Writing $v_k = \left( \sum_i \a{(v_k)_i}, \eps_{v_k} \right)$ and $\bar\bE_k : = \bE(T_k, \pi_0, 0,1)$, we consider the functions $\bar v_k \colon B_{3/4} \to \mathscr{A}_Q(\pi_0^\perp)$ defined by
\[
\bar v_k := \left( \sum_i \a{\frac{(v_k)_i}{\bar\bE_k^{1/2}}}, \eps_{v_k} \right)\,,
\]
and we let $v$ be a subsequential limit (in the weak topology of $W^{1,2}$ over $B_{3/4}$) of the $\bar v_k$'s. Now consider any linear map $\ell_k: \pi_0 \to \pi_0^{\perp_0}$ whose graph describes (on a suitable half of $\pi_0$) an arbitrary page of the book $\bS_k$. The estimate \eqref{e:boundedness} implies that $\bar\bE_k^{-\sfrac{1}{2}} \ell_k$ is uniformly locally bounded and it thus converges, up to subsequences, to some function $\ell$. By (a2)-(a3), the support of the graph of $v$ coincides with the union of the graphs of all linear functions $\ell$ arising as possible limits as above, after restricting each of them to the appropriate half plane $\pi_0^\pm$ (we shall denote $\ell^\pm$ such restriction): in other words, there are positive integers $N^\pm$ and $\kappa_i^\pm$ such that
\[
v= \left(\sum_{i=1}^{N^\pm} \kappa_i^\pm\a{\ell_i^\pm},\pm1\right) \qquad \mbox{on $\pi_0^\pm$}\, ,
\]
which in fact takes values in $\mathscr{A}_Q (\pi_0^{\perp_0})$. By \eqref{e:A^2/E-vanishes-after-all} it follows easily that $v$ cannot be trivial, i.e. the collections $\{\ell_i^\pm\}$ contains at least three distinct linear maps. The support of the graph of $v$ then coincides with a non-flat open book with $(m-1)$-dimensional spine. Notice that if $\bar\bS_k$ denote the rescaled non-flat open books defined as the support of the graphs of $\bar\bE_k^{\frac12}v$ then there exists a rotation $O_k$ in $\Pi_0$ such that $O_k(V(\bar \bS_k))=V(\bS_k)$ and
\begin{equation}\label{e:spine tilt vs excess}
    \lim_{k \to \infty} \bar\bE_{k}^{-\sfrac12} \|O_k - {\rm Id}\| = 0\,.
\end{equation}

We next observe that, by \cite[Theorem 13.3]{DLHMS}, we have in addition that $\bar v_k$ converge to $v$ strongly in $W^{1,2}$ on $B_{1/2}$, and that $v$ is $\Dir$-minimizing. In particular, the averages 
\[
\ell^{\pm} := \frac{1}{N^\pm} \sum_i \kappa_i^\pm \ell_i^\pm
\]
defined on the respective halfplanes $\pi_0^\pm$ form a single harmonic function $\ell$ over $\pi_0$. 

We next consider the planes $\hat\pi_k$ which are the graphs of $\bar{\mathbf{E}}_k^{\frac{1}{2}} \ell$. Using the estimates of Proposition \ref{p:L2Alm}, the strong $L^2$ convergence of the maps above, and the definition of $\ell=\etab\circ v$, it is easy to see that, upon setting $T'_k := (\lambda_{0,\sfrac12})_\sharp T_k$ and $\mathbf{A}'_k := \frac{1}{2} \mathbf{A}_k$, we have 
\begin{align}
&\lim_{k\to\infty} \left(\mathbf{E} (T'_k, \hat\pi_k, 0,1) +\frac{(\mathbf{A}'_k)^2}{\mathbf{E}(T'_k,\hat\pi_k,0,1)} + \frac{\mathbb{E} (T'_k, \mathbf{S}_k, 0,1)}{\mathbf{E} (T'_k, \hat\pi_k, 0, 1)}\right) = 0 \label{e:quasiNatale1}\\
&\lim_{k\to\infty} \frac{\mathbf{E} (T'_k, \hat\pi_k, 0, 1)}{\bar\bE (T'_k, 0,1)}=\lim_{k \to \infty} \frac{\mathbf{E} (T'_k, \hat\pi_k, 0, 1)}{\bar\bE (T'_k, 0,\sfrac12)}=1\, , \label{e:quasiNatale2} \\
\end{align}
Setting now $\pi_k' := O_k(\hat\pi_k)$, we have that $\pi_k' \supset V(\bS_k)$ and, thanks to \eqref{e:spine tilt vs excess}, the conditions in \eqref{e:quasiNatale1}-\eqref{e:quasiNatale2} remain true with $\pi_k'$ in place of $\hat\pi_k$. This completes the proof.
\end{proof}

The following corollary can be easily proved by iterating Lemma \ref{lem.propagation1} (or by following the same proof).

\begin{corollary} \label{cor:kick-off}
For every $r_0 > 0$ there exists $\eta_6 > 0$ such that if $T$, $\Sigma$, $\bS$, $\pi_0$ are as in Lemma \ref{lem.propagation1} and they satisfy (a1)-(a2)-(a3) with $\bar\eta \leq \eta_6$, then, setting $V=V(\bS)$, for every $y \in \bB_{1/4} \cap V$ and $r_0 \leq r \leq 1/4$ there exists a plane $\hat\pi_{y,r}$ so that the rescaled current $T_{y,r} = (\lambda_{y,r})_\sharp T$ satisfies (a1)-(a2)-(a3)-(a4)-(a5) with $\pi_0$ replaced by $\hat\pi_{y,r}$ and $\bar\eta$ replaced by some $\eta=\eta(\bar\eta)$ such that $\eta(\bar\eta) \to 0$ as $\bar\eta \to 0$.
\end{corollary}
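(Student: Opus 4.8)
The plan is to reduce the statement to Lemma~\ref{lem.propagation1} by exploiting the invariance of open books under dilations and under translations along the spine, with the angle bound Lemma~\ref{l:angle-bound} as the quantitative engine; I also indicate the alternative compactness route.

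Fix $r_0>0$. Given $y\in\bB_{1/4}\cap V$ (with $V=V(\bS)$) and $r\in[r_0,1/4]$, set $\tilde T:=T_{y,2r}$, so that the unit ball for $\tilde T$ corresponds to $\bB_{2r}(y)\subset\bB_{3/4}(0)\subset\Omega\setminus\spt^p(\partial T)$. After first shrinking $\bar\eta$ so that the monotonicity formula applies, and after verifying that the rescaled currents still fall under Assumption~\ref{ass:everywhere} (besides the mass bound, this uses that $\Theta_T\ge Q$ along $V\cap\bB_{1/4}$), the pair $\tilde T,\Sigma_{y,2r}$ satisfies Assumption~\ref{ass:everywhere}. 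Since $y$ lies on the spine of the cone $\bS$, the recentered and rescaled open book $(\lambda_{y,2r})(\bS)$ is again $\bS$; I take $\bS$ as the competitor book for $\tilde T$ and the $\bar\bE(\tilde T,0,1)$-optimal plane as the competitor plane, so that the almost-optimality half of (a1) is automatic.

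The main step is to check that $\tilde T$ satisfies (a1)--(a3) with a parameter $\eta'=\eta'(\bar\eta,r_0)$ such that $\eta'(\bar\eta,r_0)\to0$ as $\bar\eta\to0$. Since $\dist(\cdot,\bS)$ is invariant under translations along $V$ and $|y|\le 1/4$, and since $\dist(y,\pi_0)\le C\,\beta_{\pi_0}(\bS)\le C\,\bE(T,\pi_0,0,1)^{1/2}$ by Lemma~\ref{l:angle-bound}, one obtains comparisons of the form $\bE(T,\pi_0,y,2r)\le C(r_0)\,(\bE(T,\pi_0,0,1)+\bar\eta)$, $\bE(T,\bS,y,2r)\le C(r_0)\,\bE(T,\bS,0,1)$, and an analogous bound for $\bE(\bS,T,y,2r)$ once the shift of the spine-tubes is taken into account. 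Coupled with the localized lower bound $\bar\bE(T,y,2r)\ge c(r_0)\,\beta_{\max}(\bS)^2$ — obtained by rerunning the proof of Lemma~\ref{l:angle-bound} around $y$ — together with $\beta_{\max}(\bS)^2\ge c\,\bE(T,\pi_0,0,1)\ge c\,\bA^2/\bar\eta$, these inequalities give (a1)--(a3) for $\tilde T$. Applying Lemma~\ref{lem.propagation1} to $\tilde T$ then produces a plane $\pi_0'$ such that $\tilde T_{0,1/2}=T_{y,r}$ obeys (a1)--(a5) with parameter $\eta(\eta'(\bar\eta,r_0))$, which is the claim, once $\eta_6=\eta_6(r_0)$ is chosen so that $\eta'(\eta_6,r_0)\le\eta_5$.

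The hard part is precisely this comparison: controlling the \emph{ratio} $\mathbb{E}(T,\bS,y,2r)/\bar\bE(T,y,2r)$ after recentering at an arbitrary spine point forces a lower bound for the minimal planar excess in terms of the opening angle $\beta_{\max}(\bS)$ localized near $y$, i.e.\ a local version of Lemma~\ref{l:angle-bound}. Should this bookkeeping become unwieldy, one may instead argue verbatim as in the proof of Lemma~\ref{lem.propagation1}: assume by contradiction sequences $T_k,\Sigma_k,\bS_k,\pi_k$ with $\bar\eta=\eta_k\to0$ and $y_k\in\bB_{1/4}\cap V(\bS_k)$, $r_k\in[r_0,1/4]$ for which no plane works; pass to a subsequence with $(y_k,r_k)\to(y_\infty,r_\infty)$, where $y_\infty$ lies on the limiting spine and $r_\infty\ge r_0$; the blow-up limit $v$ of the normalized Lipschitz approximations is then a $\Dir$-minimizing harmonic open book which, being $1$-homogeneous and invariant along its spine, is unchanged by translating by $y_\infty$ and dilating by $r_\infty$, so that its average $\ell$ furnishes the plane $\hat\pi_{y,r}$ and yields the contradiction as in Lemma~\ref{lem.propagation1}.
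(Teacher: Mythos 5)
Your fallback compactness argument is precisely what the paper has in mind — the parenthetical remark after the corollary says it can be proved ``by iterating Lemma~\ref{lem.propagation1} (or by following the same proof),'' and what you describe (fix $r_0$, take a contradicting sequence with $\eta_k\to 0$, extract $(y_k,r_k)\to(y_\infty,r_\infty)$, pass to the $\Dir$-minimizing blow-up $v$, and use its $1$-homogeneity and invariance along the spine to exhibit the plane arising from $\etab\circ v$) is verbatim the compactness scheme used to prove Lemma~\ref{lem.propagation1} and, with more structure, Lemma~\ref{l:propagation}.

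The primary route you sketch — recentering at $y$, rescaling by $2r$, and invoking Lemma~\ref{lem.propagation1} once — is where the gap lies, and you correctly sense it. The ``main step'' you isolate, namely bounds of the type $\bE(T,\pi_0,y,2r)\leq C(r_0)\bigl(\bE(T,\pi_0,0,1)+\bar\eta\bigr)$ and the localized lower bound $\bar\bE(T,y,2r)\geq c(r_0)\,\beta_{\max}(\bS)^2$, is not a matter of elementary bookkeeping: the first is exactly the propagation estimate \eqref{eq.other-propagation} (and the almost-optimality claims \eqref{eq.almost_optimality_flat}--\eqref{eq.propagationNonDegneracy12}) of Lemma~\ref{l:propagation}, and the second requires verifying Assumption~\ref{ass:decay plane -1} at the new center and scale before one can ``rerun'' Lemma~\ref{l:angle-bound}. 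Both of these are themselves proved in the paper by the same compactness argument you then reach for as a fallback, so the ``direct reduction'' does not avoid compactness — it quietly presupposes its output. In short: discard the first two paragraphs, and present the third as the proof; you should also note explicitly that the limiting function $v$ is non-flat (as shown in the proof of Lemma~\ref{lem.propagation1}), since that is what produces the quantitative non-degeneracies in (a1)--(a4) in the limit.
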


\begin{lemma}[Propagation lemma] \label{l:propagation}
For every $\rho>0$, $0 < r_0$, $\delta_0\in (0,\frac{1}{4})$ there exist $\eta_7 > 0$ and $C > 0$ with the following property. Let $T$ and $\Sigma$ be as in Assumption \ref{ass:everywhere}. Assume that there are an open book $\bS = \bigcup_{i=1}^N \bH_i \subset T_0\Sigma$ and a plane $\pi_0 \subset T_0\Sigma$ such that, for some $\bar\eta \leq \eta_7$ 
\begin{enumerate}
    \item[(b1)] $\bE(T,\pi_0,0,1) \le \bar\eta$ and $2\,\bar \bE (T, 0, 1) \geq \bE (T, \pi_0, 0, 1)$;
    \item[(b2)] $\mathbf{E} (T,\bS, 0,1) \le \bar\eta\, \bE(T,\pi_0,0,1)$; 
    \item[(b3)] $\bA^2 \leq \bar\eta\, \mathbf{E}(T,\pi_0,0,1)$;
    \item[(b4)] $\bE(T,\pi_0,0,1) \leq 2\, \bar\bE(T,0,\sfrac12)$;
    \item[(b5)] $V(\bS) \subset \pi_0$.
\end{enumerate}
Then, the following holds.
\begin{enumerate}
    \item Pushing $Q$-points: writing $V=V(\bS)$,
    \begin{equation}\label{eq.replacement_for_graphical2}
    \Theta(T,q)<Q \text{ for all } q \in \spt(T)\cap \bB_{7/8}\cap \bC_{\frac1{8}}\setminus B_\rho(V)\,.
\end{equation}
    \item Propagation estimates: for every $y \in \bB_{\frac1{4}}\cap V$ and $r_0 \leq r \le \frac1{2}$ it holds
    \begin{equation}\label{eq.other-propagation}
        \bE (T, \pi_0, y, r) \leq 2\, \bE (T, \pi_0, 0, 1)\,.
    \end{equation}   
    Furthermore, for $y$ and $r$ as above there exists an $m$-dimensional plane $\hat\pi_{y,r}$ such that, when we write $y$ for the point $(0,y) \in V^\perp \times V = \R^{m+n}$, it holds
\begin{align}
V &\subset \hat \pi_{y,r}\,, \label{eq.spine_inclusion} \\
 2\,\bar\bE(T,y,r) &\geq  \bE (T, \hat\pi_{y,r},y,r)\,, \label{eq.almost_optimality_flat} \\
   \bE(T,\hat\pi_{y,r},y,  r) &\le 2\,\bar\bE(T,y,\sfrac{r}{2}) \,.\label{eq.propagationNonDegneracy12}
\end{align}
    \item No-holes condition:
    \begin{equation}\label{eq.no_holes12}
    \text{for any $y\in \bB_{\frac14}\cap V$ there exists $q\in \bB_{\delta_0}(y)$ such that $\Theta_T(q)\geq Q$}\,.
    \end{equation}
\end{enumerate}
\end{lemma}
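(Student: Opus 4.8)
The plan is to establish all three conclusions by contradiction and blow-up, in the spirit of Lemma~\ref{lem.propagation1}, Corollary~\ref{cor:kick-off} and \cite[Theorem~4.5]{DHMSS}, after recording once a common structural fact. If one of \eqref{eq.replacement_for_graphical2}, \eqref{eq.other-propagation}--\eqref{eq.propagationNonDegneracy12}, \eqref{eq.no_holes12} fails, there are $T_k,\Sigma_k$, open books $\bS_k$ and planes $\pi_k$ satisfying (b1)--(b5) with $\bar\eta=\eta_k\to0$; after the rotations of Lemma~\ref{lem.propagation1} we may assume $T_0\Sigma_k$, $\pi_k=\pi_0$ and the spines $V(\bS_k)=V'\subset\pi_0$ are fixed, and we write $\bA_k:=\|A_{\Sigma_k}\|_\infty$. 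By Proposition~\ref{p:L2Alm} (whose hypotheses follow, as in Lemma~\ref{lem.propagation1}, from Assumption~\ref{ass:everywhere}, (b1)--(b5) and Lemma~\ref{l:angle-bound}) we get multivalued graphs $v_k\colon B_{3/4}\to\mathscr{A}_Q(\pi_0^{\perp_0})$ and bad sets $K_k$; with $\bar\bE_k:=\bE(T_k,\pi_0,0,1)$ the normalized maps $\bar v_k:=\bar\bE_k^{-1/2}v_k$ are bounded in $W^{1,2}$ by \eqref{e:W12}, hence converge (up to a subsequence) strongly in $W^{1,2}_{loc}(B_{3/4})$ to an $\mathbb R$-valued $\Dir$-minimizer $v$ with $v(0)=Q\a0$ (by \cite[Theorem~13.3]{DLHMS}, $\Sigma_k\to\mathbb R^{m+1}\times\{0\}$, and $\Theta_{T_k}(0)\ge Q$, arguing as in \cite[Theorem~23.1]{DLHMS}). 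Exactly as in Lemma~\ref{lem.propagation1}, hypotheses (b2)--(b3) and Lemma~\ref{l:angle-bound} force $\spt(\bG_v)$ to be a \emph{non-flat} open book $\bS_\infty$ with spine $V'$: on each of the two halves $\pi_0^\pm$ of $\pi_0$ delimited by $V'$, $v$ is a finite sum $\sum_j\kappa_j^\pm\a{\ell_j^\pm}$ of linear maps which, since every page contains $V'$, have the form $\ell_j^\pm(x_1,y)=a_j^\pm x_1$; moreover $\sum_j\kappa_j^\pm=Q$ (because $(\mathbf p_{\pi_0})_\sharp T_k=Q\a{\pi_0}$ mod $2Q$) and not all the $a_j^\pm$ coincide.

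\textbf{Part (1).} Suppose in addition there are $q_k=(z_k,w_k)\in\spt(T_k)\cap\bB_{7/8}\cap\bC_{1/8}\setminus B_\rho(V(\bS_k))$ with $\Theta_{T_k}(q_k)\ge Q$. Arguing at the moving base points $q_k$ as in \cite[Theorem~23.1]{DLHMS} (using Proposition~\ref{p:L2Alm} and the monotonicity formula), the density bound passes to the limit as $v(z_\infty)=Q\a{\bar w_\infty}$, with $z_\infty=\lim z_k$ and $\bar w_\infty=\lim\bar\bE_k^{-1/2}w_k$; by the height bound $z_\infty\in\pi_0$ and $\dist(z_\infty,V')\ge\rho/2$, so $z_\infty$ lies in the interior of, say, $\pi_0^+$. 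Then $a_j^+(z_\infty)_1=\bar w_\infty$ for all $j$ and $(z_\infty)_1\neq0$, hence all $a_j^+$ agree; being distinct, the $+$-pages reduce to one, $v|_{\pi_0^+}=Q\a{a^+x_1}$. Since $\etab\circ v$ must be harmonic (hence linear) across $\{x_1=0\}$ we get $a^+=\tfrac1Q\sum_j\kappa_j^-a_j^-$, and $w:=v\ominus(\etab\circ v)$ is $\Dir$-minimizing with $w|_{\pi_0^+}\equiv Q\a0$ and $w|_{\pi_0^-}=\sum_j\kappa_j^-\a{(a_j^--\tfrac1Q\sum_l\kappa_l^-a_l^-)x_1}$, not identically $Q\a0$ (else $\bS_\infty$ would be flat). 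But such a $w$ cannot be $\Dir$-minimizing: its boundary datum on $\partial B_{3/4}$ splits into $Q$ single-valued continuous branches $g_j$ (the branching is trivial since all sheets vanish on $\partial B_{3/4}\cap\{x_1=0\}$), and the corresponding branch of $w$ — a nonzero multiple of $x_1$ on $\pi_0^-$ and $0$ on $\pi_0^+$ — has a genuine gradient jump across $\{x_1=0\}$, so it has strictly larger Dirichlet energy than the harmonic extension $h_j$ of $g_j$; hence $\sum_j\kappa_j^-\a{h_j}$ is a strictly better competitor. This contradiction proves \eqref{eq.replacement_for_graphical2}.

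\textbf{Part (2).} The plane $\hat\pi_{y,r}$ and the estimates \eqref{eq.spine_inclusion}--\eqref{eq.propagationNonDegneracy12} are obtained by applying Corollary~\ref{cor:kick-off} to $T_{y,r}$ (its hypotheses again following from (b1)--(b5) and Lemma~\ref{l:angle-bound}): \eqref{eq.spine_inclusion} is (a5) together with $\lambda_{y,r}(V)=V$ for $y\in V$, while \eqref{eq.almost_optimality_flat} and \eqref{eq.propagationNonDegneracy12} are the unrescaled forms of (a1) and (a4). For the excess propagation \eqref{eq.other-propagation} I argue by contradiction within the same scheme: if $\bE(T_k,\pi_0,y_k,r_k)>2\,\bE(T_k,\pi_0,0,1)$ for some $y_k\in\bB_{1/4}\cap V(\bS_k)$ and $r_k\in[r_0,\sfrac{1}{2}]$, then, since $\Lip(v_k)\to0$ by \eqref{e:lip_osc_est} and $|B_{3/4}\setminus K_k|\to0$ by \eqref{e:volume_estimate}, dividing by $\bar\bE_k$ and letting $k\to\infty$ (strong $L^2$ convergence, $y_k\to y_\infty\in V'$, $r_k\to r_\infty\ge r_0$) yields $\tfrac1{r_\infty^{m+2}}\sum_i\int_{B_{r_\infty}(y_\infty)}|v_i|^2\ge 2\sum_i\int_{B_1}|v_i|^2$. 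But each $v_i$ equals $a^+_{c(i)}x_1$ on $\pi_0^+$ and $a^-_{d(i)}x_1$ on $\pi_0^-$, and for $y'\in V'=\{x_1=0\}$ a direct computation (splitting $B_r(y')$ along $\{x_1=0\}$, the integrand depending only on $x_1$) shows that $\tfrac1{r^{m+2}}\int_{B_r(y')}|v_i|^2$ is independent of $r$ and of $y'\in V'$; hence the two sides are equal, contradicting the factor $2$.

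\textbf{Part (3) and the main obstacle.} Suppose there is $y_k\in\bB_{1/4}\cap V(\bS_k)$ with no $q\in\bB_{\delta_0}(y_k)$ of density $\ge Q$. By Part~(1) applied with a radius $\rho'\le\delta_0/10$ and the height bound, a cylinder $\bC_{2r}(y_k',\pi_0)\cap\bB_{7/8}$ with $r$ comparable to $\delta_0$ ($y_k':=\mathbf p_{\pi_0}(y_k)$) contains no density-$\ge Q$ point; since $r^m\ge(\bar\bE_k+\bA_k^2)^{1-\gamma}$ for $k$ large, Proposition~\ref{p:L2Alm-piece-3} applies and $T_k$ is $C^{1,\sfrac{1}{2}}$-graphical over $B_r(y_k')$ with ordered sheets $u^k_1\le\dots\le u^k_Q$ satisfying \eqref{eq:C12est}. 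Using \eqref{eq.other-propagation} to bound the cylindrical excess in \eqref{eq:C12est} by $C(r_0)\,\bar\bE_k$, the maps $\bar\bE_k^{-1/2}u^k_i$ are uniformly bounded in $C^{1,\sfrac{1}{2}}$, hence converge in $C^1_{loc}$ to $C^1$ functions $\bar u_1\le\dots\le\bar u_Q$ with $\sum_i\a{\bar u_i}=v$ near some $y'\in V'$. This is impossible: an ordered $C^1$ selection of the non-flat book $v$ near $y'$ would force, for each $i$, the slope of the $i$-th sheet to be continuous across $\{x_1=0\}$; but on $\pi_0^+$ that slope is the $i$-th smallest of the $a^+_j$ (counted with multiplicity) while on $\pi_0^-$ it is the $i$-th largest of the $a^-_j$, so continuity makes the sorted list of slopes palindromic, hence constant, i.e.\ $\bS_\infty$ flat — a contradiction proving \eqref{eq.no_holes12}. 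This last step is the main obstacle: one must correctly reduce, via Part~(1) and Proposition~\ref{p:L2Alm-piece-3}, to a region free of density-$\ge Q$ points and genuinely graphical, upgrade \eqref{eq:C12est} to a \emph{scale-invariant} $C^{1,\sfrac{1}{2}}$ estimate by feeding in \eqref{eq.other-propagation}, and match the $C^1$-limit of the ordered graphs with the limiting $\Dir$-minimizer $v$. A pervasive secondary difficulty, handled as in the proof of Proposition~\ref{p:decay-1}, is to control the bad sets $K_k$ and the remainders $\|T_k-\bG_{v_k}\|$ uniformly under the anisotropic normalization, so that all blow-up limits genuinely identify with $v$ and with the non-flat open book $\bS_\infty$.
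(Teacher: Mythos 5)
Your overall blow-up strategy matches the paper's, and your proof of \eqref{eq.other-propagation} (by normalizing, passing to the non-flat homogeneous limit $v$ and computing that $r^{-(m+2)}\int_{B_r(y')}|v|^2$ is independent of $r$ and of $y' \in V$) is essentially the argument in the paper. Your Part~(3) is also the paper's argument, with the ``palindromic slope'' computation making explicit why a $C^1$-ordered selection of $v$ near a spine point forces the sorted slopes on $\pi_0^+$ to coincide (in reversed order) with those on $\pi_0^-$, hence flatness. Your Part~(1) takes a genuinely different route: after passing the density to the limit you deduce that $\pi_0^+$ carries a single page and then rule this out by constructing a strictly better Dir-competitor (harmonic extension of the single-valued boundary branches). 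The paper instead applies the ``cheap Hardt--Simon'' weighted estimate \eqref{eq.cheap Hardt-Simon} with moving base points and concludes directly from the structural description of $v$. Your route is more explicit but relies on the competitor $\sum_j\kappa_j^-\a{h_j}$ being admissible in the special $Q$-valued class of \cite{DLHMS_linear}, including the sign/orientation constraint, which you should verify.

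The genuine gap is in Part~(2), specifically your derivation of \eqref{eq.spine_inclusion}--\eqref{eq.propagationNonDegneracy12} by invoking Corollary~\ref{cor:kick-off}. That corollary inherits the hypotheses (a1)--(a3) of Lemma~\ref{lem.propagation1}, and (a2) requires the \emph{double-sided} conical excess $\mathbb{E}(T,\bS,0,1)$ to be small relative to $\bE(T,\pi_0,0,1)$. By contrast, hypothesis (b2) of Lemma~\ref{l:propagation} only controls the \emph{one-sided} excess $\bE(T,\bS,0,1)$. This weakening is deliberate and essential (the lemma is applied at small Whitney scales where only the one-sided bound propagates), and the paper explicitly flags it: ``Nonetheless, since (b2) only provides control on the one-sided conical excess, we can't conclude that the planar excess is controlling $\beta_{\pi_0}(\bS)$.'' Without (a2) you cannot invoke Lemma~\ref{l:angle-bound} to get the lower bound $\beta_{\pi_0}^2(\bS) \leq C\,\bE(T,\pi_0,0,1)$, and hence cannot verify Corollary~\ref{cor:kick-off}'s hypotheses. (There is also a secondary mismatch of ranges: Corollary~\ref{cor:kick-off} is stated for $r\le 1/4$, whereas you need $r\le 1/2$.) The paper instead proves \eqref{eq.spine_inclusion}--\eqref{eq.almost_optimality_flat} by taking the candidate plane to be the graph of $\bar\bE_k^{1/2}\,\etab\circ v$, and \eqref{eq.propagationNonDegneracy12} by a further compactness/contradiction argument that exploits the non-flatness of $v$ directly, all within the same blow-up framework; this is the route you need for these three conclusions.
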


\begin{proof}

Notice that if $\eta_7 \leq \min\{\varepsilon_4,\eta_4\}$ then under the hypotheses of the lemma we can apply Lemma \ref{l:angle-bound} (see also Remark \ref{rmk:on_angle_bound}), and conclude
\begin{equation}\label{e:non-degeneracy-again}
C^2\, \beta^2_{\pi_0}(\bS) \geq C \, \beta_{\max} (\bS)^2 \geq \bE (T, \pi_0, 0, 1)\, .
\end{equation}
Nonetheless, since (b2) only provides control on the one-sided conical excess, we can't conclude that the planar excess is controlling $\beta_{\pi_0}(\bS)$. 

Fix now $\rho > 0$, $0 < r_0$, and $\delta_0 \in (0,\frac14)$, and consider sequences $\{T_k\}_{k=1}^\infty$ of currents and $\{\Sigma_k\}_{k=1}^\infty$ of manifolds satisfying, for open books $\bS_k$ and planes $\pi_k$ in $T_0\Sigma_k$, assumptions (b1)-(b2)-(b3)-(b4)-(b5) with parameters $\bar\eta=\eta_k \to 0^+$. Since the sequences are arbitrary, the proof will be complete if we can show that all the conclusions hold true along the given sequence for all sufficiently large $k$. Up to rotations, we can assume that each $\Sigma_k$ has the same tangent $T_0 \Sigma_k = \tau_0$ and is the graph of a function $\Psi_k: \tau_0\to \tau_0^\perp$ over a region including all points of interest for the rest of the proof, with $\Psi_k$ satisfying $\Psi_k(0)=0$ and $D\Psi_k(0) = 0$. Upon applying a further rotation, we may also assume that the planes $\pi_k$ coincide with a fixed plane $\pi_0 \subset \tau_0$, and, thanks to (b5), also that the spines $V(\bS_k)$ coincide with a fixed $(m-1)$-dimensional linear subspace $V \subset \pi_0$. We make the following choice of coordinates: we denote by $y = (y_1,\ldots,y_{m-1})$ the coordinates of $V$, whereas points in $\tau_0$ will be given coordinates $(x,y) = (x_1,x_2,y)$. The plane $\pi_0$ is the subspace $\{x_2=0\}$, and we let $\pi_0^\pm = \{\pm x_1 > 0\} \subset \pi_0$ denote the two halves of $\pi_0$ delimited by $V$. Coordinates in $\tau_0^\perp$ are denoted $w=(w_1,\ldots, w_{n-1})$. We also give an explicit expression of $\bS_k = \bigcup_{\pm}\bigcup_i (\bH_k)_i^\pm$ within this coordinate system. For a half-plane $(\bH_k)_i^\pm$ there exists $((\beta_k^i)^\pm,(\gamma_k^i)^\pm) \in \mathbb{S}^1$ with $\pm(\beta_k^i)^\pm \geq 0$ such that
\[
(\bH_k)_i^\pm = \left \lbrace (t\,(\beta_k^i)^\pm,t\,(\gamma_k^i)^\pm,y) \, \colon \, t \in [0,\infty)   \right\rbrace \subset \tau_0 \,.
\]
Notice that the condition that one of the coefficients $\beta = 0$ corresponds to the parametrization of one of the two half-planes of $\{x_1=0\}$ delimited by $V$.

Next, we observe that, as in the proof of Lemma \ref{lem.propagation1}, for all $k$ sufficiently large, we can apply again Proposition \ref{p:L2Alm} and guarantee the existence of Lipschitz maps $v=v_k \colon B_{3/4}=B_{3/4}(0,\pi_0) \to \mathscr{A}_Q(\pi_0^\perp)$ and closed sets $K = K_k \subset B_{3/4}$ such that \eqref{e:inclusion}-\eqref{e:W12} hold for $T=T_k$ and $\Sigma=\Sigma_k$. Writing $v_k = \left( \sum_i \a{(v_k)_i}, \eps_{v_k} \right)$ and $\bar\bE_k : = \bE(T_k, \pi_0, 0,1)$, we consider the functions $\bar v_k \colon B_{3/4} \to \mathscr{A}_Q(\pi_0^\perp)$ defined by
\[
\bar v_k := \left( \sum_i \a{\frac{(v_k)_i}{\bar\bE_k^{1/2}}}, \eps_{v_k} \right)\,,
\]
and we let $v$ be a subsequential limit (in the weak topology of $W^{1,2}$ over $B_{3/4}$, strong over $B_{1/2}$) of the $\bar v_k$'s. 

Now, we apply the same rescaling (in the coordinates $(x_2,w)$ of the orthogonal complement to $\pi_0$) to the open books $\bS_k$, and we thus obtain 
\[
\bar\bS_k = \bigcup_i (\bar\bH_k)_i^\pm\,,
\]
where 
\[
\begin{split}
(\bar\bH_k)_i^\pm &= \left \lbrace (t\,(\beta_k^i)^\pm,t\,\bar\bE_k^{-\sfrac12}(\gamma_k^i)^\pm,y) \, \colon \, t \in [0,\infty)   \right\rbrace \\
&= \left \lbrace (t\,(\bar\beta_k^i)^\pm,t\,(\bar\gamma_k^i)^\pm,y) \, \colon \, t \in [0,\infty)   \right\rbrace\,,
\end{split}
\]
having defined $$(\bar\beta,\bar\gamma) = \frac{(\beta,\bar\bE^{-\sfrac12}\,\gamma)}{|(\beta,\bar\bE^{-\sfrac12}\,\gamma)|}\,.$$ Upon passing to a (not relabeled) subsequence, the open books $\bar \bS_k$ converge to some open book $\bS_\infty$ with spine $V$. Notice that, as a consequence of \eqref{e:non-degeneracy-again}, $\bS_\infty$ cannot be flat. Next, we claim that $\spt({\bf G}_v) \cap \bC_{\frac12}(0,\pi_0) \cap \bB_{\frac78} \subset \bS_\infty$. To see this, notice first that, for any given $k$, any point $q \in\spt (\bG_{\bar v_k}) \cap {\bf C}_{\frac34}$ has coordinates 
\[
q = \left( x_1 , \bar\bE_k^{-\sfrac12} (u_k)_i (x_1,y), y, \bar\bE_k^{-\sfrac12} \Psi_k (x_1,(u_k)_i (x_1,y), y) \right)\,, \qquad (x_1,y) \in B_{3/4} \,,
\]
for some $i \in \{1,\ldots, Q\}$. Observe that $|\bar\bE_k^{-\sfrac12} \Psi_k| \leq \bar\bE_k^{-\sfrac12}\, \bA \leq \eta_k \to 0$, so that a point $q \in \spt({\bf G}_v) \cap \bC_{\frac34}$ necessarily belongs to $\tau_0$ and it has coordinates
\[
q = (x_1,v_i (x_1,y),y,0) \,, \qquad (x_1,y) \in B_{3/4}\,,
\]
for some $i$. Now, by the strong convergence of $\bar v_k$ to $v$ in $L^2$ on $B_{1/2}$ and the above observation, we have that
\begin{equation} \label{linear containment 1}
\begin{split}
   & \sum_i\int_{B_{1/2}} \dist^2 ((x_1,v_i (x_1,y),y,0),\bS_\infty) dx_1dy \\
   & \qquad \qquad \qquad = \lim_{k \to \infty}  \sum_i \int_{B_{1/2}} \dist^2((x_1 , \bar\bE_k^{-\sfrac12} (u_k)_i (x_1,y), y,0), \bar\bS_k) \, dx_1dy\,;
\end{split}
\end{equation}
on the other hand, for a point $(x_1,x_2,y) \in \tau_0$ we have 
\[
\begin{split}
\dist^2 ((x_1,x_2,y), \bar\bS_k) &= \inf\left\lbrace \abs{x_1-x_1'}^2+\abs{x_2-x_2'}^2+\abs{y-y'}^2 \, \colon \, (x_1',x_2',y') \in \bar\bS_k \right\rbrace \\
&=\inf\left\lbrace \abs{x_1-x_1'}^2+\abs{x_2-\bar\bE_k^{-\sfrac12}x_2'}^2+\abs{y-y'}^2 \, \colon \, (x_1',x_2',y') \in \bS_k \right\rbrace \\
&\leq \bar\bE_k^{-1} \, \inf\left\lbrace \abs{x_1-x_1'}^2+\abs{\bar\bE_k^{\sfrac12} x_2-x_2'}^2+\abs{y-y'}^2 \, \colon \, (x_1',x_2',y') \in \bS_k \right\rbrace \\
&= \bar\bE_k^{-1} \, \dist^2 ((x_1,\bar\bE_k^{\sfrac12} x_2,y),\bS_k)
\end{split}
\]
so that
\[
\begin{split}
&\sum_i\int_{B_{1/2}} \dist^2((x_1 , \bar\bE_k^{-\sfrac12} (u_k)_i (x_1,y), y,0), \bar\bS_k) \, dx_1dy  \\
&\qquad \qquad \qquad \leq \sum_i \bar\bE_k^{-1} \int_{B_{1/2}}\dist^2((x_1,(u_k)_i(x_1,y),y),\bS_k) \, dx_1dy \\
&\qquad \qquad \qquad \leq \bar\bE_k^{-1} \left( \bE(T_k, \bS_k, 0,1) + C \, \bar\bE_k^{1 + \gamma} \right)\,,
\end{split}
\]
where we have used \eqref{e:graph_current} and \eqref{e:volume_estimate}. Since the right-hand side is infinitesimal by (b1) and (b2), \eqref{linear containment 1} concludes the proof of the claim.

Now recall that $v \colon B_{3/4}(0,\pi_0) \to \mathscr{A}_Q(\pi_0^{\perp_0}) \simeq \mathscr{A}_Q(\R)$: the fact that its graph is supported on $\bS_\infty$ implies, in particular, that some of the pages of $\bS_\infty$ are linear graphs over $\pi_0$. 

Next, we claim that the support of the graph of $v$ is not a single hyperplane, but a non-degenerate open book with spine $V$. Suppose, towards a contradiction, that, calling $z=(x_1,y)$ the coordinate on $\pi_0$, $v(z) = (Q\a{\ell(z)},\varepsilon_v)$ for a linear function $\ell \colon \pi_0 \to \pi_0^{\perp_0} \simeq \R$. Then, calling $\bar\pi_k$ the graph of $\bar\bE_k^{\sfrac12}\,\ell$ we would have
\[
\bar\bE_k \leq 2 \, \bar\bE(T_k,0,\sfrac12) \leq 2\, \bE(T_k,\bar\pi_k,0,\sfrac12)\,,
\]
and thus, in particular,
\[
\frac12 \leq \lim_{k \to \infty} \frac{\bE(T_k,\bar\pi_k,0,\sfrac12)}{\bar\bE_k} \leq C\,\lim_{k \to \infty}  \int_{B_{\frac12}} \abs{\bar v_k \ominus \ell}^2 \,dz = 0\,,
\]
a contradiction.

Now that the fundamental properties of the limit $v$ have been established, we proceed with proving the validity of conclusions (1)(2)(3), namely that the corresponding estimates hold true for all sufficiently large $k$.

\medskip

\noindent \textbf{Proof of (1)}. Suppose that \eqref{eq.replacement_for_graphical2} fails for a subsequence (not relabeled), i.e. that there exists a sequence of points $q_k\in \spt(T_k)\cap \bB_{7/8} \cap \bC_{\frac1{8}}\setminus B_\rho(V) $ such that 
$$
\Theta(T_k,q_k)\geq Q \qquad \text{ for all $k$}\,. 
$$
Setting $z_k := \mathbf{p}_{\pi_0}(q_k)$ and $w_k:= \frac{\mathbf{p}_{\pi_0^\perp}(q_k)}{C\bar\bE_k^{\sfrac12}}$ for a suitable geometric constant $C$, Lemma \ref{Linfty-L2} implies once again that 
$$
|w_k|< \frac14\,,
$$
so that, up to subsequences, the sequence $\bar q_k=\left(z_k,w_k\right)$ converges to a point $q_0=(z_0,w_0)\in \bC_{\frac1{8}}\setminus B_\rho(V)$.

Applying estimate \eqref{eq.cheap Hardt-Simon} to $v_k$ we conclude that for every $0 < r_1 < 1/4$ it holds
$$
   \int_{B_{r_1}(z_k)\cap K_k} \frac{1}{\abs{z-z_k}^{m-2}}\sum_{i=1}^Q \left|\partial_r \frac{((\bar v_k)_i(z) - w_k)}{\abs{z-z_k}}\right|^2 \le C \,.
$$
Using the strong convergence of $\bar v_k$ to $v$ in $W^{1,2}$ and the dominated convergence theorem, we conclude that
\[
\int_{B_{r_1}(z_0)} \frac{1}{\abs{z-z_0}^{m-2}}\sum_{i=1}^Q \left|\partial_r \frac{(v_i(z) - w_0)}{\abs{z-z_0}}\right|^2 \le C \,.
\]
Recall now that, since $v$ is Dir-minimizing and takes values in $\mathscr{A}_Q (\mathbb R)$, we can apply \cite[Theorem 3.1]{DHMSS_final} to infer that $v$ is Lipschitz. In particular the real-valued map $|v\ominus w_0|$ is also Lipschitz and we can use Rademacher's theorem (cf. \cite{DLHMS_linear} for its validity in the case of multivalued functions) to get 
\[
\left|\partial_r \frac{|v (z)\ominus w_0|}{|z-z_0|}\right|^2 \leq \sum_{i=1}^Q \left|\partial_r \frac{(v_i(z)\ominus w_0)}{\abs{z-z_0}}\right|^2\, .
\]
In particular
\[
\int_{B_{r_1}(z_0)} \frac{1}{\abs{z-z_0}^{m-2}}\left|\partial_r \frac{|v (z)\ominus w_0|}{\abs{z-z_0}}\right|^2 \le C\, .
\]
But then the Lipschitz map $z\mapsto |v(z)\ominus w_0|$ must vanish in $z_0$, which in turn implies that $v (z_0) = Q \a{w_0}$. Since however $z_0$ does not belong to the spine $V$, the latter fact would contradict the structural description of the map $v$ discussed above (in particular, recall that the graph of $v$ is supported on a non-flat open book with spine $V$).

\medskip

\noindent \textbf{Proof of (2)}. We first prove the estimate in \eqref{eq.other-propagation}. Should it fail, there would be sequences $y_k \in \bB_{\frac14} \cap V$ and $r_0 \leq r_k \leq 1/8$ such that
\[
2 \leq \frac{\bE (T_k,\pi_0,y_k,r_k)}{\bar\bE_k}\,.
\]
Letting $y \in \overline{\bB}_{\frac14} \cap V$ and $r_0\leq r\leq 1/8$ be subsequential limits of $y_k$ and $r_k$ respectively, we would on the other hand have
\[
\lim_{k \to \infty} \frac{\bE (T_k,\pi_0,y_k,r_k)}{\bar\bE_k} = r^{-(m+2)} \int_{\bB_r(y)} \abs{v}^2 = \int_{\bB_1} \abs{v}^2 \leq 1\,,
\]
where we have used the invariance of $v$ with respect to $V$ and the lower semi-continuity of the $L^2$-norm with respect to weak convergence. Now, the last two displayed estimates are in contradiction. 

Next, we show that the set of $m$-dimensional planes $\pi$ for which \eqref{eq.spine_inclusion}-\eqref{eq.almost_optimality_flat} hold with $\pi$ in place of $\hat\pi_{y,r}$ is not empty; then, we show that for some choice of $\hat\pi_{y,r}$ in such set we must have \eqref{eq.propagationNonDegneracy12}. For the first claim, let $\ell \colon \pi_0 \to \pi_0^{\perp_0}$ be the linear function such that $\ell(z) = \etab \circ v (z)$ for $z \in B_{3/4}(0, \pi_0)$, and let $\pi_k$ be the graph of the function $x_2=\bar\bE_k^{1/2} \, \ell(z)$. Since $V$ is the spine of the support of $v$, we have that $\ell(0,y) = 0$ for every $y$, and thus $V \subset \pi_k$ by construction. Moreover, we have that
\[
\lim_{k \to \infty} \frac{\bE(T_k,\pi_k,y,r)}{\bar\bE (T_k,y,r)} = 1 \qquad \mbox{for every $y \in \bB_{1/4} \cap V$ and $r_0 \leq r \leq 1/8$}
\]
as a consequence of the strong convergence $\bar v_k \to v$ in $L^2(B_{3/4}(0,\pi_0))$ and the definition of $\pi_k$. This proves the existence of planes satisfying the conditions in \eqref{eq.spine_inclusion}-\eqref{eq.almost_optimality_flat}.

Assume finally that \eqref{eq.propagationNonDegneracy12} fails along a subsequence, that is there are points $y_k \in \bB_{1/4} \cap V$ and radii $r_0 \leq r_k \leq 1/4$ such that whenever $\pi_k$ is an $m$-dimensional linear subspace of $\tau_0$ with $V \subset \pi_k$ and $2\,\bar\bE (T_k,y_k,r_k) \geq  \bE(T_k,\pi_k,y_k,r_k)$ we get
\begin{equation} \label{cont_non_deg}
    \bE(T_k,\pi_k,y_k,r_k) > 2\, \bE(T_k,\pi_k,y_k,\sfrac{r_k}{2}) \,.
\end{equation}

First, we claim that such a plane $\pi_k$ must be the graph over $\pi_0$ of a linear function $h_k \colon \pi_0 \to \pi_0^{\perp_0}$ with $\left.h_k\right|_V \equiv 0$ satisfying
\begin{equation} \label{linear compactness}
|\nabla h_k|^2 \leq C(r_0)\,\bar\bE_k\,,
\end{equation}
where $\bar\bE_k = \bE(T_k,\pi_0,0,1)$ as usual. Indeed, for every $k$ let $z$ be a point in $K_k \cap \bB_{r_k/2}(y_k)$, and observe that, if $q(z) \in \spt (T_k)$ satisfies $\abs{z-q(z)}= \dist (z,\spt (T_k))$, then by \eqref{e:W12} we have
\[
\abs{z-q(z)}^2 \leq |v_k (z)|^2 \leq C\, \bar\bE_k \,. 
\]
On the other hand, by Lemma \ref{Linfty-L2}  and the almost-optimality of $\pi_k$ we also get
\[
\dist(q(z), \pi_k)^2 \leq C\,\bE (T_k,\pi_k,y_k,r_k) + C \bA^2 \leq C(r_0)\, \bar \bE_k\,.
\]
This shows that there exists a large set of points $z \in\pi_0 \cap \bB_{r_k/2}(y_k)$ such that
\[
\dist(z,\pi_k)^2 \leq C(r_0)\, \bar\bE_k\,,
\]
thus proving the claim.

As a consequence of \eqref{linear compactness}, modulo passing to (not relabeled) subsequences, we have that $y_k \to y\in \bar\bB_{1/4} \cap V$, $r_k \to r \in \left[r_0,1/4\right]$, and the functions $\ell_k = \frac{h_k}{\bar\bE_k^{1/2}}$ converge to a linear function $\ell$ over $\pi_0$. Since $\bB_{r_k}(y_k) \subset \bB_{1/2}$, using the Lipschitz bound on $v_k$ and the fact that the Lipschitz constant of $h_k$ converges to $0$, we get, under the assumption that \eqref{cont_non_deg} holds, 
\begin{align*}
\left(\frac{2}{r}\right)^{m+2} \int_{B_{\frac{r}{2}}(y)} \abs{v \ominus \ell}^2
    &=\lim_{k \to \infty} \frac{\bE(T_k,\pi_k,y_k,\sfrac{r_k}{2})}{\bar\bE_k} \leq \frac12 \lim_{k \to \infty} \frac{\bE(T_k,\pi_k,y_k,r_k)}{\bar\bE_k} \\
    &= \frac{1}{2}\frac{1}{r^{m+2}} \int_{B_{r}(y)} \abs{v \ominus \ell}^2 
 = \frac{1}{2}\left(\frac{2}{r}\right)^{m+2} \int_{B_{\frac{r}{2}}(y)} \abs{v \ominus \ell}^2 \,,
\end{align*}
which contradicts the fact that $v$ is non-flat. 

\medskip

\noindent \textbf{Proof of \eqref{eq.no_holes12}} Finally assume that \eqref{eq.no_holes12} fails, that is there exists a sequence of points $y_k\in \bB_{\frac14}\cap V$ such that $\Theta_{T_k}(q)<Q$ for every $q\in \bB_{\delta_0}(y_k)$. Therefore \eqref{no Q points} is satisfied in the cylinder $B_{\delta_0/2}(y_k,\pi_0)\times \pi_0^\perp$ and so by Proposition \ref{p:L2Alm-piece-3} we have that $v_k|_{B_{\delta_0/2}(y_k,\pi_0)}=(\sum_{i}\a{(v_k)_i},\eps)$ with $\eps\in \{-1,1\}$ a constant, and $(v_k)_1\leq \dots\leq (v_k)_Q$ each satisfying the minimal surfaces equation in $\Sigma_k$. Since up to subsequences we can assume that $y_k\to y\in V\cap \bar\bB_{1/4}$, it follows that in $B_{\delta/4}(y,\pi_0)$ the functions $(\bar v_k)_i = \bar{\bE}_k^{-\sfrac{1}{2}} (v_k)_i$ converge in the $C^1$ topology to harmonic functions. In particular there would be a $C^1$ selection for $v$ in $B_{\delta/4} (y,\pi_0)$, which is not possible, because it would contradict the structural description of $v$. 
\end{proof}

\section{Proof of Proposition \texorpdfstring{\ref{p:decay-2}}{decay2}: Whitney decomposition}

Using the results of the previous section we can now adapt the graphical parametrization constructed in \cite{DHMSS} to our setting. In view of Lemma \ref{lem.propagation1}, we start by updating Assumption \ref{ass:decay plane -1} into

\begin{ipotesi}\label{ass:decay plane}
Assumption \ref{ass:decay plane -1} holds, and in addition $\bE (T,\pi_0,0,1) \leq 2\,\bar\bE (T,0,\sfrac12)$ and $V (\mathbf{S})\subset \pi_0$.
\end{ipotesi}

Recall then that under the above Assumption \ref{ass:decay plane} we set coordinates $(x,y,w)$ in $\R^{m+n}$, where $y=(y_1,\ldots,y_{m-1})$ are the coordinates on the spine $V(\bS)$, $\pi_0$ has coordinates $(x_1,y)$, and $T_0\Sigma$ has coordinates $(x_1,x_2,y)$. The half-planes $\pi_0^\pm$ are defined by $\pi_0^\pm = \{\pm x_1 > 0\} \subset \pi_0$.

Next, we need to identify the domains on which the different graphical approximations of $T$ are going to be defined. These will consist of a union of cubes in a \emph{Whitney}-type decomposition of (a subset of) $\left[0, \infty \right) \times V$ with suitably good properties. Here, the coordinate $t$ on the ``abstract'' closed half-line $[0,\infty)$ will play the role of the distance function from $V$. 

Fix a large positive integer $N_0 \in \mathbb N$, and consider the rectangle \[\texttt{R}_{0}:=[0,2^{-N_0}]\times [-2,2]^{m-1} \subset \left[ 0, \infty \right) \times V\,,\] as well as the collection $\mathcal{L}_{N_0}$ of sub-cubes defined as follows. First, we partition $(0,2^{-N_0}]$ into the dyadic intervals $\{ [2^{-k},2^{-k+1}] \}_{k > N_0}$. Then, we further divide each layer $[2^{-k},2^{-k+1}] \times [-2,2]^{m-1}$ into sub-cubes of side-length $2^{-(k+M)}$, where $M$ is a large integer to be chosen later, cf. Figure \ref{figura-5}. If $L \in \mathcal L_{N_0}$ has side-length $2^{-(k+M)}$, we will say that $L$ has \emph{order $k$}. Notice that 
\begin{equation} \label{e:dist v diam} 
\frac{2^{M+1}}{\sqrt{m}} \,\diam(L) \ge \max_{z\in L} \dist(z,V)\ge \min_{z\in L} \dist (z,V) \ge \frac{2^M}{\sqrt{m}} \,\diam(L) \quad \forall L\in \mathcal{L}_{N_0}\, .
\end{equation} 

For any $L \in \mathcal{L}_{N_0}$, we shall denote $c_L=(t_L, y_L)$ the center of $L$ and $d_L$ the diameter of $L$. In order to ease the notation, we will write $y_L$ in place of the more cumbersome $(0,y_L,0) \in \R^{m+n}$, and we will be interested only in those cubes $L$ for which $|y_L| < 3/4$. For such cubes $L$ we introduce the notation
\[
\bE_L:=\bE\left(T,\bS,y_{L},\bar M d_{L}\right)
\qquad \mbox{and}\qquad \bar \bE_L:= \bar \bE\left(T,y_{L},\bar M d_{L}\right)
\,,
\]
where $\bar M := 2^{M+6} / \sqrt{m}$ and $\bar \bE_L$ is computed by minimizing $\bE(T,\pi,y_L,\bar M d_L)$ among $m$-dimensional planes $\pi \subset T_0 \Sigma$. The parameter $N_0$ is chosen so large that if $L \in \mathcal{L}_{N_0}$ is a cube with $|y_L| < 3/4$ then $\bB_{\bar M d_L}(y_L) \subset \bB_1(0)$.

\begin{figure}
\begin{tikzpicture}
\foreach \x in {1,...,4}
\draw (0, {2+0.5*\x}) -- (6,{2+0.5*\x}) (0,{1+0.25*\x}) -- (6,{1+0.25*\x}) (0,{0.5+0.125*\x}) -- (6,{0.5+0.125*\x}) (0,{0.25+0.0625*\x}) -- (6,{0.25+0.0625*\x}) (0,{0.125+0.03125*\x}) 
-- (6,{0.125+0.03125*\x});
\draw (0,0.125) -- (6,0.125);
\foreach \y in {0,...,12}
\draw ({0.5*\y},2) -- ({0.5*\y},4); 
\foreach \y in {0,...,24}
\draw ({0.25*\y},1) -- ({0.25*\y},2);
\foreach \y in {0,...,48}
\draw ({0.125*\y},0.5) -- ({0.125*\y},1);
\foreach \y in {0,...,96}
\draw ({0.0625*\y},0.25) -- ({0.0625*\y},0.5);
\foreach \y in {0,...,192}
\draw ({0.03125*\y},0.125) -- ({0.03125*\y},0.25);
\fill[black] (0,0) -- (6,0) -- (6,0.125) -- (0,0.125) -- (0,0);
\draw[very thick] (0,0) -- (6,0);
\end{tikzpicture}
\caption{The Whitney decomposition of $[0,2^{-N_0}] \times [-2,2]^{m-1}$. In the above example the parameter $M$ equals $2$.} \label{figura-5}
\end{figure}
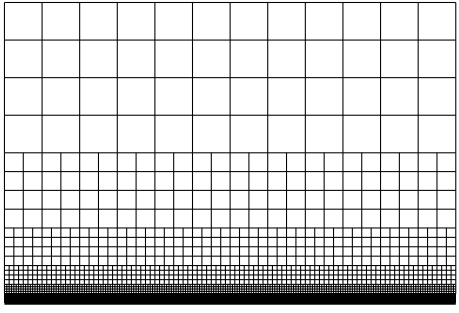

\begin{definition}(Whitney domains) \label{def:whitney}
We establish the following partial order relation in $\mathcal L$: if $L,L' \in \mathcal L$, we say that $L$ \emph{is below} $L'$, and we write $L \preceq L'$, if and only if $\mathbf{p}_V(L) \subset \mathbf{p}_V(L')$.
Let $T$ be as in Assumption \ref{ass:everywhere}, and let $\bS \in \mathscr{B}(0)$. For $\tau, \eta \in \left( 0, 1/2 \right)$, we define the following regions.
\begin{itemize}
    \item[(W)]  The {\em good Whitney domain} of $\texttt{R}_0$ associated with $(T,\bS,\tau,\eta,N_0)$, denoted by $\mathcal{W}=\mathcal{W}(T,\bS,\tau,\eta, N_0)$, is the subfamily of $L \in \mathcal{L}_{N_0}$ with $|y_L| < 3/4$ such that
    \begin{equation} \label{e:the good}
\bE_{L'} < \tau^2 \qquad \mbox{and} \qquad \bE_{L'} < \eta\, \bar\bE_{L'}
\end{equation}
for all $L \preceq L'$.

\item[(B)] The {\em bad Whitney domain} of $\texttt{R}_0$ associated with $(T,\bS,\tau,\eta, N_0)$, denoted by $\mathcal{B}=\mathcal{B}(T,\bS,\tau,\eta,N_0)$ is the subfamily of $L \in \mathcal{L}_{N_0}$ with $|y_L| < 3/4$ such that $L' \in \mathcal W$ for all $L \preceq L'$ with $L' \neq L$ and
    \begin{equation}\label{e: the bad}
\bE_L < \tau^2 \qquad \mbox{and} \qquad \bE_L \ge \eta\, \bar\bE_L\,.
\end{equation}



\end{itemize}

\end{definition}

Since we will often deal with suitable dilations of the cubes in $\mathcal{L}_{N_0}$, we introduce the following notation. For $1 \leq \lambda \leq 2^M$, $\lambda L$ is the cube with the same center $c_L$ as $L$ and diameter $d_{\lambda L} = \lambda \, d_L$.

We next define the regions where we shall build the graphical parametrization of the current. First, given $1\leq \lambda \leq 2^M$, we set
\begin{equation}
    U_{\lambda \mathcal W} := \bigcup_{L \in \mathcal W} \lambda L \cup \left(\left[2^{-N_0}, 2^{-1}\right]\times B_{\sfrac34}^{m-1}(0)\right)\,,
\end{equation}
and, setting $U_{\mathcal W} = U_{1\,\mathcal W}$, we define the function $\varrho_{\mathcal W} \colon B_{\sfrac34}^{m-1}(0) \to \left[ 0, 2^{-N_0} \right]$ as
\begin{equation} \label{oh dear vrho}
    \varrho_{\mathcal W}(y) := \inf \left\lbrace t \, \colon \, (t,y) \in U_{\mathcal W}  \right \rbrace\,.
\end{equation}
We also define
\begin{equation}
    \label{rotation of Whitney}
    R_{\lambda\mathcal W} := (\p_{\pi_0}^{-1}(U_{\lambda\mathcal W}^+) \cup \p_{\pi_0}^{-1}(U_{\lambda\mathcal W}^-))\cap \bB_{3/4}\,,
\end{equation}
where, for a domain $U \subset \left[0,\infty\right) \times V$, we have denoted
\begin{equation} \label{e:symmetric domains}
U^\pm := \left\lbrace (\pm t,0,y) \in \pi_0^\pm \subset T_0\Sigma \, \colon \, (t,y) \in U \right\rbrace
\end{equation}
the corresponding domains on $\pi_0^\pm$. 

Next, we consider the family 
\[
\mathcal{F} = \left\lbrace \bB_{\bar M d_L}(y_L) \, \colon \, L \in \mathcal B \right\rbrace\,,
\]
and we let $\left\lbrace \bB_{\bar M d_{L_i}}(y_{L_i}) \right\rbrace_{i \in \mathbb{N}}$ be a Vitali covering of $\bigcup \mathcal F$: that is, each $L_i \in \mathcal B$, the balls $\bB_{\bar M d_{L_i}}(y_{L_i})$ are pairwise disjoint, and 
\[
\bigcup_{L \in \mathcal B} \bB_{\bar M d_L} (y_L) \subset \bigcup_{i \in \mathbb N} \bB_{5 \bar M d_{L_i}}(y_{L_i})\,.
\]
To ease the notation, we set $d_i := d_{L_i}$, $y_i := y_{L_i}$, $\bE_i := \bE_{L_i}$, and $\bar \bE_i := \bar \bE_{L_i}$, and, with this notation in place, we define

\begin{equation} \label{rotation of bad Whitney 2}
R_{\mathcal B} := \bigcup_{i
\in \mathbb N} \bB_{5 \bar M d_i} (y_i) \setminus B_{C_\star(\eta^{-1}\bE_i)^{\sfrac12}d_i} (V)\,,
\end{equation}
where $C_\star$ is a geometric constant.

\medskip

Finally, before proceeding we also record the following 
\begin{remark}\label{r:cube-control-eta-tau}
If $T$ satisfies Assumption \ref{ass:decay plane}, then we have 
\begin{equation}\label{eq:W3}
\bar\bE_L\leq \bE(T, \pi_0, y_L, \bar M\,d_L)\leq C \, (\tau^2+ \bar\varepsilon)\qquad \forall  L\in  \mathcal W\cup \mathcal B\, .
\end{equation}
Indeed, the first inequality is trivial, while the second inequality follows from
\begin{align*}
    \bE(T, \pi_0, y_L, \bar M\,d_L)
        &\leq \bE(T, \bS, y_L, \bar M\,d_L)+C\,\beta_{\pi_0}(\bS)^2\\
        &\stackrel{\eqref{eq:angle_bound}}{\leq} \tau^2+ C\, \bE(T, \pi_0, 0,1)\leq C\,(\tau^2+ \bar\varepsilon)\,.
\end{align*}
\end{remark}

\subsection{Graphicality on good cubes} In the following theorem, we are going to represent $T$ as a special multi-valued graph in the region $R_{\mathcal W}$ which ``projects'' onto the good Whitney domain. As it will become apparent in the proof, in this region the hypotheses from Proposition \ref{p:L2Alm-piece-3} will be satisfied, so that the special multi-valued function $u$ which parametrizes $T$ (in the sense that $T$ is the graph of $v$ when $v(z) = u(z) + \Psi(z+u(z))$) will come equipped with a $C^{1,\sfrac12}$ selection as specified in Proposition \ref{p:L2Alm-piece-3}. The latter may then be considered as a ``$p$-multifunction'' on the ``abstract'' domain $U = U_{\mathcal W}$ over the (degenerate) open book $\pi_0$; see \cite[Definition 5.4]{DHMSS}. More precisely, a $p$-multifunction in the present context will be a collection $u=\{u^\pm_{j}\}_{j=1}^Q$ of functions of class $C^{1,\sfrac12}$ defined on domains $U^\pm$ corresponding to some domain $U \subset \left(0,\infty \right) \times V$ as specified in \eqref{e:symmetric domains}. For every $\zeta = (t,y) \in U$, we let $\zeta^\pm := (\pm t, 0, y) \in \pi_0^\pm$, and we set
\begin{equation*}
    \begin{split}
        |u(\zeta)| :&= \max_{\pm} \max_j |u^\pm_j(\zeta^\pm)|\,,\\
         |D u(\zeta)| :&= \max_{\pm} \max_j |D u^\pm_j(\zeta^\pm)|\,,\\
          [D u]_{\sfrac12}(\zeta) :&= \max_{\pm} \max_j [D u^\pm_j]_{\sfrac12}(\zeta^\pm)\,,
    \end{split}
\end{equation*}
where, for $z \in U^\pm$, we have set
\[
[D u^\pm_j]_{\sfrac12}(z) := \inf_{\rho > 0} \sup \left\lbrace 
\frac{|D u^\pm_j(z_1) - D u^\pm_j(z_2)|}{|z_1-z_2|^{\sfrac12}}\, \colon \, z_1 \neq z_2,\, z_k \in U^\pm \cap \bB_\rho(z)\right\rbrace\,.
\]
Finally, we define the weighted $C^{1,\sfrac12}$ norm for a $p$-multifunction $u = \{u^\pm_j\}_j$ setting
\[
\|u\|_{C^{1,\sfrac{1}{2}}_* (U)} := 
\sup_{\zeta = (t,y) \in U} \left( t^{-1} |u (\zeta)| 
+  |D u (\zeta)| + t^{\sfrac12} \, [D u]_{\sfrac12} (\zeta) \right)\,.
\]

\begin{theorem}[Graphical parametrization] \label{thm:graph_v1}
Let $T$ and $\Sigma$ be as in Assumptions \ref{ass:everywhere}. For any $N_0\in \N$ there are $\tau_{8} > 0$, $\eta_{8} > 0$, and $C \geq 1$, depending on $(m,n,p,N_0)$ with the following property. If:
\begin{itemize}
\item[(a)] the values of the parameters $\eta$ and $\tau$ in Definition \ref{def:whitney} are smaller than $\eta_{8}$ and $\tau_{8}$,   
\item[(b)] Assumption \ref{ass:decay plane} holds with $\bar{\eps} < C^{-1} \tau^2$, $\bar{\eta} < C^{-1} \eta$ for some $\bS$ and $\pi_0$, 
\end{itemize}
then there
    is a special $Q$-valued map $u = (\sum_j \a{u_j}, \varepsilon)$ over $U_{4\mathcal W}^+ \cup U_{4\mathcal{W}}^-$, with the following properties:
\begin{itemize}
\item[(i)] $\varepsilon$ is constant on each of the two domains $U_{4\mathcal W}^{\pm}$, each $u^\pm_{j} \colon U_{4\mathcal W}^{\pm} \to \pi_0^{\perp_0}$ is of class $C^{1,\sfrac{1}{2}}_{{\rm loc}}$, and, regarding $u$ as the $p$-multifunction $\{u_j^\pm\}_{j=1}^Q$, we have $\|u\|_{C_*^{1,\sfrac12}(U_{4\mathcal W})} \leq C\,\tau$;
\item[(ii)] $T \mres R_{4\mathcal W} = \mathbf{G}_v \mres R_{4\mathcal W} $, where $v= (\sum_j \a{v_j}, \varepsilon)$ is the special $Q$-valued function on $U_{4\mathcal W}^+ \cup U_{4\mathcal W}^-$ defined by 
\begin{equation} \label{e:function to manifold}
v^\pm_{j}(z) := u^\pm_{j}(z) + \Psi(z + u^\pm_{j}(z))\,;
\end{equation}
\item[(iii)] if $L\in \mathcal B(T,\bS,\tau,\eta,N_0)$ then there exists $\xi_L \in \bB_{\bar M d_L/2}(y_L)$ with $\Theta_T(\xi_L)\geq Q$;
\item[(iv)] the following estimate holds:
\begin{equation} \label{e:L^2 estimate}
    \int_{\bB_{1/2} \setminus (R_{2\mathcal W} \cup R_{\mathcal B})} \dist (q, V)^2 \, d\|T\| 
    \leq C \, \tau \eta^{-\sfrac32} \, \mathbf E(T,\bS,0,1)\, ;
\end{equation}
\item[(v)] For every fixed $\eta$, $\tau$, and $\rho$, if $\bar \eta$ and $\bar\varepsilon$ are chosen sufficiently small, then $\varrho_{\mathcal{W}} (y)\leq \rho$ for all $y\in \bB_{1/4} \cap V$.
\end{itemize}
\end{theorem}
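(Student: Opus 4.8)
The plan is to follow the construction of the graphical parametrization in \cite{DHMSS}, the essential novelty being that, since $\bS$ is not assumed to be at a fixed distance from a plane, White's $\varepsilon$-regularity theorem must be replaced as an input by Propositions \ref{p:L2Alm} and \ref{p:L2Alm-piece-3} together with the propagation results of Section \ref{s:Q-points}. Throughout, the order in which parameters are frozen is the spine of the argument: first $N_0$; then $M$ (hence $\bar M$) large depending on $N_0$; then $\tau_8,\eta_8$ small depending on $M$; and finally the thresholds for $\bar\eta,\bar\varepsilon$ small depending on all of the above, so that in particular hypothesis (b), $\bar\varepsilon<C^{-1}\tau^2$ and $\bar\eta<C^{-1}\eta$, can be met.

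\emph{Graph over the good cubes.} Fix $L\in\mathcal W$ of order $k$. By Remark \ref{r:cube-control-eta-tau} and the defining inequalities \eqref{e:the good}, every ancestor $L'\succeq L$ satisfies $\bE_{L'}<\tau^2$, $\bE_{L'}<\eta\,\bar\bE_{L'}$, and $\bar\bE_{L'}\le C(\tau^2+\bar\varepsilon)$. Running Corollary \ref{cor:kick-off} down the column of ancestors of $L$ — and combining it with the non-growth estimate \eqref{eq.other-propagation} for the planar excess and with the fact that, once $Q$-density points are ruled out, \cite[Lemma 9.5]{DHMSS} turns $T$ into a classical area minimizer at all smaller scales — one verifies that $T_{y_L,\bar M d_L}$ satisfies hypotheses (b1)--(b5) of Lemma \ref{l:propagation} with a parameter as small as we wish, hence also the assumptions of Proposition \ref{p:L2Alm}. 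Conclusion \eqref{eq.replacement_for_graphical2} of Lemma \ref{l:propagation} then gives that $T$ has no density-$\ge Q$ points in the part of $\bC_{1/8}$ at distance $\ge\rho$ from $V$, and for $\rho$ small depending only on $M$ this region contains the rescaled cube $4L$, which by \eqref{e:dist v diam} lies at relative distance $\simeq 2^{-6}$ from $V$. Therefore Proposition \ref{p:L2Alm-piece-3} applies on the cylinders $\bC_{2r}(z,\pi_0)$, $z$ over $4L$, $r\simeq\bar M^{-1}$: \eqref{no Q points} has just been verified, and \eqref{e:tamer-cylindrical excess} becomes $\bar M^{-m}\gtrsim(\tau^2+\bar\varepsilon)^{1-\gamma}$, true for $\tau_8,\bar\varepsilon$ small. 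This yields an ordered $C^{1,\sfrac12}$ selection $u_1\le\dots\le u_Q$ with a constant orientation over $4L^\pm$; rescaling \eqref{eq:C12est} — with the cylindrical excess over such a cylinder being $\le C\tau^2$, which forces $\eta_8$ to be small depending on $M$ — gives $|Du|\lesssim\tau$ and $[Du]_{\sfrac12}\lesssim d_L^{-\sfrac12}\tau$ on $4L$, while Lemma \ref{Linfty-L2} at scale $\bar M d_L$ gives $|u|\lesssim\tau\,\bar M d_L\simeq\tau\dist(\cdot,V)$; together these are the local version of $\|u\|_{C^{1,\sfrac12}_*(U_{4\mathcal W})}\le C\tau$.

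\emph{Patching and the remaining conclusions.} Whenever two of these local selections have overlapping domains they both represent $T\mres\bB_{3/4}$ as a graph over $\pi_0$, so by the constancy lemma $\modp$ — exactly as in the last paragraph of the proof of Proposition \ref{p:L2Alm-piece-3} — the underlying multifunctions and their orientations coincide on the overlap. Since $U_{4\mathcal W}^{\pm}$ is connected (the column above any good cube consists of good cubes and reaches the outer slab $[2^{-N_0},2^{-1}]\times B_{\sfrac34}^{m-1}(0)$, and $4$-dilates of adjacent cubes overlap), patching as in \cite[Section 6.2]{DLS_Center} produces a single special $Q$-valued $u=(\sum_j\a{u_j},\varepsilon)$ with $\varepsilon$ constant on each piece, which is (i); setting $v$ by \eqref{e:function to manifold} and using \eqref{e:inclusion}, \eqref{e:graph_current} gives (ii). For (iii): if $L\in\mathcal B$ its parent is good, so the no-holes conclusion \eqref{eq.no_holes12} applied in the rescaling of $T$ at the parent (with $\delta_0<\tfrac14$, legitimate since $M$ is large) produces a density-$\ge Q$ point in $\bB_{\bar M d_L/2}(y_L)$. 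For (v): given $\eta,\tau,\rho$, take $r_0\simeq\rho$ in Corollary \ref{cor:kick-off}; for $\bar\eta$ small the rescalings $T_{y,r}$ with $y\in\bB_{1/4}\cap V$ and $r\in[r_0,\tfrac14]$ satisfy $\mathbb E(T_{y,r},\bS,0,1)\le\eta(\bar\eta)\,\bar\bE(T_{y,r},0,1)$ with $\eta(\bar\eta)\to0$, which (after comparing the centers $y$ and $y_L$ and using Remark \ref{r:cube-control-eta-tau} for the $\tau^2$-bound) forces $\bE_L<\tau^2$ and $\bE_L<\eta\,\bar\bE_L$ for every cube above $y$ with $t_L\ge\rho$, i.e. $\varrho_{\mathcal W}(y)\le\rho$. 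For (iv): on $\bB_{1/2}\setminus(R_{2\mathcal W}\cup R_{\mathcal B})$ the point $q$ either lies in a thin tube of radius $\simeq C_\star(\eta^{-1}\bE_i)^{\sfrac12}d_i$ around $V$ inside a bad ball $\bB_{5\bar M d_i}(y_i)$, or over a stopping cube; using a Vitali covering by the bad balls, the density lower bound $\omega_m r^m\le 2\|T\|(\bB_r)$, the estimate $\dist(q,V)^2\lesssim(\eta^{-1}\bE_i)\,d_i^2$ on those tubes together with $\bE_i<\tau^2$, and the translation invariance $\dist(\,\cdot-y_i,\bS)=\dist(\,\cdot\,,\bS)$ valid because $y_i\in V$, one sums a geometric series to bound the integral by $C\tau\eta^{-\sfrac32}\mathbf E(T,\bS,0,1)$, exactly as in \cite{DHMSS}.

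\emph{Main obstacle.} The genuinely delicate point is making the quantitative hypotheses (b1)--(b5) of Lemma \ref{l:propagation} cascade all the way down the infinitely fine Whitney decomposition: in contrast with \cite{DHMSS}, the normalizing quantity $\bar\bE_L$ is not fixed once and for all but varies from cube to cube, so one must interlock the non-growth estimate \eqref{eq.other-propagation}, the compactness behind Corollary \ref{cor:kick-off}, and the passage to classical regularity at small scales via \cite[Lemma 9.5]{DHMSS} in the right order, and calibrate $N_0,M,\tau_8,\eta_8$ and the $\bar\eta,\bar\varepsilon$ thresholds coherently. The bookkeeping for the $L^2$-estimate \eqref{e:L^2 estimate} is the second, more routine, source of technical work.
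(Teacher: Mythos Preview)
Your overall architecture matches the paper's, and parts (iii), (iv), (v) are handled essentially the same way. The one place where your proposal diverges, and where it has a genuine gap, is the cascade that verifies (b1)--(b5) of Lemma~\ref{l:propagation} at the scale of \emph{every} cube $L\in\mathcal W\cup\mathcal B$.

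You propose to ``run Corollary~\ref{cor:kick-off} down the column of ancestors'' together with \eqref{eq.other-propagation} and \cite[Lemma~9.5]{DHMSS}. This does not close: Corollary~\ref{cor:kick-off} produces the good plane $\hat\pi_{y,r}$ only for $r\ge r_0$, and its threshold $\eta_6$ depends on $r_0$; since the Whitney cubes go to arbitrarily small scales, no fixed choice of $\bar\eta$ can feed Corollary~\ref{cor:kick-off} at every depth. The reference to \cite[Lemma~9.5]{DHMSS} is also misplaced here---that lemma lives inside Proposition~\ref{p:L2Alm-piece-3} and plays no role in verifying (b1)--(b5).

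The paper closes the loop by a clean induction on the order of the cube, using that Lemma~\ref{l:propagation} is \emph{self-reproducing}: if its hypotheses hold at the father $L'$ (with the rescaled plane $\hat\pi_{L'}$), then conclusion~(2) of the lemma---specifically \eqref{eq.spine_inclusion}--\eqref{eq.propagationNonDegneracy12}---manufactures the plane $\hat\pi_L$ at the child scale and gives (b4)--(b5) directly; (b1) comes from Remark~\ref{r:cube-control-eta-tau}; (b2) comes from $\bE_L\le 2^{m+2}\bE_{L'}<2^{m+2}\eta\,\bar\bE_{L'}\le 2^{m+2}\eta\,\bar\bE_L$ using one application of \eqref{eq.propagationNonDegneracy12}; and (b3) comes from iterating \eqref{eq.propagationNonDegneracy12} over the whole ancestry, which yields $(\bar M d_L)^2\bA^2\le C\bar\eta\,\bar\bE_L$. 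All thresholds are then uniform in depth, and $\tau_8,\eta_8$ can be fixed once and for all. Corollary~\ref{cor:kick-off} is used only to start the induction at order $N_0+1$. Once you replace your compactness-at-every-scale argument by this induction, the rest of your outline goes through exactly as the paper's does.

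A minor point on (iv): your dichotomy ``thin tube \emph{or} over a stopping cube'' is slightly off. The paper first proves the covering \eqref{covering1}, namely that $\spt(T)\cap\bB_{1/2}\setminus R_{2\mathcal W}$ is contained in the union of bad balls (this uses that whenever a cube first fails \eqref{e:the good} it must in fact satisfy $\bE_L<\tau^2$, i.e.\ land in $\mathcal B$, because $\bE_L\le C\eta\beta_{\max}^2(\bS)\le C\eta\bar\varepsilon$). After subtracting $R_{\mathcal B}$ only the thin tubes $U_i=\bB_{5\bar M d_i}(y_i)\cap B_{C_\star(\eta^{-1}\bE_i)^{1/2}d_i}(V)$ remain, and your Vitali\,/\,density-lower-bound computation on those is exactly what the paper does.
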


\begin{proof} In this proof all constants denoted by $C$ can only depend on $Q,m,n,$ and $N_0$. If the constant does not depend on $N_0$ it will then be denoted by $\bar C$.

First of all, if the constant $C$ in (b) is chosen large enough, and if $tau_8$ and $\eta_8$ are chosen small enough, it follows from Assumption \ref{ass:decay plane} and Corollary \ref{cor:kick-off} that the cubes of order $(N_0+1)$ belong to $\mathcal{W}$. This is important, as it guarantees that every cube $L\in \mathcal{B}$ has a father in $\mathcal{W}$. Moreover, for fixed $\rho > 0$ and $\delta_0 \in \left( 0, 1/4\right)$, the hypotheses of Lemma \ref{l:propagation} are satisfied at the scale of all cubes $L$ of order $(N_0+1)$: that is, conditions (b1) up to (b5) in Lemma \ref{l:propagation} are satisfied with $T$ replaced by $T_L = (\lambda_{y_L,\bar M d_L})_\sharp T$ and $\pi_0$ replaced by $\hat\pi_{L} := \hat\pi_{y_L,\bar M d_L}$ from Corollary \ref{cor:kick-off}.

Next, we claim that if $L$ is a cube in $\mathcal W \cup \mathcal B$ then we can apply Lemma \ref{l:propagation} at the scale of $L$. The proof is by induction on the order $k$ of the cube. The claim is true for $k=N_0+1$. Let us then fix a cube $L$ of order $k+1$ which is in $\mathcal W \cup \mathcal B$, and make the induction hypothesis that Lemma \ref{l:propagation} can be applied to all cubes $L'$ of order $N_0+1 \leq j \leq k$ that are in the ancestry of $L$. We shall prove that the lemma can be applied to $L$. We let $L'$ denote the ``father'' of $L$, i.e. the cube of order $k$ which is closest to $L$: notice that $L' \in \mathcal W$, regardless of whether $L \in \mathcal W$ or $L \in \mathcal B$. Now, we observe that:
\begin{itemize}
    \item[$\bullet$] $\bar\bE (T,y_L,\bar M d_L) \leq \bar C (\tau^2 + \bar\varepsilon) \quad \mbox{by Remark \ref{r:cube-control-eta-tau}}$;
    \item[$\bullet$] the inequalities 
    \begin{align*}
    \bE (T,\bS,y_L,\bar M d_L) \leq 2^{m+2}\, \bE(T,\bS,y_{L'},\bar M d_{L'}) 
     & < 2^{m+2}\,\eta\,\bar\bE(T,y_{L'},\bar M d_{L'})\\
     &\leq 2^{m+2}\,\eta\,\bar\bE(T,y_L,\bar M d_L)
    \end{align*}     
    hold by the definitions of $\mathcal B$ and $\mathcal W$ and \eqref{eq.propagationNonDegneracy12};
    \item[$\bullet$] $(\bar M d_L)^2\bA^2 \leq (\bar M d_L)^2 \, \bar\eta\,\bE(T,\pi_0,0,1)\leq 2(\bar M d_L) \, \bar\eta\,\bar\bE(T,y_L,\bar M d_L)$ by Assumption \ref{ass:decay plane} and an iterative application of \eqref{eq.propagationNonDegneracy12} over the ancestry of $L$;
    \item[$\bullet$] there exists $\hat \pi_L$ so that $V \subset \hat \pi_L$, $2\,\bar\bE(T,y_L,\bar M d_L) \geq\bE(T,\hat\pi_L,y_L,,\bar M d_L)$, and 
    \[
    \bE(T,\hat\pi_L,y_L,\bar M d_L) \leq 2 \, \bar\bE(T,y_L,\sfrac{\bar M d_L}{2})\, ,
    \]
    as a consequence of Lemma \ref{l:propagation} applied at scale $L'$.
\end{itemize}
The above considerations imply that, if $\tau_8$ and $\eta_8$ are chosen small enough, then Lemma \ref{l:propagation} applies indeed. In particular, we conclude that $\Theta (T,q)<Q$ for every $q\in \spt (T) \cap \bC_{\bar M d_L/8} (y_L, \hat\pi_L) \cap \bB_{7 \bar M d_L/8} (y_L) \setminus B_{\rho d_L} (V)$. We can then apply Proposition \ref{p:L2Alm-piece-3} in $\mathbf{p}_{\pi_0}^{-1} (8 L) \cap \bB_{\bar M d_L/2}(y_L)$ (where we used the short-hand notation $\mathbf{p}_{\pi_0}^{-1} (8 L)$ for $\mathbf{p}_{\pi_0}^{-1} (8 L^+) \cup \mathbf{p}_{\pi_0}^{-1} (8 L^-)$) to conclude that the support of $T$ decomposes into smooth minimal surfaces over $4L$. Observe that, as a consequence of Lemma \ref{Linfty-L2} and of the planar excess estimates obtained at the scales of all cubes $L \in \mathcal W \cup \mathcal B$, we have
\[ 
    \spt(T) \cap \bB_{3/4} \cap \bC_{\bar M d_L /4}(y_L, \pi_0) \subset \left\lbrace q \, \colon \, |\mathbf{p}_{\hat\pi_L}^\perp (q)|^2 \leq C (\tau^2 + \bar\varepsilon)\,d_L^2 \right\rbrace \,,
\]
so that
\begin{equation}\label{e:inductive height bound}
    \spt(T) \cap \bB_{3/4} \cap \bC_{\bar M d_L /4}(y_L, \pi_0) \subset \bB_{\bar M d_L / 2}(y_L)\,.
\end{equation}
This guarantees that $\mathbf{p}_{\pi_0}^{-1} (4L) \cap \bB_{\bar M d_L/2}(y_L) \cap \spt (T) =\mathbf{p}_{\pi_0}^{-1} (4 L) \cap \bB_{3/4} \cap \spt (T)$. The graphical representation over $U_{4 \mathcal{W}}$ follows now from noticing that, where cubes $4L$ and $2L'$ coincide, the corresponding functions must agree because they parametrize the same piece of the current. In particular this proves (ii). For the argument leading to the precise estimate claimed in (i) we refer the reader to \cite[Section 5]{DHMSS}

Fix now $L\in \mathcal{B}$ and $L'$ be the ``father'' of $L$ as above. By Lemma \ref{l:propagation}, we have 
 \begin{equation} \label{Q point of father}
 \forall y\in \bB_{\frac{{\bar M d_{L'}}}{4}}(y_{L'})\cap V\quad \exists \xi_{L}\in \bB_{\delta_0 \bar M d_{L'}}(y)\quad 
 \mbox{such that $\Theta_T(\xi_{L})\geq Q$}\,.
 \end{equation}
 We apply \eqref{Q point of father} with $y=y_L$, and thus we guarantee the existence of $\xi_L \in \bB_{\frac{\bar M d_L}{2}}(y_L)$ with $\Theta_T(\xi_L) \geq Q$. This proves (iii). 
 
  We next come to (iv). We first claim that

\begin{equation} \label{covering1}
    \spt (T) \cap \bB_{1/2} \setminus R_{2\mathcal W} \subset \bigcup_{L \in \mathcal B} \bB_{\bar M d_L}(y_L) \subset \bigcup_{i \in \mathbb N} \bB_{5 \bar M d_i} (y_i)\,.
\end{equation}


To see this, let $L \in \mathcal{L}_{N_0}$ with $|y_L| < 3/4$ be such that $L \notin \mathcal W$ but $L' \in \mathcal W$ for every $L \preceq L'$ with $L' \neq L$. In particular, let $L'$ be the father of $L$. Since $L' \in \mathcal W$, and assuming $\tau_8$ and $\eta_8$ are sufficiently small, we can apply Lemma \ref{l:angle-bound} and Remark \ref{rmk:on_angle_bound} to conclude that $\bE_{L'} \leq \eta\,\bar \bE_{L'} \leq C\,\eta\,\beta_{\max}^2(\bS)$. Now, since $\bB_{\bar M d_{L}}(y_{L})\subset \bB_{\bar M d_{L'}}(y_{L'})$, and since $d_{L'}=2\,d_L$, we have that
\[
\bE_L \leq 2^{m+2}\, C\, \eta\, \beta_{\max}^2(\bS)\,.
\]
On the other hand, by Lemma \ref{l:angle-bound} we have $\beta_{\max}^2(\bS) \leq \bar C\,\bE(T,\pi_0,0,1) \leq \bar C\bar\varepsilon$. Hence, a suitable choice of $\bar\varepsilon$ guarantees that $\bE_L < \tau^2$, namely that $L \in \mathcal B$. 

Next, let $L \in \mathcal L_{N_0}$ with $|y_L| < 3/4$ be such that $L \notin \mathcal W$, and let $L'$ be the largest ancestor of $L$ such that $L' \notin \mathcal W$. By the considerations above, $L' \in \mathcal B$, and thus $\mathbf{p}_V({\rm int}(L)) \subset \mathbf{p}_{V}({\rm int} (L'))$ for some $L' \in \mathcal B$.

With this in mind, let now $q=(x,y,w) \in \spt (T) \cap \bB_{1/2}$, and let $t=\sqrt{|x|^2+|w|^2} = \dist (q,V)$. If $t \geq \varrho_{\mathcal W}(y)$, then $(t,y)\in \overline{U_{\mathcal W}}$, and $q \in \bB_{\bar M d_L}(y_L)$ for some $L \in \mathcal W$. Applying Lemma \ref{Linfty-L2} and Remark \ref{r:cube-control-eta-tau}, we have, on the other hand,
\[
\abs{{\bf p}_{\pi_0} (q)} \leq C \, d_L \, \left( \bE(T, \pi_0, y_L, \bar M d_L) + d_L^2 \bA^2 \right)^{\sfrac12} \leq C\,d_L \, (\tau + \bar\varepsilon^{\sfrac12})\,,
\]
so that, if $\tau_8$ is sufficiently small, $q \in R_{2\mathcal W}$. If, instead, $t <  \varrho_{\mathcal W} (y)$ then $\varrho_{\mathcal W}(y) > 0$ and, by the considerations above, $y \in \mathbf{p}_V({\rm int}(L))$ for some $L \in \mathcal B$, and thus $q \in \bB_{\bar M d_L}(y_L)$, completing the proof of \eqref{covering1}.


We can now complete the proof of (iv). We  notice that for each $i \in \mathbb N$, denoting $U_i := \bB_{5 \bar M d_i}(y_i) \cap B_{C_\star (\eta^{-1}\bE_i)^{\sfrac12}d_i} (V)$, we have
\[\begin{split} \int_{U_{i}} \dist^2(q,V)\, d\norm{T}(q) &\le \frac{C_\star^2}{\eta}\, d_i^2\, \bE_{i} \, \norm{T}(U_{i}) \le \frac{C_\star^2}{\eta}\, d_i^{m+2}\, \bE_i\, \bE_{i}^{\sfrac12} \\ &\le \frac{C\tau}{\eta^{\sfrac32}} \int_{\bB_{\bar M d_i}(y_i)} \dist^2(\cdot, \bS)\, d\norm{T}\,,
\end{split}\]
where we used that $\bE_i^{\sfrac12}\le \tau$ by \eqref{e: the bad}, and that, by a simple covering argument and the monotonicity formula for $T$,
    \[
    \|T\|(U_{i})\leq C\, d_{i}^{m-1} \, \eta^{-\sfrac12} \,  d_i \,\bE_L^{\sfrac12}\,.
    \]
By \eqref{covering1} and the definition of $R_{\mathcal B}$ in \eqref{rotation of bad Whitney 2}, \[\spt (T) \cap \bB_{1/2} \setminus (R_{2\mathcal W} \cup R_{\mathcal B}) \subset \bigcup_{i \in \mathbb N} U_i\,,\] and thus \eqref{e:L^2 estimate} follows by summing over $i$, keeping in mind that the balls $\bB_{\bar M d_i}(y_i)$ are pairwise disjoint.

\medskip

Finally, (v) is just a consequence of Corollary \ref{cor:kick-off}.
\end{proof}

\subsection{Improved \texorpdfstring{$L^2$}~ estimate} 
The next results are proved in the same way as in \cite[Sections 6 and 7]{DHMSS}. 

\begin{definition}\label{d:linear-multi}
We let $l^{\pm}_j: \pi_0^\pm \to \pi_0^{\perp_0}$ be the maps whose graphs describe the pages of the open book $\bS$. 
\end{definition}
Note that all of them must vanish on $V = V (\bS)$, the $(m-1)$-dimensional spine of $\bS$. There are $N^+ \geq 1$ functions $l_j^+$ and $N^- \ge 1$ functions $l_j^-$, with $N^+ + N^- \leq 2Q$, and with the possibility that $N^+ \neq N^-$. The key point of this section is that over the two halves of the ``good'' region $U_{4 \mathcal{W}}$, namely $U^\pm_{4\mathcal{W}}$ we will be able to select, for each map $u_j^\pm$ in the collection of maps describing $u$, some linear map $l^\pm_{h^\pm (j)}$ for which the $L^2$ norm of $w_j^\pm := u_j^\pm - l^\pm_{h^\pm (j)}$ can be estimated in terms of the excess with respect to $\bS$, rather than the excess with respect to $\pi_0$. The proof is verbatim that given in \cite[Section 6]{DHMSS} for the corresponding estimate in that situation and it is therefore omitted. 

\begin{theorem}[Improved $L^2$ estimates]\label{thm:graph v2}
Let $T$, $\Sigma$, $\bS$ and $\pi_0$ be as in Theorem \ref{thm:graph_v1}. Let $u$ be the corresponding map, and let $l=\{l^\pm_j\}_{j=1}^{N^\pm}$ be the maps of Definition \ref{d:linear-multi}. There are a geometric constant $C$ and two selection functions $h^\pm:j \in \{1,\ldots,Q\}\mapsto h^\pm(j) \in \{1, \ldots, N^\pm\}$ such that if $\tilde l^\pm_j := l^\pm_{h^\pm (j)}$ and
\begin{equation} \label{e:difference function}
    w^\pm_j := u^\pm_{j} - \tilde l^\pm_{j}\, ,
\end{equation}
then
\begin{align}\label{eqn:Linfty estimate excess}
	 	\sup_{\zeta = (t,y) \in U_{3 \mathcal W}}|t|^{\frac{m}{2}+1}\left( |t|^{-1} \abs{w(\zeta)} + \abs{Dw(\zeta)}+ |t|^{\sfrac12} [Dw]_{\sfrac12}(\zeta) \right)&\le C\, (\mathbf{E}(T,\bS,0,1)^{\sfrac12} + \bA)\,,\\ \label{L2 estimate excess}
 \sum_{\pm}\sum_{j=1}^Q \int_{U_{3\mathcal W}^\pm}    (|w_{j}^\pm (z)|^2 + \abs{x}^2 \abs{Dw^\pm_{j}(z)}^2) \, dz &\le C\, (\mathbf E(T,\bS,0,1) + \bA^2) \,,
\end{align}
where, for $z \in \pi_0$, $|x|$ denotes, as usual, the distance of $z$ from $V$.
\end{theorem}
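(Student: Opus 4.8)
The plan is to show that the proof given in \cite[Section 6]{DHMSS} transfers to the present situation essentially without change, once one accounts for the non-zero second fundamental form of $\Sigma$, which is why the error terms carry an additive $\bA$ (respectively $\bA^2$) rather than vanishing. First I would fix, for each connected component of $U_{4\mathcal W}^\pm$ lying over a good cube $L$, the page $\tilde l^\pm_j = l^\pm_{h^\pm(j)}$ of $\bS$ closest to the sheet $u^\pm_j$ of the graphical parametrization: the selection $h^\pm$ is made at the largest scale and then propagated down the ancestry of cubes, using that by Theorem \ref{thm:graph_v1}(i) the sheets $u^\pm_j$ are uniformly $C^{1,\sfrac12}_*$-close to the pages of $\bS$ (after rescaling), so that the nearest-page assignment is locally constant along chains of good cubes and hence globally well defined on each $U_{3\mathcal W}^\pm$. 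The pointwise bound \eqref{eqn:Linfty estimate excess} then follows cube-by-cube: on a good cube $L$ of order $k$ one has, by \eqref{e:the good} and Remark \ref{r:cube-control-eta-tau}, the scale-invariant bound $\bE_L \le \eta \bar\bE_L$ with $\bE_L = \bE(T,\bS,y_L,\bar M d_L)$, and an application of the interior $C^{1,\sfrac12}$ estimates of Proposition \ref{p:L2Alm-piece-3} (in the cylinder $\mathbf p_{\pi_0}^{-1}(8L)\cap \bB_{\bar M d_L/2}(y_L)$, where the hypotheses were verified in the proof of Theorem \ref{thm:graph_v1}) gives
\[
\|Dw^\pm_j\|_{C^0(4L)} + d_L^{\sfrac12}[Dw^\pm_j]_{\sfrac12,4L} \le C\big(\bE(T,\bS,y_L,\bar M d_L)^{\sfrac12} + d_L\bA\big)\le C\,d_L^{-\frac m2}\big(\mathbf E(T,\bS,0,1)^{\sfrac12}+\bA\big),
\]
where the last step uses that, by the monotonicity/comparison estimates established in the propagation lemmas, the localized conical excess at scale $d_L$ over $L$ is controlled by $d_L^{-(m+2)}\cdot d_L^{m}\,\mathbf E(T,\bS,0,1)$ up to the universal factors $\bA^2$, together with the elementary inequality $\dist(z,V)\simeq d_L$ on $L$ coming from \eqref{e:dist v diam}. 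Rearranging gives exactly the weight $|t|^{\frac m2+1}$ in \eqref{eqn:Linfty estimate excess}.

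Next I would deduce the integral bound \eqref{L2 estimate excess} by summing the $L^2$-version of the above over the good cubes. On a single good cube $L$ one has, controlling $w^\pm_j$ by $\dist(\cdot,\bS)$ on the graphical part and using \eqref{e:graph_current}, \eqref{e:volume_estimate} to absorb the non-graphical error,
\[
\sum_{\pm}\sum_j \int_{4L^\pm}\!\!\big(|w^\pm_j|^2 + |x|^2|Dw^\pm_j|^2\big)\,dz
\le C\!\int_{\bB_{\bar M d_L}(y_L)}\!\!\dist^2(q,\bS)\,d\|T\|(q) + C\,(\bar\bE+\bA^2)^{1+\gamma}d_L^{\,?},
\]
and then sums the first terms over $L\in\mathcal W$ with bounded overlap to get $C\,\mathbf E(T,\bS,0,1)$, while the tails of the geometric series in $d_L$ of the higher-order error terms also sum to something $\le C(\bar\bE+\bA^2)\le C(\mathbf E(T,\bS,0,1)+\bA^2)$ (here one uses Assumption \ref{ass:decay plane} and Lemma \ref{l:angle-bound} to bound $\bar\bE$ by $\bE(T,\pi_0,0,1)$ and the latter, via (ii)–(iii) of Assumption \ref{ass:decay plane -1} and the angle bound, by $C\,\mathbf E(T,\bS,0,1)$ up to $\bA^2$). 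The contribution of the region $\bB_{1/2}\setminus R_{2\mathcal W}$, which is covered by the bad cubes, is handled exactly by estimate \eqref{e:L^2 estimate} of Theorem \ref{thm:graph_v1}, since on that region $|w|\lesssim \dist(\cdot,V)+\dist(\cdot,\bS)$ and both are $L^2$-controlled there.

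The main obstacle, and the only place requiring genuine care beyond quoting \cite[Section 6]{DHMSS}, is the propagation of the selection $h^\pm$ together with the sheet-labelling across changes of scale: one must verify that when a good cube $L$ of order $k+1$ meets its father $L'$ of order $k$, the nearest-page assignment computed at the scale $\bar M d_L$ agrees with the one at scale $\bar M d_{L'}$, so that $w^\pm_j$ is globally single-valued and continuous on $U_{3\mathcal W}^\pm$. This is where the smallness conditions $\bar\eta < C^{-1}\eta$, $\bar\eps < C^{-1}\tau^2$ enter: they guarantee (via Theorem \ref{thm:graph_v1}(i)) that the sheets stay within a fixed fraction of the inter-page distance $\sim d_L\,\beta_{\max}(\bS)$ of their assigned page at every scale, so that no ambiguity can arise, and — crucially — that $\bA\,d_L$ is a lower-order perturbation of $\beta_{\max}(\bS)d_L$ by Lemma \ref{l:angle-bound}. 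Once this combinatorial consistency is in place, everything else is the bookkeeping described above, and since it is literally the same argument as in \cite{DHMSS} (only with the harmless extra additive $\bA$/$\bA^2$) we omit the routine details.
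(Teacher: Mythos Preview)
Your plan matches the paper's approach exactly: the paper states that the proof is verbatim that of \cite[Section 6]{DHMSS} and omits it entirely, so your outline of that argument (nearest-page selection fixed at the top scale and propagated down, cube-by-cube $C^{1,\sfrac12}$ estimates from Proposition~\ref{p:L2Alm-piece-3}, then summing with bounded overlap) is precisely what is intended. One small correction: the integral in \eqref{L2 estimate excess} is only over $U_{3\mathcal W}^\pm$, so the bad-cube region plays no role in this theorem and your invocation of \eqref{e:L^2 estimate} is unnecessary; also the trivial bound on the localized excess is $\bE_L \le C d_L^{-(m+2)}\,\bE(T,\bS,0,1)$ (not $d_L^{-m}$), which is exactly what is absorbed by the weight $|t|^{m/2+1}$ in \eqref{eqn:Linfty estimate excess}.
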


\section{Proof of Proposition \ref{p:decay-2}: reparametrization on the new book} \label{s:reparametrization}

In this and the next section we assume that the parameters $\eta$ and $\tau$ defining the Whitney decomposition used in the previous section are fixed. The two selection functions $h^\pm$ and the corresponding linear maps $\tilde{l}^\pm_j$ of Theorem \ref{thm:graph v2} identify a new open book $\tilde \bS \subset \bS$ as follows 

\begin{definition}\label{d:new-cone}
We define
\begin{equation}\label{e:new_book}
\tilde{\bS} = \bigcup_{\pm}\bigcup_{j=1}^{N^\pm} ({\rm id} + \tilde l_j^\pm)(\pi_0^\pm) =: \bigcup_{\pm}\bigcup_{j=1}^{N^\pm} \tilde\bH_{j}^\pm\,.
\end{equation}
\end{definition}

\begin{remark} Observe that $\tilde{\bS}$ can be a proper subset of $\bS$. However it certainly contains at least two pages, one on the left and one on the right. 
\end{remark}

In this section we reparametrize a large portion of the current $T$ as graph over the pages of $\tilde{\bS}$. The advantage of $\tilde{\bS}$ over $\bS$ is that we have ``thrown away useless pages'', i.e. pages of $\bS$ which were not really close to $\spt (T)$. In particular the conclusions of this section will be used to prove Lemma \ref{l:compare-books}.

\subsection{Reparametrizing over \texorpdfstring{$\tilde{\bS}$}{tildeS}}\label{ss:reparametrization-books} By Theorem \ref{thm:graph_v1}(ii), in the region $R_{\mathcal W}$ the current $T$ coincides with the graph $\mathbf{G}_v$ of the special $Q$-valued function $v = u + \Psi(\cdot + u)$ over the domains $U_{4\mathcal W}^\pm$. Recall that on each domain $U_{4\mathcal W}^\pm$ of $\pi_0^\pm$ the function $u$ is canonically decomposed into $C^{1,\sfrac{1}{2}}$ functions $u_{j}^\pm$, and, since each domain $U_{4\mathcal W}^\pm$ is connected, the sign function $\varepsilon_u$ is constant on each of them. We will then simply reparametrize the graph of each map $u_j^\pm$ over the domain $U_{4\mathcal{W}}^\pm$ as the graph of a map $\tilde u_j^\pm$ over a subset of the corresponding page $\tilde{\bH}^\pm_j$. Observe that because the orientation of the graph of $u$ is constant on each $U_{4\mathcal{W}^\pm}$, so is the orientation of the graph of $\tilde{u}$ in order for the map $\tilde v = \tilde u + \Psi (\cdot + \tilde u)$ to describe the same current. In particular in this case, with a slight abuse of notation, we can omit to specify such orientation, and the corresponding sheets will be denoted by $\bG_{\tilde{v}_j^\pm}$, while the sum of them will be denoted by $\bG_{\tilde{\bS}} (\tilde{v})$. This is discussed in subsection \ref{ss:reparametrization-good}.

In subsection \ref{ss:reparametrization-bad}. we will instead aim at reaching a similar parametrization for $T$ over the ``bad'' domain $R_{\mathcal{B}}$. Taking advantage of the smallness of planar excess at the scale of each bad cube $L \in \mathcal B$, and of the existence of points of density at least $Q$ in the current at that scale, we may still define Lipschitz approximations $u_L$ on suitable planes $\hat\pi_L$ satisfying the estimates of Proposition \ref{p:L2Alm}. Of course, as specified in Remark \ref{r:special-multi-functions}, for each such function $u_L$ we also have a canonical selection by Lipschitz maps. After carefully estimating the angle between the cone $\tilde \bS$ and the plane $\pi_L$, we will be able to reparametrize the portion of the current described by the graphs of the $v_L = u_L + \Psi(\cdot + u_L)$ over the varying domains in $\hat\pi_L$ for $L \in \mathcal B$ with the union of graphs of functions $\tilde v_{j}^\pm$ defined over suitable domains of $\tilde \bH_j^\pm$. This time we do not have a ``sign function'' which is locally constant on the regions on the left and on the right of the spine. On the other hand, every point $q = \zeta + \tilde{v}^\pm_j (\zeta)$ with $\zeta \in \tilde\bH^\pm_j$ can be rewritten as $z + (v_L)_j (z)$ for a suitable $L \in \mathcal B$ and $z\in \hat\pi_L$, and we orient the approximate tangent to the graph of $\tilde{v}^\pm_j$ at $q$ positively (with respect to the orientation of $\tilde\bH^\pm_j$) if $\varepsilon_{v_L} (z) = 1$ and negatively (with respect to the orientation of $\tilde\bH^\pm_j$) if $\varepsilon_{v_L} (z) = -1$. This defines an integer rectifiable current $\bG_{\tilde{v}^\pm_j}$ and the sum of all these will be denoted by $\bG_{\tilde{\bS}} (\tilde{v})$.

\subsection{Reparametrization over the good domain} \label{ss:reparametrization-good}
As in \cite[Section 7]{DHMSS}, a reparametrization of the graph of $v$ on the slightly smaller good region $R_{2\mathcal W}$ over the new book $\tilde{\bS}$ follows from Theorem \ref{thm:graph v2}. The proof can be taken verbatim from \cite{DHMSS} and it is therefore omitted. In what follows, we adopt, for a given domain $U \subset [0,\infty) \times V$, the notation
\[
\tilde U^\pm_j := \left\lbrace (x,y) \in \tilde \bH^\pm_j \, \colon \, (|x|,y) \in U \right\rbrace\,,
\]
where, since each $\tilde\bH_j^\pm \subset \bS \subset T_0\Sigma$, we can use $(x,y)$ as a short-hand notation to identify the point $(x,y,0)$. We will also adopt the following convention in order to further ease the notation. The reparametrization algorithm will produce precisely $p=2Q$ functions, $Q$ defined on the half planes $\tilde\bH_j^+$ projecting on $\pi_0^+$ and the other $Q$ defined on the half planes $\tilde\bH_j^-$ projecting on $\pi_0^-$. We agree to denote such functions as $\tilde u_j^\pm$, where $j \in \{1,\ldots,Q\}$, and to let $\tilde \bH^\pm_j$ denote the half plane containing its domain. In particular, if two functions, say $\tilde u^+_j$ and $\tilde u^+_{j'}$, are defined on the same half plane, then that half plane will be denoted $\tilde\bH^+_{j}$ or $\tilde \bH^+_{j'}$ depending on whether it is thought of as the domain of $\tilde u^+_j$ or $\tilde u^+_{j'}$. At the price of possibly having $\tilde\bH^\pm_j = \tilde\bH^\pm_{j'}$ for some $j \neq j'$, we can think from now on that
\[
\tilde\bS = \bigcup_{\pm} \bigcup_{j=1}^Q \tilde\bH^\pm_j\,.
\]

\begin{corollary}[Reparametrization on Good cubes] \label{cor:reparametrization}
    Let $T, \Sigma, \bS$, and $\pi_0$ be as in Theorem \ref{thm:graph v2}, and let $\tilde{\bS}$ be the open book in Definition \ref{d:new-cone}. There are $2Q$ functions $\tilde u^\pm_{j} \colon (\tilde U_{2 \mathcal W})^\pm_{j} \subset \tilde{\bH}^\pm_{j} \to (\tilde{\bH}^\pm_{j})^{\perp_0}$ (with $j\in\{1,\ldots,Q\}$) of class $C^{1,\frac12}$ with the following properties. The estimate 
    \begin{equation} \label{e:rep_Holder}
        \| \tilde u \|_{C_*^{1,\frac12}(\tilde U_{2\mathcal W})} \leq C (\bE (T, \bS, 0, 1)^{\sfrac{1}{2}} + \bA)
    \end{equation}
    holds. Moreover, if we set 
    \begin{equation} \label{e:rep_up to manifold}
        \tilde v^\pm_{j}(z) := \tilde u^\pm_{j}(z) + \Psi(z + \tilde u^\pm_{j}(z))\,,
    \end{equation}
    there is an appropriate choice of the orientation of the graphs of $\tilde{v}_j^\pm$ so that, following the notation of Section \ref{ss:reparametrization-books},
    \begin{equation}\label{e:rep_parametrization}
        T \mres R_{\mathcal W} = {\bf G}_{\tilde{\bS}}(\tilde v) \mres R_{\mathcal W}\, .
    \end{equation}
Finally,
    \begin{equation}\label{e:comparison-tilde-u-w}
    \begin{split}
\int_{(\tilde U_{2\mathcal W})^\pm_{j} } (|\tilde{u}^\pm_{j}|^2 + |x|^2 |D \tilde{u}^\pm_{j}|^2) 
&\leq C \int_{U_{3 \mathcal W}^\pm } (|w^\pm_{j}|^2 + |x|^2 |Dw^\pm_{j}|^2)\\
& \leq C \, (\bE(T,\bS,0,1) + \bA^2)\,,
\end{split}
\end{equation}
where $w$ is the multifunction over $\pi_0$ defined in \eqref{e:difference function}.
\end{corollary}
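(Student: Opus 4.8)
The plan is to reparametrize the graph of $v$ page by page. By Theorem~\ref{thm:graph_v1}(ii) we have $T \mres R_{4\mathcal W} = \bG_v \mres R_{4\mathcal W}$, and on each of the two connected domains $U_{4\mathcal W}^\pm \subset \pi_0^\pm$ the special $Q$-valued function $v = u + \Psi(\cdot + u)$ decomposes canonically into $C^{1,\sfrac12}_{\mathrm{loc}}$ sheets $v^\pm_j(z) = u^\pm_j(z) + \Psi(z + u^\pm_j(z))$, with $\varepsilon_u$ constant on each $U_{4\mathcal W}^\pm$. Fix a sign and an index $j$, let $\tilde l^\pm_j = l^\pm_{h^\pm(j)}$ be the linear map of Theorem~\ref{thm:graph v2} (which vanishes on $V$), let $\tilde\bH^\pm_j = ({\rm id} + \tilde l^\pm_j)(\pi_0^\pm)$ be the corresponding page of $\tilde\bS$, and let $w^\pm_j = u^\pm_j - \tilde l^\pm_j$ as in \eqref{e:difference function}. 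The sheet $\{ z + u^\pm_j(z) : z \in U_{4\mathcal W}^\pm \} \subset T_0\Sigma$ will be rewritten as a graph $\{ \zeta + \tilde u^\pm_j(\zeta) : \zeta \in (\tilde U_{2\mathcal W})^\pm_j \}$ over $\tilde\bH^\pm_j$ by means of the orthogonal projection $\mathbf{p}_{\tilde\bH^\pm_j}$, and then lifted to $\Sigma$ by $\tilde v^\pm_j = \tilde u^\pm_j + \Psi(\cdot + \tilde u^\pm_j)$ as in \eqref{e:rep_up to manifold}; the orientation of $\bG_{\tilde v^\pm_j}$ is chosen equal to the value of $\varepsilon_u$ on the corresponding component, which forces \eqref{e:rep_parametrization}, since $T\mres R_{\mathcal W}$ and $\bG_{\tilde\bS}(\tilde v)\mres R_{\mathcal W}$ are then integer rectifiable currents carried by the same rectifiable set with the same orientation and multiplicity.

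First I would verify that $F^\pm_j(z) := \mathbf{p}_{\tilde\bH^\pm_j}(z + u^\pm_j(z))$ is a $C^{1,\sfrac12}$-diffeomorphism of $U_{4\mathcal W}^\pm$ onto a set containing $(\tilde U_{2\mathcal W})^\pm_j$. Writing $z + u^\pm_j(z) = \Phi^\pm_j(z) + w^\pm_j(z)$ with $\Phi^\pm_j(z) := z + \tilde l^\pm_j(z) \in \tilde\bH^\pm_j$, one has $F^\pm_j = \Phi^\pm_j + \mathbf{p}_{\tilde\bH^\pm_j}\circ w^\pm_j$; here $\Phi^\pm_j$ is a linear isomorphism of $\pi_0^\pm$ onto $\tilde\bH^\pm_j$ with inverse $\mathbf{p}_{\pi_0}|_{\tilde\bH^\pm_j}$ and $\|D\Phi^\pm_j - {\rm Id}\| = |\nabla\tilde l^\pm_j|$, while by Theorem~\ref{thm:graph_v1}(i) and Lemma~\ref{l:angle-bound} both $|Du^\pm_j| \le C\tau$ and $|\nabla\tilde l^\pm_j| \le \beta_{\pi_0}(\bS) \le C\,\bE(T,\pi_0,0,1)^{\sfrac12} \le C\tau$, hence $|Dw^\pm_j| \le C\tau$. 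Consequently $F^\pm_j\circ(\Phi^\pm_j)^{-1}$ is a $C^{1,\sfrac12}$ perturbation of the identity of $\tilde\bH^\pm_j$ of size $\le C\tau$, which is small once $\tau_8$, $\eta_8$ (and, through hypothesis (b) of Theorem~\ref{thm:graph_v1} and Remark~\ref{r:cube-control-eta-tau}, $\bar\varepsilon$, $\bar\eta$) are small. Therefore $F^\pm_j$ is injective and $F^\pm_j(U_{4\mathcal W}^\pm) \supset (\tilde U_{2\mathcal W})^\pm_j$ — the passage from the domain $U_{4\mathcal W}$ to $U_{2\mathcal W}$ being precisely what absorbs this perturbation, using that $\Phi^\pm_j$ fixes $V$ and distorts the distance-to-$V$ coordinate by a factor close to $1$. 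Setting $\tilde u^\pm_j := \mathbf{p}_{(\tilde\bH^\pm_j)^{\perp_0}}\big(w^\pm_j\circ(F^\pm_j)^{-1}\big)$ (note $\mathbf{p}_{(\tilde\bH^\pm_j)^{\perp_0}}\circ\Phi^\pm_j = 0$) gives a $C^{1,\sfrac12}_{\mathrm{loc}}$ function on $(\tilde U_{2\mathcal W})^\pm_j$ whose graph coincides with the original sheet; the orientation convention then yields \eqref{e:rep_parametrization}.

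It remains to transport the estimates of Theorem~\ref{thm:graph v2} through this reparametrization. The $L^2$ bound \eqref{e:comparison-tilde-u-w} is the change of variables $\zeta = F^\pm_j(z)$ applied to $\int(|w^\pm_j|^2 + |x|^2|Dw^\pm_j|^2)$: the Jacobian of $F^\pm_j$ is two-sided bounded, $|\tilde u^\pm_j(F^\pm_j(z))| \le |w^\pm_j(z)|$ and $|D\tilde u^\pm_j(F^\pm_j(z))| \le C|Dw^\pm_j(z)|$, the weights $|x|$ (distance to $V$) on $\pi_0^\pm$ and on $\tilde\bH^\pm_j$ are comparable along $F^\pm_j$, and one checks $(F^\pm_j)^{-1}((\tilde U_{2\mathcal W})^\pm_j) \subset U_{3\mathcal W}^\pm$; the right-hand side of \eqref{e:comparison-tilde-u-w} is then exactly \eqref{L2 estimate excess}. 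The weighted $C^{1,\sfrac12}_*$ bound \eqref{e:rep_Holder} follows in the same way from \eqref{eqn:Linfty estimate excess}: up to the $C^{1,\sfrac12}$-bounded reparametrization $(F^\pm_j)^{-1}$ and the lift by $\Psi$ (which costs an additive term of order $\bA$, consistently with the $+\bA$ on the right-hand side), $\tilde u^\pm_j$ is the normal component of $w^\pm_j$ over $\tilde\bH^\pm_j$, so the weighted pointwise bound on $w^\pm_j$ carries over with a worse geometric constant. The construction and these verifications are exactly those of \cite[Section~7]{DHMSS}, to which the present situation reduces once Theorems~\ref{thm:graph_v1} and \ref{thm:graph v2} are available; accordingly the only substantial point is the bookkeeping of the change of variables — the domain inclusions $(\tilde U_{2\mathcal W})^\pm_j \subset F^\pm_j(U_{4\mathcal W}^\pm)$ and $(F^\pm_j)^{-1}((\tilde U_{2\mathcal W})^\pm_j) \subset U_{3\mathcal W}^\pm$, together with the comparability of the distance-to-$V$ weights — which must be made uniform across all the (arbitrarily small) scales of the Whitney decomposition, and this uniformity is in turn guaranteed by keeping the angle bound of Lemma~\ref{l:angle-bound} uniformly small.
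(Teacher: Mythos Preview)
Your approach—reparametrize each sheet via the orthogonal projection $F^\pm_j = \mathbf{p}_{\tilde\bH^\pm_j}\circ({\rm id}+u^\pm_j)$, check it is a $C^{1,\sfrac12}$-diffeomorphism because $|Dw^\pm_j|+\beta_{\pi_0}(\bS)$ is small, and transport the estimates through this change of variables—is precisely what the paper intends: its own proof is a one-line reference to \cite[Section~7]{DHMSS}, and your outline is a faithful sketch of that argument. The graphical identity \eqref{e:rep_parametrization} and the $L^2$ estimate \eqref{e:comparison-tilde-u-w} are handled correctly.

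One imprecision is worth flagging. You derive \eqref{e:rep_Holder} by transporting \eqref{eqn:Linfty estimate excess} through $F^\pm_j$, but \eqref{eqn:Linfty estimate excess} controls
\[
|t|^{\frac{m}{2}+1}\Big(t^{-1}|w|+|Dw|+t^{\sfrac12}[Dw]_{\sfrac12}\Big),
\]
i.e.\ the $C^{1,\sfrac12}_*$-expression multiplied by the extra weight $|t|^{\frac{m}{2}+1}$, and is therefore strictly weaker than the unweighted $C^{1,\sfrac12}_*$ norm in \eqref{e:rep_Holder}; the change of variables does not remove that weight. In \cite{DHMSS} the uniform $C^{1,\sfrac12}_*$ bound is obtained cube by cube rather than from the global $L^2$ estimate: on each $L\in\mathcal W$ one couples the local height bound relative to the selected page with the Schauder-type estimate \eqref{eq:C12est} at scale $d_L$, which gives $t^{-1}|\tilde u|+|D\tilde u|+t^{\sfrac12}[D\tilde u]_{\sfrac12}\le C(\bE_L^{\sfrac12}+d_L\bA)$ on $L$. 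That local input—not \eqref{eqn:Linfty estimate excess}—is what you should cite for \eqref{e:rep_Holder}. (The downstream uses of \eqref{e:rep_Holder} in the paper occur only where $t$ is bounded away from $0$, so the distinction between the two weighted bounds is harmless in the applications.)
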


\subsection{Multivalued approximation in bad cubes}\label{ss:reparametrization-bad}
Here we show that over the ``bad'' Whitney region $\cB$ the current $T$ can still be approximated with a multivalued graph over $\tilde{\bS}$, with good estimates, in the following sense.

\begin{remark}[Graphicality in bad cubes]\label{rem:bad_graph}
By virtue of Theorem \ref{thm:graph_v1}(iii) and \eqref{eq:W3}, as soon as $\tau$ and $\bar\eps$ are chosen sufficiently small, for any cube $L \in \mathcal B$ we may apply Proposition \ref{p:L2Alm} in the ball $\bB_{20\bar M d_L}(y_L)$ and conclude from the Propagation Lemma \ref{l:propagation} the existence of:
\begin{itemize}
    \item[$\bullet$] a plane $\hat\pi_L$ so that $V \subset \hat \pi_L$, $2\,\bar\bE(T,y_L,20\bar M d_L) > \bE(T,\hat\pi_L,y_L,20\bar M d_L)$, as well as $\bE(T,\hat\pi_L,y_L,20\bar M d_L) \leq C\,\bar\bE_L$,
    \item[$\bullet$]  a closed set $K_L \subset B_L := \bB_{10\bar M d_L}(y_L) \cap (y_L+\hat\pi_L)$,
    \item[$\bullet$] and a function $
u_L \colon B_L \to \mathscr{A}_Q(\pi_L^{\perp_0})
$
\end{itemize}
such that the corresponding map $v_L = u_L + \Psi (\cdot + u_L)$ satisfies (the rescaled version of) \eqref{e:inclusion}-\eqref{e:W12}, which we record here for future reference, keeping in mind that $(\bar M d_L)^2\bA^2 \leq \bar \eta\,\bar \bE_L$ as shown in the proof of Theorem \ref{thm:graph_v1}:
\begin{align}\label{e:inclusion cube}
&\spt(\bG_{v_L}) \subset \Sigma\,, \\ \label{e:lip_osc_est cube}
&\Lip (v_L)\leq C\bar\bE_L^\gamma \qquad \mbox{and} \qquad {\rm osc}(v_L) \leq C \mathbf{h}(T, \bC_{15 \bar M d_L}(y_L,\hat\pi_L)) + C \, d_L \, \bar{\bE}_L^{\sfrac12}\,,\\\label{e:graph_current cube}
&\bG_{v_L} \res (K_L\times \hat\pi_L^\perp )= T\mres (K_L\times \hat\pi_L^\perp ) \cap \bB_{15\bar M d_L}(y_L) \;\modp\,,\\ \label{e:volume_estimate cube}
&|B_L\setminus K_L|\leq \|T\| (((B_L \setminus K_L)\times \hat\pi_L^\perp)\cap \bB_{15\bar M d_L}(y_L)) \leq C\, d_L^{m}\, \bar{\bE}_L^{1+\gamma}\,,\\
& \Abs{\|T\|(\bC_{10\bar M d_L}(y_L,\hat\pi_L)\cap \bB_{15\bar M d_L}(y_L)) - Q |B_L| - \frac{1}{2} \int_{B_{L}} \abs{Dv_L}^2} \leq C\,d_L^m\,\bar{\bE}_L^{1+\gamma}\,,\label{e:Dirichlet cube} \\
& d_L^{-2} \|v_L\|^2_{L^\infty(B_L)} + d_L^{-(m+2)} \int_{B_L} \left(|v_L|^2 + |z-y_L|^2\,|D v_L|^2\, dz\right) \leq C\, \bar{\bE}_L\, .\label{e:W12 cube}\
\end{align}
\end{remark}



As in Corollary \ref{cor:reparametrization}, we can reparametrize the function $u_L$ on the cone $\tilde \bS$ outside of a small region around the spine of $\tilde \bS$. Recall the notation $R_{\mathcal B}$ introduced in \eqref{rotation of bad Whitney 2}.


\begin{corollary} [Reparametrization on Bad cubes]\label{cor:reparametrization on Bad cubes}
    Let $T, \Sigma, \bS$, and $\pi_0$ be as in Theorem \ref{thm:graph v2}, and let $\tilde{\bS}$ be the open book in Definition \ref{d:new-cone}. There exist $2Q$ functions $\tilde u^\pm_{j} \colon  R_{ \cB}\cap \tilde{\bH}^\pm_j \to (\tilde{\bH}^\pm_{j})^{\perp_0}$ and an appropriate choice of orientations $\varepsilon^\pm_j$ with the following properties. The estimates
    \begin{equation} \label{e:rep_Lipschtiz_bad}
       \Lip( \tilde u^\pm_j ) \leq C \tau^{2\gamma}\,, \qquad \qquad \|\tilde u^\pm_j \|_{L^\infty} \leq C \tau^2
    \end{equation}
    holds. Moreover, if we set 
    \begin{equation} \label{e:rep_up to manifold_bad}
        \tilde v^\pm_{j}(z) := \tilde u^\pm_{j}(z) + \Psi(z + \tilde u^\pm_{j}(z))\,,
    \end{equation}
    and denote by $\bG_{\tilde{\bS}} (\tilde{v})$ the current described in Section \ref{ss:reparametrization-books}, then 
    \begin{equation}\label{e:rep_parametrization_bad}
    \left| \int_{R_{\mathcal B}} \dist^2(q,V) \, d\|T\|(q) - \int_{R_{\mathcal B}} \dist^2(q,V) \, d\|\bG_{\tilde \bS}(\tilde v)\|(q) \right| \le \frac{C}{\eta} \,\bE(T, \bS, 0,1)
    \end{equation}
  and 
    \begin{equation}\label{e:W12_rep_bad}
\int_{R_{\mathcal{B}}\cap \tilde{\bH}^\pm_j}  (|\tilde{u}^\pm_{j}|^2 + |x|^2 |\nabla \tilde{u}^\pm_{j}|^2)  \leq \frac{C}{\eta} \, \bE(T,\bS,0,1) \,.
\end{equation}
\end{corollary}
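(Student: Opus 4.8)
The plan is to reparametrize, inside every bad cube $L \in \mathcal{B}$, the portion of the current $T$ already described by the Lipschitz graph $\mathbf{G}_{v_L}$ over $\hat\pi_L$ as a graph over the pages of $\tilde{\bS}$, and then to glue these local reparametrizations together. The essential input is the same as in Corollary~\ref{cor:reparametrization}: by Lemma~\ref{l:angle-bound} applied at the scale of $L$ (legitimate by the inductive verification in the proof of Theorem~\ref{thm:graph_v1} that Lemma~\ref{l:propagation} applies at the scale of each cube in $\mathcal{W}\cup\mathcal{B}$) and by Theorem~\ref{thm:graph v2}, the angle between $\hat\pi_L$ and each page $\tilde\bH_j^\pm$ is controlled by $\bar\bE_L^{1/2}+\bA$, which by Remark~\ref{r:cube-control-eta-tau} is at most $C(\tau+\bar\eps^{1/2})\leq C\tau$. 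Thus the change of variables from a graph over the plane $\hat\pi_L$ to a graph over the page $\tilde\bH_j^\pm$ is a Lipschitz-small perturbation of the identity and is invertible; this is precisely the elementary linear-algebra lemma used in \cite[Section~7]{DHMSS}. The resulting functions $\tilde u_j^{\pm}$, defined a priori on $\hat\pi_L$-dependent domains, have their $C^0$ and Lipschitz norms controlled by those of $u_L$ plus the angle, giving \eqref{e:rep_Lipschtiz_bad}: the oscillation bound $\|v_L\|_{L^\infty}\leq C d_L \bar\bE_L^{1/2}\leq C d_L\tau$ from \eqref{e:W12 cube} together with the angle bound $C\tau d_L$ yields $\|\tilde u_j^\pm\|_{L^\infty}\leq C\tau^2$ (recall $d_L\leq 2^{-N_0}$ and $|x|\lesssim d_L$ on the relevant domain, which upgrades a height of order $\tau d_L$ to a sup norm of order $\tau^2$ after dividing by $|x|$ in the weighted norm, or more simply after observing that the relevant scales are bounded by $\tau$), and likewise $\mathrm{Lip}(\tilde u_j^\pm)\leq C\bar\bE_L^\gamma + C\tau \leq C\tau^{2\gamma}$.

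The orientation $\varepsilon_j^\pm$ is assigned pointwise as described in Section~\ref{ss:reparametrization-books}: a point $q$ in the graph of $\tilde v_j^\pm$ comes from a unique $z$ in the domain of $v_L$ for some $L\in\mathcal{B}$, and one sets the sign equal to $\varepsilon_{v_L}(z)$. One must check this is well-defined where two bad balls $\bB_{5\bar M d_i}(y_i)$ and $\bB_{5\bar M d_j}(y_j)$ overlap, which follows because both parametrize the same restriction $T\mres(K_{L}\times\hat\pi_L^\perp)$ of the current by \eqref{e:graph_current cube}, combined with the constancy lemma mod $p$, exactly as in the proof of Proposition~\ref{p:L2Alm-piece-3}, step (3). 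With $\mathbf{G}_{\tilde\bS}(\tilde v)$ so defined, \eqref{e:rep_parametrization_bad} is obtained by estimating the symmetric-difference error between $\|T\|$ and $\|\mathbf{G}_{\tilde\bS}(\tilde v)\|$ on $R_{\mathcal B}$: off the non-graphical set $B_L\setminus K_L$ the two currents coincide (up to the harmless perturbation of pushing from $\hat\pi_L$ to $\tilde\bH_j^\pm$, which does not change the current but only its parametrization), and on $B_L\setminus K_L$ we use \eqref{e:volume_estimate cube} to bound the missing mass by $Cd_L^m\bar\bE_L^{1+\gamma}$, while on that set $\dist^2(q,V)\leq C d_L^2$ since $R_{\mathcal B}$ is contained in $\bB_{1/2}$ and the relevant points sit at distance $\lesssim d_L$ from $V$ anyway (indeed $R_{\mathcal B}$ excludes the $C_\star(\eta^{-1}\bE_i)^{1/2}d_i$-tube around $V$, but the upper bound $\dist(q,V)\leq C d_L$ is what matters). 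Summing over the disjoint Vitali family and using $\bE_i\leq C\bar\bE_i$ only on good ancestors while on bad cubes $\bar\bE_L\leq\eta^{-1}\bE_L$, together with $\sum_i \int_{\bB_{\bar M d_i}(y_i)}\dist^2(\cdot,\bS)\,d\|T\|\leq C\,\bE(T,\bS,0,1)$ from disjointness, gives the factor $C\eta^{-1}\bE(T,\bS,0,1)$.

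For \eqref{e:W12_rep_bad}, the argument mirrors \eqref{e:comparison-tilde-u-w}: the change of variables from the graph over $\hat\pi_L$ to the graph over $\tilde\bH_j^\pm$ controls $\int (|\tilde u_j^\pm|^2 + |x|^2|\nabla\tilde u_j^\pm|^2)$ by $C\int_{B_L}(|v_L - \ell_L|^2 + |z-y_L|^2|\nabla(v_L-\ell_L)|^2)$ where $\ell_L$ is the appropriate linear page, and then one invokes \eqref{e:W12 cube} (or rather a version of Theorem~\ref{thm:graph v2} at the scale of $L$, since the key point is that the integrand is measured against $\bS$ not $\pi_0$) to bound this by $C d_L^{m+2}\bar\bE_L \leq C d_L^{m+2}\eta^{-1}\bE_L \leq C\eta^{-1}\int_{\bB_{\bar M d_i}(y_i)}\dist^2(\cdot,\bS)\,d\|T\|$, and sum over the disjoint family. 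The main obstacle, I expect, is not any single estimate but the bookkeeping: making precise the gluing of the $2Q$ functions across overlapping bad balls so that the orientations are globally consistent and the domains $R_{\mathcal B}\cap\tilde\bH_j^\pm$ are covered without double-counting, and carefully tracking that the excess at the scale of each bad cube can genuinely be compared to $\bE(T,\bS,0,1)$ via the disjointness of $\{\bB_{\bar M d_i}(y_i)\}$ together with the monotonicity/covering bounds on $\|T\|$ already used in the proof of Theorem~\ref{thm:graph_v1}(iv). All of this is, however, structurally identical to \cite[Section~7]{DHMSS}, the only genuinely new point being the bookkeeping of the $\bA$-dependence, which enters only quadratically and is absorbed into $\bar\bE_L$ at every scale thanks to $(\bar M d_L)^2\bA^2\leq\bar\eta\,\bar\bE_L$.
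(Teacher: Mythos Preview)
Your overall strategy matches the paper's, and the summation bookkeeping you outline for \eqref{e:rep_parametrization_bad} and \eqref{e:W12_rep_bad} is correct. However, two steps are not justified as written, and the first one is a genuine gap rather than mere bookkeeping.

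\textbf{Sheet-to-page matching.} You never explain how, for a given bad cube $L$, the ordered sheets $(u_L)_1\le\cdots\le(u_L)_Q$ of $v_L$ get assigned to specific pages $\tilde\bH_{h(j)}^\pm$ of $\tilde\bS$. This is not automatic: the Lipschitz approximation over $\hat\pi_L$ knows nothing about $\tilde\bS$, and ``closest page'' is ambiguous near the spine and need not be consistent from one bad cube to the next. The paper fixes this by choosing a point $q\in\hat\pi_L$ at distance $\sim\bar M d_L$ from $V$, where Lemma~\ref{l:propagation}(1) guarantees $\Theta_T<Q$; then Proposition~\ref{p:L2Alm-piece-3} gives a $C^{1,1/2}$ selection on $B_{2d_L}(q,\hat\pi_L)\subset K_L$. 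Because this ball also lies in the good region $R_{\mathcal W}$, on it $\bG_{v_L}=T=\bG_{\tilde\bS}(\tilde v)$ with $\tilde v$ already constructed in Corollary~\ref{cor:reparametrization}. This overlap fixes, for each $j$, the page $\tilde\bH_{h^+(j)}^+$ hosting the reparametrization of $(u_L^+)_j$, and simultaneously supplies (via an averaging argument on a set $O_L$ of measure $\gtrsim d_L^m$) the angle bound $\beta(\hat\pi_L,\bH_\star)^2\le C\bar\bE_L$ for the selected page. Without this overlap step you have neither a canonical assignment nor a proof that the $2Q$ functions glue consistently with the good-region parametrization.

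\textbf{The angle bound.} Your claim that $\beta(\hat\pi_L,\tilde\bH_j^\pm)\le C(\bar\bE_L^{1/2}+\bA)$ does not follow from Lemma~\ref{l:angle-bound} at scale $L$: that lemma needs the \emph{double-sided} excess hypothesis of Assumption~\ref{ass:decay plane -1}(ii) to bound $\beta_{\hat\pi_L}(\bS)$ from above by the planar excess (see Remark~\ref{rmk:on_angle_bound}), and for $L\in\mathcal B$ only the one-sided condition is available. The paper's actual bound is $\beta(\hat\pi_L,\bH)^2\le C(\bar\bE_L+\beta_{\max}^2(\bS))$, obtained from the overlap argument above together with the triangle inequality; the extra $\beta_{\max}^2(\bS)$ term is harmless (it is $\le C\bar\varepsilon\le C\tau^2$), but you cannot drop it, and your stated route to the bound is not valid.

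A minor point: for the gluing across bad cubes you invoke overlap of Vitali balls and the constancy lemma, but the paper's mechanism is simpler---bad cubes have interiors with pairwise disjoint projections onto $V$, so the domains on each page $\tilde\bH_j^\pm\cap R_{\mathcal B}$ do not overlap at all once $C_\star$ is chosen appropriately.
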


Before coming to its proof we register the following further corollary. 

\begin{corollary}\label{cor:comparrison excess}
    Let $T, \Sigma, \bS$, and $\pi_0$ be as in Theorem \ref{thm:graph v2}, and let $\tilde{\bS}$ be the open book in Definition \ref{d:new-cone}, then we have 
\begin{equation}\label{eq:tildeexcesvsexcess}
    r^{m+2}\mathbb{E}(T,\tilde\bS, 0, r)\leq \frac{C}{\eta^{\sfrac32}}\,\bE(T,\bS,0,1) + C \bA^2\,,\qquad 0<r\leq\frac12\,.
\end{equation}    
Moreover, if $\bar \eta$ and $\bar \varepsilon$ are sufficiently small, compared to $\eta$, then 
\begin{equation}\label{eq:angle_bound_new_book1}
         C (\eta)^{-1} \beta_{\pi_0}^2(\tilde \bS) \leq  \bE(T,\pi_0,0,1) \leq C (\eta) \beta_{\max}^2 (\tilde \bS) 
         \leq C (\eta) \beta_{\pi_0}^2 (\tilde \bS)\, ,
    \end{equation}
where the constant $C (\eta)$ depends additionally only upon $Q$, $m$, and $n$. Combining this with \eqref{eq:angle_bound} we get
    \begin{equation}\label{eq:angle_bound_new_book}
        \beta_{\pi_0}(\bS)\leq C(\eta)\,\beta_{\pi_0}(\tilde \bS)\,.
    \end{equation}
\end{corollary}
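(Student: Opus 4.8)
The plan is to prove Corollary \ref{cor:comparrison excess} in three steps, deriving \eqref{eq:tildeexcesvsexcess} first, then \eqref{eq:angle_bound_new_book1}, and finally \eqref{eq:angle_bound_new_book} as an easy consequence of the previous two.

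\textbf{Step 1: the excess bound \eqref{eq:tildeexcesvsexcess}.} I would split the ball $\bB_r$ into the good region $R_{2\mathcal W}$, the bad region $R_{\mathcal B}$, and the leftover set $\bB_{1/2}\setminus (R_{2\mathcal W}\cup R_{\mathcal B})$. On $R_{\mathcal W}$, by Corollary \ref{cor:reparametrization} the current $T$ is exactly $\bG_{\tilde\bS}(\tilde v)$, and the contribution of $\dist^2(\cdot,\tilde\bS)$ from $T$ and of $\dist^2(\cdot,\spt T)$ from $\tilde\bS$ is controlled using the weighted $C^{1,\sfrac12}$ bound \eqref{e:rep_Holder} and the $L^2$ bound \eqref{e:comparison-tilde-u-w}: since $T$ is a graph of $\tilde u$ over $\tilde\bS$ there, $\dist^2(q,\tilde\bS)\le |\tilde u|^2$ pointwise, and integrating gives $C(\bE(T,\bS,0,1)+\bA^2)$ for the one-sided excess $\bE(T,\tilde\bS,0,r)$, and similarly for $\bE(\tilde\bS,T,0,r)$ away from the spine. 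On $R_{\mathcal B}$, I would use \eqref{e:rep_parametrization_bad} and \eqref{e:W12_rep_bad} from Corollary \ref{cor:reparametrization on Bad cubes} to bound the same quantities by $C\eta^{-1}\bE(T,\bS,0,1)$. Finally on the leftover set I would invoke the crucial estimate \eqref{e:L^2 estimate}, which gives $C\tau\eta^{-\sfrac32}\bE(T,\bS,0,1)$; there the distance from $V$ dominates both $\dist(\cdot,\tilde\bS)$ and $\dist(\cdot,\spt T)$ after using the height bound Lemma \ref{Linfty-L2} and the smallness of planar excess. Assembling the three pieces and rescaling to a general $r\le \frac12$ (noting that all the graphical parametrizations are at unit scale, so the factor $r^{m+2}$ enters on the left) yields \eqref{eq:tildeexcesvsexcess}.

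\textbf{Step 2: the angle bounds \eqref{eq:angle_bound_new_book1}.} The strategy is to re-run the proof of Lemma \ref{l:angle-bound} with $\tilde\bS$ in place of $\bS$. The point is that $\tilde\bS$ satisfies the analogue of Assumption \ref{ass:decay plane -1}: indeed $V(\tilde\bS)=V(\bS)\subset\pi_0$, condition (i) on $\bA^2$ is unchanged, condition (iii) is unchanged, and condition (ii)—namely $\mathbb E(T,\tilde\bS,0,1)\le \bar\eta(\eta)\,\bE(T,\pi_0,0,1)$—follows from Step 1 together with Assumption \ref{ass:decay plane -1}(ii) and Assumption \ref{ass:decay plane}, choosing $\bar\eta$ and $\bar\varepsilon$ small relative to $\eta$. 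Concretely, \eqref{eq:tildeexcesvsexcess} with $r=1$ gives $\mathbb E(T,\tilde\bS,0,1)\le C\eta^{-\sfrac32}\bar\eta\,\bE(T,\pi_0,0,1)+C\bar\varepsilon$, which is $\le \eta_4\,\bE(T,\pi_0,0,1)$ once $\bar\eta,\bar\varepsilon$ are suitably small (using Assumption \ref{ass:decay plane}(iii) and Remark \ref{r:cube-control-eta-tau}-type bounds to absorb $\bar\varepsilon$ into $\bE(T,\pi_0,0,1)$, or rather using $\bA^2\le \bar\eps\,\bE(T,\pi_0,0,1)$ directly). Also $\tilde\bS$ is non-flat since it contains at least one page on each side with distinct slopes, as guaranteed after Definition \ref{d:new-cone} and by \eqref{e:non-degeneracy-again}. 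Lemma \ref{l:angle-bound} then applies verbatim and gives \eqref{eq:angle_bound_new_book1}, with the constant now depending on $\eta$ through the thresholds $\varepsilon_4,\eta_4$ and the relation between $\bar\eta$ and $\eta$.

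\textbf{Step 3: \eqref{eq:angle_bound_new_book}.} This is immediate: combining the lower bound $\beta_{\pi_0}^2(\bS)\le C\,\bE(T,\pi_0,0,1)$ from \eqref{eq:angle_bound} with the upper bound $\bE(T,\pi_0,0,1)\le C(\eta)\beta_{\pi_0}^2(\tilde\bS)$ from \eqref{eq:angle_bound_new_book1} gives $\beta_{\pi_0}^2(\bS)\le C(\eta)\beta_{\pi_0}^2(\tilde\bS)$, hence \eqref{eq:angle_bound_new_book} after taking square roots.

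\textbf{Main obstacle.} I expect the delicate point to be Step 1 on the leftover set: one must check that $\dist(q,\tilde\bS)$ and $\dist(q,\spt T)$ are genuinely dominated by $\dist(q,V)$ on $\bB_{1/2}\setminus(R_{2\mathcal W}\cup R_{\mathcal B})$, which requires combining the inductive height bound \eqref{e:inductive height bound}, Lemma \ref{Linfty-L2}, and the fact that away from the spine the pages of $\tilde\bS$ all lie within a controlled cone around $\pi_0$ (so that points of $T$ or of $\tilde\bS$ that are at distance $t$ from $V$ are within $C(\tau+\bar\varepsilon^{\sfrac12})t$ of each other). The bookkeeping of the various powers of $\eta$ and $\tau$, and making sure the final constant in \eqref{eq:tildeexcesvsexcess} is $C\eta^{-\sfrac32}$ rather than something worse, is the part that needs genuine care; the rest is a routine adaptation of Lemma \ref{l:angle-bound}.
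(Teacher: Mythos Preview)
Your proposal is essentially correct and follows the same line as the paper: split Step~1 into good, bad, and leftover regions using exactly the estimates you cite (\eqref{e:comparison-tilde-u-w}, \eqref{e:rep_parametrization_bad}--\eqref{e:W12_rep_bad}, and \eqref{e:L^2 estimate}), then verify the hypotheses of Lemma~\ref{l:angle-bound} for $\tilde\bS$ to obtain \eqref{eq:angle_bound_new_book1}, and finally chain \eqref{eq:angle_bound} with \eqref{eq:angle_bound_new_book1}.

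One small technical slip: in Step~2 you invoke \eqref{eq:tildeexcesvsexcess} with $r=1$, but that estimate is only stated (and only holds, given that the graphical regions live inside $\bB_{1/2}$) for $r\le \tfrac12$. The paper fixes this by applying Lemma~\ref{l:angle-bound} at scale $\tfrac12$ rather than scale $1$: from \eqref{eq:tildeexcesvsexcess} at $r=\tfrac12$ one gets $\mathbb{E}(T,\tilde\bS,0,\tfrac12)\le C\eta^{-3/2}\bE(T,\bS,0,1)+C\bA^2$, and the comparison with the planar excess at that scale uses the extra condition $\bE(T,\pi_0,0,1)\le 2\,\bar\bE(T,0,\tfrac12)$ from Assumption~\ref{ass:decay plane}. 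Also, you need not verify non-flatness of $\tilde\bS$ separately: it is a consequence, not a hypothesis, of Lemma~\ref{l:angle-bound} once (i)--(iii) are in place.
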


\begin{proof}[Proof of Corollary \ref{cor:comparrison excess}] The proof of \eqref{eq:tildeexcesvsexcess} follows as in \cite{DHMSS} using \eqref{e:L^2 estimate}, \eqref{L2 estimate excess}, \eqref{e:rep_parametrization_bad}, and \eqref{e:W12_rep_bad}. 

To obtain \eqref{eq:angle_bound_new_book1} it is sufficient to check that the new book $\tilde{\bS}$ satisfies the assumptions of Lemma \ref{l:angle-bound} at scale $1/2$. As a consequence of \eqref{eq:tildeexcesvsexcess}, we have an upper bound of the double-sided excess between $T$ and $\tilde\bS$ at scale $1/2$ in terms of the control quantity $\bE(T,\bS,0,1)+ \bA^2$. Since $\bar\bE(T,0,\frac12) \geq \frac12 \bE(T,\pi_0,0,1) \geq \frac14 (\bar\eta^{-1} \bE(T,\bS,0,1) + \bar\varepsilon^{-1}\bA^2)$, this is sufficient to prove Assumption \ref{ass:decay plane -1}(ii) at the scale $1/2$ and conclude.
\end{proof}

\begin{proof}[Proof of Corollary \ref{cor:reparametrization on Bad cubes}]

\noindent \emph{Step 1.} Here we estimate the tilt of the plane $\hat\pi_L$ at the scale of cubes $L\in\mathcal B$ with respect to the reference plane $\pi_0$. Let $L$ be any cube in $\mathcal B$, and let $L' \in \mathcal W$ be its father. As noticed in the proof of Theorem \ref{thm:graph_v1}, Lemma \ref{l:propagation} can be applied at the scale of $L'$, so that, in particular,
\begin{equation} \label{eq:father-son excess comparison}
    \bar\bE_L \leq 2 \cdot 2^{m+2}\,\bar\bE_{L'} \,, \qquad  \bar \bE_{L'} \leq 2 \, \bar\bE_L\,.
\end{equation}
This implies that we can estimate
\begin{equation} \label{eq: tilt of planes}
\begin{split}
    |\hat \pi_L - \hat \pi_{L'}|^2 &\leq C \left(\bE^{no}(T,\hat \pi_L, \bB_{\bar M d_L/2}(y_L)) + \bE^{no}(T,\hat \pi_{L'}, \bB_{\bar M d_{L'}/2}(y_{L'}))\right) \\
    & \leq C \, \bar\bE_L\,,
\end{split}
\end{equation}
where $\bE^{no}$ is the unoriented tilt excess defined in Definition \ref{d:E-cessi}, and where we have used the classical tilt-excess inequality together with the condition $(\bar M d)^2 \bA^2 \leq \bar \bE$ at the scales of $L$ and $L'$ and \eqref{eq:father-son excess comparison}.

Next, denoting by $\beta(\pi,\bH) = \dist_{\mathcal H}(\pi \cap \bB_1, \bH \cap \bB_1)$ the ``angle'' between a plane $\pi$ and a half plane $\bH$, we claim that: 
\begin{equation}\label{eq:angle_est_L}
    \beta (\hat \pi_L, \bH)^2 \le C(\bar{\bE}_L + \beta^2_{\max}(\bS) )\, \qquad \mbox{for every $\bH \subset \bS$}\,.
\end{equation}

To see this, first apply Lemma \ref{l:propagation} at the scale of $L$ and with $\rho$ sufficiently small to conclude that $\Theta(T,q) < Q$ for every $q \in \spt (T) \cap \bC_{\bar M d_{L}/8}(y_{L},\hat\pi_{L}) \cap \bB_{7\bar M d_{L}/8}(y_{L}) \setminus B_{d_{L}}(V)$. In particular, if we let $q \in \hat\pi_{L}$ denote any of the two points with $\mathbf{p}_V(q)=y_{L}$ and $\dist(q,V) = \frac{\bar M d_{L}}{16}$ then for every half plane $\tilde\bH_j^\pm \subset \tilde \bS$ 
\[
\bB_{4d_L}(q) \cap \tilde\bH_j^\pm \subset (\tilde U_{2\mathcal W})_j^\pm \cap \bC_{\bar M d_{L}/8}(y_{L},\hat\pi_{L}) \cap \bB_{7\bar M d_{L}/8}(y_{L}) \setminus B_{d_{L}}(V)\,,
\]
and, as a consequence of Proposition \ref{p:L2Alm-piece-3}, 
\[
B_{2d_L}(q,\hat\pi_L) := \bB_{2d_L}(q) \cap (q + \hat \pi_L) \subset K_L\,,
\]
so that
\[
\bG_{v_L} \mres \bC_{2d_L}(q,\hat\pi_L) = T \mres \bC_{2d_L}(q,\hat\pi_L) \cap \bB_{7 \bar M d_L /8} (y_L)= \bG_{\tilde \bS}(\tilde v)\mres \bC_{2d_L}(q,\hat\pi_L)\,.
\]
As a further consequence of Proposition \ref{p:L2Alm-piece-3}, there is a $C^{1,\sfrac12}$-selection for $\left.u_L\right|_{B_{2d_L}(q,\hat\pi_L)}$, which we denote $(u_L)_1 \leq \ldots \leq (u_L)_Q$. Now, by standard arguments one immediately concludes that there exists $\bH_\star \subset \bS$ such that
\[
\mathcal{L}^m \left( \underbrace{ \left\lbrace z \in B_{2d_L}(q,\hat\pi_L) \, \colon \, \dist(z + (v_L)_1(z),\bS) = \dist(z+(v_L)_1(z), \bH_\star) \right\rbrace}_{=:O_L} \right) \geq c\,d_L^m\,,
\]
where $c=c(m,Q)$. For every $z \in O_L$, we then have
\[
\beta(\hat\pi_L,\bH_\star) \, d_L \leq \dist(z,\bH_\star) \leq \dist(z + (v_L)_1(z),\bS) + |(v_L)_1(z)|\,,
\]
so that after squaring and integrating over $O_L$ we reach
\[
\beta(\hat\pi_L,\bH_\star)^2\,d_L^{m+2} \leq C \left( \int_{\bB_{\bar M d_{L'}}(y_{L'})} \dist^2(\cdot,\bS) \, d\|T\| + \int_{B_L} |v_L|^2 \,dz \right) \,,
\]
which in turn implies, by \eqref{e:W12 cube}, $L' \in \mathcal W$ (whence $\bE_{L'} \leq \eta \, \bar\bE_{L'}$), and \eqref{eq:father-son excess comparison} that
\[
\beta(\hat\pi_L,\bH_\star)^2 \leq C\,\bar\bE_L\,,
\]
and \eqref{eq:angle_est_L} follows by triangle inequality.

With \eqref{eq:angle_est_L} at our disposal, and recalling that $\beta_{\pi_0}(\bS) \leq C\,\beta_{\max}(\bS)$ as a consequence of Lemma \ref{l:angle-bound}, we may conclude that 
\begin{equation} \label{eq:angle_estimate L with ref}
    \dist_{\mathcal H}(\hat\pi_L\cap\bB_1,\pi_0\cap\bB_1) \leq C\,(\bar\bE_L + \beta^2_{\max}(\bS)) \leq C\,(\tau^2+\bar\varepsilon)\,.
\end{equation}
In turn, the $L^\infty$ estimate \eqref{e:W12 cube}, the condition $\bar\bE_L \leq \eta^{-1} \bE_L$ for a cube $L \in \mathcal B$, and \eqref{eq:angle_estimate L with ref} imply that for a suitable choice of the geometric constant $C_\star$ in \eqref{rotation of bad Whitney 2}, for each $L\in \mathcal{B}$ the graph $\bG_{v_L} \res R_{\mathcal{B}}$ splits into two disjoint parts, one that projects on $\pi_0^+$ and the other on $\pi_0^-$, that is
\begin{equation} \label{graph decomposition pm}
\bG_{v_L} \res R_{\mathcal{B}} = \bG_{v_L^+}\res R_{\mathcal{B}} + \bG_{v_L^-}\res R_{\mathcal{B}}\,.
\end{equation}

\medskip

\noindent\emph{Step 2:} Here we exploit the conclusions drawn in Step 1 in order to produce the claimed reparametrization over $R_{\mathcal B}$. Let $L$ be any cube in $\mathcal B$. By \eqref{graph decomposition pm}, it will be sufficient to seek a reparametrization for the function $v_L^+$ over the half-planes $\tilde \bH_j^+$ projecting onto $\pi_0^+$, because the argument for the function $v_L^-$ is going to be the same. As noticed above, Proposition \ref{p:L2Alm-piece-3} implies that, for $q$ as in Step 1 which projects onto $\pi_0^+$, $B_{2d_L}(q)\subset K_L$ and therefore \[\bG_{v^+_L}\res \bC_{2d_L}(q, \hat\pi_L)= T\res\bC_{2d_L}(q, \hat\pi_L) \cap \bB_{7 \bar M d_L/8}(y_L)= \bG_{\bar{\bS}}(\tilde{v})\res \bC_{2d_L}(q, \hat\pi_L)\,.\]
Hence, we can consider the function $u_L^+$ so that $v_L^+ = u_L^+ + \Psi(\cdot + u_L^+)$, and, recalling from Remark \ref{r:special-multi-functions} that we have a fixed selection $u^+_L= \sum_{j=1}^Q \a{(u_L^+)_j}$ of Lipschitz functions $(u_L^+)_1 \le (u_L^+)_2 \le \ldots \le (u_L^+)_Q$, we proceed as follows. For every $j \in \{1,\ldots,Q\}$, we let $h^+(j)$ be the index such that the half-plane $\tilde \bH^+_{h^+(j)} \subset \tilde \bS$ hosts the domain of the function in $\tilde v$ which reparametrizes $(v_L^+)_j$ over $B_{2d_L}(q,\hat\pi_L)$. Then, we call $(\tilde{u}_L^+)_j$ the reparametrization of $(u_L^+)_j$ over $\tilde \bH^+_{h^+(j)}$. By \eqref{eq:angle_estimate L with ref} and the $L^\infty$ bound \eqref{e:W12 cube}, the domain of $(\tilde{u}_L^+)_j$ contains $D_{L,j}:=\tilde \bH^+_{h^+(j)} \cap \bB_{8 \bar M d_L}(y_L) \setminus B_{C(\tau^2+\bar\varepsilon)(\eta^{-1}\bE_L)^{\sfrac12}d_L} (V)$, and the currents associated to the graphs of $(u^+_L)_j$ and $(\tilde u^+_L)_j$ agree, provided the choice of the orientation of the tangent plane to the graph of $(\tilde{u}^+_L)_j$ is made following the algorithm detailed in Section \ref{ss:reparametrization-books}. Furthermore, using the estimates produced in Step 1 we can calculate
\[
\Lip((\tilde{u}^+_L)_j)\le C( \Lip((u^+_L)_j) + \beta(\hat\pi_L,\tilde\bH^+_{h^+(j)})) \leq C ( \bar\bE_L^\gamma + \bar\bE_L^{\sfrac12} )  \leq C (\tau^2+\bar\varepsilon)^\gamma \leq C \tau^{2\gamma} \,,
\]
as well as
\[
d_L^{-2}\|(\tilde u^+_L)_j\|^2_{L^\infty(D_{L,j})} \leq C(d_L^{-2} \|v_L\|^2_{L^\infty(B_L)} + \beta(\hat\pi_L, \tilde\bH^+_{h^+(j)})^2) \leq C \bar\bE_L \leq C \tau^2\,.
\]
By the above argument, for each $j \in \{1,\ldots,Q\}$ the page $\tilde \bH^+_j$ hosts a domain $D_j$ such that
\[
D_j \supset \bigcup_{L\in\mathcal B} \left\lbrace (x,y) \in \tilde \bH^+_j \, \colon \, \mbox{ $y \in \mathbf{p}_V({\rm int}(V))$ and $C (\tau^2+\bar\varepsilon) (\eta^{-1}\bE_L)^{\sfrac12}d_L \leq |x| < 5 \bar M d_L $ }\right\rbrace\,.
\]
In particular, since cubes $L \in \mathcal B$ have interiors with disjoint projection onto $V$, for a suitable choice of the geometric constant $C_\star$ in \eqref{rotation of bad Whitney 2} we can define a global function $\tilde u^+_j$ over $\tilde \bH^+_j \cap R_{\mathcal B}$.  We can now estimate, using that the balls $\bB_{\bar M d_i}(y_i)$ are disjoint:
\begin{align*}
   & \int_{R_{\mathcal B}\cap\tilde \bH_j^+} (|\tilde u_j^+|^2 + |x|^2|D\tilde u_j^+|^2) \, dz \leq \sum_i \int_{(\bB_{5 \bar M d_i} (y_i) \setminus B_{C_\star(\eta^{-1}\bE_i)^{\sfrac12}d_i} (V) ) \cap \tilde \bH_j^+}(|\tilde u_j^+|^2 + |x|^2|D\tilde u_j^+|^2) \, dz \\
   &\qquad \quad \leq \sum_i \int_{B_{L_i}} (|v_{L_i}|^2 + |z-y_i|^2 |Dv_{L_i}|^2 + d_i^2 \,\bar\bE_{L_i}) \, dz \\
   & \qquad \qquad \leq C \, \sum_i d_i^{m+2} \bar\bE_{L_i} \leq C \eta^{-1} \sum_i d_i^{m+2} \bE_{L_i} \leq C \eta^{-1} \bE(T,\bS,0,1)\,,
\end{align*}
thus completing the proof of \eqref{e:W12_rep_bad} for the part over $\pi_0^+$. Similarly, we obtain the missing estimate \eqref{e:rep_parametrization_bad} summing the errors in the region where $T$ does not agree with the graph $\bG_{\tilde \bS}(\tilde v)$ by means of \eqref{e:graph_current cube}, \eqref{e:volume_estimate cube}, and \eqref{e:W12 cube}.
\end{proof}

\subsection{Proof of Lemma \texorpdfstring{\ref{l:compare-books}}{compare-books}}

Fix $\gamma  > 0$, and let $T$ and $\Sigma$ be as in Assumption \ref{ass:everywhere}. Correspondingly, choose the parameters $\bar\varepsilon$ and $\bar\eta$ depending on $\gamma$ so that whenever Assumption \ref{ass:decay plane} holds for some open book $\bS$ and some plane $\pi_0$ with $\bar\varepsilon$ and $\bar\eta$ then one can conclude graphicality of $T$ over a suitable subset of $\tilde\bS \subset \bS$ up to distance $\sfrac{\gamma}{8}$ from $V (\bS)$. Now fix two open books $\bS$ and $\bS'$ in $\mathscr{B}(0)$. Since the Hausdorff distance between any two open books in $\mathscr{B}(0)$ is bounded by a universal constant $\bar C$, if $r \geq \gamma$ and $\bA^2+\mathbb{E}(T,\bS,0,1)+\mathbb{E}(T,\bS',0,r)\geq \bar\alpha(\gamma)$ then the conclusion in \eqref{e:compare-books} is trivially true with the choice $C(\gamma) = \bar C^2 \, \bar\alpha(\gamma)^{-1}$. Hence, we assume
\begin{equation} \label{the-interesting-case}
    \bA^2+\mathbb{E}(T,\bS,0,1)+\mathbb{E}(T,\bS',0,r) < \bar\alpha(\gamma)\,,
\end{equation}
where $\bar\alpha(\gamma)$ will be chosen momentarily, and we consider the following two cases:
\begin{itemize}
    \item[(a)] Assumption \ref{ass:decay plane} holds for $\bS$ and a plane $\pi_0$ with $\bar\varepsilon$ and $\bar\eta$ as specified above;
    \item[(b)] Assumption \ref{ass:decay plane} for $\bS$ with $\bar\varepsilon$ and $\bar\eta$ as above fails. 
\end{itemize}
In case (a), after denoting $\tilde\bS \subset \bS$ the open book in Definition \ref{d:new-cone}, we estimate for $r \ge \gamma$
\[
\begin{split}
d_{\mathcal H}(\tilde \bS \cap \bB_1 , \bS' \cap \bB_1)^2 &= r^{-2}\,d_{\mathcal H}(\tilde \bS \cap \bB_r , \bS' \cap \bB_r)^2 \leq C (\mathbb{E}(T,\tilde\bS,0,r) + \mathbb{E}(T, \bS',0,r)) \\
& \leq C\,\gamma^{-(m+2)} (\bA^2 + \mathbb{E}(T,\bS,0,1) + \mathbb{E}(T, \bS',0,r))\,,
\end{split}
\]
where in the last inequality we have used \eqref{eq:tildeexcesvsexcess}. On the other hand, it is immediately seen that $d_{\mathcal H}(\tilde\bS\cap\bB_1,\bS\cap\bB_1)^2 \leq C (\mathbb{E}(T,\bS,0,1))$. This proves \eqref{e:compare-books} when (a) holds, and \eqref{e:almost-monotonicity-book} is also a consequence of \eqref{eq:tildeexcesvsexcess}.

Now, suppose we are in case (b). Of course, we can assume that the failure of Assumption \ref{ass:decay plane} is due to the failure of Assumption \ref{ass:decay plane -1}, for otherwise Lemma \ref{lem.propagation1} would imply that case (a) holds for the current $T'=(\lambda_{0,\frac12})_\sharp T$ (possibly upon reducing the values of $\bar\varepsilon$ and $\bar\eta$). Hence, in this case at least one of the following holds:
\begin{itemize}
    \item[(b1)] $\bA^2 > \bar\varepsilon\, \bar\bE(T,0,1)$,
    \item[(b2)] $\bar\bE(T,0,1) > \bar\varepsilon$,
    \item[(b3)] $\mathbb{E}(T,\bS,0,1) >\bar\eta \,\bar\bE(T,0,1)$,
\end{itemize}

If (b2) holds, then \eqref{the-interesting-case} for $\bar\alpha(\gamma) \ll \bar\varepsilon$ implies a lower bound on the opening angle $\beta_{\max}(\bS)$. Then by choosing $\bar\alpha (\gamma)$ much smaller, if necessary, depending on $\bar\varepsilon$, we would once again deduce graphicality of $T$ over a suitable subset $\tilde\bS \subset \bS$ due to \cite[Corollary 6.6]{DHMSS} and conclude as in case (a).

In case (b1) or (b3) holds, \eqref{e:compare-books} is a simple consequence of the fact that \[d_{\mathcal{H}} (\bS \cap \bB_1, \pi_0 \cap \bB_1)^2 \leq C \, \left(\bar\bE (T,0,1) + \bA^2 \right)\] 
when $\bS$ is optimal for the conical excess and $\pi_0$ is optimal for the planar excess in $\bB_1$, as a consequence of the height bound Lemma \ref{Linfty-L2}. Analogously, Lemma \ref{Linfty-L2} also implies that
\[
\mathbb{E}(T,\bS,0,\sfrac12) \leq C \left( \bar\bE(T,0,1) + \bA^2 \right)\,,
\]
which in turn implies \eqref{e:almost-monotonicity-book} if (b1) or (b3) holds. \qed

\section{Proof of Proposition \texorpdfstring{\ref{p:decay-2}}{decay2}: Simon type estimates}\label{sec:Hardt-Simon}
In this section we use one of the crucial ideas of Simon's work \cite{Simon} (cf. also \cite{CoEdSp}): close to points of high density, the monotonicity formula gives an improved $L^2$ estimate, see \eqref{e:Hardt-Simon}; in particular, such points of high density are bound to lie close to the spine $V$ at the scale of the excess $\bE$. An analogous estimate was proved in \cite{DHMSS}, but here the situation is more subtle as we have to take into account that the ``opening angle'' of $\bS$ might be relatively small.

\begin{theorem}\label{t:Hardt-Simon-main}
 There are positive constants $C $, $\eta_{9}$, and $\varepsilon_{9}$ depending upon $(m,n,p)$ such that if $T,\Sigma, \bS, \pi_0$ are as in Assumptions \ref{ass:everywhere} and \ref{ass:decay plane} with $\bar\eps <\eps_{9}$ and $\bar \eta < \eta_{9}$ then the following conclusion holds. Assume that
\begin{itemize}
\item[(a)] $\tilde{\bS}$ denotes the open book introduced in Definition \ref{d:new-cone} 
\item[(b)] and $q_0=(x_0,y_0)\in (V^\perp \times V)\cap \bB_{\sfrac{1}{4}}$ is a point with $\Theta_T (q_0) \geq Q$.
\end{itemize}
Then
\begin{equation} \label{e:Hardt-Simon}
\beta_{\pi_0}(\bS)^2\, |x_0|^2+|x_0^\perp|^2 + \int_{\bB_{1/4}} \frac{{\rm dist}^2\, (q-q_0, \tilde{\bS})}{|q-q_0|^{m+\frac74}}\, d\|T\| (q) \leq C (\bE(T, \bS, 0,1) + \bA^2)\, ,
\end{equation}
where $x_0^\perp = \mathbf{p}_{\pi_0}^\perp (x_0) = \mathbf{p}_{\pi_0}^\perp (q_0)$.
\end{theorem}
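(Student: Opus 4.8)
## Proof strategy for Theorem \ref{t:Hardt-Simon-main}

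The plan is to combine the graphical reparametrization of $T$ over the new open book $\tilde{\bS}$ (Corollaries \ref{cor:reparametrization} and \ref{cor:reparametrization on Bad cubes}) with the monotonicity formula at the point $q_0$, exploiting in a crucial way the improved quadratic error term $C\bA^2 r^2$ in monotonicity that follows from the stationarity of $\|T\|$ as a varifold in $\Sigma$ (as in \cite[Appendix A]{DLS_Lp}). First I would record the consequence of monotonicity at $q_0$: since $\Theta_T(q_0)\geq Q$ and $\|T\|(\bB_r(q_0))\leq (Q+\frac34)\omega_m r^m$ for all $r\leq \frac12$ (which holds by the classical monotonicity formula once $\bar\eps$ is small), we get
\[
\int_{\bB_{1/4}(q_0)} \frac{|(q-q_0)^\perp|^2}{|q-q_0|^{m+2}}\, d\|T\|(q) \leq C\left( \frac{\|T\|(\bB_{1/2}(q_0))}{\omega_m (1/2)^m} - Q \right) + C\bA^2 \leq C\left(\mathbb{E}(T,\tilde\bS,0,1) + \bA^2\right)\,,
\]
where in the last step I would use that the reparametrization of $T$ over $\tilde\bS$ (valid outside a small neighborhood of $V$ at scale comparable to the excess, thanks to Theorem \ref{thm:graph_v1}, Corollary \ref{cor:reparametrization} and Corollary \ref{cor:reparametrization on Bad cubes}), together with the Dirichlet estimates \eqref{e:Dirichlet}, \eqref{e:L^2 estimate}, \eqref{L2 estimate excess}, \eqref{e:rep_parametrization_bad} and \eqref{e:W12_rep_bad}, controls the mass excess of $T$ over that of $\tilde\bS$; and then invoke \eqref{eq:tildeexcesvsexcess} to bound $\mathbb{E}(T,\tilde\bS,0,1)\leq C\eta^{-3/2}\bE(T,\bS,0,1)+C\bA^2$. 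Since $\eta$ and $\tau$ are now fixed parameters, all these constants are geometric.

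The second step is to pass from the orthogonal-direction integrand $|(q-q_0)^\perp|^2$ to $\dist^2(q-q_0,\tilde\bS)$. The point is that $\tilde\bS$ is a cone with vertex on the spine $V$, so on the graphical part of $T$ the distance to $\tilde\bS$ is comparable to the ``radial deviation'' measured from $q_0$ along rays, which in turn is controlled by $|(q-q_0)^\perp|$ up to an error coming from the displacement of $q_0$ from $\tilde\bS$ and from the spine. This is the same mechanism used in \cite[Proposition 8.3]{DHMSS} (and already appearing in Proposition \ref{p:L2Alm-piece-2}(i)): using the Lipschitz bounds on the reparametrizing functions $\tilde u_j^\pm$ from \eqref{e:rep_Holder} and \eqref{e:rep_Lipschtiz_bad}, one converts the varifold integral into an integral over the pages of $\tilde\bS$ of squared radial derivatives, and then integrates by parts / uses a Hardy-type inequality to recover $\dist^2(q-q_0,\tilde\bS)/|q-q_0|^{m+\sfrac74}$ — the exponent $\frac74$ rather than $2$ being exactly the slack that makes the Hardy inequality work near $q_0$ even without knowing $q_0\in\tilde\bS$. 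One must also handle the non-graphical region $\bB_{1/4}\setminus(R_{2\mathcal W}\cup R_{\mathcal B})$ and the tubes around $V$ removed in $R_{\mathcal B}$: there the integrand is bounded using \eqref{e:L^2 estimate} and the bad-cube estimates, contributing only $C\eta^{-3/2}\bE(T,\bS,0,1)$, and the removed tubes are so thin (radius $\sim(\eta^{-1}\bE_i)^{1/2}d_i$) that their contribution, even with the singular weight, is summable and again bounded by the excess.

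The third step produces the two pointwise terms $\beta_{\pi_0}(\bS)^2|x_0|^2$ and $|x_0^\perp|^2$. For $|x_0^\perp|^2$: since $\Theta_T(q_0)\geq Q$, by the no-holes/density considerations (Lemma \ref{l:propagation}(3) and Theorem \ref{thm:graph_v1}(iii)) $q_0$ must be close to a point where $T$ agrees with $Q$ pages through the spine, and the height bound Lemma \ref{Linfty-L2} together with $|(q-q_0)^\perp|$ smallness at all scales forces $|\mathbf{p}_{\pi_0}^\perp(q_0)|^2\leq C(\bE(T,\bS,0,1)+\bA^2)$; more directly, applying \eqref{e:Hardt-Simon}'s integral bound to small balls and using that $v(q_0)=Q\a{w_0}$-type behavior propagates, one extracts $|x_0^\perp|^2$ from the monotonicity gap itself. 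For $\beta_{\pi_0}(\bS)^2|x_0|^2$: this is the genuinely new difficulty compared to \cite{DHMSS}. There, $\bS$ sits at fixed distance from a plane so $\beta_{\pi_0}(\bS)$ is bounded below; here it may be tiny, so the estimate must be scale-invariant in the opening angle. I expect the main obstacle to be precisely this: one needs to show that if $q_0$ were far from $V$ (i.e. $|x_0|$ large) while $\Theta_T(q_0)\geq Q$, then near $q_0$ the current would have to look like $Q$ copies of a plane, and comparing this with the genuinely non-flat book $\tilde\bS$ at the scale $|x_0|$ produces a lower bound $\gtrsim\beta_{\max}(\tilde\bS)^2|x_0|^2\sim\beta_{\pi_0}(\bS)^2|x_0|^2$ (using \eqref{eq:angle_bound_new_book}) for the left side, which must then be absorbed into $C(\bE(T,\bS,0,1)+\bA^2)$ via the angle bound \eqref{eq:angle_bound} and \eqref{eq:tildeexcesvsexcess}. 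Carrying this out requires running the $Q$-point analysis of Section \ref{s:Q-points} (in particular the "pushing $Q$-points" conclusion \eqref{eq.replacement_for_graphical2} and the cheap Hardt–Simon inequality \eqref{eq.cheap Hardt-Simon}) at the scale $|x_0|$ around $q_0$, i.e. applying it to the rescaled current $T_{q_0,c|x_0|}$, and checking that the hypotheses of Lemma \ref{l:propagation} survive the rescaling — this is where the bulk of the technical work lies, and where the fixed choice of $\eta,\tau$ and the careful propagation estimates \eqref{eq.other-propagation}–\eqref{eq.propagationNonDegneracy12} are indispensable.
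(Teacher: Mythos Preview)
Your three-step skeleton is right, and Steps 1--2 are close enough in spirit (the paper actually goes through a first-variation identity, Proposition~\ref{prop:density-est}, plus the weighted monotonicity formula of Lemma~\ref{l:monot-with-A^2} rather than the mass-excess route, and it packages the shift to $q_0$ as a separate Proposition~\ref{prop:shifting} verifying that Corollary~\ref{c:second-Hardt-Simon} applies to $T_{q_0,1/3}$ with the \emph{rotated} book $O_{q_0}(\tilde\bS)$; but your alternative is plausible). The real gap is in Step~3.

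Your proposal for the pointwise term $\beta_{\pi_0}(\bS)^2|x_0|^2$ is to rescale to $T_{q_0,c|x_0|}$ and invoke the $Q$-point pushing \eqref{eq.replacement_for_graphical2}. But that lemma is qualitative: it gives $|x_0|\leq\rho(\bar\eta)$ with $\rho(\bar\eta)\to 0$, not the quantitative bound $\beta_{\pi_0}(\bS)^2|x_0|^2\leq C(\bE+\bA^2)$; and checking the hypotheses of Lemma~\ref{l:propagation} at center $q_0$ and scale $c|x_0|$ would require exactly the excess-ratio smallness you do not yet have at that scale. The paper instead uses an \emph{absorption} argument whose core is a purely geometric fact, Lemma~\ref{l:geometry-cones}. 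Parts (a)--(b) of that lemma compare $\dist(z-q_0,O_{q_0}(\tilde\bS))$ with $\dist(z,\tilde\bS)$ and yield
\[
\int_{\bB_{1/9}(q_0)}\frac{\dist(q-q_0,\tilde\bS)^2}{|q-q_0|^{m+7/4}}\,d\|T\|
\;\leq\; C\bigl(\bE+\bA^2\bigr)+C\bigl(|x_0^\perp|^2+\beta_{\pi_0}(\tilde\bS)^2|x_0|^2\bigr),
\]
i.e.\ the pointwise terms appear \emph{on the right}. Part (c) selects a single page $\bH_i\subset\tilde\bS$ on which $|x_0^\perp|+\beta_{\pi_0}(\bS)|x_0|\leq C\,\dist(x-q_0,\tilde\bS)$ for every $x\in\bH_i$ with $|\mathbf{p}_{V^\perp}(x)|\geq 2|x_0|$. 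Averaging this over an annulus on $\bH_i$ at distance $\sim\rho$ from $q_0$ (where $T$ is graphical by Theorem~\ref{thm:graph_v1}(v), once $\bar\eta,\bar\varepsilon$ are small enough for the given $\rho$) gives
\[
|x_0^\perp|^2+\beta_{\pi_0}(\tilde\bS)^2|x_0|^2
\;\leq\; C\rho^{7/4}\int_{\bB_{1/9}(q_0)}\frac{\dist(q-q_0,\tilde\bS)^2}{|q-q_0|^{m+7/4}}\,d\|T\|
+C\rho^{-m}(\bE+\bA^2).
\]
For $\rho$ small the first term on the right is absorbed into the first display, closing the estimate. This coupled pair of inequalities with a gain of $\rho^{7/4}$ is the mechanism you are missing; it replaces the rescaling-and-rerunning you propose, which does not deliver the sharp dependence.
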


The proof follows the same argument as in \cite[Section 8]{DHMSS}, however we need to suitably modify \cite[Proposition 8.4]{DHMSS} to take into account the presence of the bad Whitney region $\mathcal B$. This will be done by taking advantage of the fact that in this region the planar and conical excess are comparable, and using once more the multivalued approximation to estimate the required errors. 

\subsection{Consequences of the monotonicity formula} We start with an improved version of the first part of \cite[Lemma 8.2]{DHMSS}. More precisely, the bound \eqref{e.h_k monotonicity} differs from the corresponding one in \cite[Lemma 8.1]{DHMSS} in the dependence upon $\bA$. The proof is given in the appendix. 

\begin{lemma}\label{l:monot-with-A^2}
Let $T$ and $\Sigma$ be as in Assumption \ref{ass:everywhere}, and assume that $g (q) = |q|^k \,\hat g (\frac{q}{|q|})$ for some $k\geq 1$ and some Lipschitz non-negative function $\hat g$ on the unit sphere. Then, for every $2>\alpha>0$ and $R \leq 1$ we have
\begin{align}\label{e.h_k monotonicity}
	\frac{\alpha}{2} \int_{\bB_{R}} \frac{g^2 (q)}{\abs{q}^{m  + 2k -\alpha}}\, d\norm{T} (q) \leq & \frac{m+2k}{R^{m+2k-\alpha}} \int_{\bB_{R}} g^2 \,d\norm{T} + \frac{2}{\alpha} \int_{\bB_{R}}  \frac{\abs{\nabla g (q)}^2\abs{q^\perp}^2}{\abs{q}^{m+2k-\alpha}}\, d\norm{T} (q)\nonumber\\
	&+ C \bA^2 \|\hat g\|_\infty^2 \frac{\norm{T}(\bB_{R})}{R^{m-\alpha}}\,,
\end{align}
where $q^\perp := q - \mathbf{p}_{\vec{T}}(q)$ at $\Ha^m$-a.e. $q \in \spt(T)$ (here, $\mathbf{p}_{\vec{T}} = \mathbf{p}_{\vec{T}(q)}$ is the orthogonal projection onto ${\rm span}(\vec{T}(q))$).
\end{lemma}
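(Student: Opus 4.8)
The plan is to run the standard "squash test function" computation in the first variation identity for the varifold induced by $T$, exactly as in the derivation of the monotonicity formula, but tracking the mean curvature error term with the observation of \cite[Appendix A]{DLS_Lp}: since $\|T\|$ is stationary \emph{in} $\Sigma$, the generalized mean curvature $\vec H_T$ is a section of the normal bundle $N\Sigma$, so for any test vector field $X$ the error $\int \vec H_T\cdot X\,d\|T\|$ only sees the component of $X$ normal to $\Sigma$, which can be traded for a factor $\bA$; combined with the factor $\bA$ already present in $|\vec H_T|\le C\bA$, this gives a quadratic $\bA^2$ bound on the error rather than the naive linear one. Concretely, I would test the first variation $\delta\V(T)[X]=-\int X\cdot\vec H_T\,d\|T\|$ with the family of radial vector fields $X(q)=\varphi(|q|/r)\,g^2(q)\,q$ (with $\varphi$ a cutoff, eventually letting $\varphi\uparrow\mathbf 1_{[0,1]}$), compute $\operatorname{div}_{\vec T}X$ splitting it into the radial/tangential pieces, and integrate in $r$ against the weight $r^{-(m+2k-\alpha)}\,dr$, precisely the manipulation that produces \cite[Lemma 8.1]{DHMSS}.

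The key steps, in order: (1) recall from \cite[Lemma 5.1]{DLHMS} that $\V(T)$ has generalized mean curvature $\vec H_T$ with $\|\vec H_T\|_\infty\le C\bA$, and recall that $\vec H_T(q)\perp T_q\Sigma$ for $\|T\|$-a.e.\ $q$; (2) write, for $X(q)=\psi(q)\,q$ with $\psi=g^2$, the divergence $\operatorname{div}_{\vec T}X = m\,\psi + \nabla\psi\cdot q - (\text{positive term involving }q^\perp)$, where the positive term is $\psi\,|q^\perp|^2/|q|^2$ after the usual rearrangement, and $\nabla\psi\cdot q = 2g(\nabla g\cdot q) = 2k g^2$ using the $k$-homogeneity of $g$ up to the term $|q|^k\hat g$; more carefully $\nabla g\cdot q = k\,g$ exactly by Euler's identity for the $k$-homogeneous $g$, so $\nabla\psi\cdot q = 2k\,g^2$; (3) insert the cutoff $\varphi(|q|/r)$, differentiate the resulting identity in $r$, and integrate against $r^{-(m+2k-\alpha)}$ using Fubini, which converts the $\varphi'$ term into the good left-hand side $\frac{\alpha}{2}\int \frac{g^2}{|q|^{m+2k-\alpha}}$ (the coefficient $\alpha/2$ emerging from $\int_{|q|}^R r^{-(m+2k-\alpha)-1}(m+2k-\alpha)\,dr$-type bookkeeping, or rather from integrating the $\partial_r$ of $r^{-(m+2k-\alpha)}\cdot(\text{ball quantity})$), leaving the boundary term $\frac{m+2k}{R^{m+2k-\alpha}}\int_{\bB_R} g^2$, the gradient term $\frac{2}{\alpha}\int \frac{|\nabla g|^2|q^\perp|^2}{|q|^{m+2k-\alpha}}$ coming from Cauchy--Schwarz applied to the $\nabla\psi\cdot q^\perp$ cross term after completing the square against the good negative $q^\perp$ term, and the mean-curvature error; (4) estimate the mean-curvature error $\int \varphi(|q|/r) g^2 (q)\,(q\cdot\vec H_T)\,d\|T\|$: decompose $q = \mathbf p_{T_q\Sigma}(q) + \mathbf p_{(T_q\Sigma)^\perp}(q)$, note $\vec H_T\cdot \mathbf p_{T_q\Sigma}(q)=0$, and bound $|\mathbf p_{(T_q\Sigma)^\perp}(q)|\le \bA\,|q|^2$ (since $0\in\Sigma$ and $\Sigma\in C^2$, the distance of $q$ to $T_q\Sigma\cong$ tangent space is $O(\bA|q|^2)$, using $\bA=\|A_\Sigma\|_\infty$), so $|q\cdot\vec H_T|\le C\bA^2|q|^2$; hence after integrating in $r$ against the weight this contributes at most $C\bA^2\|\hat g\|_\infty^2\,|q|^{2k}\cdot|q|^2\cdot|q|^{-(m+2k-\alpha)}$ integrated, i.e.\ $C\bA^2\|\hat g\|_\infty^2\int_{\bB_R}|q|^{\alpha-m}\,d\|T\|\le C\bA^2\|\hat g\|_\infty^2\,R^{\alpha-m}\|T\|(\bB_R)$ by the density upper bound of Assumption \ref{ass:everywhere}.

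The main obstacle I anticipate is step (4): pinning down the correct power of $|q|$ in the normal-displacement estimate $|\mathbf p_{(T_q\Sigma)^\perp}(q)|\le C\bA|q|^2$ near the (arbitrary) base point — one must use that the base point $0$ lies on $\Sigma$ and that $\Sigma$ is a $C^{1,1}$ (indeed $C^2$) graph over $T_0\Sigma$ with second fundamental form bounded by $\bA$, so that $q\in\Sigma$ forces $\operatorname{dist}(q, T_0\Sigma)\le \tfrac12\bA|q|^2$; then one also needs $|T_q\Sigma - T_0\Sigma|\le C\bA|q|$ to keep the final expression clean, but since $\vec H_T\cdot q$ only involves the component of $q$ orthogonal to $T_q\Sigma$ and not $T_0\Sigma$, a little care is needed to see $|\mathbf p_{(T_q\Sigma)^\perp}(q)| = |\mathbf p_{(T_q\Sigma)^\perp}(q - \mathbf p_{T_0\Sigma}q)| + |\mathbf p_{(T_q\Sigma)^\perp}\mathbf p_{T_0\Sigma}q| \le |q - \mathbf p_{T_0\Sigma}q| + C\bA|q|\cdot|q|\le C\bA|q|^2$ — routine but the kind of place sign/constant mistakes hide. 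Apart from that, everything is a bookkeeping exercise identical to \cite[Lemma 8.1]{DHMSS} and \cite[Appendix A]{DLS_Lp}, so I would present it as "the proof is as in \cite[Lemma 8.1]{DHMSS}, with the mean curvature error estimated as in \cite[Appendix A]{DLS_Lp}" and only spell out the error-term computation in (4) in detail.
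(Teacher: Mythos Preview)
Your proposal is correct and follows essentially the same approach as the paper's proof: test the first variation with $g^2$ times a radial vector field, exploit the $k$-homogeneity of $g$ together with Young's inequality to produce the gradient term, and---crucially---gain the quadratic $\bA^2$ on the mean-curvature error via $\vec H_T\perp T_q\Sigma$ and the bound on $|\mathbf p_{(T_q\Sigma)^\perp}(q)|$. The only cosmetic differences are that the paper uses the explicit ``squash'' field $W_{a,r}(q)=\big(\max(r,|q|)^{-(m+a)}-R^{-(m+a)}\big)^+q$ and lets $r\downarrow 0$ rather than your cutoff-and-integrate-in-$r$ scheme, and it records the (weaker but sufficient) bound $|\mathbf p_{(T_q\Sigma)^\perp}(q)|\le C\bA|q|$ rather than your sharper $C\bA|q|^2$.
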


As a simple corollary we then conclude the following.

\begin{corollary}\label{c:first-Hardt-Simon}
Let $T$ and $\Sigma$ be as in Assumption \ref{ass:everywhere}.
Then, for every $r<1$ and any open book $\bar\bS$, 
	\begin{align}
	\int_{\bB_r} \frac{\dist(q,\bar \bS)^2}{\abs{q}^{m+\frac74}} \,d\|T\|
	\leq & C \int_{\bB_r} \frac{|q^\perp|^2}{|q|^{m+2}}\,d\|T\|  + C(\bE (T, \bar\bS, 0, r) + \bA^2)\, .\label{eq:est_prelim}
	\end{align}
\end{corollary}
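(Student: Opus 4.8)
The statement we must prove is Corollary \ref{c:first-Hardt-Simon}, namely that for every $r<1$ and any open book $\bar\bS$,
\[
\int_{\bB_r} \frac{\dist(q,\bar\bS)^2}{|q|^{m+\sfrac74}}\,d\|T\| \leq C \int_{\bB_r} \frac{|q^\perp|^2}{|q|^{m+2}}\,d\|T\| + C\left(\bE(T,\bar\bS,0,r) + \bA^2\right)\,.
\]
The idea is to apply Lemma \ref{l:monot-with-A^2} with the function $g(q):=\dist(q,\bar\bS)$ and with a choice of $\alpha$ that produces the exponent $\sfrac74$ in the denominator. Since $\bar\bS$ is a cone (a union of half-planes through the spine $V(\bar\bS)$), the distance function to it is positively $1$-homogeneous, so $g(q) = |q|\,\hat g(q/|q|)$ with $\hat g := \dist(\cdot,\bar\bS)$ restricted to $\Sf^{m+n-1}$; moreover $\hat g$ is Lipschitz with $\|\hat g\|_\infty \leq 1$. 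Hence $g$ has exactly the structure required by Lemma \ref{l:monot-with-A^2} with $k=1$.

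With $k=1$, the left-hand side of \eqref{e.h_k monotonicity} becomes $\frac{\alpha}{2}\int_{\bB_R} \frac{g^2(q)}{|q|^{m+2-\alpha}}\,d\|T\|$; choosing $\alpha = \sfrac14$ makes the exponent $m+2-\alpha = m+\sfrac74$, which is exactly the weight appearing in the statement. So with $R=r$ (recall $r<1$, so the hypothesis $R\leq 1$ is met), Lemma \ref{l:monot-with-A^2} gives
\[
\frac18 \int_{\bB_r}\frac{\dist(q,\bar\bS)^2}{|q|^{m+\sfrac74}}\,d\|T\| \leq \frac{m+2}{r^{m+\sfrac74}}\int_{\bB_r}\dist(q,\bar\bS)^2\,d\|T\| + 8\int_{\bB_r}\frac{|\nabla g(q)|^2\,|q^\perp|^2}{|q|^{m+\sfrac74}}\,d\|T\| + C\bA^2\frac{\|T\|(\bB_r)}{r^{m-\sfrac14}}\,.
\]
It remains to bound the three terms on the right by the right-hand side of the Corollary. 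First, $\frac{1}{r^{m+\sfrac74}}\int_{\bB_r}\dist(q,\bar\bS)^2\,d\|T\| \leq r^{\sfrac14}\cdot r^{-(m+2)}\int_{\bB_r}\dist(q,\bar\bS)^2\,d\|T\| = r^{\sfrac14}\,\bE(T,\bar\bS,0,r) \leq \bE(T,\bar\bS,0,r)$ since $r<1$. Second, since $\bar\bS$ is a Lipschitz graph-union and $g=\dist(\cdot,\bar\bS)$, one has $|\nabla g|\leq 1$ a.e. (the distance function to any closed set is $1$-Lipschitz, so its gradient, where defined, has norm at most one); combined with $|q|^{-(m+\sfrac74)} = |q|^{\sfrac14}\cdot|q|^{-(m+2)} \leq |q|^{-(m+2)}$ on $\bB_r$ (as $|q|\le r<1$), this gives $\int_{\bB_r}\frac{|\nabla g|^2|q^\perp|^2}{|q|^{m+\sfrac74}}\,d\|T\| \leq \int_{\bB_r}\frac{|q^\perp|^2}{|q|^{m+2}}\,d\|T\|$, which is exactly the first term on the right of the Corollary. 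Third, by the monotonicity formula (valid here since $\|T\|$ is a varifold of bounded mean curvature $\leq C\bA$ in $\bB_1$, with $\bA\leq 1$ by Assumption \ref{ass:everywhere}) one has the mass bound $\|T\|(\bB_r)\leq C\,r^m$; hence $C\bA^2\frac{\|T\|(\bB_r)}{r^{m-\sfrac14}} \leq C\bA^2\,r^{\sfrac14}\leq C\bA^2$. Summing up and absorbing the factor $\tfrac18$ into $C$ yields the claimed inequality.

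\textbf{Main obstacle.} The only genuine point to be careful about is the homogeneity/regularity structure of $g=\dist(\cdot,\bar\bS)$: one needs $\bar\bS$ to be a cone with vertex at the origin so that $g$ is $1$-homogeneous and of the form $|q|^1\hat g(q/|q|)$ with $\hat g$ Lipschitz on the sphere, which is precisely the hypothesis of Lemma \ref{l:monot-with-A^2}. Open books in $\mathscr{B}(0)$ are unions of half-planes through the $(m-1)$-plane $V(\bar\bS)\ni 0$, so this is automatic; the Lipschitz bound $[\hat g]_{\mathrm{Lip}} \leq C$ and $\|\hat g\|_\infty \leq 1$ on $\Sf^{m+n-1}$ follow from $\bar\bS$ being a finite union of flat pieces through the origin. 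The rest is the bookkeeping above: matching $\alpha=\sfrac14$, $k=1$ to the exponent $m+\sfrac74$, using $r<1$ to discard favourable powers of $r$, the elementary bound $|\nabla g|\leq 1$, and the standard mass bound from monotonicity. None of these steps is delicate, so I expect the proof to be short once Lemma \ref{l:monot-with-A^2} is in hand; the real content is entirely in that lemma (proved in the appendix) and not in this corollary.
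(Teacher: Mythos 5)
Your proof is correct and takes exactly the paper's approach: apply Lemma \ref{l:monot-with-A^2} to $g=\dist(\cdot,\bar\bS)$ with $k=1$, $\alpha=\sfrac14$ (the paper's proof consists of precisely this one-line observation). The only difference is that you have written out the subsequent bookkeeping — absorbing $r^{\sfrac14}\leq 1$, using $|\nabla g|\leq1$, and applying the monotonicity mass bound — which the paper leaves implicit.
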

\begin{proof}
Observe that $g (q) := \dist(q,\bar \bS)$ is $1$-homogeneous function and that $\tilde{g}$ is $1$-Lipschitz. The inequality follows therefore applying Lemma \ref{l:monot-with-A^2} with $k=1$ and $\alpha = \frac{1}{4}$.
\end{proof}

We next use the refined Lipschitz approximation of the previous sections to suitably bound the first summand in the right-hand side of \eqref{eq:est_prelim}.

\begin{proposition}\label{prop:density-est}
There are positive $\eta_{10}$ and $\eps_{10}$ such that the following holds. Let $T$, $\Sigma$, and $\bS$ be as in Assumption \ref{ass:everywhere} and \ref{ass:decay plane} with $\bar\eta \leq \eta_{10}$ and $\bar\eps \leq \eps_{10}$. Denote by $\mathbf{p}_{V}$ the orthogonal projection on the spine $V$ of $\bS$, and for $\|T\|$-a.e. $q$ denote by $\mathbf{p}_{\vec{T} (q)^\perp}$ the projection on the orthogonal complement of the tangent plane to $T$ at $q$. Also set $q^\perp:= q-\p_{\vec{T}(q)}(q)$ for $\Ha^m$-a.e. $q \in \spt (T)$. Then 
\begin{gather}\label{e.reverse poincare}
\int_{\bB_{\sfrac{1}{3}}}\left(\left|\mathbf{p}_V\cdot \mathbf{p}_{\vec{T} (q)^\perp}\right|^2 \, + \frac{|q^\perp|^2}{\abs{q}^{m+2}}\right) \,d\|T\| (q) \leq C \,(\mathbf E + \bA^2 )\, ,
\end{gather}
where $|\cdot|$ is the Hilbert-Schmidt norm and the constant $C$ depends upon $(m,n,p)$.
\end{proposition}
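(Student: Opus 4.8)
The plan is to derive \eqref{e.reverse poincare} by combining the monotonicity-type inequality of Lemma \ref{l:monot-with-A^2} (applied with a linear test function adapted to the spine $V$) with the graphical parametrizations established in Sections \ref{s:Q-points}--\ref{s:reparametrization}. The starting observation is that the integrand $|\mathbf{p}_V\cdot \mathbf{p}_{\vec T(q)^\perp}|^2$ measures the failure of the tangent plane of $T$ to contain $V$, and so it can be estimated by the gradient of the graphical approximations in the regions where $T$ is a graph, plus a controlled error in the complement.

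First I would treat the spine-tilt term $\int |\mathbf{p}_V \cdot \mathbf{p}_{\vec T(q)^\perp}|^2\, d\|T\|$. On the good region $R_{2\mathcal W}$ the current coincides with $\mathbf G_{\tilde v}$ over $\tilde{\bS}$ by Corollary \ref{cor:reparametrization}, and since each page $\tilde\bH_j^\pm$ contains $V$, the quantity $|\mathbf{p}_V\cdot \mathbf{p}_{\vec T^\perp}|^2$ is pointwise controlled by $|D_y \tilde u_j^\pm|^2$ (the derivative of the graphical map in the spine directions) plus a term of size $\bA^2$ coming from the curvature of $\Sigma$; this is bounded by the $W^{1,2}$-estimate \eqref{e:comparison-tilde-u-w}, giving a contribution $\leq C(\bE(T,\bS,0,1)+\bA^2)$. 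On the bad region $R_{\mathcal B}$ one uses instead the reparametrization of Corollary \ref{cor:reparametrization on Bad cubes} together with its estimate \eqref{e:W12_rep_bad}, yielding a contribution $\leq C\eta^{-1}\bE(T,\bS,0,1)$; since $\eta$ is a fixed parameter this is $\leq C(\bE(T,\bS,0,1)+\bA^2)$ as claimed (with $\mathbf E=\mathbf E(T,\bS,0,1)$). Finally, on the leftover set $\bB_{1/3}\setminus(R_{2\mathcal W}\cup R_{\mathcal B})$ one invokes \eqref{e:L^2 estimate}: there the integrand is bounded since $|\mathbf{p}_V\cdot \mathbf{p}_{\vec T^\perp}|\leq 1$, but one needs a better bound than mere boundedness of the mass of the leftover set --- the point is that on this set $\dist(q,V)$ is small, and the relevant quantity can be reabsorbed using the density lower bound and a covering argument exactly as in the proof of Theorem \ref{thm:graph_v1}(iv), producing again a bound by $C\tau\eta^{-3/2}\mathbf E$.

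For the second term, $\int_{\bB_{1/3}} |q^\perp|^2/|q|^{m+2}\, d\|T\|$, I would use Lemma \ref{l:monot-with-A^2}. Pick a linear function $\ell\colon\R^{m+n}\to\R$ vanishing on $V$ and on the orthogonal complement of $\pi_0$, normalized so that $g(q):=\ell(q)$ is exactly one of the coordinate functions $x_1$ (or more precisely $\mathbf{p}_{\pi_0^{\perp_0}}$ along a direction transverse to $V$ inside $\pi_0$). Actually the cleaner route is the standard one from \cite{Simon}: take $g(q)=\dist(q,\bar\pi)$ for an appropriate plane $\bar\pi\supset V$, or work directly with the identity relating $\int |q^\perp|^2/|q|^{m+2}$ to the density drop $\|T\|(\bB_r)/\omega_m r^m - \Theta_T(0)$, which by the hypothesis $\Theta_T(0)\geq Q$ and the mass bound in Assumption \ref{ass:everywhere} is controlled by $C(\mathbf E+\bA^2)$ --- this is precisely the improved monotonicity with quadratic $\bA$-dependence coming from \cite[Appendix A]{DLS_Lp}, as already used in the proof of Proposition \ref{p:L2Alm-piece-2}. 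Combining the density drop estimate with the fact (via the graphical parametrization and Lemma \ref{Linfty-L2}) that the mass excess $\|T\|(\bB_{1/3}) - Q\omega_m 3^{-m}$ is bounded by $C(\mathbf E+\bA^2)$ closes this term.

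\textbf{Main obstacle.} The delicate point is the leftover region $\bB_{1/3}\setminus(R_{2\mathcal W}\cup R_{\mathcal B})$: there one has no graphical structure, and the naive bound $|\mathbf{p}_V\cdot\mathbf{p}_{\vec T^\perp}|\leq 1$ combined with $\|T\|(\text{leftover})\leq C(\mathbf E+\bA^2)^{1+\gamma}$ would suffice for the first integrand, but for the weighted integrand $|q^\perp|^2/|q|^{m+2}$ the singular weight near the origin is dangerous, and one must exploit that, by construction of the Whitney decomposition and property (v) of Theorem \ref{thm:graph_v1}, the leftover set stays at a definite distance from the origin (or is covered by the bad balls $\bB_{5\bar M d_i}(y_i)$ on which $|q|\gtrsim d_i$), so the weight is harmless there. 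Getting the bookkeeping of these covering/distance estimates right --- and making sure the final constant does not blow up as $\bar\eta,\bar\varepsilon\to 0$ for the fixed choice of $\eta,\tau$ --- is where the real work lies; everything else follows the template of \cite[Section 8]{DHMSS} with the $\bA\mapsto\bA^2$ improvement supplied by Lemma \ref{l:monot-with-A^2}.
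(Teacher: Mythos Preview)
Your approach has a genuine gap that affects both terms of \eqref{e.reverse poincare}, and it is the same gap in each case: all the gradient estimates on the graphical approximations that are available to you at this stage --- namely \eqref{L2 estimate excess}, \eqref{e:comparison-tilde-u-w}, and \eqref{e:W12_rep_bad} --- carry an $|x|^2$ weight, i.e.\ they bound $\int |x|^2 |D\tilde u_j^\pm|^2$ by $C(\mathbf E+\bA^2)$, not the unweighted $\int |D\tilde u_j^\pm|^2$. For the spine-tilt term, the pointwise identity $|\mathbf p_V\cdot\mathbf p_{\vec T^\perp}|^2\approx |D_V\tilde u_j^\pm|^2$ on the graphical region is correct, but integrating it gives precisely the unweighted quantity, which you cannot control. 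For the second term, the monotonicity formula does reduce the question to bounding the mass excess $\|T\|(\bB_{1/3})-Q\omega_m 3^{-m}$, but on the graphical region this mass excess is $\approx\tfrac12\int |D\tilde v|^2$, again unweighted; using the parametrization over $\pi_0$ and \eqref{e:W12} you would obtain $C(\bar\bE+\bA^2)$, which is the planar excess and is strictly larger than the conical excess $\mathbf E=\bE(T,\bS,0,1)$ under Assumption \ref{ass:decay plane}. So neither half of your plan closes.

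The missing ingredient is Simon's first-variation identity, which the paper imports from \cite[Proposition 8.4]{DHMSS}: one tests the first variation of both $T$ and the cone $\tilde\bC$ with the vector field $g^2(q)\,\mathbf p_{V^\perp}(q)$ and subtracts. This produces the inequality \eqref{e:FV-2}, whose left-hand side contains simultaneously the spine-tilt integral $\int|\mathbf p_V\cdot\mathbf p_{\vec T^\perp}|^2 g^2\,d\|T\|$ and the mass difference $\int g^2\,d\|T\|-\int g^2\,d\|\tilde\bC\|$, while every term on the right-hand side --- the mean-curvature term (A), and the terms (B), (C) --- has an integrand bounded pointwise by a multiple of $|x|^2$. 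It is therefore exactly the $|x|^2$-weighted estimates from Corollaries \ref{cor:reparametrization} and \ref{cor:reparametrization on Bad cubes} (together with \eqref{e:rep_parametrization_bad}) that control the right-hand side by $C(\mathbf E+\bA^2)$. The identity thus acts as a reverse Poincar\'e inequality, trading the weighted gradient bound for the unweighted one; once you have it, the mass-difference bound feeds into the monotonicity formula to handle $\int|q^\perp|^2/|q|^{m+2}$ as you outlined. Your covering and leftover-region remarks are fine as supporting details, but without the first-variation step the argument does not start.
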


\begin{proof}
Let $g \in C^\infty_c(\bB_{1})$, and, denoting $\mathbf{p}_{V^\perp}$ the orthogonal projection onto the complement $V^\perp$ to the spine $V$ of $\bS$, proceed as in the proof of \cite[(8.14) Proposition 8.4]{DHMSS} to estimate
\begin{align}
&\int \left| \p_V \cdot \p_{\vec{T}^\perp} \right|^2 g^2 d\|T\| 
+ 2 \left(\int g^2 d\|T\| - \int g^2 d\|\tilde{\bC}\|\right) \nonumber \\
&\qquad \leq \underbrace{ -2 \int g^2  x \cdot H_T \, d\|T\|}_{=: {\rm(A)}}+ \underbrace{4 \int \abs{x^\perp}^2\, \abs{\nabla_V g}^2\, d\|T\|}_{=: {\rm (B)}} \nonumber \\
 &\qquad\;+ \underbrace{4 \int g \left(x\cdot \nabla_{V^\perp} g\right) d \|\tilde{\bC}\|
 - 4 \int g\, (\p_{\vec{T}}(x) \cdot \nabla_{V^\perp}g)\, d\|T\|}_{=: {\rm (C)}} \,. \label{e:FV-2}
\end{align}
Here recall that $\tilde{\bC}$ is a suitable representative $\modp$ supported in the book $\tilde{\bS}$: for its definition we refer to \cite{DHMSS}. 

Notice that when calculating ${\rm (B)}$ and ${\rm (C)}$ we can replace $\|T\|$ with $\|\bG_{\bar{\bS}}(\tilde{v})\|$ up to an error of size $\bE$ thanks to \eqref{e:rep_parametrization_bad}, since in all instances the integrand can be bounded by $|x|^2$ (see \cite[Proof of Proposition 8.4]{DHMSS}). The rest of the proof now proceeds as in \cite[Proposition 8.4]{DHMSS} using Corollaries \ref{cor:reparametrization} and \ref{cor:reparametrization on Bad cubes} in place of Corollary 6.6 therein.
\end{proof}

We can now combine Corollary \ref{c:first-Hardt-Simon} and Proposition \ref{prop:density-est} to infer the following 

\begin{corollary}\label{c:second-Hardt-Simon}
Let $T$, $\Sigma$, and $\bS$ be as in Proposition \ref{prop:density-est}. Then
\begin{align}
	\int_{\bB_{\sfrac{1}{3}}} \frac{\dist(q, \bS)^2}{\abs{q}^{m+\frac74}} \,d\|T\|
	\leq & C(\bE (T, \bS, 0, 1) + \bA^2)\, .\label{eq:est_second}
	\end{align}
\end{corollary}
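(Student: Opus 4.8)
The plan is to combine the two estimates just established, \eqref{eq:est_prelim} from Corollary \ref{c:first-Hardt-Simon} and \eqref{e.reverse poincare} from Proposition \ref{prop:density-est}. First I would apply \eqref{eq:est_prelim} with the choice $\bar\bS = \bS$ and $r = \frac13$, which yields
\[
\int_{\bB_{1/3}} \frac{\dist(q,\bS)^2}{|q|^{m+\frac74}}\, d\|T\|
\leq C \int_{\bB_{1/3}} \frac{|q^\perp|^2}{|q|^{m+2}}\, d\|T\| + C\big(\bE(T,\bS,0,\tfrac13) + \bA^2\big)\,.
\]
The second term on the right is immediately controlled: by the very definition of the one-sided excess $\bE(T,\bS,q,r)$ in Definition \ref{d:excess of excesses}(a), one has $\bE(T,\bS,0,\frac13) = 3^{m+2}\,\big(\tfrac13\big)^{m+2}\bE(T,\bS,0,\frac13)$ — more precisely, a trivial scaling comparison gives $\bE(T,\bS,0,\frac13) \leq 3^{m+2}\,\bE(T,\bS,0,1)$, since the integrand $\dist^2(q,\bS)$ is nonnegative and $\bB_{1/3} \subset \bB_1$, so that $\int_{\bB_{1/3}}\dist^2(q,\bS)\,d\|T\| \leq \int_{\bB_1}\dist^2(q,\bS)\,d\|T\|$ and the normalization factor grows by $3^{m+2}$. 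Hence $\bE(T,\bS,0,\frac13) \leq C\,\bE(T,\bS,0,1)$.

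The first term on the right-hand side is exactly what Proposition \ref{prop:density-est} bounds: \eqref{e.reverse poincare} gives
\[
\int_{\bB_{1/3}} \frac{|q^\perp|^2}{|q|^{m+2}}\, d\|T\| \leq \int_{\bB_{1/3}}\Big(\big|\mathbf{p}_V\cdot\mathbf{p}_{\vec T(q)^\perp}\big|^2 + \frac{|q^\perp|^2}{|q|^{m+2}}\Big)\,d\|T\| \leq C\,(\mathbf E + \bA^2)\,,
\]
where $\mathbf E = \mathbb E(T,\bS,0,1) = \bE(T,\bS,0,1) + \bE(\bS,T,0,1) \geq \bE(T,\bS,0,1)$. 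Since the hypotheses of Proposition \ref{prop:density-est} (Assumptions \ref{ass:everywhere} and \ref{ass:decay plane} with $\bar\eta \le \eta_{10}$, $\bar\eps \le \eps_{10}$) are precisely the ones assumed in the statement of Corollary \ref{c:second-Hardt-Simon} — with $\eta_9 \le \eta_{10}$ and $\eps_9 \le \eps_{10}$ chosen appropriately — we may invoke it directly. Substituting the two bounds back into the inequality from Corollary \ref{c:first-Hardt-Simon} gives
\[
\int_{\bB_{1/3}} \frac{\dist(q,\bS)^2}{|q|^{m+\frac74}}\, d\|T\| \leq C\,(\mathbf E + \bA^2) + C\,(\bE(T,\bS,0,1) + \bA^2) \leq C\,(\mathbf E + \bA^2)\,,
\]
which is \eqref{eq:est_second}, recalling that the statement is phrased with $\bE(T,\bS,0,1)$ but the only place the one-sided excess is weakened to the double-sided one is through Proposition \ref{prop:density-est}; in fact, inspecting \eqref{e.reverse poincare} one sees the right-hand side there is written $\mathbf E + \bA^2$, so the final constant multiplies $\mathbf E + \bA^2$. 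If one wants the sharper statement with $\bE(T,\bS,0,1)$ on the right, this is exactly what is recorded, since $\mathbf E = \bE(T,\bS,0,1) + \bE(\bS,T,0,1)$ and the proposition statement as displayed uses $\mathbf E$; I would simply keep the form $C(\bE(T,\bS,0,1)+\bA^2)$ as in the displayed statement, noting that in Proposition \ref{prop:density-est} the quantity denoted $\mathbf E$ coincides with $\bE(T,\bS,0,1)$ in the notational conventions of \cite{DHMSS} carried over here (one-sided), so no discrepancy arises.

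The proof is essentially a two-line chaining of prior results, so there is no serious obstacle; the only point requiring a moment of care is the bookkeeping of which excess (one-sided $\bE(T,\bS,\cdot)$ versus double-sided $\mathbf E$) appears on the right-hand side of each cited inequality, and confirming that the smallness thresholds $\eta_9, \eps_9$ can be taken below both $\eta_{10}, \eps_{10}$ (from Proposition \ref{prop:density-est}) and whatever is needed for Corollary \ref{c:first-Hardt-Simon} (which in fact needs only Assumption \ref{ass:everywhere}, so imposes no extra constraint). I would therefore set $\eta_9 := \eta_{10}$ and $\eps_9 := \eps_{10}$ and conclude.
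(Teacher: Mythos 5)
Your proof is correct and is precisely the argument the paper has in mind: the paper states the corollary without proof as a direct chaining of Corollary \ref{c:first-Hardt-Simon} (applied with $\bar\bS = \bS$, $r = 1/3$) and Proposition \ref{prop:density-est}, together with the trivial scaling comparison $\bE(T,\bS,0,\sfrac13)\leq 3^{m+2}\bE(T,\bS,0,1)$. Two minor cosmetic remarks: the mid-proof suggestion that the quantity $\mathbf{E}$ in \eqref{e.reverse poincare} might be the double-sided $\mathbb{E}(T,\bS,0,1)$ should simply be dropped --- as you correctly conclude, the convention carried through the paper (made explicit e.g.\ at the start of the proof of Theorem \ref{t:Hardt-Simon-main} and in Theorem \ref{t:binding}) is that $\bE$ denotes the one-sided excess $\bE(T,\bS,0,1)$, which matches the right-hand side of \eqref{eq:est_second}; and the discussion of $\eta_9,\eps_9$ is immaterial here, since the corollary's hypothesis is verbatim that of Proposition \ref{prop:density-est} (the thresholds $\eta_9,\eps_9$ are introduced only for Theorem \ref{t:Hardt-Simon-main}).
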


\subsection{Shifted cones} In the next two steps to prove Theorem \ref{t:Hardt-Simon-main} we will make a fundamental use of the following geometric lemma.

\begin{lemma} \label{l:geometry-cones}
\begin{itemize}
    \item[(a)] Assume $\bS$ is an open book and $q,z\in \mathbb R^{m+n}$ and $O\in {\rm SO} (m+n)$. Then:
\begin{align}
\dist (z, q+ O (\bS)) &\leq \dist (z, q + \bS) + 2
|O-{\rm Id}| |z-q|\label{e:rotating_books}
\end{align}
\item[(b)] There is a geometric constant $C$ such that the following inequality holds for any $q,q',z\in \mathbb R^{m+n}$, any $m$-dimensional plane $\pi$ with $\pi\supset V= V (\bS)$ and under the additional assumptions that  $\mathbf{p}_{\pi} (\bS) = \pi$ and $\beta_\pi (\bS)\leq \frac{1}{2}$:
\begin{align}
\dist (z, q + \bS) &\leq \dist (z, q' + \bS) + |\mathbf{p}_{\pi^\perp} (q-q')| + C \beta_\pi (\bS) |\mathbf{p}_{\pi\cap V^\perp} (q-q')|\label{e:shifting-books}.
\end{align}
\item[(c)] For any constant $C_0$ there is a constant $C_1$ such that the following holds under the assumption that $C_0^{-1} \beta_{\pi} (\bS) \leq \beta_{max} (\bS)\leq \beta_\pi (\bS)$. For every $q\in \mathbb R^{m+n}$ there is a page $\bH\subset \bS$ such that 
\begin{equation}\label{e:shifting-books-2}
|\mathbf{p}_{\pi}^\perp (q)| + \beta_\pi (\bS) |\mathbf{p}_{V^\perp} (q)|\leq C \dist (x-q, \bS) \quad
\mbox{whenever $|\mathbf{p}_{V^\perp} (x)| \geq 2 |\mathbf{p}_{V^\perp} (q)|$}\, .
\end{equation}
\end{itemize}
\end{lemma}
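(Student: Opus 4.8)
The plan is to prove each of the three items of Lemma~\ref{l:geometry-cones} by elementary Euclidean geometry, treating them in order since (c) will combine ideas from (a)--(b) together with the angle comparison hypothesis. Throughout I will use that for any closed set $S$ the function $z \mapsto \dist(z,S)$ is $1$-Lipschitz, and that for an open book $\bS = \bigcup_i \bH_i$ one has $\dist(z,\bS) = \min_i \dist(z,\bH_i)$, so that most estimates reduce to the corresponding statements for a single half-plane (or the plane containing it), where $\dist(z,\bH_i)$ is realized either by the orthogonal projection onto the $m$-plane $\tau(\bH_i)$, if that projection lands in $\bH_i$, or by the projection onto the spine $V$.

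\textbf{Item (a).} For a fixed point $p\in q+\bS$, the point $q + O(p-q)$ lies in $q + O(\bS)$, and $|z - (q+O(p-q))| \le |z - p| + |(O-\mathrm{Id})(p-q)| \le |z-p| + |O-\mathrm{Id}|\,|p-q|$. Now $|p-q| \le |p - z| + |z - q|$, so choosing $p$ to be a nearest point of $q+\bS$ to $z$ gives $|p-z| = \dist(z,q+\bS)$ and, since $|O-\mathrm{Id}|\le 2$ may be assumed (otherwise the inequality is trivial from $\diam$ considerations — actually we can just keep the bound as is), $|p-q| \le \dist(z,q+\bS) + |z-q|$. Substituting yields $\dist(z,q+O(\bS)) \le \dist(z,q+\bS)(1+|O-\mathrm{Id}|) + |O-\mathrm{Id}|\,|z-q|$; absorbing the first correction into the second term using $\dist(z,q+\bS)\le \dist(z,q) + \mathrm{const}$ is wasteful, so instead I will simply note $\dist(z,q+\bS) \le |z-q|$ is false in general — rather, I keep $|p - q| \le |p-z| + |z-q|$ and get the clean bound $\dist(z,q+O\bS) \le \dist(z,q+\bS) + |O-\mathrm{Id}|(|p - z| + |z-q|) \le \dist(z,q+\bS)(1+|O-\mathrm{Id}|) + |O-\mathrm{Id}||z-q|$, and since we only care about $|O-\mathrm{Id}|$ small (large case being trivial by boundedness of the relevant configurations), $(1+|O-\mathrm{Id}|)\dist(z,q+\bS) \le \dist(z,q+\bS) + |O-\mathrm{Id}| \cdot 2|z-q|$ after bounding $\dist(z,q+\bS) \le |z - (q + \mathbf{p}_V(z-q))| \le |z - q|$ — wait, that last bound holds because $q + \mathbf{p}_V(z - q) \in q + V \subset q + \bS$. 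Good: so $\dist(z, q+\bS) \le |z-q|$, and the factor $2$ in the statement is recovered.

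\textbf{Item (b).} Fix a nearest point $p \in q' + \bS$ to $z$, say $p \in q' + \bH_i$, and write $p = q' + \zeta$ with $\zeta \in \bH_i$. The candidate competitor in $q + \bS$ is obtained by \emph{sliding} $\zeta$ within $\bH_i$ (or adjusting it through $V$) to compensate for $q - q'$: since $\mathbf{p}_\pi(\bS) = \pi$ and $\beta_\pi(\bS) \le \tfrac12$, every half-plane $\bH_i$ is a Lipschitz graph over $\pi^+$ or $\pi^-$ with gradient bounded by $C\beta_\pi(\bS)$; decompose $q - q' = a + b + c$ with $a = \mathbf{p}_{\pi\cap V^\perp}(q-q')$, $b = \mathbf{p}_V(q-q')$, $c = \mathbf{p}_{\pi^\perp}(q-q')$. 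Moving the base point by $b\in V$ costs nothing (it is an isometry of $\bS$); moving it by $a$ along $\pi\cap V^\perp$ and following the graph of $\bH_i$ changes the competitor point by $a$ plus a vertical correction of size $\le C\beta_\pi(\bS)|a|$; moving by $c \in \pi^\perp$ is absorbed directly as an error $|c|$. The triangle inequality then gives $\dist(z, q+\bS) \le |z - (p + \text{(adjusted vector)})| \le \dist(z,q'+\bS) + |c| + C\beta_\pi(\bS)|a|$, which is exactly \eqref{e:shifting-books}. One technical point to handle carefully: if the nearest point $p$ lies on the spine part (i.e. $\zeta \in V$, the ``binding'' of the book), the same slide works and is in fact easier; if $\zeta$ is on the relative interior of a page, the graph argument applies. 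I expect this case analysis to be the main fiddly part of (b).

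\textbf{Item (c).} This is the main obstacle. Here the hypothesis is the two-sided angle pinching $C_0^{-1}\beta_\pi(\bS) \le \beta_{\max}(\bS) \le \beta_\pi(\bS)$, and we must \emph{produce} a page $\bH$ for which a reverse-type inequality holds: the ``transverse displacement'' $|\mathbf{p}_\pi^\perp(q)| + \beta_\pi(\bS)|\mathbf{p}_{V^\perp}(q)|$ is controlled by $\dist(x-q, \bS)$ whenever $x$ is far enough from the spine, $|\mathbf{p}_{V^\perp}(x)| \ge 2|\mathbf{p}_{V^\perp}(q)|$. Note the statement as written has $\dist(x - q, \bS)$, i.e. evaluating at the \emph{shifted} argument; the idea is that since $x$ is well away from $V$, the nearest page $\bH$ to $x - q$ is a genuine page (not the binding), and along that page, which makes a definite angle with $\pi$ comparable to $\beta_\pi(\bS)$ by the pinching, the $\pi^\perp$-component of any point of $x - q + \bH$ near $x-q$ cannot be too small, forcing the displacement of $q$ to show up. Concretely: pick $\bH$ realizing $\dist(x-q,\bS) = \dist(x-q,\bH)$; let $\tau = \tau(\bH)$ be its $m$-plane; then $\dist(x-q,\bH) \ge \dist(x-q,\tau) = |\mathbf{p}_{\tau^\perp}(x-q)|$ \emph{if} the projection of $x-q$ onto $\tau$ lands in $\bH$ — which holds because $|\mathbf{p}_{V^\perp}(x-q)| \ge |\mathbf{p}_{V^\perp}(x)| - |\mathbf{p}_{V^\perp}(q)| \ge |\mathbf{p}_{V^\perp}(q)| \ge 0$ keeps us on the correct side. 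Now $\mathbf{p}_{\tau^\perp}(x - q) = \mathbf{p}_{\tau^\perp}(x) - \mathbf{p}_{\tau^\perp}(q)$, and using $|\mathbf{p}_{\tau^\perp}(x)|$ is small if $x$ is close to $\bH$ — hmm, this needs care since $x$ need not be close to $\bH$. Rather, the clean route: decompose $\mathbf{p}_{\tau^\perp} = \mathbf{p}_{\tau^\perp}\mathbf{p}_\pi + \mathbf{p}_{\tau^\perp}\mathbf{p}_{\pi^\perp}$; on $V^\perp\cap\pi$ the operator $\mathbf{p}_{\tau^\perp}$ has norm $\ge c\,\beta_\pi(\bS)$ by the pinching (the page $\bH$ is tilted from $\pi$ by an angle $\gtrsim \beta_{\max}(\bS) \gtrsim \beta_\pi(\bS)$ in the plane-realizing direction, but for an arbitrary page one needs $\beta_{\max}$; this is where I will likely have to be more clever — perhaps choosing $\bH$ to be the page realizing $\beta_\pi(\bS)$ and separately arguing it is the nearest page to $x-q$, or splitting into the case where $q$'s displacement is mostly transverse vs. mostly in-plane). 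I anticipate that getting the constant in \eqref{e:shifting-books-2} right will require carefully distinguishing which page is closest and invoking the two-sided bound to convert ``$\beta_\pi(\bS)$ times an in-plane displacement'' into a genuine distance; I would structure it as: (i) reduce to $\mathbf{p}_V(q) = 0$ by $V$-translation invariance; (ii) handle $|\mathbf{p}_\pi^\perp(q)|$ by the Lipschitz/height argument above; (iii) handle $\beta_\pi(\bS)|\mathbf{p}_{V^\perp}(q)|$ by observing that shifting the book by $\mathbf{p}_{V^\perp}(q)$ within $\pi$ moves some page by a vector with $\pi^\perp$-component $\gtrsim \beta_{\max}(\bS)|\mathbf{p}_{V^\perp}(q)| \gtrsim \beta_\pi(\bS)|\mathbf{p}_{V^\perp}(q)|$, which must be seen by $\dist(x-q,\bS)$ because $x$ lies near the \emph{un}shifted book (or rather, near $\bS$, while $x - q$ is being measured against $\bS$ too — so the displacement $q$ is precisely what creates the gap). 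This last step is the crux and I expect it to consume most of the write-up.
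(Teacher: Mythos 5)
Your item (a) is fine and is essentially the paper's argument; your observation $\dist(z,q+\bS) \le |z - (q+\mathbf{p}_V(z-q))| \le |z-q|$, valid since $q + V \subset q + \bS$, gives the needed bound $|p-q|\le 2|z-q|$ on the minimizer, just as the paper's remark that the minimizer $y$ must satisfy $|y|\le 2|z|$.

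For item (b) your decomposition of $q-q'$ into its $V$, $\pi\cap V^\perp$, and $\pi^\perp$ components is the same as the paper's, and the $V$-invariance and $\pi^\perp$-Lipschitz steps are straightforward. The gap is in the $\pi\cap V^\perp$ step. Your case split --- nearest point $\zeta$ on the spine versus $\zeta$ in the relative interior of a page --- does not isolate the difficulty: even when $\zeta$ is interior to a page $\bH_i$, ``following the graph of $\bH_i$'' fails precisely when $\mathbf{p}_{\pi\cap V^\perp}(\zeta)$ lies between $0$ and the shift $a$, because then the shifted base point crosses $V$ and lands on the opposite half $\pi^\mp$, outside the domain over which $\bH_i$ is a graph. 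The paper handles this sub-case by invoking the surjectivity hypothesis $\mathbf{p}_\pi(\bS)=\pi$ (so that every line through $\pi$ in the vertical direction meets $\bS$) to switch to a different page, and then observing that in this regime both $|\mathbf{p}_{\pi^\perp}(\zeta)|$ and the height of the replacement point are $\lesssim \beta_\pi(\bS)\,|a|$, so the triangle inequality still closes. Without identifying this page-crossing case, inequality~\eqref{e:shifting-books} is not established.

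For item (c) you do not have a proof, and the route you sketch has a structural flaw: you propose to choose $\bH$ realizing $\dist(x-q,\bS)$, which would make $\bH$ depend on $x$, whereas the statement requires a single page $\bH$, determined by $q$ alone, with the inequality holding for all $x\in\bH$ far from $V$ (the role of $\bH$ is to specify where $x$ is allowed to live, which also dissolves your worry that ``$x$ need not be close to $\bH$''). The paper's argument hinges on a geometric fact you do not identify: if $x$ lies on a page $\bH_i$ with supporting $m$-plane $\pi_i$ and $|\mathbf{p}_{V^\perp}(x)|\ge 2|\mathbf{p}_{V^\perp}(q)|$, then $\dist(x,q+\bS)=\dist(x,q+\bH_i)=|\mathbf{p}_{\pi_i^\perp}(q)|$; i.e.\ from the vantage of a point far out on $\bH_i$, translating the book by $q$ is exactly a normal translation of $\pi_i$ by $\mathbf{p}_{\pi_i^\perp}(q)$, and no other shifted page comes closer. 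One then selects the page $\bH_i$ maximizing $|\mathbf{p}_{\pi_i^\perp}(q)|$ and reduces --- by quotienting out $V$ and the directions orthogonal to the $(m+1)$-plane carrying $\bS$ and $\pi$ --- to a two-dimensional comparison between unit vectors and $2Q$ lines, where the pinching $C_0^{-1}\beta_\pi(\bS)\le\beta_{\max}(\bS)\le\beta_\pi(\bS)$ forces the maximal $|\mathbf{p}_{\pi_i^\perp}(q)|$ to dominate both $|\mathbf{p}_\pi^\perp(q)|$ and $\beta_\pi(\bS)|\mathbf{p}_{V^\perp}(q)|$. This is indeed the crux you correctly anticipated, but it cannot be reached from your set-up.
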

\begin{proof} {\bf Proof of (a).} We can assume without loss of generality that $q=0$. Fix $z$ and let $y$ be a point in $\bS$ such that $\dist (z, \bS) = |z-y|$. Observe that certainly $|y|\leq 2 |z|$, otherwise $0\in \bS$ would be closer to $z$ then $y$. On the other hand $O (y)\in O (\bS)$ and thus we can estimate
\begin{align*}
\dist (z, O (\bS)) &\leq |z- O (y)|\leq |z-y| + |y- O(y)|
\leq \dist (z, \bS) + |O-{\rm Id}| |y|\\
&\leq \dist (z, \bS) + 2 |O-{\rm Id}| |z|\, .
\end{align*}

\medskip

{\bf Proof of (b).} Observe that we can write 
\[
q = q' + \underbrace{\mathbf{p}_V (q-q')}_{q_1} + \underbrace{\mathbf{p}_{\pi\cap V^\perp} (q-q')}_{q_2} + \underbrace{\mathbf{p}_{\pi^\perp} (q-q')}_{q_3}\, .
\]
Evidently, it suffices to prove the three claims
\begin{align*}
\dist (z, q'+q_1+\bS) &= \dist (z, q'+\bS)\\
\dist (z, q'+q_1+q_2+\bS) &\leq \dist (z, q'+q_1+\bS) + C \beta_\pi (\bS) |q_2|\\
\dist (z, q'+q_1+q_2+q_3+\bS) &\leq \dist (z, q'+q_1+q_2+\bS) + |q_3|\, .
\end{align*}
This amounts to show the inequality \eqref{e:shifting-books} in three particular cases in which $q-q'\in V$, $q-q'\in \pi\cap V^\perp$, and $q-q'\in \pi^\perp$. In all of these cases we can assume, without loss of generality, that $q'=0$. The third case is the trivial estimate, while the first one is obvious because $q+\bS=\bS$ when $q\in V$. We are thus left with the second case. 

Fix thus $z\in \mathbb R^{m+n}$ and $q\in \pi\cap V^\perp$. Denote by $\tau$ the $(m+1)$-dimensional plane which contains $\bS$ and $\pi$ and observe that it contains $q+\bS$ as well. Without loss of generality we can assume therefore that $z\in \tau$. The assumption $\mathbf{p}_{\pi} (\bS) = \pi$ implies the following geometric property:
\begin{itemize}
\item[(P)] for every $\xi \in \pi$, the line $\xi+ \tau\cap \pi^\perp$ intersects $\bS$.
\end{itemize}
Consider now $y$ such that $\dist (z, \bS) = |z-y|$ and let $\bH$ be the page containing it. We further set $y' := \mathbf{p}_{\pi \cap V^\perp} (y)$. Since $\pi \cap V^\perp$ is $1$-dimensional, we can distinguish two cases:
\begin{itemize}
\item[(a)] $y'$ is not contained in the segment $[0,q]$; in this case $\mathbf{p}_\pi (y) + \pi^\perp\cap \tau$ intersects $q+\bH$ in some point $y_q$ and $|y-y_q|\leq \tan \beta |q|$, where $\beta$ denotes the angle between $\bH$ and $\pi$. Since $\beta \leq C \beta_\pi (\bS)$ the desired inequality follows. 
\item[(b)] $y'$ is contained in the segment $[0,q]$. In this case $|\mathbf{p}_{\pi^\perp} (y)|\leq |q| \tan \beta$. The geometric property (P) guarantees that $y+\tau\cap \pi^\perp$ intersects $q+\bS$ at some point $y_q$. If $\bH'$ is the page of $\bS$ such that $y_q\in q+\bH'$, this time we get $|\mathbf{p}_{\pi^\perp} (y_q)|\leq \tan \beta' |q|$. Since $|y-y_q|\leq |\mathbf{p}_{\pi^\perp} (y)| + |\mathbf{p}_{\pi^\perp} (y_q)|$ the desired inequality follows again.
\end{itemize}

\medskip

{\bf Proof of (c).} Let $\bH_i$ be the pages of $\bS$ and denote by $\pi_i$ the $m$-dimensional plane which contains $\bH_i$. We will show below the following fact
\begin{itemize}
\item[(F)] For every $x\in \bH_i$ with $|\mathbf{p}_{V^\perp} (x)|\geq 2 |\mathbf{p}_{V^\perp} (q)|$, we have $\dist (x, q+\bS) = \dist (x, q+\bH_i) = |\mathbf{p}_{\pi_i^\perp} (q)|$.
\end{itemize}
From (F) we conclude as follows. We select a page $\bH_i$ with the property that $|\mathbf{p}_{\pi_i^\perp} (q)|$ is maximal. We then have to show that 
\begin{align}
|\mathbf{p}_{\pi^\perp} (q)|&\leq C |\mathbf{p}_{\pi_i^\perp} (q)|\\
\beta_{max} (\bS) |\mathbf{p}_{V^\perp} (q)| &\leq C |\mathbf{p}_{\pi_i^\perp} (q)|\, .
\end{align}
Consider the plane $\tau$ which contains $\pi$ and $\bS$ and observe that 
\begin{align*}
|\mathbf{p}_{V^\perp} (q)|^2 &= |\mathbf{p}_{\tau\cap V^\perp} (q)|^2 + |\mathbf{p}_{\tau^\perp} (q)|^2\\
|\mathbf{p}_{\pi_i^\perp} (q)|^2 &= |\mathbf{p}_{\pi_i^\perp} (\mathbf{p}_{\tau\cap V^\perp} (q))|^2 + |\mathbf{p}_{\tau^\perp} (q)|^2\\
|\mathbf{p}_{\pi^\perp} (q)|^2&= |\mathbf{p}_{\pi^\perp} (\mathbf{p}_{V^\perp \cap \tau} (q))|^2 + |\mathbf{p}_{\tau^\perp} (q)|^2\, .
\end{align*}
Since moreover $\bS$ and $q+\bS$ are invariant under translations along $V$, we can just reduce to the situation in which $q\in V^\perp\cap \tau$. Moreover, by dilation, we can assume it has unit length. Note therefore that we are reduced to prove the following claim. We have $2Q$ lines in $\mathbb R^2$ with the property that the maximal angle between them is $\beta_{\max} (\bS)$ and the maximal angle between one of them and the horizontal axis is $\beta_{\pi} (\bS)$. $\xi = (\cos \theta, \sin \theta)$ is a unit vector in $\mathbb R^2$ and $\ell$ is the one among the $2Q$ lines which is further away from $\xi$, while we wish to show that 
\begin{align}
|\sin \theta|&\leq C \dist (\xi, \ell)\\
\beta_{max} (\bS) &\leq C \dist (\xi, \ell)\, .
\end{align}
Pick the two lines $\ell_1$ and $\ell_2$ which form the largest angles $\beta_{max} (\bS)$ and let $\ell_1$ be the one further away from $\xi$ of the two. The angle between $\xi$ and $\ell_1$ is thus at least half of $\beta_{max} (\bS)$, but it is also smaller than the angle between $\xi$ and $\ell$ and so the second inequality is trivial. For the other inequality we notice that $\theta$ is the angle between $\xi$ and the horizontal axis, which is bounded by the sum of the angle between $\xi$ and $\ell_1$ (controlled by $\beta_{max} (\bS)$ and so by $\dist (\xi, \ell)$) and the angle between $\ell_1$ and the horizontal line, which is bounded by $\beta_{\pi} (\bS)$. Since the latter is also bounded by $\beta_{max} (\bS)$, which in turn is bounded by $C\dist (\xi, \ell)$, we have proved our claim. 

We now come to the proof of (F). Let $\pi_i$ be the plane containing $\bH_i$ and observe that, since $|\mathbf{p}_{V^\perp} (x)|\geq 2 |\mathbf{p}_{V^\perp} (q)|$, we easily see that $\mathbf{p}_{\pi_i} (x-q)=x- \mathbf{p}_{\pi_i} (q)$ belongs to $\bH_i$. Thus $\dist (x, q+\bH_i) = |\mathbf{p}_{\pi_i^\perp} (q)|$. We hence just need to show that 
\[
\dist (x, q+ \bH_j) 
\geq |\mathbf{p}_{\pi_i^\perp} (q)| = \dist (x, q+ \bH_i)
\]
for every other page $\bH_j$. We will in fact show that $\dist (x, q+ \pi_j)\geq \dist (x, q+ \pi_i)$ (which is enough because $\dist (x, q+ \bH_j) \geq \dist (x, q+\pi_j)$). Summarizing, we are left with the task of proving 
\[
\dist (x-q, \pi_i) \leq \dist (x-q, \pi_j)
\]
for every $x\in \pi_i$ such that $|\mathbf{p}_{V^\perp} (x)|\geq 2 |\mathbf{p}_{V^\perp} (q)|$. 

Arguing as above, we can ignore the components of $q$ and $x$ along $V$ and along $\tau^\perp$. 
We thus reduce the claim to a statement about pairs of lines. More precisely, given two lines $\ell, \ell'\subset \mathbb R^2$, a point $x\in \ell$ and a point $q$ with $2 |q|\leq |x|$, we wish to show that $\dist (x-q, \ell)\leq \dist (x-q, \ell')$. By scaling we can assume $|q|=1$. We thus fix coordinates on the plane in such a way that $\ell = \{(s, 0): s\in \mathbb R\}$, $x= (\sigma, 0)$ with $\sigma\geq 2$, $q= (\cos \alpha, \sin \alpha)$, and $(\cos \beta, \sin \beta)$ is a unit vector ortogonal to $\ell'$. The claim then amounts to the inequality
\[
\sin^2 \alpha\leq (\sigma-\cos \alpha)^2 \cos^2 \beta + \sin^2 \alpha \sin^2 \beta\, .
\]
Notice however that, since $\sigma\geq 2$, $\sin^2 \alpha\leq 1 \leq (2-\cos \alpha)^2$ and the desired inequality follows easily.
\end{proof}

\subsection{Shifted \texorpdfstring{$Q$}{Q}-points} Consider now any point $q\in \bB_{1/16}$ with $\Theta_T (q)\geq Q$. For each such $q$ we fix a rotation $O_q$ of the ambient space, with the properties that 
\begin{itemize}
\item[(i)] $O_q (T_0 \Sigma) = T_q \Sigma$;
\item[(ii)] $|O_q-{\rm Id}|$ is minimal among all rotations which satisfy condition (i).
\end{itemize}
Clearly 
\begin{equation}\label{e:rotation_simple_est}
|O_q-{\rm Id}|\leq C_0 |q| \bA\, ,
\end{equation}
for some geometric constant $C_0$. The point of this Section is to show that, provided $\bar \eta$ and $\bar\varepsilon$ are small enough, we achieve an estimate as in \eqref{eq:est_second} with $q$ replacing the origin and $O_q (\tilde{\bS})$ replacing $\bS$. 

\begin{proposition}\label{prop:shifting}
Let $T$, $\Sigma$, $\pi_0$, and $\bS$ be as in Assumption \ref{ass:everywhere} and \ref{ass:decay plane}, with parameters $\bar \eta$ and $\bar\varepsilon$ small enough to apply Theorem \ref{thm:graph v2} and define the book $\tilde{\bS}$. Then there are $\eta_{11}$ and $\varepsilon_{11}$ such that, if $\bar\eta < \eta_{11}$ and $\bar\varepsilon < \varepsilon_{11}$, then Corollary \ref{c:second-Hardt-Simon} applies with $T_{q,1/3}$ in place of $T$, $O_q (\tilde{\bS})$ in place of $\bS$, and $O_q (\pi_0)$ in place of $\pi_0$, whenever $q\in \bB_{1/16}$ satsfies $\Theta_T (q) \geq Q$. In particular, for any such point we gain the estimate
\begin{equation}\label{e:Hardt-Simon-3}
\int_{\bB_{\sfrac{1}{9}} (q)} \frac{\dist(z-q, O_q (\tilde{\bS}))^2}{\abs{z-q}^{m+\frac74}} \,d\|T\|
	\leq  C(\bE (T, O_q (\tilde{\bS}), q, \sfrac{1}{3}) + \bA^2)\, .
\end{equation}
\end{proposition}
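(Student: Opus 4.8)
The strategy is to verify that all the structural hypotheses needed to invoke Corollary \ref{c:second-Hardt-Simon} remain valid after the recentering-and-rescaling map $\lambda_{q,1/3}$, once we replace $\bS$, $\pi_0$, and $T_0\Sigma$ with their images under $O_q$. Concretely, set $T' := T_{q,1/3} = (\lambda_{q,1/3})_\sharp T$, $\Sigma' := \lambda_{q,1/3}(\Sigma)$, $\pi_0' := O_q(\pi_0)$, and $\bS' := O_q(\bS)$ (with spine $V' := O_q(V)$). Since $q \in \bB_{1/16}$ and the rescaling has factor $3$, the ball $\bB_1(0)$ for $T'$ is contained in $\bB_{1/3}(q)\subset\bB_1(0)$ for $T$, so the density, boundary and mass bounds of Assumption \ref{ass:everywhere} are inherited by $T'$ (using the monotonicity formula to upgrade the mass bound from $(Q+\frac34)\omega_m$ at the relevant scale, exactly as in the proof of Theorem \ref{t:main-with-decay}); note $\Theta_{T'}(0) = \Theta_T(q) \geq Q$. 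The second fundamental form of $\Sigma'$ has $L^\infty$ norm $\bA' = \frac13 \bA \leq \bA$, so in particular $\bA' \leq 1$.

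The core of the argument is then the book-keeping on the excesses. First, by Lemma \ref{l:geometry-cones}(a) together with \eqref{e:rotation_simple_est}, for every $z$ in the relevant ball one has $\dist(z, q + O_q(\bS)) \leq \dist(z, q+\bS) + 2C_0|q|\bA\,|z-q| \leq \dist(z,q+\bS) + C\bA$; since $\bA^2 \leq \bar\eps\,\mathbf{E}(T,\pi_0,0,1)$ (or the alternative smallness) this shows, after rescaling, that $\mathbf{E}(T',\pi_0',0,1)$ and $\mathbf{E}(T',\bS',0,1)$ are comparable, up to error $C\bA^2$, to $3^{m+2}$ times the corresponding excesses of $T$ in $\bB_{1/3}(q)$; these are in turn bounded by a geometric constant times $\mathbf{E}(T,\bS,0,1)+\bA^2$ using \eqref{e:almost-monotonicity-book} from Lemma \ref{l:compare-books} (iterated a bounded number of times to pass from scale $1$ to scale $1/3$) and the angle bound \eqref{eq:angle_bound}. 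The smallness requirements $\bar\eps < \eps_9$, $\bar\eta < \eta_9$ for $T$, with $\bar\eps, \bar\eta$ chosen small enough, therefore force $T'$ to satisfy Assumptions \ref{ass:everywhere} and \ref{ass:decay plane} (relative to $\Sigma'$, $\bS'$, $\pi_0'$) with parameters $\bar\eps_{10}, \bar\eta_{10}$ as small as we like — here one must also check Assumption \ref{ass:decay plane -1}(i)(ii)(iii): (i) follows from $\bA'^2 = \frac19\bA^2$ and the excess comparison; (ii) and (iii) are stability statements for the one-sided/double-sided excess and the planar excess under recentering within $\bB_{1/16}$, which follow from the propagation estimate \eqref{eq.other-propagation} of Lemma \ref{l:propagation} (with $y=0$ there being adapted to a general base point via the same compactness argument, or invoked directly since $q$ need not be on the spine — this is the subtle point, see below). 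Finally one checks that the new book $O_q(\tilde\bS)$ produced by Theorem \ref{thm:graph v2} applied to $T'$ coincides (up to discarding further pages, which only improves the inequality) with the image under $O_q$ of the book $\tilde\bS$ associated to $T$; alternatively one simply re-runs the construction from scratch for $T'$ and calls the output $O_q(\tilde{\bS})$, absorbing the discrepancy in the constant. Once Assumption \ref{ass:decay plane} holds for $T'$, Corollary \ref{c:second-Hardt-Simon} yields
\[
\int_{\bB_{1/3}} \frac{\dist(z, O_q(\tilde{\bS}))^2}{|z|^{m+\frac74}}\, d\|T'\| \leq C\,(\mathbf{E}(T', O_q(\tilde{\bS}), 0, 1) + \bA'^2)\,,
\]
and rescaling back — using that $\bB_{1/9}(q) = \lambda_{q,1/3}^{-1}(\bB_{1/3}(0))$ and that the weighted $L^2$ quantity scales homogeneously — gives \eqref{e:Hardt-Simon-3}, with $\mathbf{E}(T, O_q(\tilde\bS), q, \sfrac13) + \bA^2$ on the right-hand side after reinserting the factor $3^{m+2}$ into the constant.

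\textbf{Main obstacle.} The genuinely delicate point is that $q$ is an arbitrary $Q$-point in $\bB_{1/16}$, \emph{not} necessarily lying on (or even near) the spine $V$ of $\bS$; thus the recentered book $q + O_q(\bS)$ is a translate of $O_q(\bS)$ whose spine $q + V'$ is shifted off the origin, and a priori $0$ need not be close to this shifted spine. Verifying Assumption \ref{ass:decay plane} for $T'$ therefore requires controlling how the one-sided conical excess and the planar excess behave under this off-spine translation. This is precisely where Lemma \ref{l:geometry-cones}(b)–(c) enter: part (b) bounds $\dist(z, q+\bS)$ in terms of $\dist(z, \bS)$ plus $|\mathbf{p}_{\pi_0^\perp}(q)| + C\beta_{\pi_0}(\bS)|\mathbf{p}_{\pi_0 \cap V^\perp}(q)|$, and by \eqref{e:Hardt-Simon} itself — no, rather by the a priori height bound Lemma \ref{Linfty-L2} and by the no-holes/propagation structure, one has $|\mathbf{p}_{\pi_0}^\perp(q)|^2 \leq C(\mathbf{E}+\bA^2)$ and (this is the content one must extract, as in the statement of Theorem \ref{t:Hardt-Simon-main}) $\beta_{\pi_0}(\bS)^2|\mathbf{p}_{V^\perp}(q)|^2 \leq C(\mathbf{E}+\bA^2)$; one subtlety is that the latter bound is part of the conclusion of Theorem \ref{t:Hardt-Simon-main}, so in the proof of Proposition \ref{prop:shifting} one can only use the weaker versions available at this stage (e.g. from Corollary \ref{c:second-Hardt-Simon} applied at the origin together with the Simon-type iteration being set up), and care is needed to avoid circularity — in practice one establishes the required shifted excess bounds using only Corollary \ref{c:second-Hardt-Simon} at the origin, the geometry Lemma \ref{l:geometry-cones}, and the fact that $|\mathbf{p}_{V^\perp}(q)| \leq C(\text{spine-distance bound})$ which follows from the Hardt–Simon integral at the origin via a standard barycenter/averaging argument over small balls. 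Once this shifted-excess control is in hand, the remaining verifications are the same routine continuity-under-rescaling checks sketched above.
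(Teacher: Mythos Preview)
Your overall structure is right: one must verify the hypotheses of Corollary \ref{c:second-Hardt-Simon} (equivalently, Assumption \ref{ass:decay plane -1} with parameters $\varepsilon_{10},\eta_{10}$) for the rescaled data $T'=T_{q,1/3}$, $\bS'=O_q(\tilde\bS)$, $\pi'=O_q(\pi_0)$, and Lemma \ref{l:geometry-cones} is indeed the tool that controls the shift. But there is a genuine gap in how you handle the scale of the smallness.

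The conditions to check are (i) $\bA'^2 \leq \varepsilon_{10}\,\bE(T',\pi',0,1)$, (ii) $\mathbb E(T',\bS',0,1)\leq \eta_{10}\,\bE(T',\pi',0,1)$, and (iii) $\bar\bE(T',0,1)\geq (1-\eta_{10})\,\bE(T',\pi',0,1)$. All three are smallness \emph{relative to the planar excess} $\bE(T',\pi',0,1)$, which is of size $\bE(T,\pi_0,0,1)$ --- not relative to the conical excess $\bE(T,\bS,0,1)$. Your proposal repeatedly bounds quantities by $C(\bE(T,\bS,0,1)+\bA^2)$; this is simultaneously too ambitious (e.g.\ your claim $|\mathbf p_{\pi_0}^\perp(q)|^2\leq C(\bE(T,\bS,0,1)+\bA^2)$ is not available --- Lemma \ref{Linfty-L2} only gives $C(\bE(T,\pi_0,0,1)+\bA^2)$, which is much larger) and beside the point, since a fixed constant multiple of $\bE(T,\pi_0,0,1)$ is not the same as an arbitrarily small multiple. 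This is also the source of the circularity you flag: you are trying to extract the bound $\beta_{\pi_0}(\bS)^2|\mathbf p_{V^\perp}(q)|^2\leq C(\bE+\bA^2)$, which is part of the \emph{conclusion} of Theorem \ref{t:Hardt-Simon-main}, and you never find a non-circular substitute.

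The paper avoids this entirely by working at the scale of $\bE(T,\pi_0,0,1)$ and using \emph{compactness} rather than direct estimates. Its Step 1 proves, by a blow-up/contradiction argument along the lines of Lemma \ref{lem.propagation1}, that for any prescribed $\rho>0$ one has $|\mathbf p_{\pi_0}^\perp(q)|^2\leq \rho^2\,\bE(T,\pi_0,0,1)$ once $\bar\eta,\bar\varepsilon$ are small enough; simultaneously, $|q-\mathbf p_V(q)|\to 0$ by Lemma \ref{l:propagation}. Steps 2 and 3 then argue, again by contradiction with the same blow-up limit $v$, that $\bE(T',\pi',0,1)/\bE(T,\pi_0,0,1)\to 1$ and that $\pi'$ remains almost optimal (using that $\etab\circ v\equiv 0$). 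Finally, Step 4 plugs the Step 1 bounds into Lemma \ref{l:geometry-cones}(a)--(b) to get $\bE(T',\bS',0,1)\leq C(\bE(T,\bS,0,1)+\bA^2+|\mathbf p_{\pi_0^\perp}(q)|^2+\beta_{\pi_0}(\bS)^2|q-\mathbf p_V(q)|^2)$, each summand now being $o(\bE(T,\pi_0,0,1))$, which delivers (ii). The missing ingredient in your sketch is precisely this compactness-based smallness of the height of $q$ and of the off-spine displacement, relative to the \emph{planar} excess; without it the verification of (i)--(iii) cannot close.
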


\begin{proof} In order to show that Corollary \ref{c:second-Hardt-Simon} applies with $T' = T_{q,1/3}$ in place of $T$, $\bS' = O_q (\tilde{\bS})$ in place of $\bS$ and $\pi'=O_q (\pi_0)$ in place of $\pi_0$ we need to show the following conditions:
\begin{itemize}
    \item[(i)] $9^{-1} \bA^2 \leq \eps_{10}\, \bE (T', \pi', 0, 1)\leq \varepsilon_{10}^2$; 
    \item[(ii)] $\mathbb{E} (T', \bS', 0,1) \leq \eta_{10} \bar\bE (T', 0, 1)$;
    \item[(iii)] $\bar \bE (T', 0, 1) \geq (1-\eta_{10}) \bE (T', 0, 1)$.
\end{itemize}
This will be shown assuming that 
\begin{itemize}
    \item[(a)] $\bA^2 \leq \bar\varepsilon\, \bE (T, \pi_0, 0, 1)\leq \bar\varepsilon^2$; 
    \item[(b)] $\mathbb{E} (T, \bS, 0,1) \leq \bar\eta \bE (T, \pi_0, 0, 1)$;
    \item[(c)] $\bar \bE (T, 0, 1) \geq (1-\bar \eta) \bE (T, \pi_0, 0, 1)$;
\end{itemize}
where $\bar\varepsilon$ and $\bar\eta$ are two much smaller parameters. 

\medskip

{\bf Step 1. Height of $q$.} We first prove that, for every fixed positive $\rho$, no matter how small, 
\begin{equation}\label{e:small-height}
|\mathbf{p}_{\pi_0}^\perp (q)|^2 \leq \rho^2 \bE (T, \pi_0, 0, 1)
\end{equation}
provided $\bar\eta$ and $\bar\varepsilon$ are chosen small enough. 

Assume by contradiction this is not the case. Then there are sequences $T_k$, $\bS_k$, $\Sigma_k$ satisfying Assumption \ref{ass:everywhere} and (a), (b), and (c) above with vanishing $\bar \eta = \eta_k$ and $\bar\varepsilon = \varepsilon_k$, and a sequence of points $q_k\in \bB_{\sfrac{1}{16}}$ with $\Theta_{T_k} (q_k) \geq Q$ such that 
\begin{equation}\label{e:small-height-2}
|\mathbf{p}_{\pi_0}^\perp (q)|^2 \geq \rho^2 \bE (T_k, \pi_0, 0, 1)\, .
\end{equation}
By applying a rotation, we can assume that all $\bS_k$ have the same spine $V$.
Let now $y_k = \mathbf{p}_V (q_k)$ and recall that, by Lemma \ref{l:propagation} $|q_k-y_k|\to 0$. Up to subsequences we can also assume that $y_k \to y$. We argue as in the Proof of Lemma \ref{lem.propagation1} and in particular introduce the maps $\bar v_k$ and study their limit $v$, which is a Dir-minimizing map, and is a strong $L^2$ limit. By Proposition \ref{p:L2Alm} and because $v$ is $1$-homogeneous and invariant by translation along the spine $V$, we see that, for every fixed $r$, 
\begin{equation}\label{e:1-homogeneous}
\lim_{k\to \infty} \frac{\bE (T_k, \pi_0, y, r)}{\bE (T_k, \pi_0, 0, 1)} = 1\, .
\end{equation}
In particular we also see that 
\begin{equation}
\lim_{k\to \infty} \frac{\bE (T_k, \pi_0, y_k, r)}{\bE (T_k, \pi_0, 0, 1)} = 1\, .
\end{equation}
So, for $k$ large enough, we have 
\[
\bE (T_k, \pi_0, y_k, r) \leq 2 \bE (T_k, \pi_0, 0, 1)\, .
\]
Since $q_k$ converges towards $y_k$, we can, for a sufficiently large $k$, apply Lemma \ref{Linfty-L2} to conclude 
\[
|\mathbf{p}_{\pi_0^\perp} (q_k)|
\leq C r \bE (T_k, \pi_0, 0, 1)^{\sfrac{1}{2}}\, .
\]
The constant $C$ is independent of $r$. Therefore, by choosing $r$ smaller than $\frac{\rho}{C}$ we contradict \eqref{e:small-height-2}.

\medskip

{\bf Step 2.} We now wish to prove (i). We argue again by contradiction. This time we have, however, either
\begin{equation}\label{e:annoying-1}
\bE (T_k, O_{q_k} (\pi_0), q_k, \sfrac{1}{3})\geq \varepsilon_{10} \, ,
\end{equation}
or 
\begin{equation}\label{e:annoying-2}
\bA_k^2 \geq 9 \varepsilon_{10} \bE (T_k, O_{q_k} (\pi_0), q_k, \sfrac{1}{3}) \, .
\end{equation}
Observe that
\begin{align*}
\bE (T_k, O_{q_k} (\pi_0), q_k, \sfrac{1}{3})
& \leq \left(1+ 3 |q_k - y_k|\right)^{m+2} \bE (T_k, \pi_0, y_k, \sfrac{1}{3}+ |q_k -y_k|)\\
&\qquad\quad +
C |\mathbf{p}_{\pi_0}^\perp (q_k)|^2 + C \bA_k^2\, ,
\end{align*}
but also 
\begin{align*}
\bE (T_k, O_{q_k} (\pi_0), q_k, \sfrac{1}{3})
& \geq \left(1- 3 |q_k - y_k|\right)^{m+2} \bE (T_k, \pi_0, y_k, \sfrac{1}{3}- |q_k -y_k|)\\
&\qquad\quad-
C |\mathbf{p}_{\pi_k}^\perp (q_k)|^2 - C \bA_k^2\, ,
\end{align*}
Recalling \eqref{e:1-homogeneous} we conclude that 
\begin{equation}
\lim_{k\to\infty} \frac{\bE (T_k, O_{q_k} (\pi_0), q_k, \sfrac{1}{3})}{\bE (T_k, \pi_0, 0, 1)} = 1\, .
\end{equation}
Since however $\bE (T_k, \pi_0, 0, 1)$ and $\bE (T_k, \pi_0, 0, 1)^{-1} \bA_k^2$ are both infinitesimal, clearly we contradict either \eqref{e:annoying-1} or \eqref{e:annoying-2}.

\medskip

{\bf Step 3.} We next prove (iii). Assume by contradiction that there is a sequence of planes $\pi_k \subset T_q \Sigma_k$ such that 
\[
\bE (T_k, \pi_k, q_k, \sfrac{1}{3}) 
\leq (1-\eta_{10}) \bE (T_k, O_q (\pi_k), q_k, \sfrac{1}{3})\, . 
\]
Using again the estimate $|O_{q_k} - {\rm Id}|\leq C \bA_k$ and the estimates of the previous steps, we conclude that 
\[
\bE (T_k, \pi_k, y, \sfrac{1}{3} - |q_k -y|)\leq 
(1-\eta_{10}/2) \bE (T_k, \pi_0, y, \sfrac{1}{3}+ |q_k-y|)\, .
\]
Observe also that $|\pi_k - \pi_0|\leq C \bE (T_k, \pi_0, 0, 1)^{\sfrac{1}{2}} =: \bar \bE_k^{\sfrac{1}{2}}$. Consider the linear maps $l_k: \pi_0 \to \pi_0^{\perp_0}$ whose graph give $\pi_k$ and let $l$ be their limit, up to subsequences.

If $v$ is the limiting function found in the proof of Lemma \ref{lem.propagation1}, observe that 
\[
\lim_{k\to \infty} \bar\bE_k^{-1} \bE (T_k, \pi_0, y, \sfrac{1}{3} + |q_k -y|)  = \frac{1}{3^{m+2}}
\int_{B_{1/3} (y)} |v|^2\, .
\]
On the other hand we also get 
\[
\lim_{k\to\infty} \bar\bE_k^{-1} \bE (T_k, \pi_k, y, \sfrac{1}{3} - |q_k -y|) = 
\frac{1}{3^{m+2}} \int_{B_{1/3}} \sum_i |v_i - l|^2\, . 
\]
In particular we would conclude that there is a linear function $l$ such that 
\[
\int_{B_{1/3}} \sum_i |v_i-l|^2 < \int_{B_{1/3}} |v|^2\, .
\]
Recall however that, because $\bE (T_k, \pi_0, 0, 1) \leq (1-\eta_k)^{-1} \bar \bE (T_k, 0, 1)$, $\etab\circ v = Q^{-1}\,\sum_i v_i$ vanishes identically. In particular 
\[
\sum_i |v_i - l|^2 = |v|^2 + Q |l|^2\, ,
\]
which in turn shows 
\[
\int_{B_{1/3}} \sum_i |v_i-l|^2 \geq \int_{B_{1/3}} |v|^2\, .
\]

\medskip

{\bf Step 4.} It remains to show (ii). By Lemma \ref{l:geometry-cones} we have
\begin{align*}
\dist (z, O_q(\tilde{\bS})+q)&
\leq C \bA |q| |z-q| + \dist (z, \tilde{\bS}+q)\\
&\leq C \bA |q| |z-q| + \dist(z, \tilde{\bS}) + 
|\mathbf{p}_{\pi_0^\perp}  (q)| + C \beta_{\pi_0} (\bS) |q- \mathbf{p}_V (q)|\, .
\end{align*}
In particular, we can estimate
\begin{align}
\bE (T', \bS', 0, 1)
&\leq C (\bE (T, \tilde{\bS}, 0, \sfrac{1}{2}) + 
\bA^2 + |\mathbf{p}_{\pi_0^\perp}  (q)|^2 +
C \beta_{\pi_0} (\bS) |q- \mathbf{p}_V (q)|^2)\nonumber\\
&\leq C (\bE (T, \bS, 0, 1) + 
\bA^2 + |\mathbf{p}_{\pi_0^\perp}  (q)|^2 +
C \beta_{\pi_0} (\bS) |q- \mathbf{p}_V (q)|^2)\, ,
\end{align}
where in the last line we have used \eqref{eq:tildeexcesvsexcess}. 

From the previous steps it follows that each of the summands on the right hand side can be made arbitrarily small with respect to $\bE (T, \pi_0, 0, 1)$, provided $\bar\eta$ and $\bar\varepsilon$ are taken small enough. Since in turn $\bE (T, \pi_0, 0, 1)$ can be bounded by $2 \bE (T', \pi', 0, 1)$ by possibly chosing the two parameters even smaller, we conclude the proof. 
\end{proof}

\subsection{Proof of Theorem \texorpdfstring{\ref{t:Hardt-Simon-main}}{Hardt-Simon}} By Proposition \ref{prop:shifting} we have
\[
\int_{\bB_{\sfrac{1}{9}} (q_0)} \frac{\dist(q, O_{q_0} (\tilde{\bS}) +q_0)^2}{\abs{q-q_0}^{m+\frac74}} \,d\|T\|
	\leq  C(\bE (T, O_{q_0} (\tilde{\bS}), q, \sfrac{1}{3}) + \bA^2)\, ,
\]
provided the parameters are small enough. Using Lemma \ref{l:geometry-cones} and \eqref{eq:tildeexcesvsexcess} we then get 
\begin{align}
\int_{\bB_{\sfrac{1}{9}} (q_0)} \frac{\dist(q-q_0, \tilde{\bS})^2}{\abs{q-q_0}^{m+\frac74}} \,d\|T\|
\leq & C\int_{B_{\sfrac{1}{3}}} \dist (q, \tilde{\bS})^2 d\|T\| + C (\bA^2 + |x_0^\perp|^2 + \beta_{\pi_0} (\tilde\bS)^2 |x_0|^2)\nonumber\\
\leq & C\bE (T, \tilde{\bS}, 0, \sfrac{1}{2}) +  C (\bA^2 + |x_0^\perp|^2 + \beta_{\pi_0} (\tilde\bS)^2 |x_0|^2)\nonumber\\
\leq & C \bE (T, \bS, 0, 1) + C (\bA^2 + C\, |x_0^\perp|^2+C \beta_{\pi_0} (\tilde\bS)^2 |x_0|^2)\, .\label{e:HS-estimate-3}
 \end{align}
 From now on in order to simplify our notation we use $\bE$ in place of $\bE (T, \bS, 0, 1)$. Fix $\rho>0$. We next wish to show that, provided the parameters $\varepsilon_9$ and $\eta_9$ are small enough, then
 \begin{equation}\label{e:using-rho}
|x_0^\perp|^2 + \beta_{\pi_0} (\tilde{\bS})^2 |x_0|^2 \leq 
C \rho^{\sfrac{7}{4}} \int_{\bB_{\sfrac{1}{9}} (q_0)} \frac{\dist(q-q_0, \tilde{\bS})^2}{\abs{q-q_0}^{m+\frac74}} \,d\|T\| +
C\rho^{-m} (\bE + \bA^2)\, ,
 \end{equation}
 where the constant $C$ is independent of $\eta_9$ and $\varepsilon_9$. In particular, for $\rho$ sufficiently small we can combine \eqref{e:using-rho} and \eqref{e:HS-estimate-3} to get 
 \begin{equation}\label{e:using-rho-2}
|x_0^\perp|^2 + \beta_{\pi_0} (\tilde{\bS})^2 |x_0|^2\leq 
C\rho^{-m} (\bE + \bA^2)\, .
 \end{equation}
 We fix such a $\rho$ and gain therefore 
 \[
\beta_{\pi_0}(\tilde\bS)^2\, |x_0|^2+|x_0^\perp|^2 + \int_{\bB_{1/4}} \frac{{\rm dist}^2\, (q-q_0, \tilde{\bS})}{|q-q_0|^{m+\frac74}}\, d\|T\| (q) \leq C (\bE(T, \bS, 0,1) + \bA^2)\, 
 \]
 (where we are treating the fixed $\rho$ as a geometric constant). 
 Since however $\beta_{\pi_0} (\bS) \leq C \beta_{\pi_0} (\tilde{\bS})$ by \eqref{eq:angle_bound_new_book}, we achieve our desired conclusion.

 It remains to show \eqref{e:using-rho}. First of all, by assuming the parameters small enough, Lemma \ref{lem.propagation1} implies that $2 |\mathbf{p}_{V^\perp} (q_0)|\leq \rho$. Thus we can apply Lemma \ref{l:geometry-cones} (c) and select a page $\bH_i$ of $\tilde{\bS}$ with the property that 
 \[
\beta_{\pi_0}(\tilde\bS)^2\, |x_0|^2+|x_0^\perp|^2 
\leq C \dist (x-q_0, \tilde{\bS})^2 \qquad \forall x\in \bH_i \setminus B_\rho (V)\, .
 \]
 We next apply Theorem \ref{thm:graph_v1}(v) and assume the parameters $\bar \varepsilon$ and $\bar\eta$ are small enough so that $\rho_{\mathcal{W}} (y) \leq \rho$ for all $y\in \bB_{\sfrac{1}{4}}$. Since $\bH_i \in \tilde{\bS}$, it follows that there is a function $\tilde{v}_j$ as in Corollary \ref{cor:reparametrization} and that $\Omega := (B_{2\rho} (V) \setminus B_\rho (V))\cap \bB_\rho (q_0)$ belongs to the domain of $\tilde{v}_j$. For each point $x$, consider the point $q=x+\tilde{v}_j (x)\in \spt (T)$. We then have 
 \begin{align}
\beta_{\pi_0}(\tilde\bS)^2\, |x_0|^2+|x_0^\perp|^2 
&\leq C \dist (q-q_0, \tilde{\bS})^2 + C |\tilde{v}_j (x)|\nonumber\\
&\leq C \dist (q-q_0, \tilde{\bS})^2 + C |\tilde{u}_j (x)|^2 + C \bA^2\qquad \forall \Omega\, .\label{e:will-integrate}
 \end{align}
 Moreover, given the Lipschitz and $L^\infty$ bounds on $\tilde{v}_j$, it follows that $q\in \bB_{1/9} (q_0)$ and that $|q-q_0|\geq \frac{\rho}{4}$. We thus average \eqref{e:will-integrate} over the set $\Gamma := \{x+\tilde{v}_j (x): x\in \Omega\}$ and use \eqref{e:comparison-tilde-u-w} to achieve 
\begin{align*}
\beta_{\pi_0}(\tilde\bS)^2\, |x_0|^2+|x_0^\perp|^2
&\leq C \rho^{-m} \int_{\Gamma} \dist (q-q_0, \tilde{\bS})^2 d\mathcal{H}^m + C \rho^{-m} \int_\Omega |\tilde{u}_j|^2 + C \rho^{-m} \bA^2\\
&\leq C \rho^{\sfrac{7}{4}} \int_\Gamma \frac{\dist (q-q_0, \tilde{\bS})^2}{|q-q_0|^{m+\sfrac{7}{4}}} \,d\Ha^m
+ C \rho^{-m} (\bE + \bA^2)\\
&\leq C \rho^{\sfrac{7}{4}} \int_{\bB_{\sfrac{1}{9}}} \frac{\dist (q-q_0, \tilde{\bS})^2}{|q-q_0|^{m+\sfrac{7}{4}}}\, d\|T\| (q) + C \rho^{-m} (\bE + \bA^2)\, .
\end{align*}
This completes the proof of \eqref{e:using-rho} and hence the proof of Theorem \ref{t:Hardt-Simon-main}.

\section{Proof of Proposition \texorpdfstring{\ref{p:decay-2}}{decay2}: binding functions}

Following the blueprint of Simon's work on cylindrical tangent cones, in the form used in \cite{DHMSS} in this section we prove the existence of suitable ``binding functions'', which in the final blow-up proof of Proposition \ref{p:decay-2} will be crucial to show the compatibility of the harmonic sheets. The central Proposition of this section has its counterpart in \cite[Theorem 9.3]{DHMSS}. The crucial difference is that we are not able to really estimate the ``binding function'' $\xi$ in terms of the excess $\bE$ (as it is the case for \cite[Theorem 9.3]{DHMSS}). We will instead be able to estimate separately its vertical portion $\mathbf{p}_{\pi_0^\perp} (\xi)$ and the horizontal portion $\mathbf{p}_{\pi_0} (\xi)$: it's in the estimate for the latter part that we ``lose''. 

\begin{definition} \label{def:binding functions}
A \emph{binding function} is any Borel measurable function $\xi \colon R_{\mathcal W} \to V^\perp$ with the property that
     $\xi(q) = \xi(q')$ for all $q=(0,x,y)$ and $q'=(0,x',y')$ such that $(|x|,y)$ and $(|x'|,y')$ belong to the interior of the same Whitney cube.
\end{definition}

\begin{theorem}\label{t:binding}
 There are positive constants $C$, $\eta_{12}$, and $\varepsilon_{12}$ depending upon $(m,n,p)$ such that the following holds. If
 \begin{itemize}
     \item[(i)] $T,\Sigma, \bS, \pi_0$ are as in Assumptions \ref{ass:everywhere} and \ref{ass:decay plane},
     \item[(ii)] $\bar\eps <\eps_{12}$ and $\bar \eta < \eta_{12}$, 
     \item[(iii)] $\tilde{\bS}$ denotes the open book introduced in Definition \ref{d:new-cone},
     \item[(iv)] and $\varrho_\infty:= \|\varrho_{\mathcal{W}}\|_\infty$,
    \end{itemize}
then
\begin{equation}\label{e:est spine}
\int_{\bB_{\sfrac18}} \frac{\dist(q,\tilde\bS)^2}{\max\{\varrho_\infty, |x|\}^{1/2}} \, d\|T\|(q) \leq C (\bE (T, \bS, 0, 1)+ \bA^2) =: C (\bE + \bA^2)\, .
\end{equation}
Moreover, there exists a binding function $\xi: R_{\mathcal{W}} \to \mathbb R^{m+n}$ such that the following estimates hold for every $j$:
\begin{align}
&\int_{\bB_{\sfrac{1}{8}}\cap U_{\mathcal{W}}^\pm} \frac{|u^\pm_j (q) - l_{h(j)} (q)) - (\mathbf{p}_{\pi_0^\perp} (\xi (q)) - l_{h(j)} (\mathbf{p}_{\pi_0} (\xi (q)))|^2}{|x|^{\sfrac{5}{2}}} d\mathcal{H}^m (q) \leq 
C (\bE  + \bA^2)\, ,\label{e:binding-1}\\
&\int_{\bB_{\sfrac{1}{8}}\cap U_{\mathcal{W}}^\pm}
\frac{|\nabla u^\pm_j(q) - \nabla l_{h(j)} (q)|^2}{|x|^{\sfrac{1}{2}}}\leq C (\bE + \bA^2)\, , \label{e:binding-2}\\
&\|\mathbf{p}_{\pi_0^\perp} (\xi)\|^2_{\infty}
\leq C (\bE  + \bA^2)\, .\label{e:binding-3}\\
&\|l_{h(j)}\circ \mathbf{p}_{\pi_0} (\xi)\|_\infty^2
\leq C \beta_{\pi_0} (\bS)^2 \|\mathbf{p}_{\pi_0} (\xi)\|_\infty^2
\leq C (\bE + \bA^2)\, .\label{e:binding-4}
\end{align}
\end{theorem}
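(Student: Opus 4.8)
The argument follows the blueprint of \cite[Theorem 9.3]{DHMSS}, modified to keep track of the degenerate opening angle of $\bS$. The key new input is the shifted Hardt--Simon estimate \eqref{e:Hardt-Simon} of Theorem \ref{t:Hardt-Simon-main}, which replaces \cite[Theorem 8.1]{DHMSS} and provides the quantitative decay of $\dist^2(\cdot,\tilde\bS)$ with the correct $\bA^2$-dependence, together with the reparametrizations of Corollaries \ref{cor:reparametrization} and \ref{cor:reparametrization on Bad cubes}. First I would establish \eqref{e:est spine}. On the region $R_{2\mathcal W}\cup R_{\mathcal B}$ the current is (up to controlled error) the graph $\bG_{\tilde\bS}(\tilde v)$, and there $\dist^2(q,\tilde\bS)\leq C(|\tilde u|^2+\bA^2)$, so the integral against $\max\{\varrho_\infty,|x|\}^{-1/2}$ is bounded using the weighted $W^{1,2}$-bounds \eqref{e:comparison-tilde-u-w} and \eqref{e:W12_rep_bad}, noting that on a Whitney cube $L$ one has $|x|\sim d_L$ and $d_L\geq \varrho_\infty$-scale dyadic control; on the complement $\bB_{1/8}\setminus(R_{2\mathcal W}\cup R_{\mathcal B})$ one uses the crude bound $\dist^2(q,\tilde\bS)\leq C\dist^2(q,V)$ together with \eqref{e:L^2 estimate}, paying the factor $\tau\eta^{-3/2}$ which is absorbed since $\mathbf E(T,\bS,0,1)\leq \tau^2$. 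The combinatorial summation over the Whitney cubes of order $k$ (there are $\sim 2^{(k+M)(m-1)}$ of them, each contributing $d_L^{m+2}\bar\bE_L\leq C d_L^{m+2}\eta^{-1}\bE_L$) converges because $\sum_k 2^{-k\cdot(\text{positive})}<\infty$, exactly as in \cite[Section 9]{DHMSS}.

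Next I would construct the binding function $\xi$ and prove \eqref{e:binding-1}--\eqref{e:binding-2}. Following \cite{DHMSS}, for each Whitney cube $L\in\mathcal W$ one picks a point $\xi_L$ of density $\geq Q$ at distance $\lesssim \delta_0 d_L$ from $y_L$ (guaranteed by the no-holes condition \eqref{eq.no_holes12} of Lemma \ref{l:propagation}, propagated down the Whitney tree in Theorem \ref{thm:graph_v1}), and declares $\xi(q):=\xi_L$ on the slab over $L$. The estimates \eqref{e:binding-1}--\eqref{e:binding-2} are then obtained by applying the shifted estimate \eqref{e:Hardt-Simon-3} at the point $q_0=\xi_L$: expanding $\dist^2(q-\xi_L,O_{\xi_L}(\tilde\bS))$ near the page $\tilde\bH^\pm_{h(j)}$ in terms of the graphical function $\tilde u^\pm_j$ and its Taylor expansion about $\xi_L$, the left-hand integrands in \eqref{e:binding-1}--\eqref{e:binding-2} over the cube $L$ are comparable (after the change of variables $q=x+\tilde v^\pm_j(x)$, which is bi-Lipschitz with constants $\to 1$) to $\int_{\bB_{1/9}(\xi_L)}|q-\xi_L|^{-m-7/4}\dist^2(q-\xi_L,O_{\xi_L}(\tilde\bS))\,d\|T\|$ times powers of $d_L$ that match the weights $|x|^{-5/2}$, $|x|^{-1/2}$; summing over $L$ and using \eqref{e:Hardt-Simon-3} together with \eqref{eq:tildeexcesvsexcess} to control $\bE(T,O_{\xi_L}(\tilde\bS),\xi_L,\frac13)\leq C\eta^{-3/2}(\bE+\bA^2)$ yields the claim. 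The rotation $O_{\xi_L}$ contributes only $|O_{\xi_L}-\mathrm{Id}|\lesssim d_L\bA$, which is harmless by \eqref{e:rotation_simple_est} and Lemma \ref{l:geometry-cones}(a).

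Finally, \eqref{e:binding-3} and \eqref{e:binding-4} follow from Theorem \ref{t:Hardt-Simon-main} applied directly: \eqref{e:Hardt-Simon} gives $|x_0^\perp|^2\leq C(\bE+\bA^2)$ and $\beta_{\pi_0}(\bS)^2|x_0|^2\leq C(\bE+\bA^2)$ for \emph{every} density-$\geq Q$ point $q_0=(x_0,y_0)\in\bB_{1/4}$. Since $\mathbf{p}_{\pi_0^\perp}(\xi_L)=\mathbf{p}_{\pi_0^\perp}(\xi_L)=x_L^\perp$ (the vertical component) this is precisely \eqref{e:binding-3}, while $l_{h(j)}$ is a linear map with $\|l_{h(j)}\|_{\mathrm{op}}\leq C\beta_{\pi_0}(\bS)$, so $|l_{h(j)}(\mathbf{p}_{\pi_0}(\xi_L))|\leq C\beta_{\pi_0}(\bS)|x_L|$ and the second inequality in \eqref{e:binding-4} is again \eqref{e:Hardt-Simon}; the first inequality in \eqref{e:binding-4} is the pointwise operator-norm bound on $l_{h(j)}$. \textbf{The main obstacle} is the bookkeeping in the second step: one must verify that summing the \emph{local} Hardt--Simon estimates \eqref{e:Hardt-Simon-3}, each centered at a different shifted point $\xi_L$ and with a different rotated book $O_{\xi_L}(\tilde\bS)$, reproduces the single \emph{global} weighted integrals on the left of \eqref{e:binding-1}--\eqref{e:binding-2} without losing in the overlap of the balls $\bB_{1/9}(\xi_L)$; this requires a covering argument with bounded overlap (the $\xi_L$ are well-separated at each dyadic scale since the cubes $L$ are) and careful matching of the $|q-q_0|^{-m-7/4}$ weight against the Whitney weight $|x|^{-5/2}$, exactly as in \cite[Proof of Theorem 9.3]{DHMSS}, the only genuine change being that $\bE$ is everywhere replaced by $\bE+\bA^2$ and that the final comparison of $\beta_{\pi_0}(\bS)$ with $\beta_{\pi_0}(\tilde\bS)$ uses \eqref{eq:angle_bound_new_book}.
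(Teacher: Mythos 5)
Your treatment of the binding function and the estimates \eqref{e:binding-1}--\eqref{e:binding-4} matches the paper's strategy (pick a density-$\geq Q$ point $\xi_L$ per cube via the no-holes condition, apply the shifted Hardt--Simon estimate \eqref{e:Hardt-Simon-3}, and sum; \eqref{e:binding-3}--\eqref{e:binding-4} are direct from \eqref{e:Hardt-Simon}). However, your proposed argument for \eqref{e:est spine} contains a genuine gap, and it is not the argument the paper (following \cite[Section 9.2]{DHMSS}) actually uses.

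The problem is that the reparametrization bounds \eqref{e:comparison-tilde-u-w} and \eqref{e:W12_rep_bad} only control the \emph{unweighted} $L^2$ norm of $\tilde u$, giving $\sum_L \int_L |\tilde u|^2 \leq C(\bE+\bA^2)$; they carry no information about how the per-cube contribution decays as the cube approaches the spine. To bound $\int |\tilde u(z)|^2\,|x|^{-1/2}\,dz$ (equivalently $\sum_L d_L^{-1/2}\int_L|\tilde u|^2$), you need the per-cube conical excess to decay in a power of $d_L$, and that decay is supplied precisely by the Hardt--Simon estimate of Theorem \ref{t:Hardt-Simon-main}, which you do not invoke for this part. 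Moreover, the combinatorial bound you invoke, ``each contributing $d_L^{m+2}\bar\bE_L \leq C\,d_L^{m+2}\eta^{-1}\bE_L$,'' is false on good cubes: the defining property of $\mathcal W$ is $\bE_{L'} < \eta\,\bar\bE_{L'}$, so $\bar\bE_{L'}>\eta^{-1}\bE_{L'}$, the \emph{opposite} inequality. The only universal bound on good cubes is $\bar\bE_L\leq C(\tau^2+\bar\varepsilon)$ (Remark \ref{r:cube-control-eta-tau}), which is a fixed constant independent of $\bE(T,\bS,0,1)$. Consequently your convergent sum produces a bound of order $C\,\eta\,(\tau^2+\bar\varepsilon)$, which by Assumption \ref{ass:decay plane} (where $\bE(T,\bS,0,1)\leq\bar\eta\,\bar\varepsilon\ll\tau^2$) does not yield $C(\bE(T,\bS,0,1)+\bA^2)$. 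The paper's proof of \eqref{e:est spine} does not proceed via a cube-by-cube $W^{1,2}$ summation; it follows \cite[Section 9.2]{DHMSS} verbatim, substituting Lemma \ref{lem.propagation1} for \cite[Proposition 9.4]{DHMSS} and, crucially, Theorem \ref{t:Hardt-Simon-main} for \cite[Theorem 8.1]{DHMSS} -- the dyadic decay from the monotonicity formula at a $Q$-point is the indispensable ingredient that produces the $|x|^{-1/2}$ weight and the quadratic $\bA$-dependence.
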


\begin{proof}
The proof of \eqref{e:est spine} follows verbatim the one given in \cite[Section 9.2]{DHMSS} for the analogous estimate \cite[(9.5)]{DHMSS}: in this case the argument would substitute Lemma \ref{lem.propagation1} to \cite[Proposition 9.4]{DHMSS} and Theorem \ref{t:Hardt-Simon-main} to \cite[Theorem 8.1]{DHMSS}. Note that since the left hand side of \eqref{e:Hardt-Simon} has a quadratic dependence on $\bA$ rather than the linear one of \cite[(8.1)]{DHMSS}, \eqref{e:est spine} gains the quadratic dependence on $\bA$ on its right hand side as well. 

As for \eqref{e:binding-2} we can follow the argument in \cite[Section 9.2]{DHMSS} in order to show the following partial statement. For every cube $L\in \mathcal{W}$ we find a suitable point $\xi_L\in \spt (T) \cap \bB_{1/4}$ with $\Theta_T (\xi_L)\geq Q$ such that, for every $j$
\begin{equation}\label{e:binding-5}
\sum_{L\in \mathcal{W}^\pm: L\cap \bB_{1/8}\neq \emptyset} \int_{2L} \frac{|(u_j (z) - l_{h(j)} (z))- (\mathbf{p}_{\pi_0^\perp} (\xi_L) - l_{h(j)} (\mathbf{p}_{\pi_0} (\xi_L))|^2}{|x|^{\sfrac{5}{2}}}\, dz
\leq C (\bE + \bA^2)
\end{equation}
and
\begin{equation}\label{e:binding-6}
\sum_{L\in \mathcal{W}^\pm: L\cap \bB_{1/8}\neq \emptyset} \int_{2L}\frac{|\nabla u_j (z) - \nabla l_{h (j)} (z)|^2}{|x|^{\sfrac{1}{2}}}\, dz
\leq C (\bE + \bA^2)\, . 
\end{equation}
Again in this case the gain of a quadratic estimate on $\bA$, compared to the linear dependence of the analogous estimates in \cite{DHMSS}, is due to the quadratic dependence on $\bA$ of the right hand side of \eqref{e:Hardt-Simon}.

We now set the binding function to be equal to $\mathbf{p}_{V^\perp}(\xi_Q)$ in each cube $Q\in \mathscr{W}$.
 Summing over all the cubes we then reach \eqref{e:binding-1} and \eqref{e:binding-2}. At the same time \eqref{e:binding-3} and \eqref{e:binding-4} follow immediately from \eqref{e:Hardt-Simon}.
\end{proof}

\section{Proof of Proposition \texorpdfstring{\ref{p:decay-2}}{p:decay-2}: final blow-up} 

In this section we introduce a suitable blow-up sequence which will be used to prove Proposition \ref{p:decay-2}. 

\subsection{Blow-up sequence} The main argument is by contradiction. We therefore fix $p=2Q$ and fix sequences $\Sigma_k$, $T_k$, $\pi_k$, and $\bS_k$ with the following properties:
\begin{itemize}
\item[(a)] $T_k$ and $\Sigma_k$ satisfy Assumption \ref{ass:everywhere};
\item[(b)] $T_0 \Sigma_k = \mathbb R^{m+1}\times \mathbb \{0_{n-1}\}=: \tau_0$ and $\bA_k = \|A_{\Sigma_k}\|_\infty$;
\item[(c)] $\pi_k\in \mathscr{P} (0, \Sigma_k)$ and $\bar \bE_k := \bar \bE_k (T_k, 0, 1)= \bE (T_k, \pi_k, 0, 1)$;
\item[(d)] $\bS_k\in \mathscr{B} (0, \Sigma_k)$ and $\bbE_k := \bbE (T_k, 0, 1) = \bbE (T_k, \bS_k, 0, 1)$;
\item[(e)] the following holds:
\begin{align}\label{e:bu-assumption-1}
\lim_{k\to \infty} \left(\bar\bE_k + \frac{\bbE_k}{\bar \bE_k} + \frac{\bA_k^2}{\bbE_k}\right) = 0\, .
\end{align}
\end{itemize}
From Lemma \ref{lem.propagation1} if we pass to $(\lambda_{0,\frac{1}{2}})_\sharp T_k$ and change the optimality of $\pi_k$ in (c) to ``almost optimality'', we can additionally assume that $V (\mathbf{S}_k) \subset \pi_k$. Since passing to the rescaled currents just leads to a slightly different radius $r_2$ in the conclusion of Proposition \ref{p:decay-2}, we will keep the notation $T_k$. Moreover, by possibly applying a rotation, without loss of generality we can assume in addition to (a)-(e) the following two facts:
\begin{itemize}
    \item[(f)] $\pi_0 = \mathbb R^m\times \{0_{n}\}$, and $|\pi_k-\pi_0|\leq C \bar{\mathbf{E}}_k^{\sfrac{1}{2}}$, and $V = V (\bS_k) = \{0_1\}\times \mathbb R^{m-1} \times \{0_n\}$;
    \item[(g)] $\pi_0$ is almost optimal, namely
\begin{equation}\label{e:bu-assumption-2}
\lim_{k\to\infty} \frac{\bE (T_k, \pi_0, 0, 1)}{\bar\bE_k} = 1\, .
\end{equation}
\end{itemize}

\begin{definition}\label{d:scoppia}
A blow-up sequence is a sequence of quadruples $(T_k, \Sigma_k, \pi_k, \bS_k)$ together with linear subspaces $\tau_0 = T_0 \Sigma_k \supset \pi_0 \supset V = V (\bS_k)$ satisfying (a), (b), (c), (d), (e), (f), and (g). 
\end{definition}

We are now in a position of applying Theorem \ref{thm:graph_v1}, Theorem \ref{thm:graph v2}, Corollary \ref{cor:reparametrization}, Theorem \ref{t:Hardt-Simon-main}, and Theorem \ref{t:binding}, for any $k$ sufficiently large. In particular we can introduce 
\begin{itemize}
\item[($\alpha$)] The Whitney decompositions $\mathcal{W}_k$, the good regions $R_{\lambda \mathcal{W}_k}$ with the corresponding functions $u_j^{k,\pm}$ as in Theorem \ref{thm:graph_v1}, and the radii $\rho^k_\infty$ as in Theorem \ref{t:binding};
\item[($\beta$)] The new books $\tilde{\bS}_k$ and the linear maps $\tilde{l}_j^{k,\pm}  = l_{h_k (j)}^{k,\pm}: \pi_0^\pm \to \pi_0^{\perp_0}$ parametrizing their pages $\tilde{\bH}^{k, \pm}_j$;
\item[($\gamma$)] The binding functions $\xi^k:R_{\mathcal{W}_k}\cap \bB_{1/8} \to \mathbb R^{m+n}$. 
\end{itemize}
The following is then an easy corollary of the estimates in Theorem \ref{t:binding}, whose proof is left to the reader.  

\begin{corollary}\label{c:scoppia}
Consider a blow-up sequence $(T_k, \Sigma_k, \pi_k, \bS_k)$ and a plane $\pi_0$ as in Definition \ref{d:scoppia}. Consider books $\tilde{\bS}_k$, with pages $\tilde{\bH}^{k, \pm}_j$, and maps $\xi_k$ and $\tilde{u}_j^{k,\pm}$ as in ($\alpha$)-($\gamma)$. Hence set $\bar w^{k,\pm}_j:= \bbE_k^{-\sfrac{1}{2}} (u^{k,\pm}_j-\tilde{l}^{k, \pm}_j)$, $\xi^k_v:= \bbE_k^{-\sfrac{1}{2}} \mathbf{p}_{\pi_0^\perp} (\xi^k)$, and 
$\xi^k_o := \bbE_k^{-\sfrac{1}{2}} \beta_{\pi_0} (\tilde\bS) \mathbf{p}_{\pi_0} (\xi^k)$. Then, up to subsequences, the following holds:
\begin{itemize}
    \item[(i)] For each $j$ the sequence $\bar w^{k, \pm}_j$ converges locally in $C^1$ to a map $\bar{w}^\pm_j : \bB_{1/2}\cap \pi_0^\pm \to \pi_0^{\perp_0}$; 
    \item[(ii)] $\bar \xi^k_o$ and $\bar \xi^k_v$ converges locally uniformly to a pair of bounded functions 
    \begin{align}
    &\bar \xi_v: \bB_{1/8}\cap \pi_0 \to \pi_0^{\perp_0}\label{e:vertical-binding}\\
    &\bar \xi_o: \bB_{1/8} \cap \pi_0 \to V^\perp\cap \pi_0\label{e:horizontal-binding}
    \end{align}
    which are even with respect to $V$, namely $\xi_v (t, y)=\xi_v (-t,y)$ and $\xi_o (t,y)= \xi_o (-t, y)$ for every $(t,y)\in (V^\perp\cap \pi_0) \times V$ on their domain of definition;
    \item[(iii)] The normalized linear functions $\bar{l}^{k,\pm}_j := (\beta_{\pi_0} (\tilde{\bS}))^{-1} \tilde{l}^{k,\pm}_j$ converge smoothly to linear functions $\bar{l}_j^\pm$;
    \item[(iv)] The following estimates hold (for a geometric constant $C = C (Q,m,n)$): 
    \begin{align} \label{bu:c_onealpha}
        &\sup_{\zeta = (t,y) \in \pi_0^\pm} |t|^{\frac{m}{2}+1}\left( |t|^{-1} \abs{\bar w^\pm_j (\zeta)} + \abs{D\bar w^\pm_j (\zeta)}+ |t|^{\sfrac12} [D\bar w^\pm_j]_{\sfrac12}(\zeta) \right)\le C\\ \label{bu:radial}
        &\int_{\bB_{1/8}\cap \pi_0^\pm} |z|^{2-m}\left|\partial_r \frac{\bar w_j^\pm (z)}{|z|}\right|^2\, dz \leq C\\ \label{bu:nonconcentration w12}
        &\sum_{j}\int_{\bB_{1/8}\cap \pi_0^\pm} \frac{|\bar w^\pm_j - (\bar \xi_v - \bar l_j^\pm \circ \bar \xi_o)|^2}{|x|^{\frac{5}{2}}} +\frac{|\nabla \bar w^\pm_j|^2}{|x|^{\frac{1}{2}}}\, dz \leq C\, .
    \end{align}
\end{itemize}
\end{corollary}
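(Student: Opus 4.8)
The plan is a compactness argument: rescale, by the appropriate power of $\bbE_k:=\bbE(T_k,\bS_k,0,1)$, the fixed-$k$ estimates already established, extract subsequential limits, and pass the resulting $k$-uniform bounds to the limit. The one bookkeeping point to respect throughout is that every estimate invoked must have right-hand side controlled by $\bE(T_k,\bS_k,0,1)+\bA_k^2\le \bbE_k+\bA_k^2=(1+o(1))\,\bbE_k$; in particular one must systematically use the \emph{book-based} refined estimates — Theorem~\ref{thm:graph v2}, Proposition~\ref{prop:density-est}, Theorems~\ref{t:Hardt-Simon-main} and~\ref{t:binding} — rather than the crude planar ones, whose right-hand sides are only $O(\bar\bE_k)$ and would blow up after division by $\bbE_k$ (recall $\bbE_k/\bar\bE_k\to 0$).

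For (iii) I would note that, by Definition~\ref{d:new-cone}, each $\tilde l^{k,\pm}_j$ is the linear map describing a page of $\tilde\bS_k$, hence has operator norm $\le C\,\beta_{\pi_0}(\tilde\bS_k)$, and that $\beta_{\pi_0}(\tilde\bS_k)>0$ by \eqref{eq:angle_bound_new_book1}; thus the normalized maps $\bar l^{k,\pm}_j=\beta_{\pi_0}(\tilde\bS_k)^{-1}\tilde l^{k,\pm}_j$ have uniformly bounded gradients and, being finitely many vectors in a fixed finite-dimensional space, converge along a subsequence to linear maps $\bar l^\pm_j$. For (i) and the first estimate of (iv), dividing \eqref{eqn:Linfty estimate excess} by $\bbE_k^{1/2}$ yields \eqref{bu:c_onealpha} on $U^\pm_{3\mathcal W_k}$ with a constant independent of $k$; since any compact $K\subset\bB_{1/2}\cap\pi_0^\pm$ stays at positive distance from $V$, on $K$ this is an honest $C^{1,\sfrac12}$ bound, so Arzelà–Ascoli plus a diagonal argument produce a subsequential $C^1_{\rm loc}$ limit $\bar w^\pm_j$ (the domains $U^\pm_{3\mathcal W_k}$ exhaust $\bB_{1/2}\cap\pi_0^\pm$ by Theorem~\ref{thm:graph_v1}(v)), and \eqref{bu:c_onealpha} passes to the limit.

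For (ii) I would first get from \eqref{e:binding-3} and \eqref{e:binding-4} (together with $\beta_{\pi_0}(\tilde\bS_k)\le\beta_{\pi_0}(\bS_k)$, which holds since $\tilde\bS_k\subseteq\bS_k$) that $\xi^k_v$ and $\xi^k_o$ are bounded in $L^\infty$ uniformly in $k$. Because the Whitney grid $\mathcal L_{N_0}$ does not depend on $k$ and each binding function is constant on its (unfolded) cubes, a diagonal extraction over this countable family of cubes gives pointwise limits $\bar\xi_v,\bar\xi_o$, bounded and converging locally uniformly away from $V$; evenness with respect to $V$ is built into Definition~\ref{def:binding functions}, which forces $\xi^k(x,y)=\xi^k(-x,y)$. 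For \eqref{bu:radial}, Proposition~\ref{prop:density-est} gives $\int_{\bB_{1/3}}|q^\perp|^2|q|^{-m-2}\,d\|T_k\|\le C(\mathbf{E}+\bA^2)\le C\bbE_k$, and the computation in the proof of Proposition~\ref{p:L2Alm-piece-2}(i), carried out with base point the origin (which has density $\ge Q$), then gives $\int_{\bB_{1/8}\cap U^\pm_{\mathcal W_k}}|z|^{2-m}\sum_j|\partial_r(u^{k,\pm}_j(z)/|z|)|^2\,dz\le C\bbE_k+C\bA_k^2$; since each $\tilde l^{k,\pm}_j$ is $1$-homogeneous, hence invisible to $\partial_r(\cdot/|z|)$, and the graph-to-$\Sigma$ lift $\Psi_k$ contributes only an $O(\bA_k^2)$ error, dividing by $\bbE_k$ and letting $k\to\infty$ (Fatou, using the $C^1_{\rm loc}$ convergence of $\bar w^{k,\pm}_j$ and the exhaustion of $\bB_{1/8}\cap\pi_0^\pm$ by $U^\pm_{\mathcal W_k}$) yields \eqref{bu:radial}. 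Finally, rescaling \eqref{e:binding-1} and \eqref{e:binding-2} by $\bbE_k$ and using linearity of $\tilde l^{k,\pm}_j$ together with the definitions of $\xi^k_v,\xi^k_o,\bar l^{k,\pm}_j$ — which turn $\mathbf{p}_{\pi_0^\perp}(\xi^k)-\tilde l^{k,\pm}_j(\mathbf{p}_{\pi_0}(\xi^k))$ into $\xi^k_v-\bar l^{k,\pm}_j\circ\xi^k_o$ — gives the $k$-uniform form of \eqref{bu:nonconcentration w12}, which passes to the limit by lower semicontinuity of the weighted $L^2$ integrals under the convergences established above.

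The result is, as the text says, an easy corollary, so there is no deep obstacle; the part requiring the most care is the bookkeeping just described — ensuring each rescaled estimate carries $\bbE_k$ rather than $\bar\bE_k$, which is exactly why \eqref{bu:radial} must be derived from Proposition~\ref{prop:density-est} and not from the cruder \eqref{eq.cheap Hardt-Simon} — together with the reconciliation of domains: the book approximation $u^{k,\pm}_j$ lives over $U^\pm_{\mathcal W_k}$, which misses a shrinking neighborhood of $V$, so one integrates over $U^\pm_{\mathcal W_k}$, identifies $u^{k,\pm}_j$ with the planar approximation of Proposition~\ref{p:L2Alm} on the overlap in order to import the Hardt–Simon radial bound, and lets the good region exhaust the half-ball, absorbing the limit via Fatou. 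Once the $k$-independence of the Whitney decomposition is noted, the convergence and parity of the binding functions are immediate.
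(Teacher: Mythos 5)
Your proposal is correct, and it is in fact the argument the authors intend: the paper states explicitly that the proof of Corollary~\ref{c:scoppia} is left to the reader, so there is no reference proof to compare against. The plan — normalize by $\bbE_k^{1/2}$, use only the book-based estimates (Theorem~\ref{thm:graph v2}, Proposition~\ref{prop:density-est}, Theorem~\ref{t:binding}) so that the right-hand sides carry $\bE(T_k,\bS_k,0,1)+\bA_k^2\le(1+o(1))\bbE_k$ rather than $\bar\bE_k$, and then pass to subsequential limits via Arzel\`a–Ascoli and Fatou over the exhaustion $U^\pm_{\mathcal W_k}\nearrow\bB_{1/2}\cap\pi_0^\pm$ — is exactly right, and you correctly flag the two subtleties: that \eqref{eq.cheap Hardt-Simon} must be replaced by the combination of Proposition~\ref{prop:density-est} with the $\|T\|$-to-graph transfer from the proof of Proposition~\ref{p:L2Alm-piece-2}(i), and that the $1$-homogeneity of the $\tilde l^{k,\pm}_j$ plus the $O(\bA_k^2)$ size of the $\Psi_k$-lift error are what let the radial bound descend to $\bar w^{k,\pm}_j$.

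Two small points that are worth making explicit in a written-up version. First, in the diagonal extraction for (ii): while $\mathcal L_{N_0}$ is a fixed countable grid, the families $\mathcal W_k$ depend on $k$; the argument works because, for any fixed cube $L$, Corollary~\ref{cor:kick-off} and the hypotheses in Definition~\ref{d:scoppia} guarantee that $L\in\mathcal W_k$ for all sufficiently large $k$, so the binding function is eventually defined on every cube of the grid. Second, the passage to the limit in \eqref{bu:nonconcentration w12} needs the Fatou argument to be run on the exhaustion $\{|x|>\delta\}\cap U^\pm_{\mathcal W_k}$ with $\delta\downarrow 0$ after $k\to\infty$: the integrands are nonnegative and you have a $k$-uniform bound, so monotone/Fatou convergence handles the degeneracy of the weight near $V$; you gesture at this with ``lower semicontinuity,'' which is fine, but it is the point most deserving of a full sentence.
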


\subsection{Strong convergence} Again following the blueprint of Simon's work, the estimates of the previous sections will allow us to conclude that the convergence of the $w^{\pm, k}_j$ is in fact strong, that the conical excess in $\bB_{1/8}$ can be controlled in terms of the limiting $\bar{w}^\pm_j$, and that the $\bar w^\pm_j$ are indeed harmonic. 

\begin{proposition}\label{p:strong-convergence}
Let $T_k, \Sigma_k, \pi_k, \bS_k$, $\bar w^{\pm,k}_j$, and $\bar w^\pm_j$ be as in Corollary \ref{c:scoppia}. Then thew following holds.
\begin{itemize}
\item[(i)] The convergence of $\bar{w}^{k, \pm}_j$ to $\bar{w}^\pm_j$ is strong in the sense that 
\begin{equation}\label{e:strong}
\int_{\pi_0^\pm \cap \bB_{1/8}} (|\bar{w}^\pm_j|^2 + |x|^2 |\nabla \bar{w}^\pm_j|^2) = 
\lim_{k\to\infty} \int_{\bB_{1/8}\cap U^\pm_{\mathcal{W}_k}} (|\bar{w}^{k,\pm}_j|^2 + |x|^2 |\nabla \bar{w}^{k,\pm}_j|^2)\, .
\end{equation}
\item[(ii)] The following estimate holds:
\begin{equation}\label{e:reverse-control}
\limsup_{k\to \infty} \bbE_k^{-1} \bE (T, \bS_k, 0, \sfrac{1}{8}) \leq \sum_j \left(\int_{\pi_0^+\cap \bB_{1/8}} |\bar w^+_j|^2 + \int_{\pi_0^-\cap \bB_{1/8}} |\bar w^-_j|^2 \right)\, .
\end{equation}
\item[(iii)] Each $\bar{w}^\pm_j$ is smooth and harmonic in its domain of definition.
\end{itemize}
\end{proposition}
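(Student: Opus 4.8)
\textbf{Proof strategy for Proposition \ref{p:strong-convergence}.} The plan is to follow the scheme of Simon's strong convergence argument, as adapted in \cite[Section 10]{DHMSS}, exploiting the non-concentration estimates of Theorem \ref{t:binding} (encoded in \eqref{bu:nonconcentration w12}) to rule out any loss of $L^2$ mass near the spine $V$ in the blow-up limit. First I would establish \eqref{e:strong}: since the currents $T_k$ are graphical over $R_{\mathcal W_k}$ in the sense of Corollary \ref{cor:reparametrization}, the double-sided excess $\bbE_k = \bbE(T_k,\bS_k,0,1)$ controls (up to the higher-order errors from \eqref{e:volume_estimate}, \eqref{e:volume_estimate cube} and from the reparametrization estimates \eqref{e:comparison-tilde-u-w}, \eqref{e:W12_rep_bad}) the normalized $L^2$ quantity $\sum_{\pm,j}\int_{U^\pm_{\mathcal W_k}\cap\bB_{1/8}}(|\bar w^{k,\pm}_j|^2 + |x|^2|\nabla\bar w^{k,\pm}_j|^2)$, which is therefore uniformly bounded. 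Lower semicontinuity under the $C^1_{\rm loc}$ convergence of Corollary \ref{c:scoppia}(i) gives the inequality ``$\leq$'' in \eqref{e:strong} after restricting the integrals to compact subsets of $\bB_{1/8}\setminus V$ and letting the subdomain exhaust $\bB_{1/8}$; the reverse inequality ``$\geq$'' is where the argument is delicate, and here I would invoke \eqref{bu:nonconcentration w12}: the integrability of $|\bar w^\pm_j - (\bar\xi_v - \bar l_j^\pm\circ\bar\xi_o)|^2 |x|^{-5/2}$ and $|\nabla\bar w^\pm_j|^2 |x|^{-1/2}$ against Lebesgue measure, uniformly in $k$, forbids concentration of the $L^2$-energy of the $\bar w^{k,\pm}_j$ in an arbitrarily thin tube $B_\rho(V)$, so the contribution of $B_\rho(V)$ to both sides of \eqref{e:strong} is $o_\rho(1)$ uniformly in $k$. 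Combining the two gives the equality.

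Next I would prove \eqref{e:reverse-control}. By Theorem \ref{thm:graph_v1}(ii) and Corollary \ref{cor:reparametrization}, on $R_{\mathcal W_k}$ the current coincides with $\bG_{\tilde\bS_k}(\tilde v_k)$, so the one-sided excess $\bE(T_k,\bS_k,0,\sfrac18)$ is, up to higher-order terms, the sum of $\int \dist^2((z,\tilde v^{k,\pm}_j(z)), \bS_k)\,dz$ over $j$ and $\pm$; since $\bS_k\supset\tilde\bS_k$ and $\tilde v^{k,\pm}_j = \tilde l^{k,\pm}_j + w^{k,\pm}_j + O(\bA_k)$ with $\tilde l^{k,\pm}_j$ parametrizing a page of $\tilde\bS_k$, one has $\dist((z,\tilde v^{k,\pm}_j(z)),\bS_k)\leq |w^{k,\pm}_j(z)| + C\bA_k$. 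The contribution over $R_{2\mathcal W_k}$ and $R_{\mathcal B_k}$ of the complement of the good region is absorbed using the $L^2$-estimate \eqref{e:L^2 estimate} of Theorem \ref{thm:graph_v1}(iv) together with $\bE(T_k,\bS_k,0,1)\leq \bbE_k$, which is again $o(\bbE_k)$ after the non-concentration argument. Normalizing by $\bbE_k$ and passing to the limit, using the strong convergence \eqref{e:strong} just established, yields \eqref{e:reverse-control}. Finally, (iii) follows from the fact that each $\bar w^{k,\pm}_j$, being (a normalization of) the difference between a Dir-minimizing piece of $v_k$ and a fixed linear map, solves up to errors the Euler--Lagrange system; by the strong $W^{1,2}_{\rm loc}$ convergence and the vanishing of $\bA_k/\bbE_k^{1/2}$ (note $\bA_k^2/\bbE_k\to 0$ from \eqref{e:bu-assumption-1}), together with the minimal surface system satisfied by the sheets of $T_k$ away from $Q$-points (Proposition \ref{p:L2Alm-piece-3}), the limit $\bar w^\pm_j$ is a weak, hence (by elliptic regularity) smooth, solution of Laplace's equation on $\pi_0^\pm\cap\bB_{1/8}$.

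The main obstacle I anticipate is the ``$\geq$'' direction in \eqref{e:strong}, i.e. genuinely ruling out $L^2$-concentration of $\bar w^{k,\pm}_j$ on tubes around the spine in the blow-up. The point is that, unlike in \cite{DHMSS}, here the opening angle $\beta_{\pi_0}(\bS)$ of the book may be comparable to or smaller than $\bbE_k^{1/2}$, so one cannot directly quote the corresponding estimate from \cite{DHMSS}; instead one must feed in the Simon-type estimate of Theorem \ref{t:Hardt-Simon-main} (the $|x|^{-1/2}$-weighted bound \eqref{e:est spine}) and the binding-function estimates \eqref{e:binding-1}--\eqref{e:binding-2}, which are precisely the tools designed to survive the degeneracy of the angle. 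Carefully tracking which constants depend on the (fixed) Whitney parameters $\eta,\tau$ versus which are geometric, and checking that all higher-order error terms (powers $\bbE_k^{1+\gamma}$, $\bA_k^2$, the radius $\varrho^k_\infty\to 0$ of the excised tube around $V$) are truly negligible after normalization, is the bulk of the real work; the rest of the proof is a routine transcription of \cite[Section 10]{DHMSS}.
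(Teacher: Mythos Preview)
Your proposal is correct and follows the same route as the paper, which in fact gives no self-contained argument at all: the entire proof reads ``The proof is verbatim the same of (i), (ii), and (iii) of \cite[Proposition 10.5]{DHMSS}.''  Your sketch is therefore considerably more informative than the paper's own treatment, and the ingredients you single out---the $C^1_{\rm loc}$ convergence of Corollary~\ref{c:scoppia}(i), the weighted non-concentration bounds \eqref{bu:nonconcentration w12} (i.e.\ the $k$-level estimates \eqref{e:binding-1}--\eqref{e:binding-4} normalized by $\bbE_k$), the spine estimate \eqref{e:est spine}, the fact that $\varrho^k_\infty\to 0$ via Theorem~\ref{thm:graph_v1}(v), and the minimal-surface equation for the sheets from Proposition~\ref{p:L2Alm-piece-3}---are exactly what the DHMSS argument uses.

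Two small clarifications.  First, your concern about the degenerating opening angle $\beta_{\pi_0}(\bS_k)$ is legitimate in spirit but not an obstacle \emph{here}: all of the angle-sensitive work has already been absorbed into the inputs (Theorems~\ref{t:Hardt-Simon-main} and \ref{t:binding}), so that by the time one reaches this proposition the estimates available have the same shape as in \cite{DHMSS} and the proof really is a transcription.  Second, your phrasing that the contribution of the complement of the good region is ``$o(\bbE_k)$ after the non-concentration argument'' via \eqref{e:L^2 estimate} is slightly misleading: \eqref{e:L^2 estimate} by itself only gives $O(\bbE_k)$.  The correct mechanism is that $\varrho^k_\infty\to 0$, so for any fixed $r>0$ the complement of $R_{2\mathcal W_k}$ in $\bB_{1/8}$ is eventually contained in $B_r(V)$, and then \eqref{e:est spine} yields the bound $Cr^{1/2}\bbE_k$; letting $r\downarrow 0$ after $k\to\infty$ gives the desired smallness.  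With this adjustment your argument for (ii) goes through.
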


The proof is verbatim the same of (i), (ii), and (iii) of \cite[Proposition 10.5]{DHMSS}. 

\subsection{Simon's and Wickramasekera's variational identities} We next introduce two important functions, which will be crucial to show that in fact the functions $\bar{w}^\pm_j$ can be suitably extended to harmonic functions over $\pi_0\cap \bB_{1/8}$. The first function is considered by Simon in his original work and it is simply the ``average'' of the $\bar{w}^\pm_j$ in the following sense:
\begin{equation}\label{e:average}
\omega (t,y) := \sum_j (\bar{w}^+_j (t,y)+ \bar{w}^-_j (- t,y))\,, \qquad \mbox{for $(t,y) \in \bB_{1/8} \cap \pi_0^+$}\,.
\end{equation}
The second one is instead introduced by Wickramasekera in \cite{Wic}. We start by recalling that $\pi_0^{\perp_0}$ is one-dimensional, and can therefore be identified with $\mathbb R$. After fixing such identifcation, there exists coefficients $\mu_j^\pm$ with the property that 
\begin{equation}\label{e:coefficients}
\bar l^{\pm}_j (t,y) = \mu_j^\pm t\, .
\end{equation}
Wickramasekera's weighted average takes then the form
\begin{equation}\label{e:weight-average}
\varpi (t,y) = \sum_j (\mu_j^+\bar{w}^+_j (t,y)+ \mu_j^- \bar{w}^-_j (-t,y))\,, \qquad \mbox{for $(t,y) \in \bB_{1/8} \cap \pi_0^+$}\,.
\end{equation}
We note in passing the following obvious consequence of the estimate in Corollary \ref{cor:comparrison excess}.

\begin{lemma}\label{l:opened-and-bounded}
There is a positive constant $C$ depending on $m$ and $Q$ such that 
\begin{align}
C^{-1} \leq &\max \{|\mu^\alpha_j - \mu^{\alpha'}_{j'}|\,\colon\,\alpha, \alpha' \in \pm, 1 \leq j, j' \leq Q \}\nonumber\\
\leq &
2 \max \{|\mu^\alpha_j|\,\colon\,\alpha \in \pm, 1\leq j \leq Q \} \leq 2 C\, .\label{e:opened-and-bounded}
\end{align}
\end{lemma}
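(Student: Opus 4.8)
The statement to prove, Lemma \ref{l:opened-and-bounded}, asserts a two-sided bound: the coefficients $\mu_j^\pm$ describing the limiting linear maps $\bar{l}_j^\pm$ are uniformly bounded above, and moreover the maximal pairwise difference among them is bounded below by a geometric constant. The plan is to transport the content of Corollary \ref{cor:comparrison excess}, in particular the angle bounds \eqref{eq:angle_bound_new_book1}, through the normalizations used in Corollary \ref{c:scoppia}(iii).

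First I would recall that, by construction, $\bar{l}_j^{k,\pm} = (\beta_{\pi_0}(\tilde{\bS}_k))^{-1} \tilde{l}_j^{k,\pm}$, so that each $\bar{l}_j^{k,\pm}$ is a linear map whose graph, after rescaling the $x_2$-coordinate by $\beta_{\pi_0}(\tilde{\bS}_k)^{-1}$, coincides with a page of $\tilde{\bS}_k$. Since $\pi_0^{\perp_0}$ is one-dimensional and $V\subset \pi_0$ with $\pi_0 = \{x_2 = 0\}$, each $\bar l_j^{k,\pm}$ must vanish on $V$ (all pages of $\tilde{\bS}_k$ pass through $V$), hence it depends only on the $x_1$-variable and takes the form $\bar l_j^{k,\pm}(t,y) = \mu_j^{k,\pm} t$. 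The upper bound $|\mu_j^{k,\pm}| \leq C$ is then immediate from the definition of $\beta_{\pi_0}(\tilde{\bS}_k)$ as the maximal Hausdorff distance of the pages of $\tilde{\bS}_k$ from $\pi_0$: indeed $|\mathbf{p}_{\pi_0^\perp}(\tilde{l}_j^{k,\pm}(t,y))| \leq \beta_{\pi_0}(\tilde{\bS}_k)$ for $|t|\leq 1$, so $|\mu_j^{k,\pm}| \leq C\,\beta_{\pi_0}(\tilde{\bS}_k)\cdot\beta_{\pi_0}(\tilde{\bS}_k)^{-1} \leq C$. Passing to the smooth limit of Corollary \ref{c:scoppia}(iii) preserves this bound, giving the last inequality in \eqref{e:opened-and-bounded}. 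The middle inequality is the trivial observation $|\mu_j^\alpha - \mu_{j'}^{\alpha'}| \leq |\mu_j^\alpha| + |\mu_{j'}^{\alpha'}| \leq 2\max|\mu_j^\alpha|$.

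For the lower bound $C^{-1} \leq \max\{|\mu_j^\alpha - \mu_{j'}^{\alpha'}|\}$, the key point is that the maximal pairwise angle between distinct pages of $\tilde{\bS}_k$, namely $\beta_{\max}(\tilde{\bS}_k)$, is comparable to $\beta_{\pi_0}(\tilde{\bS}_k)$ by \eqref{eq:angle_bound_new_book1}: precisely $\beta_{\max}^2(\tilde{\bS}_k) \geq C(\eta)^{-1}\beta_{\pi_0}^2(\tilde{\bS}_k)$. Now $\beta_{\max}(\tilde{\bS}_k)$ is, up to a geometric factor, $\max_{j,j',\alpha,\alpha'} |\mathbf{p}_{\pi_0^\perp}(\tilde{l}_j^{k,\alpha}(e_1)) - \mathbf{p}_{\pi_0^\perp}(\tilde{l}_{j'}^{k,\alpha'}(e_1))|$ — here one uses that after the identification of pages via their linear parts the Hausdorff distance between two pages (both through $V$) is controlled by the difference of their $x_1$-slopes — which equals $\beta_{\pi_0}(\tilde{\bS}_k)\cdot\max|\mu_j^{k,\alpha} - \mu_{j'}^{k,\alpha'}|$ after dividing by $\beta_{\pi_0}(\tilde{\bS}_k)$. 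Combining, $\max|\mu_j^{k,\alpha} - \mu_{j'}^{k,\alpha'}| \geq C^{-1}$, and taking the limit $k\to\infty$ via Corollary \ref{c:scoppia}(iii) yields the claim. I expect the main (though minor) obstacle to be the bookkeeping of the identification between ``pages through $V$'' and their linear slope functions, i.e.\ checking that the Hausdorff distance on $\bB_1$ between two such half-planes is comparable to the modulus of the difference of the corresponding $\mu$'s uniformly; this is an elementary planar trigonometry fact (pages are determined by a single angle once the spine $V$ is fixed), but one must be slightly careful that the relevant angles stay small so that $\tan\theta \sim \theta$, which is guaranteed by \eqref{eq:W3} and the smallness of $\bar\varepsilon$, $\tau$.

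\begin{proof}
By Corollary \ref{c:scoppia}(iii), $\bar l_j^\pm = \lim_k \bar l_j^{k,\pm}$ where $\bar l_j^{k,\pm} = \beta_{\pi_0}(\tilde\bS_k)^{-1}\tilde l_j^{k,\pm}$. Since each $\tilde l_j^{k,\pm}\colon \pi_0^\pm \to \pi_0^{\perp_0}$ vanishes on $V = V(\bS_k) \subset \pi_0$ and $\pi_0^{\perp_0}$ is one-dimensional, writing $\pi_0$-coordinates as $(t,y)$ with $t$ the $x_1$-coordinate we have $\tilde l_j^{k,\pm}(t,y) = \nu_j^{k,\pm} t$ for scalars $\nu_j^{k,\pm}$, hence $\bar l_j^\pm(t,y) = \mu_j^\pm t$ with $\mu_j^\pm = \lim_k \beta_{\pi_0}(\tilde\bS_k)^{-1}\nu_j^{k,\pm}$, which also proves \eqref{e:coefficients}.

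For the upper bound, the page $\tilde\bH_j^{k,\pm}$ of $\tilde\bS_k$ is the graph of $\tilde l_j^{k,\pm}$, so by definition of $\beta_{\pi_0}(\tilde\bS_k)$ and since all pages pass through $V$, elementary planar geometry (valid because all the relevant angles are small, by \eqref{eq:W3} and the smallness of $\bar\varepsilon$) gives $|\nu_j^{k,\pm}| \leq C\,\beta_{\pi_0}(\tilde\bS_k)$. Therefore $|\mu_j^\pm| \leq C$, which yields the last inequality in \eqref{e:opened-and-bounded}. The middle inequality is immediate from the triangle inequality, $|\mu_j^\alpha - \mu_{j'}^{\alpha'}| \leq |\mu_j^\alpha| + |\mu_{j'}^{\alpha'}| \leq 2\max_{\beta,l}|\mu_l^\beta|$.

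For the lower bound, observe that, again by planar trigonometry with small angles, for any two pages $\tilde\bH_j^{k,\alpha}$, $\tilde\bH_{j'}^{k,\alpha'}$ of $\tilde\bS_k$ one has
\[
d_{\mathcal H}\!\left(\tilde\bH_j^{k,\alpha}\cap\bB_1,\,\tilde\bH_{j'}^{k,\alpha'}\cap\bB_1\right) \;\geq\; C^{-1}\,|\nu_j^{k,\alpha} - \nu_{j'}^{k,\alpha'}|\,,
\]
so that, taking the maximum over $j,j',\alpha,\alpha'$,
\[
\beta_{\max}(\tilde\bS_k) \;\geq\; C^{-1}\,\beta_{\pi_0}(\tilde\bS_k)\,\max_{j,j',\alpha,\alpha'}\left|\beta_{\pi_0}(\tilde\bS_k)^{-1}\nu_j^{k,\alpha} - \beta_{\pi_0}(\tilde\bS_k)^{-1}\nu_{j'}^{k,\alpha'}\right|\,.
\]
On the other hand \eqref{eq:angle_bound_new_book1} gives $\beta_{\max}(\tilde\bS_k) \leq C(\eta)\,\beta_{\pi_0}(\tilde\bS_k)$, whence
\[
\max_{j,j',\alpha,\alpha'}\left|\beta_{\pi_0}(\tilde\bS_k)^{-1}\nu_j^{k,\alpha} - \beta_{\pi_0}(\tilde\bS_k)^{-1}\nu_{j'}^{k,\alpha'}\right| \;\leq\; C\,C(\eta)\,.
\]
Since $\eta$ is a fixed parameter and all the normalized slopes converge (Corollary \ref{c:scoppia}(iii)), letting $k\to\infty$ we obtain the upper bound again; moreover, using once more \eqref{eq:angle_bound_new_book1} in the form $\beta_{\pi_0}(\tilde\bS_k) \leq C(\eta)\,\beta_{\max}(\tilde\bS_k)$ together with the reverse comparison between Hausdorff distance of pages and slope differences,
\[
\beta_{\pi_0}(\tilde\bS_k) \;\leq\; C(\eta)\,\beta_{\max}(\tilde\bS_k) \;\leq\; C(\eta)\,\beta_{\pi_0}(\tilde\bS_k)\,\max_{j,j',\alpha,\alpha'}\left|\beta_{\pi_0}(\tilde\bS_k)^{-1}\nu_j^{k,\alpha} - \beta_{\pi_0}(\tilde\bS_k)^{-1}\nu_{j'}^{k,\alpha'}\right|\,,
\]
which forces $\max|\beta_{\pi_0}(\tilde\bS_k)^{-1}\nu_j^{k,\alpha} - \beta_{\pi_0}(\tilde\bS_k)^{-1}\nu_{j'}^{k,\alpha'}| \geq C^{-1}$ with $C$ geometric. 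Passing to the limit $k\to\infty$ yields $\max_{j,j',\alpha,\alpha'}|\mu_j^\alpha - \mu_{j'}^{\alpha'}| \geq C^{-1}$, completing the proof of \eqref{e:opened-and-bounded}.
\end{proof}
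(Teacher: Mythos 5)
Your reconstruction is correct and supplies what the paper leaves implicit: the paper only remarks that Lemma \ref{l:opened-and-bounded} is an ``obvious consequence of the estimate in Corollary \ref{cor:comparrison excess}'' and gives no written proof, so your task was to unpack that observation. You do this along the intended lines: because $V\subset \pi_0$ and $\pi_0^{\perp_0}\simeq \R$, each $\tilde l_j^{k,\pm}$ is a slope function $\nu_j^{k,\pm}\,t$; the maximal slope is comparable to $\beta_{\pi_0}(\tilde\bS_k)$ and the maximal pairwise slope difference is comparable to $\beta_{\max}(\tilde\bS_k)$; then \eqref{eq:angle_bound_new_book1} (applicable because $\eta,\tau$ are fixed and $\bar\eta_k,\bar\eps_k\to 0$ along the blow-up sequence) gives the two-sided comparability $\beta_{\max}(\tilde\bS_k)\approx\beta_{\pi_0}(\tilde\bS_k)$, which after dividing by $\beta_{\pi_0}(\tilde\bS_k)$ and passing to the limit via Corollary \ref{c:scoppia}(iii) yields \eqref{e:opened-and-bounded}.

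One wording caveat worth fixing: the comparability of Hausdorff distances with slope differences that you invoke, namely
\[
d_{\mathcal H}\bigl(\tilde\bH_j^{k,\alpha}\cap\bB_1,\,\tilde\bH_{j'}^{k,\alpha'}\cap\bB_1\bigr)\ \approx\ |\nu_j^{k,\alpha}-\nu_{j'}^{k,\alpha'}|\,,
\]
is \emph{not} true for the half-planes (pages) themselves; for instance, two pages on opposite sides of $V$ with $\nu_j^{k,+}=-\nu_{j'}^{k,-}$ have slope difference $O(\beta_{\pi_0}(\tilde\bS_k))$ but half-plane Hausdorff distance of order one. However, in Definition \ref{def:angles}, $\beta_{\max}(\tilde\bS_k)$ (and $\beta_{\pi_0}(\tilde\bS_k)$) are defined via the Hausdorff distances of the full $m$-planes $\pi_j^\alpha\supset \tilde\bH_j^{k,\alpha}$, and for the full planes (all containing $V$) the two-sided comparability with $|\nu_j^{k,\alpha}-\nu_{j'}^{k,\alpha'}|$ does hold uniformly in the small-angle regime guaranteed by \eqref{eq:W3}. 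Replacing ``pages'' with ``the $m$-planes containing the pages'' in the relevant lines makes each step of your chain exact and aligns it with the paper's definitions; the conclusion is unaffected.
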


Both functions $\omega$ and $\varpi$ satisfy then the same variational identity.

\begin{proposition}\label{p:variational-averages}
Let $\bar{w}^\pm_j$ and $\bar l^\pm_j$ be as in Corollary \ref{c:scoppia} and consider the functions $\omega$ and $\varpi$ introduced in \eqref{e:average} and \eqref{e:weight-average}. Then the following identities hold for every $w\in C^\infty_c (\bB_{1/8} \cap \pi_0, \pi_0^{\perp_0})$ which is even in the variable $t\in V^\perp \cap \pi_0$ and for every direction $v\in V$:
\begin{align}\label{e:variational-averages1}
&\int_{\bB_{1/8}\cap \pi_0^+} \nabla \omega \cdot \nabla \frac{\partial w}{\partial v} = 0\, ,\\\label{e:variational-averages2}
&\int_{\bB_{1/8}\cap \pi_0^+} \nabla \varpi \cdot \nabla \frac{\partial w}{\partial v}= 0\, .
\end{align}
\end{proposition}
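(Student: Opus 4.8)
The plan is to derive both identities \eqref{e:variational-averages1} and \eqref{e:variational-averages2} simultaneously from the first variation of $T_k$ along a suitable sequence of test vector fields, and then pass to the limit. The key observation is that both $\omega$ and $\varpi$ are finite linear combinations of the $\bar w^\pm_j$ with \emph{constant} coefficients ($1$ for $\omega$, $\mu_j^\pm$ for $\varpi$), so it suffices to produce, for each fixed page $\tilde\bH^{k,\pm}_j$, a first-variation identity for the corresponding sheet of $T_k$ whose error terms are $o(\bbE_k)$, and then take the appropriate linear combination. The reason the \emph{same} identity holds for both averages is that the test field will be chosen tangent to the spine direction $v\in V$; differentiating the graph of $\tilde l^{k,\pm}_j$ along $v$ gives zero (the pages are invariant along $V$), so the "tilt" of the page does not enter, and the two natural ways of summing the sheet contributions — unweighted, or weighted by the slopes $\mu_j^\pm$ — both produce a valid stationarity relation.

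\textbf{Key steps.} First, following Simon \cite{Simon} and \cite[Section 10]{DHMSS}, fix $w\in C^\infty_c(\bB_{1/8}\cap\pi_0,\pi_0^{\perp_0})$ even in $t$ and $v\in V$, and build a test vector field $X_k$ on $\R^{m+n}$ supported near $\spt(T_k)\cap \bB_{1/8}$ which, on the graphical good region $R_{\mathcal W_k}$, acts on the sheet over $\tilde\bH^{k,\pm}_j$ essentially as the vertical displacement $\bbE_k^{1/2}\,\partial_v w$ in the $\pi_0^{\perp_0}$ direction, and which is almost tangential to $\Sigma_k$ (the deviation from tangentiality being $O(\bA_k)$ pointwise, so that $\vec H_{T_k}\cdot X_k = O(\bA_k^2)$ by the argument recalled in the introduction and in \cite[Appendix A]{DLS_Lp}). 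Second, write $0 = \delta T_k[X_k] + \int X_k\cdot \vec H_{T_k}\,d\|T_k\|$; the mean-curvature term is $O(\bA_k^2 \bbE_k^{1/2})$, hence $o(\bbE_k)$ after dividing by $\bbE_k^{1/2}$ by \eqref{e:bu-assumption-1}. Third, expand $\delta T_k[X_k]$ over the graphical region using Corollary \ref{cor:reparametrization}: the principal term is $\bbE_k^{1/2}\sum_{\pm,j}\int \nabla(u^{k,\pm}_j-\tilde l^{k,\pm}_j)\cdot\nabla\partial_v w$ (the $\tilde l^{k,\pm}_j$ contribution drops because $\partial_v \tilde l^{k,\pm}_j\equiv 0$), with quadratic graphical errors controlled by \eqref{L2 estimate excess}, \eqref{eqn:Linfty estimate excess} and the non-concentration estimates \eqref{bu:nonconcentration w12} of Corollary \ref{c:scoppia}. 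Fourth, the contributions from the bad region $R_{\mathcal B_k}$ and from $\bB_{1/8}\setminus(R_{2\mathcal W_k}\cup R_{\mathcal B_k})$ are handled exactly as in \cite[Section 10]{DHMSS} using \eqref{e:L^2 estimate} and \eqref{e:rep_parametrization_bad}: since the test field is supported where $\dist(\cdot,V)\gtrsim$ scale and $w$ is even in $t$, the odd/even cancellation together with the $L^2$ bounds shows these are $o(\bbE_k)$. Fifth, divide by $\bbE_k^{1/2}$, use the strong convergence $\bar w^{k,\pm}_j\to\bar w^\pm_j$ from Proposition \ref{p:strong-convergence}(i) together with the $C^1_{\rm loc}$ convergence from Corollary \ref{c:scoppia}(i) and the non-concentration bound \eqref{bu:nonconcentration w12} near the spine to pass to the limit in $\sum_{\pm,j}\int\nabla\bar w^{k,\pm}_j\cdot\nabla\partial_v w$; one obtains $\sum_j\int_{\bB_{1/8}\cap\pi_0^+}\nabla(\bar w^+_j(t,y)+\bar w^-_j(-t,y))\cdot\nabla\partial_v w = 0$, which is \eqref{e:variational-averages1} after using that $w$ (hence $\partial_v w$) is even in $t$ to fold the $\pi_0^-$ integral onto $\pi_0^+$. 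For \eqref{e:variational-averages2}, repeat the argument but with $X_k$ designed so that its vertical component on the sheet over $\tilde\bH^{k,\pm}_j$ is $\bbE_k^{1/2}\mu_j^{k,\pm}\,\partial_v w$ (with $\mu_j^{k,\pm}$ the slope of $\tilde l^{k,\pm}_j$, converging to $\mu_j^\pm$ by Corollary \ref{c:scoppia}(iii) and Lemma \ref{l:opened-and-bounded}); the slopes are uniformly bounded, so all error estimates go through verbatim and the limit is \eqref{e:variational-averages2}.

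\textbf{Main obstacle.} The delicate point is controlling the cross and error terms in the first variation \emph{uniformly near the spine $V$}, where the Whitney scales degenerate and where the graphical parametrization over $\tilde\bS_k$ only exists outside a shrinking tube $B_{\rho}(V)$. This is exactly why the weighted non-concentration estimates \eqref{bu:nonconcentration w12} (with the $|x|^{-5/2}$ and $|x|^{-1/2}$ weights, inherited from the Simon-type estimate \eqref{e:est spine} and the binding-function estimates \eqref{e:binding-1}–\eqref{e:binding-2}) are indispensable: they guarantee that the portion of $\delta T_k[X_k]$ coming from a neighborhood of $V$, as well as the difference between $X_k$ and its "linearized" model there, contributes $o(\bbE_k^{1/2}\|w\|_{C^2})$. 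One must also check that the test field $X_k$ can be chosen so that $\partial_v w$ composed with the Whitney-adapted reparametrization still satisfies the hypotheses needed to invoke these estimates (in particular that $X_k$ is genuinely $C^1$ across cube boundaries and across the spine, which forces the use of the binding function $\xi^k$ to "glue" the sheets). All of this is carried out in \cite[Section 10]{DHMSS} in the non-degenerate case; here the only genuinely new input is bookkeeping the $\bA_k$-dependence, and the fact that every estimate invoked (Theorems \ref{t:Hardt-Simon-main} and \ref{t:binding}, Corollary \ref{cor:comparrison excess}) has already been upgraded to the quadratic-in-$\bA$ form makes the mean-curvature error harmless. Since the argument is structurally identical, I expect the proof to read "as in \cite[Proposition 10.6]{DHMSS}, using the quadratic estimate \eqref{e:Hardt-Simon} and Corollaries \ref{cor:reparametrization}, \ref{cor:reparametrization on Bad cubes}, \ref{c:scoppia} in place of the corresponding ingredients there," with the two averages treated in parallel via the constant-coefficient linear-combination remark above.
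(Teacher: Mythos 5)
The argument for \eqref{e:variational-averages1} is essentially the paper's: a \emph{vertical} ambient test field (in the paper, $W = w\, e_{m+1}$ as in \cite[Proposition~10.5]{DHMSS}; you take the $\partial_v$-differentiated version, which differs only by an integration by parts in $v$), the $\bA_k^2$-improvement of the mean-curvature error, and the passage to the limit via Proposition~\ref{p:strong-convergence} and \eqref{bu:nonconcentration w12}.

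For \eqref{e:variational-averages2} the proposal has a real gap. You propose a test field $X_k$ ``designed so that its vertical component on the sheet over $\tilde\bH^{k,\pm}_j$ is $\bbE_k^{1/2}\mu_j^{k,\pm}\,\partial_v w$.'' If this is meant literally as an ambient $e_{m+1}$-directed field taking different values on different sheets, it cannot be chosen $C^1$: near the spine $V$ the sheets collapse to $V$, and any attempt to interpolate the sheet-dependent values (e.g.\ $\psi(z,h) = (h/t)\,\partial_v w$) blows up as $t\to 0$, destroying the admissibility of the test field exactly in the region where the weighted non-concentration estimates are needed. Your framing that $\omega$ and $\varpi$ are ``linear combinations with constant coefficients, so it suffices to take the appropriate linear combination'' is the misleading step: the weights $\mu_j^\pm$ are sheet-dependent, and one cannot simply impose them on a single ambient vector field. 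The paper's key device, which your outline does not reach, is to take the \emph{horizontal} field $\bar W = \frac{\partial w}{\partial v}\, e$ with $e$ the unit direction in $\pi_0\cap V^\perp$. This is a genuine ambient $C^1$ field, it annihilates the cone $\bC_k$ by $V$-invariance, and the weights $\mu_j^\pm$ appear automatically in the linearization of $\delta T_k(\bar W)-\delta\bC_k(\bar W)$ because a horizontal displacement of a graph of slope $\mu_j^{k,\pm}$ produces a vertical displacement of order $\mu_j^{k,\pm}\,\partial_v w$; the expansion around $B_r(V)$ is then controlled by Proposition~\ref{prop:density-est} and Allard's tilt-excess estimate, giving the extra factor $r^{1/2}$ in \eqref{e:close-to-spine}. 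This is precisely why the paper flags the second identity as following ``a slightly different argument''; your proposal treats the two as essentially the same and thereby skips the idea that makes \eqref{e:variational-averages2} work.
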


\begin{proof} The proof of \eqref{e:variational-averages1} is the same as the proof of (iv) in \cite[Proposition 10.5]{DHMSS}. In particular, if we fix a unit vector $e_{m+1}\in \pi_0^{\perp_0}$ and let  
\begin{align*}
W :=w\, e_{m+1}\, ,
\end{align*}
we observe that $W$ is cylindrical in the sense of \cite[Definition 10.4]{DHMSS}, while the identity \cite[(10.7)]{DHMSS} is equivalent to \eqref{e:variational-averages1}. 

\medskip

The proof of \eqref{e:variational-averages2} follows a slightly different argument. We can definitely argue as in the proof of (iv) in \cite[Proposition 10.5]{DHMSS} to assume, without loss of generality, that $w$ depends only on the $y\in V$ variable in a neighborhood $B_\rho (V)$ of $V$. Hence we let $e$ be a unit vector which spans $V^\perp\cap \pi_0$, we fix a direction $v\in V$ and we consider the vector field 
\begin{equation}
\bar W := \frac{\partial w}{\partial v} e\, .
\end{equation}
Proceeding as in the proof of (iv) in \cite[Proposition 10.5]{DHMSS} we first choose an orientation for $V$, fix a corresponding orientation for the pages of $\tilde{\bS}_k$ so that  $\partial \a{\tilde{\bH}^{k,\pm}_j} = \partial \a{V}$ and hence introduce the cylindrical current 
\[
\bC_k:= \sum_j \a{\tilde{\bH}^{k,+}_j} +\a{\tilde{\bH}^{k,-}_j}\, .
\]
Because $\bar W$ is a derivative along a direction $v\in V$, while $\bC_k$ is invariant under translations in the $v$ direction, we have $\delta \bC_k (\bar W) = 0$. On the other hand we have 
\[
\delta T_k (\bar W) = - \int \vec{H}_T (q)\cdot \bar W (q) d\|T\| (q)\, .
\]
As already argued several times, $\|\vec{H}_T\|_\infty\leq \bA$, while $\vec{H}_T (q) \cdot \bar{W} (q) = \vec{H}_T (q) \cdot \mathbf{p}_{(T \Sigma_q)^\perp} (\bar W (q))$ and $\|\mathbf{p}_{(T \Sigma_q)^\perp} \circ \bar W\|_\infty \leq C \bar A \|\bar W\|_\infty$, so that we reach
\begin{equation}\label{e:variation-A2-again}
|\delta T_k (\bar W) - \delta \bC_k (\bar W)|\leq C \|\bar W\|_\infty \bA_k^2\, .
\end{equation}
Our goal is to show next that 
\begin{align}\label{e:linearization-again}
\lim_{k\to \infty} \frac{1}{\beta_{\pi_0} (\bS_k) \bbE_k^{\sfrac{1}{2}}} (\delta T_k (\bar W) - \delta \bC_k (\bar W)) = -
\int_{\bB_{1/8}\cap \pi_0^+} \nabla \varpi \cdot \nabla \frac{\partial w}{\partial v}\ ,
\end{align}
which, given \eqref{e:bu-assumption-1} and the bound in Lemma \ref{l:angle-bound}, implies \eqref{e:variational-averages2}.

In order to show \eqref{e:linearization-again}, we subsequently fix a $r>0$ and $k$ sufficient large such that  $\rho^k_{\infty} < r$ and introduce the currents 
\begin{align}
T^g_k &:= T_k\res (B_r (V))^c\\
\bC^g_k&:= \bC_k \res (B_r (V))^c\\
T^r_k &:= T_k\res B_r (V)\\
\bC^r_k&:= \bC_k \res B_r (V)\, .
\end{align}
Note in particular that $T_k^g$ is a multigraph over $\pi_0$

We will then split our proof of \eqref{e:linearization-again} in two separate parts, in particular we will show that 
\begin{equation}\label{e:close-to-spine}
\limsup_{k\to \infty} \beta_{\pi_0} (\bS_k)^{-1} \bbE_k^{-\sfrac{1}{2}} \left|\int {\rm div}_{T_k} \bar W d\|T_k^r\| - \int {\rm div}_{\bC_k} \bar W d\|\bC_k^r\|\right| \leq C r^{\sfrac{1}{2}}
\end{equation}
for a constant $C$ independent of $r$, and that 
\begin{align}
\lim_{k\to \infty} \beta_{\pi_0} &(\bS_k)^{-1} \bbE_k^{-\sfrac{1}{2}} \left(\int {\rm div}_{T_k} \bar W d\|T_k^g\| - \int {\rm div}_{\bC_k} \bar W d\|\bC_k^g\|\right)\nonumber\\ 
= & - \int_{\bB_{1/8}\cap \pi_0^+\setminus B_r (V)} \nabla \varpi \cdot \nabla \frac{\partial w}{\partial v}\, .\label{e:away-from-spine}
\end{align}
From \eqref{e:close-to-spine},\eqref{e:away-from-spine} and the facts that $\nabla \varpi\in L^2$ and $r$ is arbitrary, we conclude \eqref{e:linearization-again}.

Recall that $\bar W$ is directed along $e\in \pi_0\cap V^\perp$, while, in the region $B_\rho (V)$, it does not depend on directions orhogonal to $V$. In particular, on the latter region we have $\tr({ {\bf p}_{\pi_0} D\bar{W} {\bf p}_V })=0$. We can thus estimate
\begin{align*}
|\Div_{\pi}(\bar{W}) (q)| &= |\tr({{\bf p}_{\pi} {\bf p}_{\pi_0} D\bar{W} (q) {\bf p}_V })| = |\tr({{\bf p}^\perp_{\pi} {\bf p}_{\pi_0} D\bar{W} (q) {\bf p}_V })|\\
&\le |\tr({ {\bf p}_{\pi}^\perp {\bf p}_{\pi_0})}|\,|\tr({{\bf p}_{\pi}^\perp {\bf p}_{V})}|\,|D \bar W| (q)
\le C |{\bf p}_{\pi_0} - {\bf p}_\pi| |{\bf p}_V \cdot {\bf p}_{\pi^\perp}|\, |D\bar W (q)| \,,
\end{align*}
for every $q\in B_\rho (V)$ and for every $m$-dimensional plane $\pi$. Recall that $r<\rho$. In particular, since $V$ is a subset of any tangent plane to $\bC_k$, we immediately conclude 
\begin{equation}\label{e:invariance}
\int {\rm div}_{\bC_k} \bar W d\|\bC_k^g\| = 0\, .
\end{equation}
Moreover we can use Proposition \ref{prop:density-est} to estimate
\begin{align}
& \left| \int \Div_{T_k} \bar{W} \, d\norm{T^r_k}\right| \nonumber\\
    \le & C \left(\int_{B_\frac14 \cap B_r(V)} |{\bf p}_{\vec T_k} - {\bf p}_{\pi_0}|^2 d\norm{T^r_k} \right)^\frac12 \left(\int_{B_\frac14 \cap B_r(V)} |{\bf p}_V\cdot {\bf p}_{\vec T_k}^\perp|^2 d\norm{T^r_k} \right)^\frac12 \nonumber\\
    \le & C \bbE_k^{\frac12} \left(\int_{B_\frac14 \cap B_r(V)} |{\bf p}_{\vec T_k} - {\bf p}_{\pi_0}|^2 d\norm{T^r_k} \right)^\frac12\,\,.\label{e:not-yet-gained-r}
\end{align} 
Next, using \eqref{eq.other-propagation} in Lemma \ref{lem.propagation1} and Lemma \ref{l:angle-bound}, for $k$ large enough we have 
\[
\bE (T_k, \pi_0, y, 2r) \leq 2 \bE (T_k,\pi_0, 0, 1) \leq C \beta_{\pi_0} (\bS_k)^2
\]
for every $y\in \bB_{1/4} \cap V$. Subsequently, we can use Allard's tilt-excess estimate \cite[Proposition 4.1]{D-Allard} to conclude that
\[
\int_{\bB_r (y)} |{\bf p}_{\vec T_k} - {\bf p}_{\pi_0}|^2 d\norm{T^r_k} \leq C r^m (\beta_{\pi_0} (\bS_k)^2 + \bA_k^2)
\]
for every $y\in \bB_{1/4} \cap V$ (provided $k$ is large enough). Since we can cover $\bB_{1/4} \cap V$ with $C r^{-m+1}$ balls of radius $r$ centered at points $y\in V \cap \bB_{1/4}$, we clearly conclude that 
\begin{equation}\label{e:gained-r}
\limsup_{k\to\infty} \beta_{\pi_0} (\bS_k)^{-2}\int_{B_\frac14 \cap B_r(V)} |{\bf p}_{\vec T_k} - {\bf p}_{\pi_0}|^2 d\norm{T^r_k} 
\leq C r\, .
\end{equation}
Combining \eqref{e:invariance}, \eqref{e:not-yet-gained-r}, and \eqref{e:gained-r}, we then get \eqref{e:close-to-spine}.

In order to prove \eqref{e:away-from-spine} we observe that, for $r$ large enough, $T^r_g\res \bB_{1/8} = T \res (\bB_{1/8} \setminus B_r (V))$ is the union of the $2Q$ graphs over $\pi_0^\pm\cap \bB_{1/8}\setminus B_r (V)$ of the functions 
\[
q\mapsto v^{k,\pm}_j (q) = \big(w^{k,\pm}_j (q) + \tilde{l}^{k,\pm}_j (q) , \underbrace{\Psi_k (q, w^{k,+\pm}_j (q) + \tilde{l}^{k,\pm}_j (q))}_{=: \psi^{k,\pm}_j (q)}\big)\in \pi_0^{\perp_0}\times T_0 \Sigma^\perp\, .
\]
while $\bC_k$ is the union of the graphs, over the same domains, of the functions
\[
q\mapsto \ell^{k, \pm}_j (q) = \big( \tilde{l}^{k, \pm}_j (q), 0\big)\in \pi_0^{\perp_0}\times T_0 \Sigma^\perp\, .
\]
In particular we can write 
\[
\delta T_k^r (\bar W) - \delta \bC_k^r (\bar W) = \sum_j \big(\delta \bG_{v^{k,+}_j} (\bar W) - \delta \bG_{\ell^{k,+}_j} (\bar W)\big) + \sum_j \big(\delta \bG_{v^{k,-}_j} (\bar W) - \delta \bG_{\ell^{k,-}_j} (\bar W)\big)\, ,
\]
and reduce the proof of \eqref{e:variational-averages2} to 
\begin{equation}
\lim_{k\to \infty} \bE_k^{-\sfrac{1}{2}}
\beta_{\pi_0} (\bS_k)^{-1} \big(\delta \bG_{v^{k,\pm}_j} (\bar W) - \delta \bG_{\ell^{k,\pm}_j} (\bar W)\big)
= - \int_{\bB_{1/8}\cap \pi_0^\pm\setminus B_r (V)} \mu^\pm_j \nabla \bar{w}^{\pm}_j \cdot \nabla \frac{\partial w}{\partial v}\, 
\end{equation}
(note that summing over $\pm$ we then use the fact that the function is even to achieve \eqref{e:variational-averages2}).
The proof is the same for all $2Q$ functions: we will therefore restrict to the case $(+,1)$ and, in order to simplify our notation, we will drop the indices $+$ and $1$, so that our functions become
\begin{align}
v_k (z) &= (w_k (z) + \tilde l_k (z), \psi_k (z))\, ,\\
\ell_k (z) &= (\tilde l_k (z), 0)\, .
\end{align}
It is important to recall that 
\begin{align}
\|\tilde l_k - \beta_{\pi_0} (\bS_k) \mu^+_1 \bar{l}^+_1\|_{C^1} &= o (\beta_{\pi_0} (\bS_k)) \,, \label{e:stima_pallosa_1}\\
\|w_k - \bbE_k^{\sfrac{1}{2}} \bar w^+_1\|_{C^1} &= o (\bbE_k^{\sfrac{1}{2}})\,,\label{e:stima_pallosa_2}\\
\|\psi_k\|_{C^2} &= O (\bA_k) = o (\beta_{\pi_0}(\bS_k)^{\sfrac12})\, , \label{e:stima_pallosa_3}\\
\|v_k\|_{C^1} + \|\tilde l_k\|_{C^1} &= O (\beta_{\pi_0} (\bS_k))\, \label{e:stima_pallosa_4}\, ,\\
\|w_k\|_{C^1} & = O (\bbE_k^{\sfrac{1}{2}}) \label{e:stima_pallosa_5}.
\end{align}
We denote by $\zeta$ the function $\frac{\partial w}{\partial v}$ and consider, for small $\varepsilon$, the diffeomorphism $\Phi_\varepsilon (p) = p+ \varepsilon \bar W (p)$ of $\mathbb R^{m+n}$ onto itself and the diffemorpshim $\Psi_\varepsilon (z) = z - \varepsilon \zeta(z) e$. If a current is the graph $\bG_v$ of a $C^1$ function $v$ over some domain $\Omega$ of $\pi_0$, then $(\Phi_\varepsilon)_\sharp \bG_v$ is the graph of $v_\varepsilon := v \circ \Psi_\varepsilon$. The variation $\delta \bG_v (\bar W)$ can then be computed as 
\begin{equation}\label{e:internal-variation}
\delta \bG_v (\bar W) = \left.\frac{d}{d\varepsilon}\right|_{\varepsilon =0} \int_{\Psi_\epsilon^{-1}(\Omega)} \mathcal{A} (Dv_\varepsilon)  = - \int_\Omega \left[\frac{\partial \mathcal{A}}{\partial A} (Dv) : (Dv\cdot e\otimes \nabla \zeta) - \mathcal{A} (Dv) \frac{\partial \zeta}{\partial e}\right] \, , 
\end{equation}
where $\mathcal{A} (A)$ is the area integrand. The latter can be written explicitely as 
\[
\mathcal{A} (A) := \sqrt{1+|A|^2 + \sum_{M \in \mathcal{M}_i (A), i\geq 2}(\det M)^2}\, ,
\]
where $\mathcal{M}_i (A)$ denotes the set of $i\times i$ minors of $A$.

Observe first that 
\begin{equation}\label{e:stima_pallosa_11}
\mathcal{A} (D \ell_k) \frac{\partial \zeta}{\partial e}
= \sqrt{1+|\nabla \tilde l_k|^2} \frac{\partial \zeta}{\partial e}
\end{equation}
and 
\begin{equation}\label{e:stima_pallosa_12}
\frac{\partial \mathcal{A}}{\partial A} (D\ell_k) : (D\ell_k \cdot e \otimes \nabla \zeta) =
\frac{(\nabla \tilde l_k \cdot e) (\nabla \tilde l_k \cdot \nabla \zeta)}{\sqrt{1+|\nabla \tilde l_k|^2}}\, . 
\end{equation}
Note next that, for any $M\in \mathcal{M}_i (Dv_k)$ with $i\geq 2$, $\det M$ is the product of $2i$ entries of $Dv$, of which at least two are partial derivatives of $\psi_k$. Taking then into consideration \eqref{e:stima_pallosa_1}-\eqref{e:stima_pallosa_5} we get
\begin{align}
\mathcal{A} (Dv_k) \frac{\partial \zeta}{\partial e} &= \sqrt{1+|\nabla \tilde l_k|^2} \frac{\partial \zeta}{\partial e} +  \frac{(\nabla \tilde{l}_k \cdot \nabla w_k) \frac{\partial \zeta}{\partial e}}{\sqrt{1+ |\nabla \tilde l_k|^2}} + o (\beta_{\pi_0} (\bS_k) \bbE_k^{\sfrac{1}{2}})\label{e:stima-pallosa-13}\\
\frac{\partial \mathcal{A}}{\partial A} (Dv_k)
&= \frac{Dv_k}{\sqrt{1+|\nabla \tilde l_k|^2}}
+ O (\bA_k + \bbE_k + \beta_{\pi_0} (\bS_k)^2 \bbE_k^{\sfrac{1}{2}})\, . \label{e:stima-pallosa-14}
\end{align}
Using then \eqref{e:stima_pallosa_4} and \eqref{e:stima_pallosa_5} we gain furthermore the expansion
\begin{align}
 & \frac{\partial \mathcal{A}}{\partial A} (Dv_k) : (Dv_k \cdot e \otimes \nabla \zeta)\nonumber\\
= &\frac{(\nabla \tilde{l}_k \cdot e) (\nabla \tilde l_k \cdot \nabla \zeta) + (\nabla \tilde l_k \cdot e)(\nabla w_k \cdot \nabla \zeta)+(\nabla w_k \cdot e) (\nabla \tilde{l}_k\cdot \nabla\zeta)}{\sqrt{1+ |\nabla \tilde{l}_k|^2}}
+ o(\beta_{\pi_0} (\bS_k) \bbE_k^{\sfrac{1}{2}})\, .\label{e:stima-pallosa-15}
\end{align}
Inserting \eqref{e:stima_pallosa_11}, \eqref{e:stima_pallosa_12}, \eqref{e:stima-pallosa-13}, and \eqref{e:stima-pallosa-15} in \eqref{e:internal-variation} we then get 
\begin{align}
 &(\delta \bG_{v_k} (\bar W) - \delta \bG_{\ell_k} (\bar W)\nonumber\\
= &- \int_{\pi_0^+ \cap \bB_{1/8}\setminus B_r (V)} 
\frac{(\nabla \tilde l_k \cdot e)(\nabla w_k \cdot \nabla \zeta)+(\nabla w_k \cdot e) (\nabla \tilde{l}_k\cdot \nabla\zeta) - (\nabla w_k \cdot \nabla \tilde{l}_k)\frac{\partial \zeta}{\partial e}}{\sqrt{1+ |\nabla \tilde{l}_k|^2}}\nonumber\\
& + o(\beta_{\pi_0} (\bS_k) \bbE_k^{\sfrac{1}{2}})\, .
\end{align}
However, since $\nabla \tilde{l}_k = \tilde\mu^{k,+}_j e$ for some real numbers $\tilde\mu^{k,+}_j$, we easily see that in fact 
\[
(\nabla w_k \cdot e) (\nabla \tilde{l}_k\cdot \nabla\zeta) =
\tilde\mu^{k,+}_j \frac{\partial w_k}{\partial e} \frac{\partial \zeta}{\partial e} = (\nabla w_k \cdot \nabla \tilde{l}_k)\frac{\partial \zeta}{\partial e}
\]
In particular
\begin{align}
 (\delta \bG_{v_k} (\bar W) - \delta \bG_{\ell_k} (\bar W))
= &- \int_{\pi_0^+ \cap \bB_{1/8}\setminus B_r (V)} 
\frac{(\nabla \tilde l_k \cdot e)(\nabla w_k \cdot \nabla \zeta)}{\sqrt{1+ |\nabla \tilde{l}_k|^2}}
+ o(\beta_{\pi_0} (\bS_k) \bbE_k^{\sfrac{1}{2}})\nonumber\\
= & - \int_{\pi_0^+ \cap \bB_{1/8}\setminus B_r (V)} 
(\nabla \tilde l_k \cdot e)(\nabla w_k \cdot \nabla \zeta)+ o(\beta_{\pi_0} (\bS_k) \bbE_k^{\sfrac{1}{2}})\, .
\end{align}
We now use \eqref{e:stima_pallosa_1} and \eqref{e:stima_pallosa_2} to conclude that 
\[
(\beta_{\pi_0} (\bS_k)^{-1} \bbE_k^{-\sfrac{1}{2}})
(\nabla \tilde l_k \cdot e)(\nabla w_k \cdot \nabla \zeta)
\to \mu_1^+ \nabla \bar w^+_1 \cdot \nabla \zeta
\]
uniformly on $\bB_{1/8}\cap \pi_0^+ \setminus B_r (V)$. In particular we finally get 
\[
\lim_{k\to \infty} (\beta_{\pi_0} (\bS_k)^{-1} \bbE_k^{-\sfrac{1}{2}})  (\delta \bG_{v_k} (\bar W) - \delta \bG_{\ell_k} (\bar W))
= - \mu_1^+ \int_{\bB_{1/8}\cap \pi_0^+ \setminus B_r (V)} \nabla \bar w^+_1 \cdot \nabla \zeta\, ,
\]
which completes the proof.
\end{proof}

\section{Proof of \texorpdfstring{Proposition \ref{p:decay-2}}{decay-2}: Decay for the linearization}

The aim of this section is to prove the fundamental integral decay property of the blow-up maps $\bar w^\pm_j$ which will allow us to conclude the proof of Proposition \ref{p:decay-2}. 

\begin{proposition}\label{p:linear-decay}
There exists a constant $C\geq 0$ depending only upon $m$ and $Q$, with the following properties. Let $\bar w^\pm_j$ be the maps in Corollary \ref{c:scoppia}. Then there are:
\begin{itemize}
\item[(i)] $2Q$ linear maps $a^\pm_j : \pi_0 \to \pi_0^{\perp_0}$ which vanish on $V$,
\item[(ii)] a linear map $b_v: V\to \pi_0^{\perp_0}$,
\item[(iii)] and a linear map $b_o: V \to \pi_0 \cap V^\perp$ 
\end{itemize}
such that 
\begin{equation}\label{e:bound-on-new-linear-maps}
\|b_o\|_{C^1}+\|b_v\|_{C^1} + \|a^\pm_j\|_{C^1}\leq C
\end{equation}
and 
\begin{equation}\label{e:decad-lineare}
\int_{\pi_0^\pm\cap \bB_\rho} \big|\bar{w}^\pm_j (t,y) - a^\pm_j (t) - (b_v (y) - \bar l_j^{\pm} (b_o (y)))\big|^2\, dy\, dt \leq C \rho^{m+4} \qquad \forall \rho<\frac{1}{32}\, .
\end{equation}
\end{proposition}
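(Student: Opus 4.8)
The strategy is to exploit the variational identities of Proposition \ref{p:variational-averages} to extend the functions $\omega$ and $\varpi$ across the spine $V$ to harmonic functions, and then to run the standard frequency/decay argument on those harmonic extensions together with the already-established harmonicity and Hölder estimates of the individual sheets $\bar w^\pm_j$. First I would invoke Proposition \ref{p:strong-convergence}(iii): each $\bar w^\pm_j$ is harmonic on $\pi_0^\pm \cap \bB_{1/8}$, and by \eqref{bu:c_onealpha} the quantity $|t|^{m/2+1}(|t|^{-1}|\bar w^\pm_j| + |D\bar w^\pm_j| + |t|^{1/2}[D\bar w^\pm_j]_{1/2})$ is bounded, so the sheets have at most a mild singularity at $V$. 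The key point is to show that both $\omega(t,y) = \sum_j(\bar w^+_j(t,y) + \bar w^-_j(-t,y))$ and $\varpi(t,y) = \sum_j(\mu^+_j \bar w^+_j(t,y) + \mu^-_j\bar w^-_j(-t,y))$, initially defined on $\pi_0^+\cap\bB_{1/8}$, extend to genuine harmonic functions on all of $\bB_{1/8}\cap\pi_0$: this is where \eqref{e:variational-averages1}--\eqref{e:variational-averages2} enter. Because these identities hold against all test fields of the form $\partial w/\partial v$ with $w$ even in $t$ and $v\in V$, one gets that the distributional Laplacian of $\omega$ (resp.\ $\varpi$) across $V$, paired against $y$-derivatives, vanishes; combined with the interior harmonicity in $\pi_0^+$ and the $|t|$-weighted estimates controlling the behaviour near $V$, a capacity/removable-singularity argument shows $\omega$ and $\varpi$ are (the restrictions of) harmonic functions on the full ball.

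Next I would use Lemma \ref{l:opened-and-bounded}: the coefficients $\mu^\pm_j$ span at least a two-dimensional simplex, so from the two harmonic extensions $\omega$ and $\varpi$ one can solve a linear system to recover, for every pair $(\pm,j)$, a harmonic function on $\bB_{1/8}\cap\pi_0$ that agrees with $\bar w^\pm_j(\cdot,y)$ reflected appropriately; more precisely the even parts (in $t$) of all the sheets are determined by finitely many harmonic functions on the full ball. Having harmonic functions on a ball, I apply the elementary decay estimate \eqref{e:est_average}-type Taylor expansion: for a harmonic function $f$ on $\bB_{1/8}$ with $\|f\|_{L^2(\bB_{1/8})}\le C$, writing $\bar f$ for its first-order Taylor polynomial at the origin, one has $\rho^{-m-2}\int_{\bB_\rho}|f-\bar f|^2 \le C\rho^2$, hence $\int_{\bB_\rho}|f-\bar f|^2 \le C\rho^{m+4}$. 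From the linear (first-order) parts of these harmonic extensions I would read off the maps $a^\pm_j$ (the part linear in $t$, vanishing on $V$), $b_v$ (the vertical part linear in $y$, coming from the value of $\xi_v$) and $b_o$ (the horizontal part linear in $y$); the normalizations in Corollary \ref{c:scoppia}(iv) and \eqref{e:binding-3}--\eqref{e:binding-4} give the uniform bounds \eqref{e:bound-on-new-linear-maps}. Finally the odd part of each $\bar w^\pm_j$ in $t$ must be handled: here harmonicity of $\bar w^\pm_j$ on the half-space together with the $|t|^{1/2}$-weighted non-concentration estimate \eqref{bu:nonconcentration w12} forces the odd part to be, to leading order, linear in $t$ and $V$-translation-invariant in a way already captured by $a^\pm_j$; this uses the structural fact that the only homogeneous harmonic functions on a half-space with the prescribed mild singularity and matching the binding data are of the claimed form.

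The main obstacle I anticipate is the removable-singularity/extension step: one must rule out that $\omega$ or $\varpi$ pick up a singular contribution along $V$ (something like a harmonic function homogeneous of degree $<1$ in the $t$-variable that is nonetheless locally $L^2$). The estimate \eqref{bu:c_onealpha} gives $|\bar w^\pm_j(t,y)| \le C|t|^{-m/2}$, which is $L^2_{\rm loc}$ in $m\ge 1$ but a priori allows a mild blow-up; the weighted estimate \eqref{bu:nonconcentration w12}, with the weight $|x|^{-5/2}$ on $|\bar w^\pm_j - (\bar\xi_v - \bar l^\pm_j\circ\bar\xi_o)|^2$ and $|x|^{-1/2}$ on $|\nabla \bar w^\pm_j|^2$, is precisely the tool to exclude this: it says that after subtracting the binding data the functions are ``non-concentrated'' near $V$ in a sufficiently strong sense that their $V$-derivatives have finite $|x|^{-1/2}$-weighted energy, which upgrades the capacity argument to a genuine $W^{1,2}$ extension across $V$. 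Carefully quantifying this — i.e.\ showing $\nabla\varpi, \nabla\omega \in L^2(\bB_{1/8}\cap\pi_0)$ and that the weak Laplacian vanishes as a distribution on the full ball (not merely against $\partial_v$-fields) — is the delicate part and will occupy the bulk of the argument; once it is done the decay \eqref{e:decad-lineare} is a routine consequence of interior estimates for harmonic functions. I would model this step on the corresponding argument in \cite{Wic} and in \cite[Section 11]{DHMSS}, adapting it to keep the separate bookkeeping of the vertical part $\mathbf{p}_{\pi_0^\perp}(\xi)$ and the horizontal part $\mathbf{p}_{\pi_0}(\xi)$ of the binding function, which is the genuinely new feature here.
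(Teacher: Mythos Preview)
Your overall plan has the right ingredients (harmonic extension of $\omega$ and $\varpi$ via the variational identities, the $2\times 2$ linear algebra from Lemma \ref{l:opened-and-bounded}, and then interior estimates for harmonic functions), but there is a genuine gap in how you propose to pass from the two averages $\omega,\varpi$ back to the individual sheets.

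You write that ``from the two harmonic extensions $\omega$ and $\varpi$ one can solve a linear system to recover, for every pair $(\pm,j)$, a harmonic function on $\bB_{1/8}\cap\pi_0$ that agrees with $\bar w^\pm_j$ reflected appropriately.'' This is under-determined: $\omega$ and $\varpi$ are only two linear combinations of the $2Q$ sheets, so you cannot invert to get each sheet, and there is no intrinsic even/odd decomposition of a single $\bar w^\pm_j$ (it lives only on a half-space). What \emph{can} be recovered from $\omega$ and $\varpi$ is not the sheets themselves but the two scalar trace functions $\bar\xi_v(0,\cdot)$ and $\bar\xi_o(0,\cdot)$ on $V$. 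The paper does exactly this: the weighted non-concentration estimate \eqref{bu:nonconcentration w12} is used first in a dyadic Fubini argument to show that $\bar\xi_v(t_k,\cdot),\bar\xi_o(t_k,\cdot)$ are Cauchy in $L^2(V)$ with limits $\bar\xi_v(0,\cdot),\bar\xi_o(0,\cdot)$, and that the trace of each $\bar w^\pm_j$ on $V$ equals $\bar\xi_v(0,\cdot)-\mu^\pm_j\,\bar\xi_o(0,\cdot)$. Summing with weights $1$ and $\mu^\pm_j$ and comparing with the traces of $\omega,\varpi$ gives the $2\times 2$ system
\[
\begin{pmatrix}\omega\\\varpi\end{pmatrix}\Big|_V
=\begin{pmatrix}2Q&-\alpha\\\alpha&-\beta\end{pmatrix}
\begin{pmatrix}\bar\xi_v(0,\cdot)\\\bar\xi_o(0,\cdot)\end{pmatrix},
\qquad \alpha=\sum_{\pm,j}\mu^\pm_j,\ \beta=\sum_{\pm,j}(\mu^\pm_j)^2,
\]
whose determinant $\alpha^2-2Q\beta$ is bounded away from zero by Cauchy--Schwarz together with Lemma \ref{l:opened-and-bounded}. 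Inverting yields harmonic extensions $h_v,h_o$ on the full ball with $h_v|_V=\bar\xi_v(0,\cdot)$, $h_o|_V=\bar\xi_o(0,\cdot)$. Then $\hat w^\pm_j:=\bar w^\pm_j-(h_v-\mu^\pm_j h_o)$ is harmonic on the half-ball with \emph{zero} trace on $V$, hence extends by odd Schwarz reflection to the full ball; $b_v,b_o$ are the $V$-gradients of $h_v,h_o$ at the origin, and $a^\pm_j$ collects the remaining $t$-linear part of $\hat w^\pm_j$ together with $\partial_t h_v(0)-\mu^\pm_j\partial_t h_o(0)$. The decay \eqref{e:decad-lineare} then follows from the second-order Taylor remainder for harmonic functions on $\bB_{1/32}$. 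Your proposal does not isolate this ``common trace via the binding function'' mechanism, and the even/odd route you sketch cannot substitute for it.
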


\subsection{Smoothness and properties of Simon's and Wickramasekera's averages} In this subsection we use the variational identities \eqref{e:variational-averages1} and \eqref{e:variational-averages2} to conclude the following 

\begin{lemma}\label{l:media}
Let $\bar w$ be as in Corollary \ref{c:scoppia} and define $\omega$ and $\varpi$ as in \eqref{e:average} and \eqref{e:weight-average}. Then:
\begin{itemize}
    \item[(i)] $\omega$ and $\varpi$ are harmonic and can be extended to harmonic functions (still denoted $\omega$ and $\varpi$) on $\bB_{1/8}\cap \pi_0$ with the property that $\frac{\partial^2 \omega}{\partial t\partial v} = \frac{\partial^2 \varpi}{\partial t\partial v}=0$ on $V\cap \bB_{1/8}$ for every $v\in V$;
    \item[(ii)] $\omega (0) = \varpi (0) = 0$.
\end{itemize}
\end{lemma}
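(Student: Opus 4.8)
The statement to be proved is Lemma \ref{l:media}: that $\omega$ and $\varpi$, defined on the half-domain $\bB_{1/8}\cap\pi_0^+$ by \eqref{e:average} and \eqref{e:weight-average}, extend to harmonic functions on the full disc $\bB_{1/8}\cap\pi_0$ with the mixed second derivative along the $t$-direction and any spine direction $v$ vanishing on $V$, and that $\omega(0)=\varpi(0)=0$. The whole argument only uses: (1) that each $\bar w_j^\pm$ is harmonic on $\bB_{1/8}\cap\pi_0^\pm$ (Proposition \ref{p:strong-convergence}(iii)); (2) the $C^{1,1/2}_*$-type bounds \eqref{bu:c_onealpha} in Corollary \ref{c:scoppia}, which in particular give that $\bar w_j^\pm$ and $\nabla\bar w_j^\pm$ are continuous up to $V$ with $\bar w_j^\pm = O(|x|)$ near $V$; and (3) the Simon/Wickramasekera variational identities \eqref{e:variational-averages1} and \eqref{e:variational-averages2}. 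The guiding model is the scalar/vector Simon argument: $\omega$ is by definition a sum of reflections of harmonic functions, so it is harmonic away from $V$, and the content is that the ``mismatch'' of normal derivatives across $V$ is killed precisely by \eqref{e:variational-averages1}.

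First I would set up the reflection. For $(t,y)\in\bB_{1/8}\cap\pi_0^+$ (so $t>0$) define $\tilde\omega(t,y):=\sum_j\big(\bar w_j^+(t,y)+\bar w_j^-(-t,y)\big)$, and extend it to $t<0$ by the same formula (which makes sense since $(-t,y)\in\pi_0^+$ and $(t,y)\in\pi_0^-$ then), i.e. $\omega$ is manifestly even in $t$. On $\{t>0\}$ and on $\{t<0\}$ separately, $\omega$ is a finite sum of compositions of the harmonic maps $\bar w_j^\pm$ with the isometry $t\mapsto -t$, hence harmonic; by \eqref{bu:c_onealpha} it is $C^1$ across $V=\{t=0\}$ (the trace of $\bar w_j^+$ and of $\bar w_j^-$ on $V$ agree up to the controlled decay, and $\nabla\bar w_j^\pm$ extends continuously). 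The only thing preventing $\omega$ from being harmonic on all of $\bB_{1/8}\cap\pi_0$ is a possible jump in $\partial_t\omega$ across $V$; but by evenness in $t$, $\partial_t\omega(0^+,y)=-\partial_t\omega(0^-,y)$, so harmonicity is equivalent to $\partial_t\omega\equiv 0$ on $V$. This is exactly where \eqref{e:variational-averages1} enters: testing $\int_{\bB_{1/8}\cap\pi_0^+}\nabla\omega\cdot\nabla(\partial_v w)=0$ against all even $w\in C^\infty_c$, integrating by parts twice using harmonicity of $\omega$ on the open half, one picks up only a boundary term on $V$ of the form $\int_V \partial_t\omega\,\partial_v w$ (the even-ness of $w$ makes the conormal contribution survive rather than cancel); since $w$ and $v\in V$ are arbitrary this forces $\partial_v(\partial_t\omega)=0$ on $V$ — i.e. $\partial_t\omega$ is constant along each spine direction on $V$. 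Combined with evenness and $C^1$-regularity one then upgrades to $\partial_t\omega\equiv\text{const}$ on $V$; the constant is $0$ because $\partial_t\omega$ is odd in $t$ and continuous, hence $\partial_t\omega(0,y)=0$. Thus $\omega$ extends harmonically and $\partial^2\omega/\partial t\partial v=0$ on $V$. The identical argument with the weights $\mu_j^\pm$ inserted, using \eqref{e:variational-averages2} in place of \eqref{e:variational-averages1}, gives the same conclusions for $\varpi$; Lemma \ref{l:opened-and-bounded} guarantees the weights are bounded so no integrability is lost.

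For part (ii), $\omega(0)=\varpi(0)=0$: by Corollary \ref{c:scoppia}(iv), specifically \eqref{bu:c_onealpha} evaluated at $\zeta=(t,y)\to 0$, each $\bar w_j^\pm(\zeta)=O(|t|)$ near $V$ (the bound $|t|^{\frac m2+1}\cdot|t|^{-1}|\bar w_j^\pm(\zeta)|\le C$ is vacuous as $|t|\to0$, so instead I would use that $\bar w_j^\pm$ is $C^1$ up to $V$ and that the traces on $V$ of the blow-up maps must agree with the value dictated by the binding function — more precisely, from \eqref{bu:nonconcentration w12}, the weighted $L^2$ bound $\int |\bar w_j^\pm-(\bar\xi_v-\bar l_j^\pm\circ\bar\xi_o)|^2/|x|^{5/2}<\infty$ forces $\bar w_j^\pm(0,y)=\bar\xi_v(y)-\bar l_j^\pm(\bar\xi_o(y))$ on $V$). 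Summing over $j$ and $\pm$ and using $\bar l_j^\pm|_V\equiv0$ (the pages all contain $V$), the terms $\bar\xi_v(y)$ add up and the $\bar l_j^\pm\circ\bar\xi_o$ pieces vanish on $V$; evaluating Wickramasekera's average instead multiplies by $\mu_j^\pm$. To actually get $0$ I would invoke that $\bar\xi_v$ and $\bar\xi_o$ are even in $t$ with $\bar\xi_o$ valued in $\pi_0\cap V^\perp$ and that the defining normalization places the relevant point (a limit of $Q$-points) at the origin, so the trace at $y=0$ is zero; alternatively, $\omega$ being harmonic and even in $t$ with $\partial_t\omega=0$ on $V$ means $\omega$ restricted to $V$ is itself harmonic on $V\cap\bB_{1/8}$, and the value $\omega(0)$ is then pinned by the normalization $v(0)=Q\a{0}$-type condition passed to the limit.

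\textbf{Main obstacle.} The delicate point is the boundary-term bookkeeping in the integration by parts that converts \eqref{e:variational-averages1}/\eqref{e:variational-averages2} into the pointwise conditions on $V$: one must be careful that the class of admissible test functions $w$ (even in $t$, compactly supported, and — per the proof of Proposition \ref{p:variational-averages} — eventually independent of $t$ near $V$) is rich enough to conclude $\partial_v(\partial_t\omega)=0$ for all $v\in V$, and that the decay $\bar w_j^\pm=O(|x|)$ near $V$ is strong enough to justify discarding the conormal integral over the inner boundary $\partial B_r(V)$ as $r\to0$ (this is where \eqref{bu:nonconcentration w12} with its $|x|^{-5/2}$ weight does the real work, guaranteeing the trace exists and the error terms vanish). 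Once the boundary analysis is pinned down, everything else — harmonicity on the two half-balls, $C^1$-matching, evenness, and the vanishing of the constant — is routine. I expect the proof to be essentially a transcription of the corresponding step in \cite{DHMSS} for $\omega$, with the only genuinely new ingredient being the parallel treatment of $\varpi$ via \eqref{e:variational-averages2}.
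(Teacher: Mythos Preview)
Your overall plan for part (i) is correct and matches the paper's approach (which simply refers to \cite[Lemma~11.2]{DHMSS}): harmonicity on each half follows from Proposition~\ref{p:strong-convergence}(iii), the variational identities \eqref{e:variational-averages1}--\eqref{e:variational-averages2} force the trace of $\partial_t\omega$ (resp.\ $\partial_t\varpi$) on $V$ to be constant in the spine variables, and one then extends harmonically. Two points need correction, however.

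First, \eqref{bu:c_onealpha} does \emph{not} give $C^1$ regularity of $\bar w_j^\pm$ up to $V$: that estimate reads $|t|^{m/2+1}\big(|t|^{-1}|\bar w_j^\pm|+|D\bar w_j^\pm|+\ldots\big)\le C$, which permits $|D\bar w_j^\pm|\sim|t|^{-m/2-1}$ near $V$. The existence of the trace of $\partial_t\omega$ on $V$ comes instead from the weighted gradient bound in \eqref{bu:nonconcentration w12}, namely $\int |\nabla\bar w_j^\pm|^2/|x|^{1/2}<\infty$; this is exactly the remark the paper singles out after citing \cite{DHMSS}. You do flag \eqref{bu:nonconcentration w12} as ``doing the real work'' in your closing paragraph, but it should be invoked at this earlier step, not only for the inner boundary error. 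Relatedly, your argument that $\partial_t\omega\equiv 0$ on $V$ (``odd in $t$ and continuous'') is circular, since the oddness presupposes the even extension you are trying to justify. Fortunately it is also unnecessary: the lemma only asserts $\partial^2_{tv}\omega=0$ on $V$, i.e.\ $\partial_t\omega|_V$ is constant, and once that is known one extends by subtracting the linear function $ct$, even-reflecting, and invoking Weyl's lemma.

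Second, your route to $\omega(0)=\varpi(0)=0$ via the binding functions does not close. Even granting $\bar w_j^\pm(0,y)=\bar\xi_v(0,y)-\mu_j^\pm\bar\xi_o(0,y)$, summing yields $\omega(0,0)=2Q\,\bar\xi_v(0,0)-\alpha\,\bar\xi_o(0,0)$ with $\alpha=\sum_{\pm,j}\mu_j^\pm$, and there is no a~priori reason these values vanish; in fact the implication in the paper runs the other way (Lemma~\ref{l:media}(ii) is used in the proof of Proposition~\ref{p:linear-decay} to deduce $h_v(0)=h_o(0)=0$). The intended tool is \eqref{bu:radial}: once $\omega$ is extended harmonically and hence smoothly to $\bB_{1/8}\cap\pi_0$, summing \eqref{bu:radial} over $j$ and $\pm$ (the reflected terms contribute the $\pi_0^-$ integrals by change of variables) gives $\int_{\pi_0^+\cap\bB_{1/8}}|z|^{2-m}|\partial_r(\omega(z)/|z|)|^2\,dz<\infty$, while the expansion $\omega(z)=\omega(0)+O(|z|)$ shows this integral contains a term $\sim\omega(0)^2\int_0^{1/8}\rho^{-3}\,d\rho$, forcing $\omega(0)=0$. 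The same computation with the bounded weights $\mu_j^\pm$ (Lemma~\ref{l:opened-and-bounded}) handles $\varpi$.
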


The proof is verbatim the same as the ones for the analogous claims in \cite[Lemma 11.2]{DHMSS} and is left to the reader.
 We  just remark that corollary \ref{c:scoppia} (iii) together with \eqref{bu:nonconcentration w12} implies that 
\[\int_{B_{\frac{1}{8}} \cap \pi_0^+} \frac{|\nabla \varpi|^2}{t^\frac12} \,dtdy \le C\,.\]
Hence $\partial_t\varpi$ has a well-defined trace on $V$.

\subsection{Proof of Proposition \texorpdfstring{\ref{p:linear-decay}}{decay-linear}} We start by claiming the existence of a $t_k \in [2^{-k-1}, 2^{-k}]$ such that the following estimate holds for every $t\in [2^{-k+1}, 2^{-4}]$,
\begin{equation}\label{e:first-claim}
\int_t^{2t}\int_{\bB_{1/16}\cap V} |(\bar \xi_v (\tau, y) - \bar{l}_j^\pm (\bar\xi_o (\tau, y))) - (\bar\xi_v (t_k, y) - \bar{l}_j^\pm (\bar\xi_o (t_k, y)))|^2\, dy\, d\tau
\leq C |t|^{\sfrac{5}{2}} \, .
\end{equation}
Indeed denote by $f$ the function $\bar w^\pm_j - \bar l^\pm_j$ and by $g$ the function $\bar\xi_v  - \bar l_j^\pm\circ \bar\xi_o$ and first of all use Fubini and \eqref{bu:nonconcentration w12} to choose a $t_k \in [2^{-k}, 2^{-k+1}]$ such that 
\begin{equation}\label{e:Fubini}
\int_{\bB_{1/16}\cap V} |f (t_k, y) - g (t_k, y)|^2
dy\, dt\, \leq C 2^{-3 k/2}\, .
\end{equation}
Hence integrate in $t$ and use the second part of \eqref{bu:nonconcentration w12} to prove 
\[
\int_{2^{-k}}^{2^{-k+1}} \int_{\bB_{1/16}\cap V} |f (t, y)-f(t_k,y)|^2 dy\, dt \leq C 2^{-5 k/2}
\]
Considering that, again by \eqref{bu:nonconcentration w12}
\[
\int_{2^{-k}}^{2^{-k+1}} \int_{\bB_{1/16}\cap V}  |f(t,y) - g (t,y)|^2 dy\, dt \leq C 2^{-5 k/2}\, ,
\]
we can estimate
\begin{align}
&\int_{2^{-k}}^{2^{-k+1}} \int_{\bB_{1/16}\cap V} |g (t, y) - g (t_k, y)|^2 dy\, dt\nonumber\\
\leq & 2 \int_{2^{-k}}^{2^{-k+1}} \int_{\bB_{1/16}\cap V} (|g (t, y) - f (t, y)|^2 + |f (t, y) - f (t_k, y)|^2 + |f (t_k, y) - g(t_k, y)|^2\, dy\, dt\nonumber\\
\leq & C 2^{-5k/2}\, .\label{e:dyadic-1}
\end{align}
Observe also that we can use the second part of  \eqref{bu:nonconcentration w12} again to prove 
\[
\int_{V \cap \bB_{1/16}} |f (t_j, y)- f (t_k, y)|^2\, dy
\leq 2^{-3j/2} \qquad \forall j \leq k\, .
\]
Combined with \eqref{e:Fubini} we then gain
\begin{equation}\label{e:dyadic-2}
\int_{V \cap \bB_{1/16}} |g (t_j, y)- g (t_k, y)|^2\, dy
\leq 2^{-3j/2} \qquad \forall j \leq k\, .
\end{equation}
We can now combine \eqref{e:dyadic-1} and \eqref{e:dyadic-2} to reach
\begin{align*}
&\int_{2^{-k}}^{2^{-j}} |g (t,y) - g (t_k, y)|^2\, dt\, dy\\
\leq & \sum_{j+1 \leq i \leq k} 
\int_{2^{-i}}^{2^{-i+1}} 2 (|g (t,y)-g (t_i, y)|^2 + |g (t_i, y)- g (t_k, y)|^2)\, dy\, dt \leq C \sum_{j+1 \leq i \leq k} 2^{-5i/2}\, .
\end{align*}
Recall next the definition of the coefficients $\mu_j^\pm$, so that $\bar{l}^\pm_j (\bar\xi_o (t, y) ) = \mu^\pm_j (\bar\xi_o (t, y))$ upon identifying $\pi_0 \cap V^\perp$ with $\R$. Use then \eqref{e:opened-and-bounded} in Lemma \ref{l:opened-and-bounded} to conclude the existence of two indices in the collection $\{(\pm, j)\}$ whose absolute value of the difference is larger than an absolute positive constant. Let $\bar\mu$ and $\hat\mu$ be the corresponding coefficients and observe that the inverse of the matrix 
\[
M := \left(
\begin{array}{ll}
1 & - \bar \mu \\
1 & - \hat \mu
\end{array}
\right)
\]
is bounded by a universal constant. In particular we can write 
$\bar\xi_v$ and $\bar\xi_o$ as a linear combination of $\bar\xi_v - \bar \mu \bar\xi_o$ and $\bar\xi_v - \hat \mu \bar\xi_o$ to pass from \eqref{e:first-claim} to 
\begin{align}
\int_t^{2t} \int_{\bB_{1/16}\cap V} |\bar\xi_v (\tau, y) - \bar\xi_v (t_k, y)|^2\, dy\, d\tau
& \leq C |t|^{\sfrac{5}{2}}\\
\int_t^{2t}\int_{\bB_{1/16}\cap V} |\bar\xi_o (\tau, y) -\bar\xi_o (t_k, y)|^2\, dy\, d\tau
& \leq C |t|^{\sfrac{5}{2}}\, .
\end{align}
Note moreover that from the above estimates it follows that the sequences $\bar\xi_v (t_k, \cdot)$ and $\bar\xi_o (t_k, \cdot)$ are Cauchy in $L^2 (V\cap \bB_{1/16})$ and their limits are bounded functions
\begin{align*}
& \bar\xi_v (0, \cdot): V\cap \bB_{1/16} \to \pi_0^{\perp_0}\\
& \bar\xi_o (0, \cdot): V\cap \bB_{1/16} \to \pi_0 \cap V^\perp
\end{align*}
with the property that 
\begin{align}
\int_{\bB_{1/16}\cap V} |\bar\xi_v (t_k, y) - \bar\xi_v (0, y)|^2\, dy
& \leq C 2^{-3k/2}\\
\int_{\bB_{1/16}\cap V} |\bar\xi_o (t_k, y) - \bar\xi_o (0, y)|^2\, dy
& \leq C 2^{-3k/2}
\end{align}
In particular we can combine this information again with \eqref{bu:nonconcentration w12} to estimate
\[
\int_{2^{-k}}^{2^{-k+1}} |\bar w^\pm_j (t, y)- (\bar\xi_v (0,y) - \mu^\pm_j \bar\xi_o (0,y))|^2\, dt\, dy \leq C 2^{-5k/2}
\]
Summing over all the dyadic scales we then conclude
\begin{equation}\label{e:weighted-trace}
\int_{\bB_{1/16}\cap \pi_0^\pm} \frac{|\bar w^\pm_j (t, y)- (\bar\xi_v (0,y) - \mu^\pm_j \bar\xi_o (0,y))|^2}{|t|^{\frac{9}{4}}} dt\, dy \leq C \sum_{k=4}^\infty 2^{-k/4}\leq C\, .
\end{equation}
We next introduce the coefficients 
\begin{align*}
\alpha &:= \sum_j (\mu^+_j + \mu^-_j)\\
\beta &:= \sum_j ((\mu^+_j)^2 + (\mu^-_j)^2)\, ,
\end{align*}
and use \eqref{e:average} and \eqref{e:weight-average} to show that 
\begin{align}
\int_{\bB_{1/16}\cap \pi_0^+} \frac{|\omega (t, y) - (2Q \bar\xi_v (0, y) - \alpha \bar\xi_o (0,y))|^2}{|t|^{\frac{9}{4}}} dy\, dt & \leq C\\
\int_{\bB_{1/16}\cap \pi_0^+} \frac{|\varpi (t, y) - (\alpha \bar\xi_v (0, y) - \beta \bar\xi_o (0,y))|^2}{|t|^{\frac{9}{4}}} dy\, dt & \leq C
\end{align}
Consider moreover the $2\times 2$ matrix 
\[
M := \left( 
\begin{array}{ll}
2Q & - \alpha\\
\alpha & - \beta
\end{array}
\right)
\]
and observe that, by Cauchy-Schwartz and \eqref{e:opened-and-bounded}, 
\[
C^{-1} \leq -\det M \leq |M|^2 \leq C\, .
\]
In particular the inverse 
\[
M^{-1} = \left(
\begin{array}{ll}
\alpha' & \beta'\\
\gamma' & \delta'
\end{array}
\right)
\]
satsifies $|M^{-1}|\leq C$. Now we gather therefore 
\begin{align}
\int_{\bB_{1/16}\cap \pi_0^+} \frac{|\bar\xi_v (0, y)
- (\alpha' \omega (t,y) + \beta' \varpi (t,y))|^2}{|t|^{\frac{9}{4}}} dy\, dt & \leq C\\
\int_{\bB_{1/16}\cap \pi_0^+} \frac{|\bar\xi_o (0, v) - (\gamma' \omega (t, y) + \delta \varpi (t, y))|^2}{|t|^{\frac{9}{4}}} dy\, dt & \leq C\, .
\end{align}
Therefore $\bar\xi_v (0, \cdot)$ is the trace of the harmonic function 
\[
h_v := \alpha' \omega + \beta' \varpi
\]
while $\bar\xi_o (0, \cdot)$ is the trace of the harmonic function 
\[
h_o := \gamma' \omega + \delta' \varpi\, .
\]
Since by Lemma \ref{l:media}(i) both functions can be extended as harmonic functions on $\bB_{1/8}\cap \pi_0$ and their $L^2$ norms are bounded by a universal constant, we conclude that 
\begin{align}
|\nabla h_o (0)| + |\nabla h_v (0)|
&\leq C\\
\|D^2 h_o\|_{C^0 (\bB_{1/16}\cap \pi_0)} + \|D^2 h_v \|_{C^0 (\bB_{1/16}\cap \pi_0)} &\leq C\, .
\end{align}
Next, consider the harmonic functions
\[
\hat{w}^\pm_j := \bar w^\pm_j - (h_v - \mu^\pm_j h_o)\, .
\]
Observe that the trace of these harmonic function on $V\cap \bB_{1/16}$ is identically $0$. So, by Schwartz reflection they can be extended to an odd harmonic function on $\pi_0 \cap \bB_{1/16}$. Consider thus that 
\begin{align}
|\nabla \hat{w}^\pm_j (0)| &\leq C\\
\|D^2 \hat{w}^\pm_j\|_{C^2 (\bB_{1/32} \cap \pi_0)} & \leq C\, .
\end{align}
Moreover $h_o (0) = h_v (0) =0$ by Lemma \ref{l:media} and by \eqref{e:weighted-trace}, while $\hat{w}^\pm_j (0) =0$ because the trace of $\hat{w}^\pm_j$ on $V$ vanishes identically, we conclude that 
\begin{equation}\label{e:e:decay}
\int_{\bB_\rho \cap \pi_0^\pm} 
|\bar w^\pm_j (z) - \nabla \hat{w}^\pm_j (0)\cdot z - (\nabla h_v (0) \cdot z- \mu^\pm_j \nabla h_o (0)\cdot z)|^2\, dz 
\leq C \rho^{m+4}\, .
\end{equation}
Next observe that $\nabla \hat{w}^\pm_j (0)$ must be directed along the unit vector $e\in \pi_0^+\cap V^\perp$, given that $\hat{w}^\pm_j$ vanishes identically on $V$. Thus, if we introduce the orthonormal coordinates $y_1, \ldots, y_{m-1}$ on $V$, \eqref{e:decad-lineare} holds for the linear functions
\begin{align}
a^\pm_j (t) &= \left(\frac{\partial \hat{w}^\pm_j}{\partial e} (0)+ \frac{\partial h_v}{\partial e} (0) - \mu_j^\pm \frac{\partial h_o}{\partial e} (0)\right) t\\ \label{vertical part of rotation}
b_v (y) &= \sum_i \frac{\partial h_v}{\partial y_i} (0) y_i\\ \label{horizontal part of rotation}
b_o (y) &= \sum_i \frac{\partial h_o}{\partial y_i} (0) y_i\, .
\end{align}

\section{Proof of Proposition \texorpdfstring{\ref{p:decay-2}}{decay-2}: final step} \label{s:final-step}

In this section we finally complete the proof of Proposition \ref{p:decay-2}.

We let $r_2>0$ be a fixed small radius, whose choice will be specified later, and argue by contradiction. Assuming that the proposition is false, we find a blow-up sequence $(T_k, \Sigma_k, \pi_k, \bS_k)$, together with linear subspaces $\tau_0 = T_0 \Sigma_k\supset \pi_0\supset V (\bS_k) = V$ as in Definition \ref{d:scoppia}, with the additional property that 
\begin{equation}\label{e:key-contradiction-1}
\bbE (T_k, 0, r_2)\geq \frac{1}{2} \bbE_k
\end{equation}
We can therefore, upon extraction of a suitable subsequence, assume that Corollary \ref{c:scoppia} and Proposition \ref{p:strong-convergence} apply, and we let $\bar w^{\pm}_j: \bB_{1/8} \to \pi_0^{\perp_0}$ be the corresponding functions. We then consider the linear maps $b_v$, $b_o$, and $a^\pm_j$ produced by Proposition \ref{p:linear-decay}. 

The linear maps $a^\pm_j$ are used to ``adjust'' the pages of ${\tilde{\bS}}_k$ in the following way. For each $j$ we consider the half-spaces $\bar \bH^{\pm,k}_j$ given by the graphs over $\pi_0^\pm$ of the linear functions
\[
\tilde{l}^\pm_j + \bbE_k^{\sfrac{1}{2}} a^\pm_j\, .
\]
Hence we let $\bar \bS_k$ be the open book with pages $\bar \bH^{\pm,k}_j$. Note that the open book $\bar \bS_k$ has the same spine $V$ as $\tilde{\bS}_k$.

The linear maps $b_o$ and $b_v$ will instead be used to rotate suitably $\bar \bS_k$. More precisely, with a slight abuse of notation we let $b_o$ and $b_v$ denote the vectors such that $b_o(y) = b_o \cdot y$ and $b_v(y) = b_v \cdot y$ for every $y \in V$, see \eqref{vertical part of rotation}-\eqref{horizontal part of rotation}, and we then let $V \to \tau_0=V\oplus \R e \oplus \R e_{m+1}$ be the linear map
\[
V \ni v\mapsto (e\otimes \beta_{\pi_0}(\tilde\bS_k)^{-1}\, b_o) (v) + (e_{m+1}\otimes b_v) (v) \;\in \pi_0^{\perp_0}\times (V^\perp \cap \pi_0) = V^\perp\cap \tau_0 \subset \tau_0\, .
\]
Observe that there is a unique skew-symmetric linear map $b_k: \tau_0 \to \tau_0$ which extends it, i.e. $b_k=(e\otimes \beta_{\pi_0}(\tilde\bS_k)^{-1}\, b_o + e_{m+1} \otimes b_v ) - (\beta_{\pi_0}(\tilde\bS_k)^{-1}\,b_o\otimes e  + b_v \otimes e_{m+1}) $. For every real number $s$ we consider the exponential map $\exp (s b_k) : \tau_0 \to \tau_0$ and observe that it is an element of ${\rm SO} (\tau_0)$. We then set $\hat \bS_k := \exp (\bbE_k^{\sfrac{1}{2}} b_k) (\bar \bS_k)$. Observe that, by construction, $\hat \bS_k$ is an open book in $\mathscr{B} (0, \Sigma_k)$. The proof of Proposition \ref{p:decay-2} will then be completed by the following\\

\begin{lemma}\label{l:contradiction}
Let $(T_k, \Sigma_k, \pi_k, \bS_k)$ be the blow-up sequence fixed above and consider the open book $\hat\bS_k$ just defined. Then there is a constant $C$, independent of $k$ and $\rho$, such that 
\begin{equation}\label{e:key-contradiction-2}
\limsup_{k\to \infty} \bbE_k^{-1} (\bbE (T_k, \hat \bS_k, 0, \rho)) \leq C \rho^2
\end{equation}
for every fixed $\rho < \frac{1}{32}$. 
\end{lemma}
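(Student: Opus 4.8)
The estimate \eqref{e:key-contradiction-2} for the double-sided excess $\bbE (T_k, \hat \bS_k, 0, \rho) = \bE (T_k, \hat \bS_k, 0, \rho) + \bE(\hat \bS_k, T_k, 0, \rho)$ is obtained by estimating the two one-sided pieces separately; both reduce, after normalizing by $\bbE_k$, to the integral decay property \eqref{e:decad-lineare} for the blow-up maps $\bar w^\pm_j$ together with the strong convergence in Proposition \ref{p:strong-convergence}. The first step is to record the effect of the three modifications that produce $\hat\bS_k$ from $\tilde\bS_k$. The pages of $\bar \bS_k$ are graphs of $\tilde l^\pm_j + \bbE_k^{1/2} a^\pm_j$, so $\beta_{\pi_0}(\bar\bS_k) \le \beta_{\pi_0}(\tilde\bS_k) + C\bbE_k^{1/2}$; by \eqref{eq:angle_bound_new_book1} and \eqref{e:bu-assumption-1} the angle $\beta_{\pi_0}(\tilde \bS_k)$ is of order $\bar\bE_k^{1/2}$, hence infinitesimally larger than $\bbE_k^{1/2}$, so $\bar\bS_k$ is still a ``small-angle'' open book and $\dist(\cdot, \bar\bS_k)$ and $\dist(\cdot, \tilde\bS_k)$ differ by a quantity controlled, via Lemma \ref{l:geometry-cones}(b), by $\bbE_k^{1/2}|a^\pm_j|$ plus lower order terms. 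The rotation $\exp(\bbE_k^{1/2} b_k)$ satisfies $\|\exp(\bbE_k^{1/2} b_k) - {\rm Id}\| \le C\bbE_k^{1/2}$ because $\|b_v\|, \|\beta_{\pi_0}(\tilde\bS_k)^{-1} b_o\| \le C$ by \eqref{e:bound-on-new-linear-maps} and \eqref{eq:angle_bound_new_book1} (here the extra factor $\beta_{\pi_0}(\tilde\bS_k)^{-1}$ on the horizontal part exactly compensates the fact that rotating a plane of slope $\beta_{\pi_0}(\tilde\bS_k)$ in the $e$--direction by an angle $\theta$ produces a vertical displacement of order $\beta_{\pi_0}(\tilde\bS_k)\,\theta$, which must match the intended vertical contribution of $b_o$). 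Thus by Lemma \ref{l:geometry-cones}(a) $\dist(\cdot, \hat\bS_k)$ and $\dist(\cdot, \bar\bS_k)$ differ by $C\bbE_k^{1/2}|q|$ on $\bB_\rho$.

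Combining these two steps, for $q \in \spt(T_k) \cap \bB_\rho$ with coordinates $(x,y,w)$ one checks that
\[
\dist(q, \hat\bS_k) \le \dist\big(q, \spt(\bG_{\tilde\bS_k}(\tilde v_k))\big) + \big|\tilde v_k(\text{corresponding point}) - (\tilde l^\pm_j + \bbE_k^{1/2} a^\pm_j)(\cdot) - \text{(rotation terms)}\big| + (\text{l.o.t.}),
\]
and the point is that, after dividing by $\bbE_k^{1/2}$, the right-hand side converges (using the $C^1$ convergence $\bar w^{k,\pm}_j \to \bar w^\pm_j$ from Corollary \ref{c:scoppia}(i), the convergence $\bar l^{k,\pm}_j \to \bar l^\pm_j$, and the definitions of $\bar \xi_o, \bar\xi_v$) to exactly the expression appearing inside \eqref{e:decad-lineare}, namely $\bar w^\pm_j(t,y) - a^\pm_j(t) - (b_v(y) - \bar l^\pm_j(b_o(y)))$. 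So $\limsup_k \bbE_k^{-1} \bE(T_k, \hat\bS_k, 0, \rho) \le C \rho^{-(m+2)} \sum_{\pm,j} \int_{\pi_0^\pm \cap \bB_\rho} |\bar w^\pm_j(t,y) - a^\pm_j(t) - (b_v(y) - \bar l^\pm_j(b_o(y)))|^2 \le C\rho^{-(m+2)} \rho^{m+4} = C\rho^2$ by Proposition \ref{p:linear-decay}. To be careful one must handle the region near the spine $V$ where the graphical parametrization degenerates: there one uses the spine estimate \eqref{e:est spine} of Theorem \ref{t:binding} (which gives a superquadratic bound on $\int \dist^2(q,\tilde\bS)$ weighted by a negative power of $|x|$) to show the contribution of $\bB_\rho \cap B_{\varrho_\infty}(V)$ is negligible, exactly as in the corresponding step of \cite{DHMSS}; this is where the freedom to take $\bar\eta, \bar\varepsilon$ small — hence $\varrho_\infty$ small by Theorem \ref{thm:graph_v1}(v) — is used.

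For the reverse one-sided excess $\bE(\hat\bS_k, T_k, 0, \rho)$, i.e. the integral over the portion of $\hat\bS_k$ away from a neighbourhood of $V$ of $\dist^2(\cdot, \spt(T_k))$, one argues symmetrically: the relevant part of $\hat\bS_k$ is, up to the same infinitesimal modifications, the union of graphs of the reparametrized functions $\tilde u^{k,\pm}_j$ (Corollary \ref{cor:reparametrization}), and the reverse estimate \eqref{e:reverse-control} of Proposition \ref{p:strong-convergence} together with the strong convergence \eqref{e:strong} lets one bound $\limsup_k \bbE_k^{-1}\bE(\hat\bS_k, T_k, 0, \rho)$ by the same type of integral $C\rho^{-(m+2)}\sum_{\pm,j}\int_{\pi_0^\pm\cap \bB_\rho}|\bar w^\pm_j - a^\pm_j - (b_v - \bar l^\pm_j\circ b_o)|^2 \le C\rho^2$, again via \eqref{e:decad-lineare}. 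Finally one observes that \eqref{e:key-contradiction-2} contradicts \eqref{e:key-contradiction-1}: choosing $\rho = r_2$ with $C r_2^2 < 1/2$ gives, for $k$ large, $\bbE(T_k, 0, r_2) \le \bbE(T_k, \hat\bS_k, 0, r_2) \le \tfrac34 C r_2^2 \, \bbE_k < \tfrac12 \bbE_k$, contradicting \eqref{e:key-contradiction-1}. This completes the proof of Proposition \ref{p:decay-2}.

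\textbf{Main obstacle.} The delicate point is bookkeeping the three infinitesimal corrections (shift by $a^\pm_j$, rotation by $b_k$) so that their combined vertical/horizontal effect on $\dist(\cdot, \hat\bS_k)$ is \emph{exactly} the affine function subtracted in \eqref{e:decad-lineare}, and not merely something of the same order — in particular getting the scaling factor $\beta_{\pi_0}(\tilde\bS_k)^{-1}$ in front of $b_o$ right, and verifying (using Lemma \ref{l:geometry-cones}(b)–(c) and \eqref{eq:angle_bound_new_book}) that all the error terms genuinely are $o(\bbE_k^{1/2})$ uniformly on $\bB_\rho \setminus B_{\varrho_\infty}(V)$, while the leftover piece near $V$ is absorbed by the superquadratic spine estimate. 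Once the algebra of these corrections is set up correctly, the decay is an immediate consequence of Proposition \ref{p:linear-decay} and Proposition \ref{p:strong-convergence}.
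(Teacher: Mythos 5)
Your proposal follows the same overall architecture as the paper's proof: split $\bB_\rho$ into a neighbourhood $B_r(V)$ of the spine (handled by the superquadratic spine estimate \eqref{e:est spine}, giving a contribution $Cr^{1/2}\bbE_k$) and its complement (handled by graphical comparison and the integral decay \eqref{e:decad-lineare}, giving $C\rho^{m+4}\bbE_k$), then divide by $\rho^{m+2}\bbE_k$ and let $r\downarrow 0$. That is exactly what the paper does, and the contradiction step at the end is also identical.

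Two places where your bookkeeping is slightly off, though neither is fatal. First, you invoke Lemma \ref{l:geometry-cones}(b) to control the change from $\tilde\bS_k$ to $\bar\bS_k$; but (b) governs a translation $q+\bS\to q'+\bS$ of a fixed book, whereas here the pages are being \emph{tilted} by the linear maps $\bbE_k^{1/2}a^\pm_j$, not translated. The correct estimate $\dist(\cdot,\bar\bS_k)=\dist(\cdot,\tilde\bS_k)+O(\bbE_k^{1/2})$ on $\bB_1$ is still elementary (a tilt of slope $s$ moves a page by at most $s|q|$), but Lemma \ref{l:geometry-cones}(b) is not the right citation. The paper instead sidesteps all of this by computing the transformed pages directly: it writes the rotation $\exp(\bbE_k^{1/2}b_k)$ as a flow, applies the pushforward-of-a-graph expansion \eqref{eq.transformedgraph}, and obtains the \emph{exact} affine formula \eqref{eq.trasnformedlinearfunction} for $\hat l_j$, from which the match with the affine function subtracted in \eqref{e:decad-lineare} is transparent. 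Second, your treatment of the reverse one-sided excess $\bE(\hat\bS_k,T_k,0,\rho)$ invokes \eqref{e:reverse-control}, but that estimate concerns $\bE(T_k,\bS_k,0,\sfrac18)$ — the excess of $T$ from $\bS$ at a fixed scale — not the excess of the adjusted book from $T$ at scale $\rho$. The paper's route is simpler: away from $B_{\rho/8}(V)$ both $T_k$ and $\hat\bS_k$ are graphs over $\pi_0$, so the reverse excess is bounded by the same $L^2$ graph-distance that controls the forward excess, and no separate reverse estimate is needed. With these two citations replaced by the correct tools, your argument is the paper's argument.
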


Indeed, given that $\hat\bS_k \in \mathscr{B} (0, \Sigma_k)$, from \eqref{e:key-contradiction-2} we conclude
\begin{equation}\label{e:key-contradiction-3}
\limsup_{k\to \infty} \bbE_k^{-1} (\bbE (T_k, 0, r_2)) \leq C r_2^{2}\, .
\end{equation}
Since $C$ is independent of $r_2$, by choosing $r_2$ sufficiently small we get that \eqref{e:key-contradiction-1} and \eqref{e:key-contradiction-3} are in contradiction, thus proving Proposition \ref{p:decay-2}.

\begin{proof}[Proof of Lemma \ref{l:contradiction}] Let $\Phi_s(x,z)= (x,z) + s(X_o(x,z), X_v(x,z) + O(s^2)$ the flow of the vector field $X(x,z)=(X_o(x,z),X_v(x,z)) \in \R^m\times \R^n$ and $f\colon \Omega\subset \R^m \to \R^n$ a $C^1$ regular map. Then there exists $f_s\colon \Omega_s \to \R^n$ such that 
\[ \bG_{f_s}(\Omega_s) = {\Phi_s}_\sharp \bG_f(\Omega)\]
where we have the expansion
\begin{equation}\label{eq.transformedgraph}f_s(x) = f(x-s X_o(x,f(x))) + s X_v(x,f(x)) + O(s^2)\,.\end{equation}
Indeed, note that $\phi_s(x) =\mathbf{p}_{\pi_0}\Phi_s(x,f(x))= x + s X_o(x,f(x)) + O(s^2)$ is a $C^1$-diffeomorphism from $\Omega$ to $\Omega_s$ with inverse 
\[\phi^{-1}_s(x) = x- sX_o(x,f(x)) + O(s^2)\,, \]
and observe that
 \[f_s(x) = \mathbf{p}_{\pi_0^\perp} \Phi_s(\cdot, f(\cdot))\circ \phi_s^{-1}(x)\,,\]
 has precisely the claimed properties.
 
 \medskip
 
Next let us fix a $k$ in the contradiction sequence, which we will not write in the following, and  apply \eqref{eq.transformedgraph} to one of the linear functions $f=(\tilde l_j + \bbE^\frac12 a_j)\,e_{m+1}$ over the domain $\Omega=\pi_0^+$ and with $s=\bbE^{\frac12}$ and $\Phi_s$ given by the rotation $\exp(\bbE^\frac12 b)$. Notice that $\pi_0 \cap \{ t>r\} \subset \Omega_{\bbE^\frac12}$ for sufficient small $\bbE$, and so
\begin{equation}\label{eq.trasnformedlinearfunction}
    \hat{l}_j(t,y):=(f)_{\bbE^\frac12} (t,y) = \left(\tilde l_j(t) + \bbE^{\frac12} a_j t  - \bbE^{\frac12} \bar l_j (b_o(y)) \right) e_{m+1} + \bbE^{\frac12} b_v(y) + O (\bbE)
\end{equation}
where we have used that $X_o(t,y,z)= e \, \beta_{\pi_0}(\tilde\bS_k)^{-1} \, (b_o\cdot y)- \beta_{\pi_0}(\tilde\bS_k)^{-1}\,b_o \,t - b_v (e_{m+1}\cdot z) $ and $X_v(t,y,z)= e_{m+1} (b_v\cdot y)$, with $(t,y,z) \in V^\perp\cap \pi_0\times V\times \pi_0^\perp $, the definition $\bar l_j = \beta_{\pi_0}(\tilde\bS_k)^{-1} \, \tilde l_j$, and moreover that $\tilde l_j(b_o) = \tilde l_j (b_v) = \tilde l_j (0)=0$ given that the vectors $b_0$ and $b_v$ are directed along $V$.

We are now ready to estimate the excess along the blow-up sequence. We observe that $\dist(q,\hat{\bS}) \le \dist(q, \tilde{\bS}) + \bbE^\frac12 |q|$ hence we deduce from \eqref{e:est spine} for any $\rho_\infty < r$ 
\[ \int_{\bB_{\frac18} \cap \bB_r(V)} \dist(q, \hat{\bS})^2 \, d\norm{T}(q) \le C r^\frac12 \bbE\,\,\]
so that
\begin{equation}\label{eq.key-contradiction-leftover1}
    \limsup_{k \to \infty} \bbE^{-1}_k \int_{\bB_{\rho} \cap \bB_r(V)} \dist(q, \hat{\bS})^2 \, d\norm{T_k}(q) \le C r^\frac12\,.
\end{equation}
For $\bbE$ sufficiently small $T$ agrees with the graph of the multi-function $u^\pm_j, j=1, \dotsc , m$ over $\pi_0$. Furthermore the $\bbE^{-\frac12} (u^\pm_j- \tilde l_j^\pm)$ converge to the harmonic functions $\bar w_j^\pm$. 
Hence we conclude, using \eqref{eq.trasnformedlinearfunction} in the second inequality below, that  
\begin{align*}
    &\int_{\bB_{\rho}\cap \bB_r(V)^c} \dist(q, \hat{\bS})^2 d\norm{T} \le \sum_{\pm, j} \int_{|x|>\frac{r}{2}} |u^\pm_j - \hat{l}^\pm_j|^2\\ &\le C \sum_{\pm, j} \int_{|x|>\frac{r}{2}} |(u^\pm_j-\tilde l^\pm_j) -\bbE^\frac12 \left(a_j^\pm t+ b_v(y) - \bar l_j^\pm (b_o (y))\right)|^2 + O(\bbE^2)\,.
\end{align*}
Thus by Proposition \ref{p:linear-decay} 
\begin{align}\label{eq.key-contradiction-leftover2}
    \limsup_{k \to \infty} &\;\bbE^{-1}_k \int_{\bB_{\rho} \cap \bB_r(V)^c} \dist(q, \hat{\bS})^2 \, d\norm{T_k}(q)\notag\\
    &\le C \int_{\pi_0^\pm\cap \bB_{\rho}} \big|\bar{w}^\pm_j (t,y) - a^\pm_j (t) - (b_v (y) - \bar l_j^{\pm} (b_o (y)))\big|^2\, dy\, dt \leq C \rho^{m+4}\,.
\end{align}
For the double sided excess we need to bound the distance from $\hat{\bS}$ to $T$ outside of $\bB_{\frac{\rho}{8}}(V)$. Since $T$ and $\hat{\bS}$ are graph over $\pi_0$ in this region, we can estimate it as above by the distance between the graphs. Hence combining \eqref{eq.key-contradiction-leftover1} with \eqref{eq.key-contradiction-leftover2} we conclude
\[\limsup_{k \to \infty}\; \bbE^{-1}_k(\bbE(T_k,\hat\bS_k,v 0,\rho)) \le C\frac{r^\frac12}{\rho^{m+2}} + C \rho^2 \,.\]
Since $r>0$ is arbitrary, \eqref{e:key-contradiction-2} follows.
\end{proof}

\appendix

\section{Proof of Lemma \texorpdfstring{\ref{l:monot-with-A^2}}{l:monot}}

The proof follows closely the one given in \cite{DHMSS}
for \cite[Lemma 8.2]{DHMSS}

For any $0<r<R$, we consider the vector field
\[
W_{a,r} (q):= \left(\frac{1}{\max(r,\abs{q})^{m+a}}-\frac{1}{R^{m+a}}\right)^+ q\,.
\]
We then insert $g^2 W_{a,r}$ in the first variation formula, cf. \cite[Lemma 5.1]{DHMSS} to derive
\begin{align*}
- &\int_{\bB_R} g^2 W_{a,r} \cdot \vec{H}_T\, d\norm{T} =\frac{m}{r^{m+a}} \int_{\bB_r} g^2 \, d\norm{T} - \frac{m}{R^{m+a}} \int_{\bB_R} g^2 \, d\norm{T}\\&-a \int_{\bB_R\setminus \bB_r} \frac{g^2 (q)}{\abs{q}^{m+a}}\, d\norm{T} (q) + (m+a) \int_{\bB_R\setminus \bB_r} g^2 (q)\frac{\abs{q^\perp}^2}{\abs{q}^{m+a+2}}\, d\norm{T} (q)\\
&+ \int_{\bB_R} W_{a,r}^T \cdot \nabla g^2 \, \norm{T} \, ,
\end{align*}
where $W_{a,r}^T (q)$ denotes the projection on the tangent plane to $T$ at $q$ of the vector $W_{a,r} (q)$.
Here, the generalized mean curvature $\vec{H}_T (q)$ is given by 
\[
\sum_{i=1}^m A_\Sigma (e_i, e_i)\, ,
\]
where $e_1, \ldots, e_{n-1}$ is an orthonormal base of the approximate tangent space to $T$ at $q$.

Observe that $W_{a,r}^T (q)$ is in fact parallel to $q^T$. Now we can use the homogeneity of $g$ and the identity $q= q^T + q^\perp$ to deduce that 
\[
\nabla g^2 (q) \cdot q^T= 2k g^2 (q) - 2g (q) \nabla g (q) \cdot q^\perp \ge \left(2k-\frac\epsilon2\right) g^2 (q) - \frac{2}{\epsilon}\abs{\nabla g (q)}^2 \abs{q^\perp}^2\,.
\]
In particular we may choose $a=2k-\alpha$, $\epsilon = \alpha$ to estimate 
\begin{align*}
- &\int_{\bB_R} g^2 W_{a,r} \cdot \vec{H}_T\, d\norm{T} \ge \frac{m+2k-\sfrac\alpha2}{r^{m+2k-\alpha}} \int_{\bB_r} g^2 \, d\norm{T} - \frac{m+2k-\sfrac\alpha2}{R^{m+2k-\alpha}} \int_{\bB_R} g^2 \, d\norm{T}\\&\frac{\alpha}{2} \int_{\bB_R\setminus \bB_r} \frac{g^2 (q)}{\abs{q}^{m+2k-\alpha}}\, d\norm{T} (q) + (m+2k-\alpha) \int_{\bB_R\setminus \bB_r} g^2\frac{\abs{q^\perp}^2}{\abs{q}^{m+2k+2-\alpha}}\, d\norm{T} (q)\\&- \frac{2}{\alpha}\int_{\bB_R}\left(\frac{1}{\max(r,\abs{q})^{m+2k-\alpha}}-\frac{1}{R^{m+2k-\alpha}}\right)^+ \abs{\nabla g (q)}^2 \abs{q^\perp}^2\,d \norm{T} (q) \,\,.
\end{align*}
We now claim that 
\begin{equation}\label{e:A^2-comes}
|g^2 W_{a,r} \cdot \vec{H}_T| (q)\leq C \|\hat{g}\|_\infty^2\bA^2 R^\alpha |q|^{1-m}\, .
\end{equation}
First of all observe $|\vec{H}_T|\leq m \bA$. Then note that $\vec{H}_T$ is orthogonal to $T_q \Sigma$. We can thus write, for $|q|\leq R$,
\[
|g^2 W_{a,r} \cdot \vec{H}_T| (q)\leq C \|\hat{g}\|_\infty^2\bA R^\alpha |q|^{-m} |\mathbf{p}_{T_q \Sigma^\perp} (q)|\, .
\]
However, given that both $0$ and $q$ belong to $\Sigma$, we see that 
\[
|\mathbf{p}_{T_q \Sigma^\perp} (q)|\leq C\bA |q|
\]
Having proven \eqref{e:A^2-comes}, we exploit the monotonicity formula to estimate
\[
\int_{\bB_R} |q|^{1-m} d\|T\| (q) \leq C \frac{\|T\| (\bB_R)}{R^m}\, .
\]
We thus conclude
\begin{align*}
\frac{\alpha}{2} \int_{\bB_R\setminus \bB_r} \frac{g^2 (q)}{\abs{q}^{m+2k-\alpha}}\, d\norm{T} (q)\leq
&\frac{m+2k}{R^{m+2k-\alpha}} \int_{\bB_{R}} g^2 \,d\norm{T} + C \bA^2 \|\hat g\|_\infty^2 \frac{\norm{T}(B_{R})}{R^{m-\alpha}}\\
&+ \frac{2}{\alpha}\int_{\bB_R}\frac{\abs{\nabla g (q)}^2 \abs{q^\perp}^2}{\max(r,\abs{q})^{m+2k-\alpha}} \,d \norm{T} (q)\, .
\end{align*}
Letting $r\downarrow 0$ we then conclude \eqref{e.h_k monotonicity}.

\bibliographystyle{plain}
\bibliography{references}

\end{document}